\newtheorem{thm}{Theorem}[section]
\newtheorem{lemma}[thm]{Lemma}
\newtheorem{cor}[thm]{Corollary}
\newtheorem{prop}[thm]{Proposition}
\newtheorem{defn}[thm]{Definition}
\newtheorem{remark}[thm]{Remark}
\numberwithin{equation}{section}
\newcommand{\tr}{\mathrm{tr} \,}
\newcommand{\AI}{A_\infty}
\newcommand{\Z}{\mathbb{Z}}
\newcommand{\R}{\mathbb{R}}
\newcommand{\C}{\mathbb{C}}
\newcommand{\var}{\mathrm{var}}
\begin{document}
\title{Geometric models of simple Lie algebras via singularity theory}
 \author[Cho]{Cheol-Hyun Cho}
\address{Department of Mathematics\\ Postech\\Pohang\\ Republic of Korea}
\email{chocheol@postech.ac.kr}
\author[Jeong]{Wonbo Jeong}
\address{Department of Mathematics and Center for Nano Materials\\ Sogang University\\ 35 Baekbeom-ro\\ Mapo-gu\\ Seoul 04107\\ Republic of Korea}
\email{wonbo.jeong@gmail.com}
\author[Kim]{Beom-Seok Kim}
\address{Department of Mathematics\\ Postech\\Pohang\\ Republic of Korea}
\email{bskim.math@gmail.com}

\begin{abstract}
It is well-known that $ADE$ Dynkin diagrams classify both the simply-laced simple Lie algebras and simple singularities. 
We introduce a polygonal wheel in a plane for each case of $ADE$, called the Coxeter wheel.  We show that  equivalence classes of edges and spokes of a Coxeter wheel form a geometric root system isomorphic to the classical root system of the corresponding type. 
This wheel is in fact derived from the Milnor fiber of corresponding simple singularities of two variables, and the bilinear form on the geometric root system is the negative of its symmetrized Seifert form. Furthermore, we give a completely geometric definition of simple Lie algebras using arcs, Seifert form and variation operator of the singularity theory.
\end{abstract}

\maketitle
\tableofcontents

\section{Introduction}
\subsection{Dynkin diagrams, simple Lie algebras and simple singularities}

Dynkin diagrams were introduced in the theory of Lie groups and Lie algebras to describe the root system
and classify semi-simple Lie algebras over an algebraically closed field.
Killing \cite{Ki} has shown that for a given semi-simple Lie algebra  $\mathfrak{g}$, there exists an abelian subalgebra  $\mathfrak{h}$, called  {\em Cartan subalgebra}, which acts on  $\mathfrak{g}$ by the adjoint representation so that
$\mathfrak{g}$ admits an eigenspace decomposition:
$$\mathfrak{g} =\mathfrak{h} \oplus \left(\bigoplus_{\alpha \in \Phi} \mathfrak{g}_\alpha\right).$$
Here $\Phi \subset \mathfrak{h}^*$ is the set of roots so that for $\alpha \in \Phi$, $g \in  \mathfrak{g}_\alpha$ and $h \in \mathfrak{h}$, we have
$$[h, g] = \alpha(h)\cdot g.$$


A Lie algebra comes with a symmetric bilinear form, called Cartan--Killing form,  defined by  $(x,y) = \mathrm{tr(ad(x) \circ ad(y))}$. 
For $ADE$ simple Lie algebras, the Cartan--Killing forms on $\mathfrak{g}$, $\mathfrak{h}$ and $\mathfrak{h}^*$ are non-degenerate.
One can choose a simple basis of $\mathfrak{h}^*$ so that the Cartan--Killing form is given by the Cartan matrix $C$. Namely, $C = 2 I_{n \times n} - A$,
where $A=(a_{ij})$ is the adjacency matrix of the corresponding Dynkin diagram, and $a_{ij}$ is the number of edges between $i$-th and $j$-th vertices.

Simple Lie algebras are classified using the Dynkin diagrams:
there are $A_k, D_k$, $E_6$, $E_7$, and $E_8$ simple Lie algebras whose Dynkin diagrams are simply-laced (no multiple edges).
There are $B_k,C_k$, $F_4$, and $G_2$ simple Lie algebras whose Dynkin diagrams are not simply-laced.
It is well-known that non-simply-laced simple Lie algebras appear as Lie subalgebras of simply-laced ones.

Dynkin diagrams also appeared surprisingly in the classification of hypersurface singularities by Arnold \cite{Arnold72}.
In particular, simple singularities are classified by Dynkin diagrams of $A_k, D_k$, $E_6,E_7$, and $E_8$ type.
For the case of two variables, they are given by
$$x^{k+1}+y^2, x^{k-1}+xy^2, x^3+y^4, x^3+xy^3,\mbox{ and }x^3+y^5$$
and adding $z_1^2+\dots +z_{n-2}^2$ to each of the above gives the classification for $n$ variables.

Given a simple singularity, there are two different, algebraic or symplectic geometric approaches to  the Dynkin diagram.
For an algebro-geometric approach, the singular fiber $f^{-1}(0)$ of an $ADE$ singularity $f(x,y,z)$ of three variables, can be identified with the quotient $\mathbb{C}^2/G$ of a finite subgroup $G$ of $SU(2)$
(whose classification is also given by $ADE$),
and  Du Val \cite{Du} have shown that the configuration of exceptional divisors in its minimal resolution is exactly described by the corresponding Dynkin diagram. 

For a symplectic geometric approach, given an $ADE$ singularity $f$, there exist a Morsification of $f$ and the collection
of vanishing paths so that its distinguished collection of vanishing cycles in the Milnor fiber has an intersection pattern as in the
associated Dynkin diagram. Milnor fiber has an exact symplectic structure and vanishing cycles are Lagrangian submanifolds,
which can be used to define Fukaya--Seidel category. Our construction is related to the symplectic approach, but we do not use any symplectic geometry in this paper.

\subsection{Known relations between simple Lie algebras and simple singularities}
Let us briefly recall two major developments linking simple Lie algebras and simple singularities.

\subsubsection{From the theory of simple Lie algebras to simple singularities}
There are classical deep results that construct (versal deformation of) a simple singularity from a Lie algebra.
Given a Lie algebra $\mathfrak{g}$ with a cartan subalgebra $\mathfrak{h}$ and the associated Weyl group $W$, there is a map 
$$\gamma : \mathfrak{g} \to \mathfrak{h}/W$$
sending $x \in \mathfrak{g}$ to the conjugacy class of the semi-simple part of $x$. 
The fiber $\gamma^{-1}(\gamma(0))$ is called the nilpotent variety $\mathcal{N}$ of $\mathfrak{g}$, which is singular.
Slodowy \cite{Slodowy} proved that there is a simultaneous resolution of $\gamma$, generalizing a Springer resolution of $\mathcal{N}$.
Associated algebraic group $G$ acts on $\mathcal{N}$ via adjoint representation, and one can choose the transversal slice $X$ (to the $G$-orbit) at a subregular
element $y$ of $\mathcal{N}$. Grothendieck \cite{Gro} conjectured (see Brieskorn \cite{brie}, Esnault \cite{esnault} for a proof) that
$(X \cap \mathcal{N},y)$  is the corresponding simple singularity and its extention to $\gamma:(X,y) \to (\mathfrak{h}/W,0)$ is its versal deformation. We refer readers to the  survey article of L\^e and Tosun \cite{Lesimple} for more details.

\subsubsection{From simple singularity to the root system}
Let us explain how to obtain a root system associated to the three-variable $ADE$ simple singularities $f(x,y,z)$, following 
\cite{brie70}, \cite{le69}, \cite{Ga73}, \cite{Eb84}.
Let $M$ be a Milnor fiber of the singularity $f$. Then $H_2(M;\Z)$ is generated by  distinguished collection of vanishing cycles, 
which also has a symmetric bilinear form given by the intersection form. It is known that vanishing cycles can be chosen so that
the intersection form equals the negative of the standard Cartan matrix $C$.

Furthermore, monodromy of $H_2(M;\Z)$ around a single Morse critical value corresponding to the vanishing cycle $\Delta$  is given by the Picard--Lefschetz formula \cite{Pham65} (for $n$-variable case)
\begin{equation*}\label{eq:PL}
 a \mapsto a+ (-1)^{\frac{n(n+1)}{2}}(a \bullet \Delta) \Delta.
\end{equation*}
When $n=3$, this can be identified with  the reflection in the associated Weyl group of the Lie algebra.
Therefore, the monodromy group of the simple singularity $f(x,y,z)$ can be identified with the Weyl group,
and this gives the correspondence between the set of homology classes of vanishing cycles of $f$
and the roots in the corresponding simple Lie algebra.

In summary, the root system from the $ADE$ simple Lie algebras can be identified with the root system
of the corresponding simple singularity $f(x,y,z)$, given by the set of vanishing cycles and intersection form.

Given a finite root system, there is a way to construct a unique semi-simple Lie algebra up to isomorphism \cite{Ser}.
This process is algebraic and one also needs to make a choice of structure constants for the Lie bracket. 

In this paper, we will work with simple singularities of two variables. Then the intersection form on middle dimensional homology
$H_1(M;\Z)$ of the Milnor fiber $M$ is skew-symmetric, hence the correspondence with the Cartan--Killing form and Weyl group action fails to hold.
However, we will give a different definition of geometric root system and find a way to visualize them. Moreover, we will be able to
give a completely geometric definition of the corresponding Lie algebras. It turns out that  this geometric definition that we found, can be modified to work in all dimensions.

\subsection{Basic singularity theory}
We recall the notion of monodromy, variation operator, and Seifert form from  \cite{AGV2} and \cite{Mil}.
Let $f:\C^n \to \C$ be an isolated singularity at the origin. 
For simplicity, let us further assume that $f$ is a weighted homogeneous polynomial: for any $t \in \C^*$, there exist $a_1,\dots, a_n, h \in \mathbb{N}$ so that
$$f(t^{a_1}z_1,\dots, t^{a_n}z_n) = t^h f(z_1,\dots,z_n).$$
$f$ is said to have weight $(a_1,\dots, a_n;h)$. In this case, Milnor fiber $M$ can be defined by the solution set $\{f=1\}$ inside the closed ball $\overline{B}_R(0)$ of large radius $R$.
Since $f$ is non-singular away from the origin, one can define a parallel transport between fibers of $f$ along the unit circle in the base $\C$, and 
this defines a map from $M$ to itself, called the geometric monodromy $\rho$.  
The induced isomorphism $\rho_*$ on the homology group of $M$ is called the monodromy operator, or simply the monodromy.
In our case, $\rho$ can be chosen to be the weight action 
\begin{equation}\label{eq:wa}
(z_1,\dots,z_n) \mapsto \big(e^{2\pi i a_1/h} z_1, \dots, e^{2\pi i a_n/h}z_n\big).
\end{equation}

The boundary $\partial M$ is diffeomorphic to the link $\{f=0\} \cap \partial B_R(0)$ of the singularity.
There is a weight flow on the link connecting the point and its weight action image. 
By adding an untwisting near the boundary using this weight flow, we can define a geometric monodromy $\rho'$ that is identity on the boundary $\partial M$,
and isotopic to $\rho$. Milnor have shown that $M$ is homotopy equivalent to the $\mu$-bouquet of spheres $S^{n-1}$, where $\mu$ is the Milnor number of $f$.
Hence $H_{n-1}(M;\Z)$ is isomorphic to $\Z^\mu$.
\begin{thm}[{\cite[Theorem 2.2]{AGV2}}] 
The variation operator $\mathrm{var}$ is a linear isomorphism
$$ \mathrm{var}: H_{n-1}(M,\partial M;\mathbb{Z}) \to H_{n-1}(M;\mathbb{Z}): \mathrm{var} ([K]) = [\rho'(K) - K]$$
sending relative cycles to the cycles without boundary in $M$.
\end{thm}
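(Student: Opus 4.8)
\emph{Proof proposal.}

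\textbf{(1) Well-definedness and a reduction.} Because $\rho'$ is the identity on $\partial M$, for any relative $(n-1)$-cycle $K$ we have $\partial(\rho'_\# K - K) = \rho'_\#(\partial K) - \partial K = 0$, since $\partial K$ is carried by $\partial M$; hence $\rho'_\# K - K$ is an absolute cycle. Changing $K$ within its class in $H_{n-1}(M,\partial M)$, that is by $\partial L$ plus a chain in $\partial M$, changes $\rho'_\# K - K$ only by $\partial(\rho'_\# L - L)$, and replacing $\rho'$ by a boundary-fixing isotopic diffeomorphism changes it by $\partial(PK)$ for the induced chain homotopy $P$, which vanishes on $\partial M$ and hence on $\partial K$. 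Thus $\mathrm{var}$ is a well-defined, visibly linear, homomorphism. By Milnor's theorem $M \simeq \bigvee_\mu S^{n-1}$, so $H_{n-1}(M) \cong \Z^\mu$ with $\widetilde H_j(M) = 0$ for $j \neq n-1$; as $M$ is a compact oriented $(2n-2)$-manifold with boundary, Poincar\'e--Lefschetz duality together with universal coefficients gives $H_{n-1}(M, \partial M) \cong H^{n-1}(M) \cong \Hom(H_{n-1}(M), \Z) \cong \Z^\mu$. A surjection between free abelian groups of the same finite rank is an isomorphism, so it is enough to prove that $\mathrm{var}$ is onto, equivalently that its matrix in a distinguished basis has determinant $\pm 1$.

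\textbf{(2) A geometric factorization.} Let $Y = \overline B_R \cap f^{-1}(\overline D_\eta)$ be the Milnor tube: it is contractible, and over $\overline D_\eta \setminus \{0\}$ it is a locally trivial fibration with fiber $M$ and monodromy $\rho'$ around $\partial D_\eta$, which we normalize to act trivially on the horizontal boundary $\partial_h Y \cong \partial M \times \overline D_\eta$. Since $Y$ is contractible, the connecting map $\partial_* \colon H_n(Y, M) \xrightarrow{\sim} H_{n-1}(M)$ is an isomorphism. Sweeping a relative $(n-1)$-cycle $K$ of $(M, \partial M)$ once around $\partial D_\eta$ by parallel transport produces an $n$-chain $\widehat K$ in the vertical boundary with $\partial\widehat K = (\rho'_\# K - K) + (\partial K \times S^1)$; subtracting the chain $\partial K \times \overline D_\eta \subset \partial_h Y$ yields a relative $n$-cycle $\widehat K'$ of $(Y, M)$ with $\partial_*[\widehat K'] = \mathrm{var}([K])$. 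One checks this assignment descends to a homomorphism $\Psi \colon H_{n-1}(M, \partial M) \to H_n(Y, M)$ with $\mathrm{var} = \partial_* \circ \Psi$; hence $\mathrm{var}$ is an isomorphism if and only if $\Psi$ is, namely if and only if the swept $n$-chains $\widehat K'$ generate $H_n(Y, M)$.

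\textbf{(3) Surjectivity, and the main obstacle.} Pick a Morsification $\widetilde f$ and a distinguished system of vanishing paths; the resulting Lefschetz thimbles $\Delta_1, \dots, \Delta_\mu \subset Y$ are embedded $n$-disks with $\partial\Delta_i = \delta_i$, the distinguished basis of vanishing cycles of $H_{n-1}(M)$, so $\{[\Delta_i]\}$ is a basis of $H_n(Y, M)$ via $\partial_*$. It therefore suffices to realize each $[\Delta_i]$ as $\Psi(K_i)$, equivalently to produce a distinguished system of relative cycles $K_i$ of $(M, \partial M)$ with $\mathrm{var}(K_i) = \delta_i$. This is the content of the local Picard--Lefschetz model near each node, where $M$ is modeled on $T^\ast S^{n-1}$, the local monodromy is the Picard--Lefschetz twist, and its variation carries a relative cycle $K$ to $\pm\langle K, \delta_i\rangle\,\delta_i$, a unimodular rank-one map on the local relative homology; patching these models along the chosen paths makes the matrix of $\Psi$, equivalently of $\mathrm{var}$, triangular with $\pm 1$ on the diagonal with respect to the dual distinguished bases. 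The technical heart, and the step I expect to be the main obstacle, is precisely this globalization: bookkeeping of how the thimbles and their dual relative cycles meet along a distinguished system of paths and confirming that no integral obstruction is introduced. It is here that one genuinely uses the controlled relation between the boundary-trivial monodromy $\rho'$ and the weighted-homogeneous monodromy $\rho$ acting globally on $Y$, and the unimodularity furnished by Poincar\'e--Lefschetz duality on $M$. Granting this, $\Psi$ is surjective, hence an isomorphism, and therefore so is $\mathrm{var} = \partial_* \circ \Psi$.
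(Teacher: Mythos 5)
First, a point of comparison: the paper does not prove this statement at all — it is quoted verbatim from \cite[Theorem 2.2]{AGV2} as background material — so there is no internal proof to measure you against; your proposal can only be judged against the classical argument in the literature.

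Judged on its own terms, your write-up has a genuine gap, and it sits exactly where you flag it. Steps (1)–(2) are fine: both groups are free of rank $\mu$ (Milnor's bouquet theorem plus Lefschetz duality and torsion-freeness), so it suffices to prove surjectivity, and the factorization $\mathrm{var}=\partial_*\circ\Psi$ through $H_n(Y,M)$ is harmless (though not really needed, and the well-definedness of $\Psi$ and the identification of the thimbles inside the tube of $f$ rather than of its Morsification are asserted, not checked). But step (3) never actually produces, for each vanishing cycle $\delta_i$ of a distinguished basis, a relative class mapping to it, nor does it verify the claimed triangularity of the matrix of $\mathrm{var}$; ``patching the local models along the paths'' is precisely the content of the theorem, and you concede it with ``Granting this''. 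The missing ingredient is concrete and classical: write the boundary-trivial geometric monodromy as the composition $\rho'=h_\mu\circ\cdots\circ h_1$ of the local monodromies attached to a distinguished system of paths, use the additivity of variation under composition, $\mathrm{var}_{g\circ h}([K])=\mathrm{var}_{g}\big(h_{\mathrm{rel}}[K]\big)+\mathrm{var}_{h}([K])$, the local Picard--Lefschetz formula $\mathrm{var}_{h_i}(a)=\pm\,(a\bullet\delta_i)\,\delta_i$, and the basis $\nabla_1,\dots,\nabla_\mu$ of $H_{n-1}(M,\partial M;\Z)$ dual to $\delta_1,\dots,\delta_\mu$ under the intersection pairing (unimodular by Lefschetz duality). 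Since $h_{l,\mathrm{rel}}$ fixes every relative class having zero intersection with $\delta_l$, one gets $(h_{i-1}\circ\cdots\circ h_1)_{\mathrm{rel}}\nabla_j=\nabla_j$ whenever $i\le j$, so the coefficient of $\delta_i$ in $\mathrm{var}(\nabla_j)$ is $\pm 1$ if $i=j$ and $0$ if $i<j$: the matrix of $\mathrm{var}$ in these bases is triangular with $\pm1$ on the diagonal, hence $\mathrm{var}$ is onto and therefore an isomorphism. Without this computation (or a substitute, e.g.\ unimodularity of the Seifert form via Alexander duality in $S^{2n-1}$ combined with Theorem \ref{thm:SA}), your argument does not establish surjectivity, so as written the proof is incomplete rather than wrong: the skeleton is the standard one, but the decisive step is missing.
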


For an isolated singularity, there is a bilinear form $\mathcal{L}$ on $H_{n-1}(M;\Z)$, called the Seifert form, which is always non-degenerate.
Seifert form is defined by a linking number, but  the following proposition provides an alternative definition.
\begin{thm}[{\cite[Theorem 2.3]{AGV2}}]\label{thm:SA}
For $a, b \in H_{n-1}(M;\Z)$,
$$\mathcal{L}(a,b) = \mathrm{var}^{-1}(a) \bullet b.$$
\end{thm}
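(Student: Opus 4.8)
The plan is to unwind both sides into a computation in the Milnor sphere and to manufacture an explicit Seifert chain for $a$ out of the monodromy flow. Realize $M$ as the fiber $f^{-1}(\varepsilon)\cap\overline{B}_R$ of the Milnor fibration $f/|f|\colon S^{2n-1}\setminus K\to S^1$, where $K=\partial M$ is the link, and recall that the Seifert form of two cycles $u,v$ supported in the interior of $M$ is defined as the linking number in $S^{2n-1}$ of $u$ with the push-off $v^{-}$ of $v$ into a neighbouring fiber $f^{-1}(\varepsilon e^{-is})$, $s>0$ small. Since $\mathrm{var}$ is an isomorphism, write $a=\mathrm{var}(\delta)$ with $\delta=\mathrm{var}^{-1}(a)\in H_{n-1}(M,\partial M;\Z)$ and fix a relative cycle $D$ representing $\delta$; by the description of $\mathrm{var}$, the absolute cycle $\rho'(D)-D$ represents $a$. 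Putting $b$ in general position we may assume $b$ is a cycle in the interior of $M$ transverse to $D$, so that $\delta\bullet b$ is the signed count of points of $D\cap b$.

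Next I would build a chain in $S^{2n-1}$ bounding a push-off of $a$. Let $\{\psi_t\}_{t\in[0,1]}$ be the horizontal flow of the Milnor fibration transporting $M=f^{-1}(\varepsilon)$ once around the base circle, normalised so that $\psi_0=\mathrm{id}$ and, after the canonical identification $f^{-1}(\varepsilon e^{2\pi i})=f^{-1}(\varepsilon)$, $\psi_1=\rho'$; crucially, choose $\psi_t$ to agree with the weight flow in a collar of $\partial M$. Then, because $\rho'$ is the identity on $K$, the boundary trace $T:=\bigcup_{t\in[0,1]}\psi_t(\partial D)$ is an $(n-1)$-cycle lying in $\partial M$, in fact in the boundary of the Milnor tube. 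Form
$$\Gamma:=\bigcup_{t\in[0,1]}\psi_t(D),\qquad \partial\Gamma=\rho'(D)-D-T.$$
Since $T$ is null-homologous in a small neighbourhood $N$ of $K$ in $S^{2n-1}$, cap it off by a chain $\Sigma\subset N$ disjoint from the interior of $M$, and set $\Gamma':=\Gamma+\Sigma$; then $\partial\Gamma'$ is a push-off of the cycle $\rho'(D)-D$ representing $a$, while the only intersections of $\Gamma'$ with the interior of $M$ occur along $\psi_0(D)=D$ and $\psi_1(D)=\rho'(D)$.

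I would then compute the linking number as an intersection number: $\mathcal{L}(a,b)=\pm\,\Gamma'\bullet b$. Here $\Sigma\bullet b=0$ because $\Sigma$ lies near the link whereas $b$ is interior, and the pieces $\psi_t(D)$ with $t\in(0,1)$ contribute nothing because they lie in the fibers $f^{-1}(\varepsilon e^{2\pi it})$, which are disjoint from $M\supset b$. With the push-off and orientation conventions built into the definition of $\mathcal{L}$, exactly one of the two remaining boundary copies — say $D$ — is seen by $b$, so $\Gamma'\bullet b=D\bullet b$ and hence $\mathcal{L}(a,b)=D\bullet b=\delta\bullet b=\mathrm{var}^{-1}(a)\bullet b$. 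Non-degeneracy of $\mathcal{L}$ then follows formally from the bijectivity of $\mathrm{var}$ together with the unimodularity of the intersection pairing $H_{n-1}(M,\partial M;\Z)\times H_{n-1}(M;\Z)\to\Z$ coming from Poincar\'e--Lefschetz duality.

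The step I expect to be the main obstacle is the construction of $\Gamma'$: controlling the trace of $\partial D$ under the monodromy flow and producing the capping chain $\Sigma$ that meets neither the interior of $M$ nor (hence) $b$. This is precisely where one must use that $\rho'$ has been chosen to be the identity near $\partial M$ — the untwisting via the weight flow — and analyse the Milnor fibration in a neighbourhood of the link; once that is in place, the rest is bookkeeping with orientations and with the push-off convention entering the linking number.
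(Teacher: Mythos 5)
The paper does not prove this statement at all: it is quoted directly from \cite[Theorem 2.3]{AGV2}, so there is no internal argument to compare yours against. What you wrote is the standard textbook proof of that classical fact --- sweep a relative cycle $D$ representing $\mathrm{var}^{-1}(a)$ once around the Milnor fibration, use that $\rho'$ is the identity on $\partial M$ to confine the boundary trace $T$ to the tube around the link, cap it off there, and compute the linking number as an intersection with the resulting bounding chain --- and in outline it is the right argument.

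Two steps are, however, asserted exactly where the content lies. First, the end-game. With the conventions you yourself fix ($\mathcal{L}(a,b)=$ linking of $a$ with the push-off $b^-\subset f^{-1}(\varepsilon e^{-is})$, push-off effected by the flow), the slice of $\Gamma'$ that $b^-$ actually meets is $\psi_{1-s}(D)=\psi_{-s}\bigl(\rho'(D)\bigr)$, so the computation yields $\pm\,\rho'(D)\bullet b$, not $D\bullet b$; these differ by $(\rho'(D)-D)\bullet b=a\bullet b$, which is nonzero in general. That discrepancy is precisely the difference between $\mathcal{L}$ and $\pm\mathcal{L}^t$, i.e.\ exactly the asymmetry the theorem pins down, so the sentence ``with the conventions built into $\mathcal{L}$, exactly one of the two boundary copies --- say $D$ --- is seen by $b$'' cannot be left as an assertion: you must either push $b$ in the positive direction, or match the actual push-off and orientation conventions of \cite{AGV2} and track the sign carried by the orientation of $\Gamma$. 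Second, ``choose $\psi_t$ to agree with the weight flow in a collar of $\partial M$'' is not the right prescription: the time-one map of the weight flow on the link is the weight action, not the identity, so with that choice $\psi_1\neq\rho'$ near the boundary. What you need is the untwisted lift, trivialized in the $K$-direction in a tube $K\times D^2$ around the link; then $T$ is the product $\partial D\times S^1$ inside that tube, and $[T]=0$ in $H_{n-1}(K\times D^2)\cong H_{n-1}(K)$ because under the retraction onto $K$ it factors through the $(n-2)$-dimensional set $\partial D$. The null-homology of $T$ (and hence the existence of the cap $\Sigma$ away from $b$), which you assert, is not automatic for an arbitrary lift of the rotation field. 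With these two repairs the argument closes.
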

Here $\bullet$ denotes the intersection pairing on $H_{n-1}(M, \partial M;\Z) \otimes H_{n-1}(M;\Z)$.
Since  variation operator is an isomorphism, we have the corresponding pairing on $H_{n-1}(M,\partial M;\Z)$ as well.

\subsection{Main result 1: Geometric root system}
Let $f(x,y)$ be a simple singularity  of type $\Gamma =A_k,\,D_k,\,E_6,\,E_7,E_8$.
We will explain how to construct   a  geometric Lie algebra $\mathfrak{g}_{geo}$ of type $\Gamma$.
Milnor fiber $M$ of $f(x,y)$  is a Riemann surface with boundary, and  $H_1(M;\Z)$ and $H_1(M,\partial M;\Z)$ are dual to each other via intersection pairing. We take these as geometric analogues of $\mathfrak{h}$ and $\mathfrak{h}^*$.
But the duality between them in this paper will be from the pairing in Definition \ref{defn:S} rather than the intersection pairing.
 \begin{defn}[Geometric Cartan subalgebra $\mathfrak{h}_{geo}$]
We  take  $H_1(M;\mathbb{Z})$ as Cartan subalgebra $\mathfrak{h}_{geo}$ of our geometric Lie algebra $\mathfrak{g}_{geo}$  (or $H_1(M;\mathbb{Z})\otimes \C$ for complex Lie algebra).  Therefore, roots of $\mathfrak{g}_{geo}$ will be a collection of relative cycles in $H_1(M,\partial M;\mathbb{Z})=\mathfrak{h}_{geo}^*$.
\end{defn}
\begin{remark}
This is different from the classical approach, where vanishing cycles were taken as roots. Since $\mathfrak{h}$ and $\mathfrak{h}^*$ are isomorphic to each other, 
two conventions can be regarded to be equivalent. In contrast, our setup permits the use of the variation operator itself, rather than its inverse, in defining $\mathfrak{sl}_2$-triples. Also we can regard the roots as oriented arcs in the Milnor fiber, which is essential for their geometric visualization.
\end{remark}

The intersection pairing on $H_1(M;\Z)$ is skew-symmetric, but the desired Cartan--Killing forms are symmetric. 
We choose the pairing to be the negative of the symmetrized  Seifert form
on $\mathfrak{h}_{geo}$ or $\mathfrak{h}_{geo}^*$.
\begin{defn}[Geometric pairing on $\mathfrak{h}_{geo}^*$]\label{defn:S}
For $\alpha,\beta \in H_1(M,\partial M;\mathbb{Z})$, we define
$$(\alpha,\beta):= \mathrm{var}(\alpha) \bullet \beta  + \mathrm{var}(\beta)  \bullet  \alpha.$$
This equals $-\mathcal{L}^t - \mathcal{L}$ on $\mathfrak{h}_{geo}^*$,  a {\em negative of the symmetrized Seifert pairing} on $H_1(M,\partial M;\mathbb{Z})$.
\end{defn}
\begin{remark}\label{re:fu}
There is a categorification of this pairing.  Namely, our pairing is the symmetrized Euler pairing of
the Fukaya category $\mathcal{F}_\rho$ of an isolated singularity being developed in \cite{BCCJ}.
Also note that Fukaya--Seidel category gives a categorification of Seifert form somewhat indirectly.
In \cite{BCCJ}, only the morphism spaces of the category are defined, but their $\AI$-structures will be defined
in the forthcoming work following the construction in \cite{CCJ}.
\end{remark}

\begin{thm}\label{geometric roots}
    Let $f(x,y)$ be a simple curve singularity of type $\Gamma$.
    Then 
    $$\Phi_\Gamma := \{\alpha\in H_1(M,\partial M;\Z) \;|\;  (\alpha,\alpha)=2 \} $$ 
    satisfies the axioms of a root system,  and we call it the geometric root system of type $\Gamma$.
    It is isomorphic to  a classical root system of type $\Gamma$.
\end{thm}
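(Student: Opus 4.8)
The plan is to reduce the assertion to the classical fact that the norm-two vectors of an $ADE$ root lattice form exactly the corresponding root system, so the whole argument comes down to identifying $(\mathfrak{h}_{geo}^*,(\,\cdot,\cdot\,))$ with the root lattice $Q_\Gamma$ equipped with its standard bilinear form.

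First I would choose a Morsification of $f(x,y)$ together with a distinguished system of vanishing paths, producing a distinguished basis of vanishing cycles $\delta_1,\dots,\delta_\mu$ of $H_1(M;\Z)$. Since the variation operator is a $\Z$-linear isomorphism by \cite[Theorem~2.2]{AGV2}, the relative classes $e_i:=\mathrm{var}^{-1}(\delta_i)$, $i=1,\dots,\mu$, form a $\Z$-basis of $H_1(M,\partial M;\Z)=\mathfrak{h}_{geo}^*$. I would then compute the Gram matrix of the pairing of Definition~\ref{defn:S} in this basis: because $\mathrm{var}(e_i)=\delta_i$, we get $(e_i,e_j)=\delta_i\bullet e_j+\delta_j\bullet e_i$, and rewriting each intersection number by Theorem~\ref{thm:SA} yields, as already recorded after Definition~\ref{defn:S}, the matrix $-(V+V^t)$, where $V=(\mathcal{L}(\delta_i,\delta_j))$ is the Seifert matrix of $f$. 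Here one must track the sign relating the two orderings of the Lefschetz intersection pairing on the surface $M$; this sign is exactly what makes the diagonal entries equal $+2$. Invoking the classical computation of the Seifert form of a simple plane curve singularity in a distinguished basis — in which $V$ may be taken upper triangular with $-1$ on the diagonal, equivalently $-(V+V^t)=2I-A_\Gamma=C_\Gamma$, the Cartan matrix of type $\Gamma$ (see \cite{brie70,Ga73,Eb84,AGV2}; it can also be deduced from the three-variable intersection form by suspension) — I conclude that $(\mathfrak{h}_{geo}^*,(\,\cdot,\cdot\,))$ with the basis $\{e_i\}$ is isometric to $Q_\Gamma$ with its standard form and simple-root basis. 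A change of distinguished basis alters $V$ but not the isometry class of $-(V+V^t)$, so this identification is canonical up to isometry.

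It then remains to transport the description of roots. For every irreducible $ADE$ root lattice the set of vectors of squared length $2$ coincides with the set of roots: this is immediate for $A_n$ and $D_n$ from the standard coordinate models, and it is the classical count of minimal vectors ($72$, $126$, $240$) for $E_6,E_7,E_8$. Moreover this set always satisfies the root system axioms — it is finite and spans, since the form is positive definite of full rank; the Cartan integers $2(\alpha,\beta)/(\beta,\beta)=(\alpha,\beta)$ are integers because the form is integral; and it is stable under the reflections $s_\beta(v)=v-(v,\beta)\beta$, which preserve both the integral lattice and the form. Applying the isometry of the previous paragraph carries $\Phi_\Gamma=\{\alpha\in H_1(M,\partial M;\Z)\mid(\alpha,\alpha)=2\}$ bijectively onto the classical root system of type $\Gamma$; hence $\Phi_\Gamma$ is a root system and is isomorphic to it.

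The one genuinely nontrivial input is the computation of the symmetrized Seifert form in a distinguished basis together with the reconciliation of all sign and orientation conventions with Definition~\ref{defn:S}; rather than recompute it from scratch I would cite the known Seifert matrices of $ADE$ plane curve singularities and verify the bookkeeping on the small cases $A_1$ (Gram matrix $(2)$) and $A_2$ (Gram matrix the $A_2$ Cartan matrix, with $V$ a trefoil Seifert matrix). Everything else is formal lattice theory.
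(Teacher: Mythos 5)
Your argument is correct, but it takes a genuinely different route through the key computational step. The paper proves the theorem via Proposition \ref{cartan matrix}: it exhibits an explicit collection of oriented arcs $\alpha_1,\dots,\alpha_k$ in its combinatorial (polygonal) model of the Milnor fiber and computes the matrix $B_{ij}=\var(\alpha_i)\bullet\alpha_j$ directly from the pictures, obtaining an upper-triangular matrix with $B+B^t$ equal to the Cartan matrix $C_\Gamma$; the lattice-theoretic endgame (positive definiteness $\Rightarrow$ finiteness, integrality of $\langle\beta,\alpha\rangle=(\beta,\alpha)$, stability under reflections, identification of the norm-two vectors with the classical roots) is then the same as yours. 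You instead take $e_i=\var^{-1}(\delta_i)$ for a distinguished basis of vanishing cycles, observe via Theorem \ref{thm:SA} and Definition \ref{defn:S} that the Gram matrix is $-(V+V^t)$ for the Seifert matrix $V$, and import the classical fact that for $ADE$ plane curve singularities a distinguished basis can be chosen with $-(V+V^t)=C_\Gamma$ (equivalently, via Thom--Sebastiani/suspension, from the three-variable intersection form). That is a legitimate shortcut, with two caveats you partly acknowledge: (i) the existence of a distinguished basis realizing the standard $ADE$ Dynkin diagram is itself a nontrivial classical input (Gabrielov/A'Campo), though only the isometry class of $-(V+V^t)$ matters for this theorem; and (ii) the sign conventions relating $\var(\alpha)\bullet\beta$, $\beta\bullet\var(\alpha)$ and $\mathcal{L}$ must be fixed consistently with Definition \ref{defn:S} (your proposed check on $A_1$, $A_2$ suffices, and it is consistent with the paper's $\mathcal{L}(\Delta_i,\Delta_i)=-1$ for two variables). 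What the paper's longer route buys is precisely what your citation-based route does not deliver: an explicit geometric simple basis of arcs, with $B$ itself (not just $B+B^t$) computed, which is reused throughout the paper --- for the Coxeter-wheel correspondence, for the Gusein-Zade upper-triangularity criterion identifying $\var(\alpha_i)$ as a distinguished collection, and for the Lie-bracket signs. Your route buys brevity and independence from the wheel model, and proves the stated theorem completely.
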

In particular, the geometric pairing can be identified with the classical Cartan--Killing form.
Moreover, we have the following geometric realizations of roots.
\begin{thm}\label{root_equiv}
    Let $M$ be the Milnor fiber of a simple curve singularity of type $\Gamma$.
    Then the following subsets of $H_1(M,\partial M;\Z)$, which are denoted by $\Phi^{(1)}_\Gamma,\ldots,\Phi^{(4)}_\Gamma$,  are equal to each other.
    \begin{itemize}
        \item[(i)]  $ \{\alpha\in H_1(M,\partial M;\Z) \;|\;  (\alpha,\alpha)=2 \} $, namely $\Phi_\Gamma$.
        \item[(ii)] $\{\alpha\in H_1(M,\partial M;\Z) \;|\; (\alpha,\alpha) \le 2 \}$.
        \item[(iii)] $\{\alpha\in H_1(M,\partial M;\Z) \;|$ $\alpha$ has an embedded connected arc representative $L$ such that  $L \cap \rho(L) = \emptyset$ $\}$.
        \item[(iv)] $\{\alpha\in H_1(M,\partial M;\Z) \;|\;$ $\var(\alpha)$ is a member of a distinguished collection of vanishing cycles of $f$ $\} $.
    \end{itemize}
\end{thm}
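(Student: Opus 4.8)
The plan is to prove the four sets coincide by going around the cycle $(iv)\subseteq(iii)\subseteq(i)\subseteq(iv)$, after first recording the easy equality $(i)=(ii)$, and to carry out the most delicate step inside the explicit Coxeter‑wheel model, where the geometric monodromy $\rho$ is a finite‑order rotation. The equality $(i)=(ii)$ is a lattice statement: by Theorem \ref{geometric roots} the pairing $(\cdot\,,\cdot)$ identifies $H_1(M,\partial M;\Z)$ with the root lattice of the simply‑laced type $\Gamma$ equipped with its Cartan--Killing form, which is an even, positive‑definite lattice. Hence $(\alpha,\alpha)$ is a non‑negative even integer, vanishing only for $\alpha=0$, so $(\alpha,\alpha)\le 2$ forces $(\alpha,\alpha)\in\{0,2\}$; reading all four displayed sets as excluding the zero class (which I would make explicit), this gives $(ii)\subseteq(i)$, and the reverse inclusion is trivial.

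For $(iv)\subseteq(iii)$: a vanishing arc $\alpha$ has $\var(\alpha)=\delta$ an embedded non‑separating circle, and $\alpha$ is represented by the embedded arc ``dual'' to $\delta$ (a co‑thimble), for which $\rho'(L)-L$ is homologous to $\delta$. In the Coxeter‑wheel realization such an $L$ is a spoke or an edge of the wheel while $\rho$ rotates the wheel, so $L$ and $\rho(L)$ are manifestly disjoint, exhibiting the required representative. For $(iii)\subseteq(i)$: if $L$ is an embedded arc with $L\cap\rho(L)=\emptyset$, then the $1$‑cycle $\var(\alpha)=\rho'(L)-L$ is carried by a neighbourhood of the two disjoint arcs $L$ and $\rho'(L)$, hence — once one checks that the boundary collar, where $\rho'$ and $\rho$ disagree, contributes no essential crossings — it is represented by an embedded simple closed curve $C$. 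Using Theorem \ref{thm:SA} together with the skew‑symmetry of the intersection pairing on the surface $M$,
\[
(\alpha,\alpha)=2\,\var(\alpha)\bullet\alpha=-2\,\mathcal{L}(C,C),
\]
and bounding the self‑Seifert number $\mathcal{L}(C,C)$ — equivalently the $M$‑framing of $C$ — on this representative gives $(\alpha,\alpha)\le 2$; combined with the equality $(i)=(ii)$ above this yields $(iii)\subseteq(i)$.

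It remains to close the cycle with $(i)\subseteq(iv)$. One route: $\var$ is a bijection from vanishing arcs to vanishing cycles, the set of vanishing cycles of a simple plane curve singularity is classically known to have cardinality $\#\Phi_\Gamma$ (it is visible directly in the Coxeter‑wheel model, or from A'Campo‑type divides), and the reverse containment $(iv)\subseteq(iii)\subseteq(i)$ already proved then forces equality by counting. More in the spirit of the paper, one can instead exhibit, for each root — i.e.\ each edge or spoke of the wheel — an explicit arc and verify directly that it is a vanishing arc, so that $\Phi_\Gamma\subseteq(iv)$ by inspection.

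I expect the genuinely delicate point to be the implication $(iii)\subseteq(i)$, namely translating the purely topological hypothesis $L\cap\rho(L)=\emptyset$ into the numerical bound $(\alpha,\alpha)\le 2$. Two things must be controlled: the untwisting near $\partial M$ that relates $\rho$ to the boundary‑fixing $\rho'$ can a priori create intersections which one must show to be inessential; and one must bound the self‑Seifert number $\mathcal{L}(C,C)$ of the resulting simple closed curve using only the disjointness of $L$ and $\rho(L)$. Both should become transparent in Coxeter‑wheel coordinates, where $\rho$ is literally a rotation of a polygon and the whole question reduces to inspecting polygonal chords and their rotates; a type‑independent argument would instead require a careful analysis of the weight flow on the link $\partial M$.
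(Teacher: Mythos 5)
Your reduction $(i)=(ii)$ is fine and matches the paper, but the two load-bearing steps of your cycle are not actually carried out, and one of them rests on a circular reduction. For $(iii)\subseteq(i)$ you yourself identify the crux --- bounding $\mathcal{L}(C,C)$, i.e.\ bounding $\var(\alpha)\bullet\alpha$, using only $L\cap\rho(L)=\emptyset$ --- and then defer it to ``Coxeter-wheel coordinates where the question reduces to polygonal chords and their rotates.'' That reduction is not available: set $(iii)$ quantifies over \emph{arbitrary} embedded connected arcs in $M$, and the fact that such an arc represents the class of an edge or spoke of the wheel is part of the conclusion of the theorem, not something you may assume. The missing idea is a local push-off argument, which is how the paper closes this step (Lemma \ref{lemma_root_subset}): perturb $L$ to an arc $L'$ meeting $L$ exactly once with sign $+1$ and still disjoint from $\rho(L)$, and use that the monodromy is right-veering near $\partial M$ (so the collar where $\rho'$ differs from $\rho$ pushes $\rho'(L)$ to the side of $L$ away from $L'$); then $\var([L])\bullet[L]=\big(\rho'(L)-L\big)\bullet L'$ has contributions only from $L\cap L'$, giving $\var(\alpha)\bullet\alpha=1$ exactly and hence $(\alpha,\alpha)=2$ --- no inequality on a self-Seifert number is ever needed.

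The other gap is the direction involving $(iv)$. Your $(iv)\subseteq(iii)$ asserts that a vanishing arc is represented by a ``co-thimble'' which in the wheel is an edge or spoke; this again presupposes the statement being proved, and no argument is given for why a class with $\var(\alpha)$ a vanishing cycle admits an embedded representative disjoint from its $\rho$-image. Likewise, the claim that the number of vanishing cycles of a \emph{two-variable} simple singularity is ``classically known'' to be $|\Phi_\Gamma|$ is not quotable as stated: the classical identification of vanishing cycles with roots is the three-variable statement. The paper instead proves $(iii)\subseteq(iv)$ by showing, via Gusein-Zade's upper-triangularity criterion (Theorem \ref{thm_gz80}), that the variation images of the projective basis form a distinguished collection of vanishing cycles, and that monodromy images of vanishing cycles are vanishing cycles, so the monodromy orbit of that basis realizes every geometric root as a vanishing arc; and it obtains the needed cardinality bound $|\Phi_\Gamma^{(4)}|\le|\Phi_\Gamma|$ by stabilizing to $f(x,y)+z^2$ and invoking Gabrielov's Thom--Sebastiani theorem to inject vanishing cycles of $f$ into those of the three-variable singularity (Lemma \ref{lem:sub}). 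Without substitutes for these two inputs, your cycle $(iv)\subseteq(iii)\subseteq(i)\subseteq(iv)$ does not close.
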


(iii) and (iv) provide two geometric characterization of roots.
First, (iii) provides an easy geometric way to determine whether $\alpha$ is a root or not using the monodromy $\rho$.
We remark that for any isolated curve singularity $f(x,y)$, geometric monodromy $\rho'$ can be chosen so that 
$\rho'$ does not have any fixed point away from the boundary.  

(iv) demonstrates how the conventional notion that vanishing cycles are roots is also connected to the current approach.
But let us point out that we are not taking all vanishing cycles, but rather the proper subset given by
the members of distinguished collection of vanishing cycles.

This difference is largely because we are working with  2-variable rather 3-variable singularities (c.f. \cite{Her24}).
Roughly speaking,  a  vanishing cycle  from an embedded vanishing path belongs to $\Phi^{(4)}_\Gamma$.
But general vanishing cycles may come from immersed vanishing paths (due to the Picard--Lefschetz transformation).
To distinguish them, we call the former {\em em vanishing cycle}.

In \cite{GZ80}, it is proven that every vanishing cycle of a 3-variable simple singularity is an em vanishing cycle,
but this fails to hold for 2-variables. In this case the set of all vanishing cycles are much bigger than that of em vanishing cycles.

The geometric condition $L \cap \rho(L) = \emptyset$ (iii) has its origin in  \cite{BCCJP}, where the notion of vanishing arcs and arcsets are introduced.
Under the assumption that it is not A, D type, and $g >4$, this condition was sufficient to guarantee the arc to be a geometric vanishing arc (not necessarily em type).  
Since this assumption is false in our cases, such a finding is not applicable here.

%
%
%

\subsection{Main result 2: Coxeter wheels and geometric roots}
The above theorem allows us to realize  roots as oriented embedded connected arcs in the Milnor fiber $M$ of a given simple singularity $f(x,y)$.
On the Milnor fiber $M$, a Riemann surface with boundary, we realize all vanishing arcs that correspond to the roots.
It is known that for $A_k, D_k, E_6, E_7$, and $E_8$ type, the number of (geometric) roots are 
$$ k(k+1), 2k(k-1), 72, 126,\mbox{ and } 240.$$
\begin{figure}[h]
\begin{subfigure}{0.32\textwidth}
\includegraphics[scale=0.22]{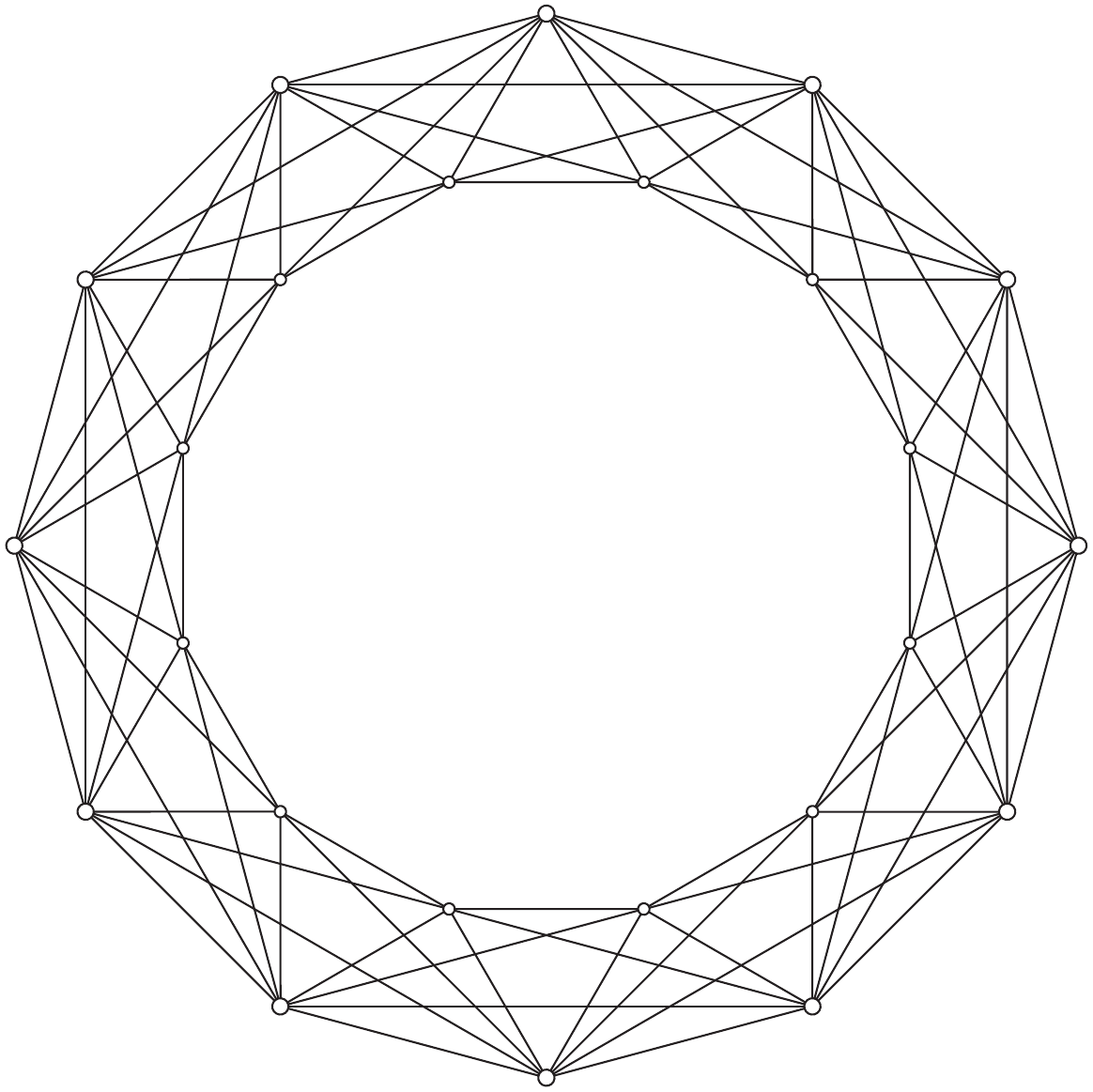}
\centering
\caption{Coxeter wheel for $E_{6}$.}
\end{subfigure}
\begin{subfigure}{0.32\textwidth}
\includegraphics[scale=0.23]{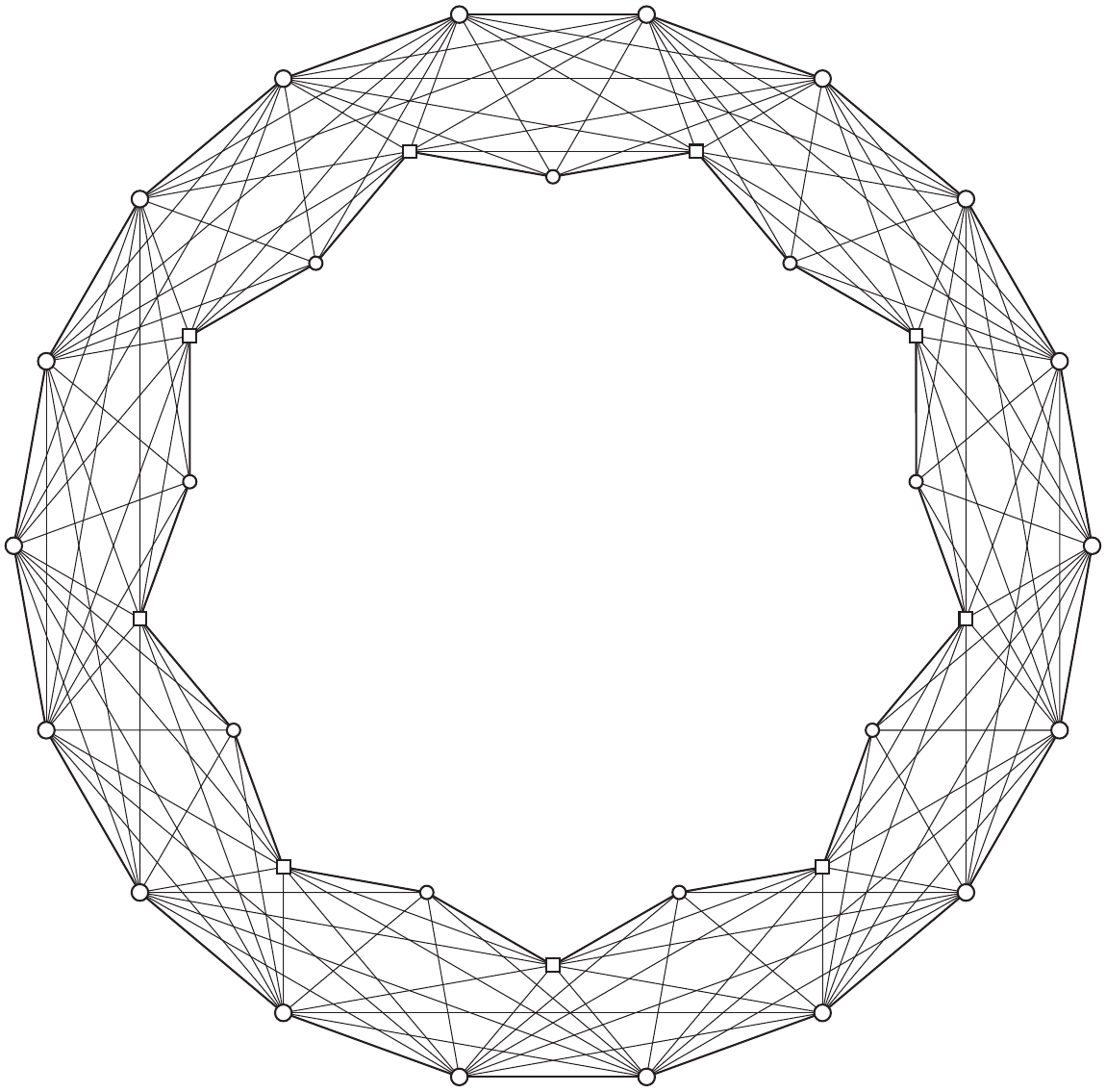}
\centering
\caption{Coxeter wheel for $E_{7}$.}
\end{subfigure}
\begin{subfigure}{0.32\textwidth}
\includegraphics[scale=0.24]{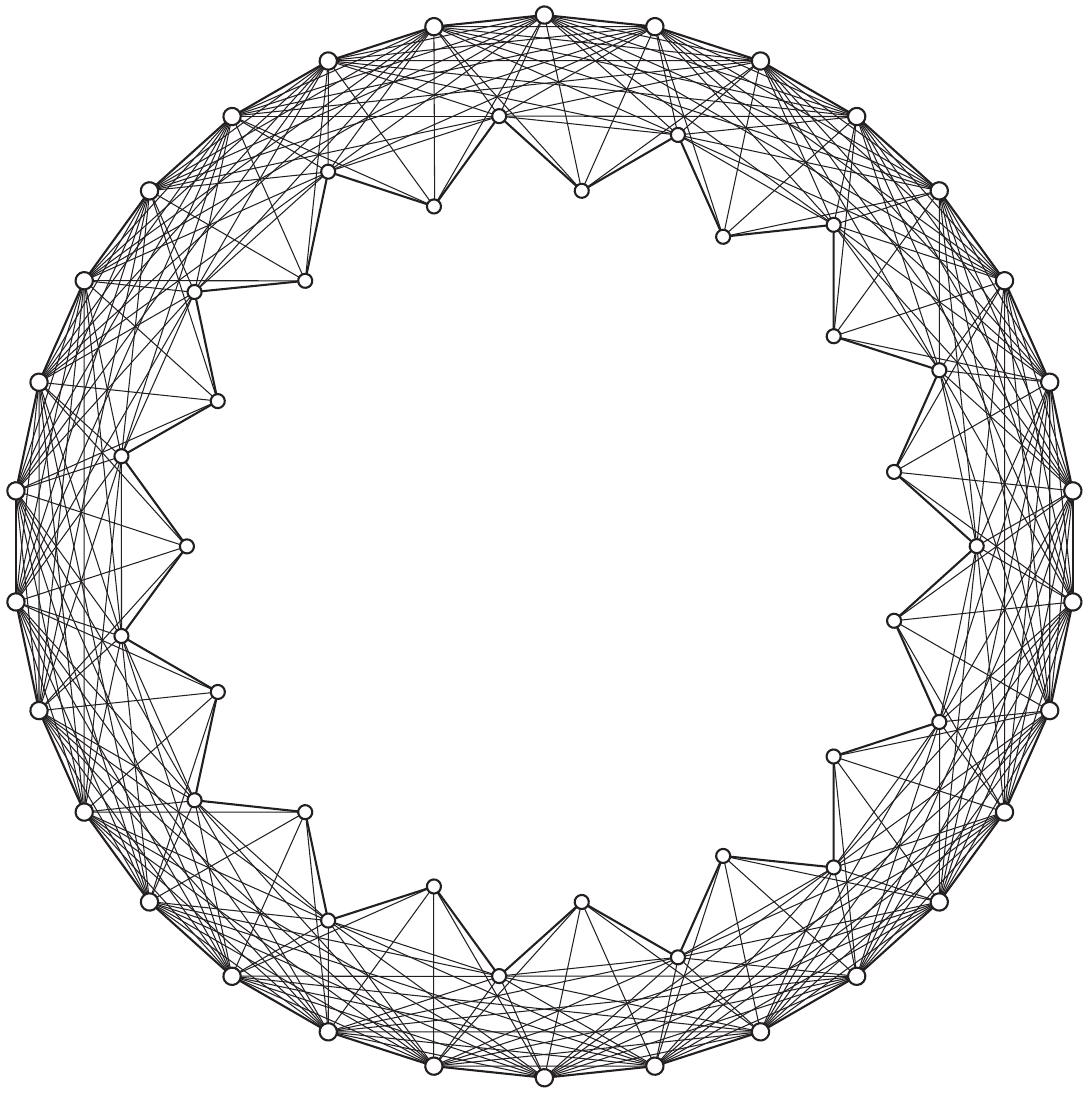}
\centering
\caption{Coxeter wheel for $E_{8}$.}
\end{subfigure}
\caption{ Geometric roots and the Coxeter wheels for $E$-types.}
\label{fig:E678Coxeter}
\end{figure}
Now, we introduce the Coxeter wheel, which is made from copies of polygonal pieces of a Milnor fiber.
Most important property of the wheel is that  geometric roots (vanishing arcs) are given by the equivalence classes of oriented edges and line segments (called spokes) in the wheel.
See Figure \ref{fig:E678Coxeter} for $E$-type wheels.

$A_k$-wheel is a regular $(k+1)$-gon whose vertices are punctures. $D_k$-wheel is a regular $2(k-1)$-gon whose vertices and the center point are punctures (see Figure \ref{fig:ADroots}).

For a wheel $W \subset \mathbb{R}^2$ of type $\Gamma$ , we have a natural map $\pi: W \to M$ to the Milnor fiber of type $\Gamma$, where
$\pi$ is an inclusion for $A_k$, $\pi$ is a surjection given by identifying opposite edges for $D_k$, and $\pi$ is very close to a covering map for $E$-type.
\begin{defn}
Given two edges or spokes in $W$, we will say that they are {\em equivalent } if their images under $\pi$ are in the same homology class of $H_1(M,\partial M;\Z)$.
\end{defn}

Here are the properties of $E$-type wheels.
\begin{thm}\label{Etype wheel}
Let $M$ be the Milnor fiber of a simple singularity of  $E$ type. There exists a polygonal wheel $W \subset  \mathbb{R}^2$ whose vertices are punctures, satisfying the following properties.
\begin{enumerate}
\item There exists a surjective map to the Milnor fiber $\pi: W \to M$.
\item There exists a decomposition of $M$ into polygons $\{M_i\}$ such that for each $M_i$,
two dimensional components of $\pi^{-1}(M_i)$ are polygons in $\mathbb{R}^2$ all of which can be identified with each other by Euclidean translation.
\item Geometric monodromy $\rho$ of Milnor fiber $M$ lifts to the rotation of $W$ given by half rotation and one more tilting in the counter-clockwise direction.
\end{enumerate}
\end{thm}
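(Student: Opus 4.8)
The plan is to build $W$ and $\pi$ explicitly from the weighted--homogeneous equations, using the weight action \eqref{eq:wa} as the geometric monodromy $\rho$. Take $f = x^3+y^4$, $x^3+xy^3$, $x^3+y^5$ for $E_6, E_7, E_8$, so that $\rho$ is $(\zeta_3 x,\zeta_4 y)$, $(\zeta_3 x,\zeta_9^2 y)$, $(\zeta_3 x,\zeta_5 y)$ respectively, of orders $12$, $9$, $15$. Each $f$ is a cubic in $x$, so the projection $(x,y)\mapsto y$ realizes $M$ as a $3$-sheeted branched cover of a disk in the $y$-plane, with branch locus the discriminant set of $f(\,\cdot\,,y)$ --- a regular polygon on $4$, $9$, $5$ vertices, over which the cover is totally ramified for $E_6, E_8$ and simply ramified for $E_7$. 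The first step is to fix this picture together with its symmetries: the group $\Sigma$ of diagonal monomial automorphisms of $f$, which contains $\rho$, acts on $M$ and respects the branched cover; $\rho$ itself is fixed-point-free on $M$, while suitable powers of $\rho$ fix the finitely many points over the branch values and over $y=0$, and these special $\Sigma$-orbits will be the punctures of $W$. It is also worth recalling that on homology $\rho_\ast$ is $-1$ times the Coxeter element, of order $h_\Gamma\in\{12,18,30\}$: this is what makes ``half a turn plus one more tilt'' the natural guess for the lift of $\rho$, and it explains why $h_\Gamma$ equals $\mathrm{ord}(\rho)$ for $E_6$ but $2\,\mathrm{ord}(\rho)$ for $E_7, E_8$.

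Next I would produce a $\Sigma$-adapted polygonal decomposition $\{M_i\}$ of $M$ by pulling back, through the branched cover, the regular polygon spanned by the branch points together with the finitely many ``outer'' regions between that polygon and the bounding circle. The cells $M_i$ are permuted by $\Sigma$, and $\rho$ permutes them without fixed cells. I then define $W\subset\mathbb{R}^2$ to consist of $h_\Gamma$ copies of a fundamental polygonal piece $P$ (a union of finitely many cells, chosen to be a fundamental domain for the $\langle\rho\rangle$-action up to the residual stabilizers at the special orbits) arranged cyclically around a common center, one copy per sector, with $\pi:W\to M$ the map which on the $j$-th sector moves $P$ back to the fundamental position and then applies the power of $\rho$ that carries it there. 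Item~(1), surjectivity, is then the finite check that the $\langle\rho\rangle$-orbit of the cells in $P$ exhausts $\{M_i\}$; item~(2) is forced by construction provided the piecewise-Euclidean structure on $W$ is chosen so that the sheets of $\pi$ --- equivalently, the action of $\Sigma$ on the cell structure --- are realized by Euclidean translations, so that for each $M_i$ the two-dimensional components of $\pi^{-1}(M_i)$ are translates of one another.

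For item~(3) I must pin down the lift of $\rho$ as the rotation by exactly $h_\Gamma/2+1$ sectors, i.e.\ by $\pi + 2\pi/h_\Gamma$. This is a direct computation once the sector labelling is fixed: the $j$-th sector, under $\pi$, records a branch point together with a sheet of the cover, $\rho$ rotates the base disk by one branch point and simultaneously shifts the sheet by $\zeta_3$, and tracking both against the labelling produces the offset $h_\Gamma/2+1$; one then confirms $(h_\Gamma/2+1)\cdot\mathrm{ord}(\rho)\equiv 0\pmod{h_\Gamma}$ in each case ($7\cdot12$, $10\cdot9$, $16\cdot15$), so the order of this rotation of $W$ is exactly $\mathrm{ord}(\rho)$, as it must be. I expect the main obstacle to be arranging all of the above compatibly and simultaneously: choosing the fundamental piece $P$, the cyclic labelling of the $h_\Gamma$ sectors, and a branch of the $3$-sheeted cover so that $\pi$ is single-valued across the radial spoke edges where adjacent sectors meet and well behaved at the punctured vertices (where the ramification type of the $y$-projection --- total for $E_6, E_8$, simple for $E_7$ --- changes the local picture), so that $\Sigma$ really becomes translations of $W$, and so that $\rho$ becomes the stated rotation rather than merely some rotation. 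With this bookkeeping in place the three $E$-cases run on a single template, the only genuinely case-dependent input being that ramification type and the three pairs $(h_\Gamma,\mathrm{ord}(\rho)) = (12,12), (18,9), (30,15)$.
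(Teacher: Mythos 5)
Your overall strategy (a symmetry-adapted polygonal decomposition of $M$, Euclidean copies of the pieces assembled cyclically into an annular wheel, and an explicit check of the rotation) is in the same spirit as the paper, and your preliminary facts are right: the weight action realizes $\rho$, its orders are $12,9,15$, and $M$ is a $3$-sheeted branched cover of a disk with $4,9,5$ branch points. But the proof has a genuine gap, and moreover the specific blueprint you give is inconsistent with the statement. First, the entire content of the theorem is the step you defer as ``the main obstacle'': one must actually exhibit Euclidean shapes for the cells and a gluing order for which the chain of pieces closes up in the plane after exactly $h$ steps, the vertices are punctures, and the repeated copies of each cell are genuine translates. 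The paper does this by hand: it cuts $M$ (in its $16$-, $18$-, $20$-gon model) into $A$- and $B$-pieces, realizes them as regular triangles and right isosceles triangles ($E_6$), isosceles $80\degree$--$50\degree$--$50\degree$ triangles ($E_7$; note the ``natural'' $40\degree$--$70\degree$--$70\degree$ shape does \emph{not} work), and regular triangles and pentagons ($E_8$), and then assembles the wheel by following \emph{adjacency of pieces in $M$}, not the $\langle\rho\rangle$-orbit of a fundamental domain; closure after $h$ steps and property (3) are then verified against the explicit permutation of pieces induced by the group element representing $\rho$.

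Second, your recipe --- $W=$ $h_\Gamma$ rotated copies of one fundamental piece $P$, with $\pi$ on the $j$-th sector equal to ``move back to $P$, then apply $\rho^j$'' --- cannot produce the wheel of the theorem. (i) With that definition, $\rho$ lifts tautologically to rotation by \emph{one} sector; rotation by $h/2+1$ sectors lifts $\rho$ only if $\rho^{h/2}=\mathrm{id}$, which holds for $E_7,E_8$ (where $\mathrm{ord}(\rho)=h/2$) but fails for $E_6$ (where $\mathrm{ord}(\rho)=h=12$). So for $E_6$ your ``direct computation'' of the offset $h/2+1$ contradicts your own definition of $\pi$; your congruence $(h/2+1)\cdot\mathrm{ord}(\rho)\equiv 0 \pmod{h}$ only checks that the orders match, not that the rotation is a lift. (ii) Property (2) is not ``forced by construction'': in your $W$ the preimage components of a cell $M_i$ lying in different sectors are related by rotations by multiples of $2\pi/h$ about the hub, which are translations only when the cell's shape happens to be invariant under exactly that rotation. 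This is why the paper's copies of a piece are placed at angular offsets equal to full rotational symmetries of the piece (equilateral triangles repeat at $2\pi/3$, squares at $\pi$, regular pentagons at $2\pi/5$), and why the multiplicities of $\pi$ over the cells are non-uniform ($A$-cells $3$ times, $B$-cells $2$ times for $E_6$ and $5$ times for $E_8$; only $E_7$ gives an honest covering) --- a pattern incompatible with ``$h$ copies of a fundamental domain''. To repair the proposal you would have to abandon the $\rho$-power labelling of sectors, build $W$ by continuation/adjacency as in the paper, and do the angle and shape bookkeeping explicitly in each of the three cases.
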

\begin{remark}\label{rmk:transl}
The property (2) makes $E_k$-Milnor fibers translation surfaces. So are $A_k$ and $D_k$-Milnor fibers.
In particular, they carry flat metrics. We learned that A'campo and Portilla Cuadrado have shown in their work in progress that all Milnor fibers of curve singularities carry flat metrics.
\end{remark}
In the case of $D_k$-wheel, geometric monodromy $\rho$ acts as in $E$-type. For $A_k$-wheel, geometric monodromy $\rho$ behaves differently. For details of geometric monodromy, see Section \ref{sec:2} and \ref{sec:3}. For descriptions of the monodromy operator $\rho_*$ of each type, see Section \ref{sec:monodromy}.

Now, we call them as the Coxeter wheels, because of the surprising relation to the Coxeter plane.
Recall that given a Weyl group of type $\Gamma$ and the Coxeter element $c$ 
with order $h$, there is a two dimensional subspace of $\mathfrak{h}^*$, called Coxeter plane on which $c$ acts
by the rotation of $\frac{2\pi}{h}$. Orthogonal projections of roots in $\mathfrak{h}^*$ to the Coxeter plane are called
root projections.

It turns out that our wheel can be placed on the Coxeter plane so that vertices of the wheel lie exactly on root projections
(see Figure \ref{fig:E67Coxeter} and Figure \ref{fig:E8Coxeter}).  It is not a coincidence.
\begin{thm}
Equivalence classes of oriented edges and spokes on the wheel of type $\Gamma$(under the map $\pi$) can be identified with geometric roots $\Phi_\Gamma$.
If an oriented edge or spoke is equivalent to another oriented edge or spoke in the wheel, they have the same length and direction.
\end{thm}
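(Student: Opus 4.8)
The plan is to prove the statement type by type ($A_k$, $D_k$, $E_6,E_7,E_8$), using in each case the explicit models of the wheel $W$, the map $\pi\colon W\to M$, and the lift of the geometric monodromy to $W$ set up in Sections~\ref{sec:2}, \ref{sec:3} and \ref{sec:monodromy}, together with the characterizations of geometric roots in Theorem~\ref{root_equiv}. The argument has three ingredients: (1) every oriented edge or spoke of $W$ maps under $\pi$ to an element of $\Phi_\Gamma$; (2) the induced map from equivalence classes of oriented edges and spokes to $\Phi_\Gamma$ is a bijection; (3) two edges or spokes in the same equivalence class have equal length and direction in $\R^2$.

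For (1): given an oriented edge or spoke $e$, the restriction $\pi|_e$ is an embedding — for $A_k$ this is immediate since $\pi$ is an inclusion; for $D_k$ an edge is never glued to itself and a spoke meets the gluing locus only at endpoints; for $E$-type $\pi$ is locally injective away from a neighbourhood of the punctures (being close to a covering) — so $L:=\pi(e)$ is an embedded connected arc in $M$. By part~(3) of Theorem~\ref{Etype wheel} (and its analogue for $A$ and $D$) the monodromy $\rho$ lifts to an explicit planar rotation $\WT{\rho}$ of $W$, so $\rho(L)=\pi(\WT{\rho}(e))$. It then suffices to verify $L\cap\rho(L)=\emptyset$, i.e.\ $\pi(e)\cap\pi(\WT{\rho}e)=\emptyset$; since $\WT{\rho}$ is ``half rotation plus one tilt'' this reduces to a finite check over the few $\WT{\rho}$-orbit types of edges and spokes, namely that $e$ and $\WT{\rho}(e)$ are disjoint in $W$ and that no points of their $\pi$-fibres collide. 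Condition~(iii)$=$(i) of Theorem~\ref{root_equiv} then gives $[\pi(e)]\in\Phi_\Gamma$. (Alternatively one computes $\bigl([\pi(e)],[\pi(e)]\bigr)=2\,\var([\pi(e)])\bullet[\pi(e)]$ directly, using $\var([\pi(e)])=[\rho'(L)-L]$ and a signed intersection count in the wheel; the route via (iii) is cleaner because it uses $\rho$ rather than $\rho'$.)

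For (2): the map from equivalence classes to $H_1(M,\partial M;\Z)$ is injective by the very definition of ``equivalent'', and it lands in $\Phi_\Gamma$ by (1); so it is enough to check that the number of equivalence classes of oriented edges and spokes equals $|\Phi_\Gamma|\in\{k(k+1),\,2k(k-1),\,72,\,126,\,240\}$. For $A_k$ one identifies $H_1(M,\partial M;\Z)$ with the standard lattice so that the oriented chord $v_iv_j$ of the $(k+1)$-gon corresponds to $e_i-e_j$, making the $k(k+1)$ oriented chords pairwise non-homologous; for $D_k$ one identifies the fibres of $\pi$ on edges and spokes (opposite edges are glued, and spokes are grouped as dictated by the induced identification of vertices) and matches the classes with the standard $\{\pm e_i\pm e_j\}$ model, checking the total is $2k(k-1)$; for the $E$-types one reads off the equivalence classes from the gluing combinatorics of Sections~\ref{sec:2}–\ref{sec:3} and verifies the counts $72$, $126$, $240$, optionally by exhibiting an explicit root labelling. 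Injectivity together with this count gives the bijection with $\Phi_\Gamma$, which is compatible with the pairing by Theorem~\ref{geometric roots}.

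For (3): for $A_k$ there is nothing to prove, since by (2) every equivalence class is a singleton. In general I would construct a linear map $p\colon V\to\R^2$, where $V\subseteq H_1(M,\partial M;\R)$ is the span of the classes $[\pi(e)]$, sending $[\pi(e)]$ to the displacement vector (head minus tail) of $e$ in $\R^2$; then $[\pi(e_1)]=[\pi(e_2)]$ forces $e_1$ and $e_2$ to have equal displacement vectors, hence equal length and direction. Well-definedness of $p$ reduces to checking that every linear relation among the classes $[\pi(e)]$ — these are generated by the boundary relations of the $2$-cells of the polygonal decomposition and by the gluing relations defining $\pi$ — also holds among the corresponding displacement vectors; this holds because $W$ is an honest planar polygonal complex assembled from Euclidean translates of the pieces $M_i$ (Theorem~\ref{Etype wheel}(2)), so each boundary relation just says a closed polygon in $\R^2$ closes up, and each gluing identifies arcs with equal displacement vectors (e.g.\ opposite edges of a regular $2(k-1)$-gon are parallel and congruent). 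Combined with the placement of $W$ on the Coxeter plane, the displacement vector of an edge or spoke is then a difference of two root projections, which is why it ``is not a coincidence''. The main obstacle is the bookkeeping for the $E$-types in steps (1) and (2): verifying the disjointness $\pi(e)\cap\pi(\WT{\rho}e)=\emptyset$ and counting the $72$, $126$, $240$ equivalence classes require a careful case analysis of $W_{E_k}$, the near-covering $\pi$, and the rotation $\WT{\rho}$, resting on the detailed combinatorial models of Sections~\ref{sec:2}, \ref{sec:3} and \ref{sec:monodromy}; constructing $p$ for the $E$-types is the other delicate point, though it follows cleanly from the translation-surface structure.
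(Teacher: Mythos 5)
Your strategy is workable and its first half is essentially the paper's: the inclusion \{oriented edges/spokes\}$\,\subseteq \Phi_\Gamma$ is exactly $\Phi^{(5)}_\Gamma\subseteq\Phi^{(3)}_\Gamma\subseteq\Phi_\Gamma$ (Lemma \ref{lem:sub}(2) plus Lemma \ref{lemma_root_subset}(1)), and your disjointness check $L\cap\rho(L)=\emptyset$ via the lifted rotation of $W$ is what the paper organizes through the region-$Y$ covering argument of Section \ref{sec:6} (which also handles the exceptional spokes $K$ and the inner-boundary representatives that your ``finite check over orbit types'' would have to rediscover). Where you genuinely diverge is the bijectivity step. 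The paper never counts equivalence classes of edges/spokes for the $E$-types: it exhibits a projective basis $\beta_1,\dots,\beta_k$ of edges/spokes, computes the matrix of $\rho_*$ in that basis, proves the monodromy action on $\Phi_\Gamma$ is free (Propositions \ref{free_E6}--\ref{free_E8}), and concludes that the orbits of $\pm\beta_j$, each realized by rotated edges/spokes, already exhaust all $72$, $126$, $240$ roots. Your plan instead enumerates equivalence classes directly and compares with $|\Phi_\Gamma|$; this is precisely the ``brute force'' route the paper flags at the start of the $E_6$ subsection and is designed to avoid. It is legitimate in principle (the triangulation labels of Figure \ref{fig:tri} determine every class), and for $A_k$, $D_k$ it reproduces Lemmas \ref{lemma_Ak root} and \ref{Dk roots}; but in your write-up the $E$-type enumeration, which is the entire content of the theorem in those cases, is left as an unexecuted check, as are the embeddedness of $\pi(e)$ and the non-collision of $\pi$-images (note that local injectivity of $\pi$ does not by itself give that $\pi|_e$ is an embedding: for $E_8$ the map $\pi$ is not even a covering, so a spoke crossing several pieces needs an explicit verification).

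For the second sentence, your displacement map $p$ is a genuinely different and more conceptual argument than the paper's, which obtains ``same length and direction'' implicitly from the explicit class-by-class identifications (parallel translates for $D_k$, translated copies of the pieces $M_i$ for $E$-type, Theorem \ref{Etype wheel}(2)). But as stated the well-definedness argument is too loose: the spokes are not edges of the polygonal decomposition, so the relations among the classes $[\pi(e)]$ are not literally ``generated by 2-cell boundaries and gluing relations'' of that decomposition. The clean way to say it is that the translation structure gives a closed $\R^2$-valued $1$-form on $M$ whose relative periods compute displacement vectors; one then needs that periods vanish on chains in $\partial M$ (no residues at the punctures/cone points) so that the period map descends to $H_1(M,\partial M;\R)$, and $p$ is its restriction. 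With that fix, and with the deferred $E$-type verifications actually carried out, your outline does prove the theorem; as written, the decisive computations are postponed rather than replaced by an argument, which is exactly the gap the paper's projective-basis/free-action mechanism fills.
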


For the case of $A_2$ root system, $A_2$-Milnor fiber is a regular hexagon with 3 punctures (with opposite sides identified) as in Figure \ref{fig:A2roots}.  $A_2$-wheel is a regular triangle whose vertices are these punctures.
There are six geometric roots given by $\alpha_1,\alpha_2,\alpha_3$ and their orientation reversals (whose homology classes are $-\alpha_1,-\alpha_2,-\alpha_3$).
    \begin{figure}[h]
    \centering
    \begin{subfigure}{0.32\textwidth}
    \includegraphics[scale=0.7]{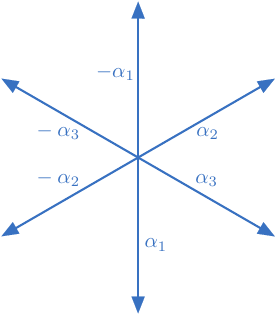}
    \centering
    \caption{Classical roots.}
    \end{subfigure}
    \begin{subfigure}{0.32\textwidth}
    \includegraphics[scale=0.7]{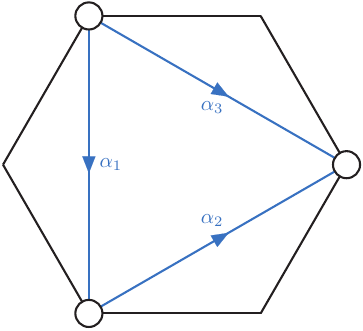}
    \centering
    \caption{Geometric roots.}
    \end{subfigure}
  \caption{$A_2$ root systems from Lie theory and geometry.}
        \label{fig:A2roots}
    \end{figure}
    
 In this case, the classical $A_2$ root system lies in $\mathbb{R}^2$, and the identification is almost immediate.
 
\begin{remark}\label{multi}
The converse of the last statement of the theorem holds for $A_{2k}$, $E_7$ and $E_8$ but not for others types. In fact the same phenomenon happens in Coxeter plane too.
The projection of roots to the root projections in the Coxeter plane are one to one only for the above types and can have multiplicities in general. 
One can check that regarding oriented edges/spokes as vectors in $\mathbb{R}^2$ have exactly the same multiplicities.
\end{remark}

Let us explain why it is not  a coincidence. On a Coxeter plane, two root projections are joined by an edge if they come from neighboring roots. In this case, the difference of two neighboring roots (which is a root also) project to the difference vector of two root projections. Our edge/spoke as a geometric root
corresponds to this root of the difference.

In the case of $A_k$ and $D_k$-wheels, they can be also related to Coxeter planes, but we will not do so because the wheel itself is simple enough.
For the $A_k$-wheel given by the regular $(k+1)$-gon, number of oriented edges and diagonals between the vertices are exactly $2 {k \choose 2} = k(k+1)$, they are  exactly the geometric roots for $A_k$.

For the $D_k$-wheel, opposite edges of $2(k-1)$-gon are identified, and from this, two parallel line segments that do not meet the center puncture and having  the same orientation  become homologous to each other. Number of such equivalence classes are ${2k-2 \choose 2} - (k-1)$. Number of oriented spokes  that meet the center is $2 \times 2(k-1)$.
In total, we have $2k(k-1)$ oriented edges/spokes, which equals the number of geometric roots.

\begin{figure}[h]
\begin{subfigure}{0.47\textwidth}
\includegraphics[scale=0.7]{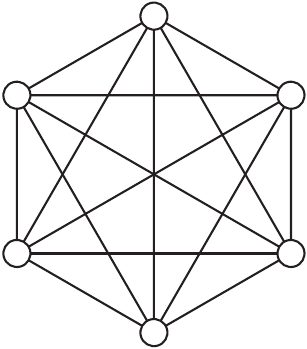}
\centering
\caption{Geometric roots in the $A_{5}$-wheel.}
\end{subfigure}
\begin{subfigure}{0.47\textwidth}
\includegraphics[scale=0.7]{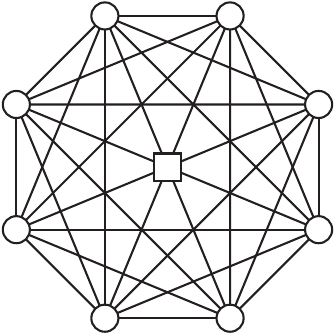}
\centering
\caption{Geometric roots in the $D_{5}$-wheel.}
\end{subfigure}
\centering
\caption{Examples of geometric roots.}
\label{fig:ADroots}
\end{figure}


The construction of $E$-type wheels and the equivalence relations between edges/spokes are more subtle. 
The polygonal description for $E$-type Milnor fiber of a simple singularity was given in \cite{CCJ2}.  
We will find a systematic way of gluing copies of these polygonal pieces to form the desired wheel.

\begin{remark}\label{qui}
These geometric roots are also related to other classification problems. 
One is related to Berglund--H\"ubsch  homological mirror symmetry of invertible curve singularities. In \cite{CCJ2}, Lagrangian arcs in the Milnor fiber quotient that correspond to indecomposable matrix factorzations of $ADE$ polynomial $f(x,y)$ were found. Their lifts in the Milnor fiber correspond to these geometric roots and in fact this is where this paper is originated from. 

Also recall that roots are dimension vectors of  indecomposable quiver representations of $ADE$ quivers.
The first two authors plan to construct an $A_\infty$-functor that sends geometric roots to indecomposable quiver representations in the work in preparation, using the Fukaya category being developed in \cite{CCJ}, \cite{CCJ2} and \cite{BCCJ}.
\end{remark}
\begin{remark}
How to define geometric roots beyond $ADE$ is a very interesting question. Saito has introduced a generalized root system of vanishing cycles and we refer readers to \cite{Sai} for more details and historical accounts. 
\end{remark}

\subsection{Main result 3: Geometric construction of Lie algebra}

Now we are ready to define the Lie algebra for each simple singularity.
It turns out that they admit a completely geometric definition, only using geometric roots, variation operator and  Seifert form.

We start with an  example of $A_1$-singularity and construct $\mathfrak{sl}_2$ Lie algebra to illustrate the idea.
Recall that $\mathfrak{sl}_2$ has a basis given by the triple  $\big(e,f,h\big)$, satisfying $[e,f] = -h$. 
On the other hand, the Milnor fiber $M$ of $A_1$-singularity $x^2+y^2$ is a cylinder (or  $T^*S^1$).
Note that $H_1(M,\partial M;\Z)$ is generated by an oriented arc (a cotangent fiber) $\alpha$ connecting two boundaries, and $H_1(M;\Z)$ by the vanishing circle $V$ (zero section).
In this case, the monodromy of $A_1$-singularity is given by the right-handed Dehn twist and the variation operator takes the relative cycle $\alpha$ to the compact cycle $V$
(this determines the orientation of $V$).

\begin{figure}[h]
\includegraphics[scale=0.8]{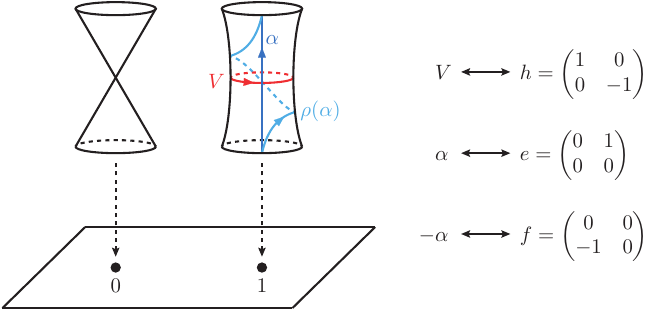}
\centering
\caption{$\mathfrak{sl}_2$ triple for $A_1$-singularity.}
\label{fig:A1ex}
\end{figure}

We choose the geometric triple corresponding to  $\big(e,f,h\big)$, to be 
$$\big(g_{\alpha},g_{-\alpha}, V= \mathrm{var}(\alpha)\big).$$
Here $g_\alpha, g_{-\alpha}$ are formal generators corresponding to $\alpha,-\alpha$, respectively.
We define the Lie bracket as
$$[g_\alpha, g_{-\alpha}]  = - \mathrm{var}(\alpha) \in \mathfrak{h}_{geo}.$$
As an analogue of $[h,e] = 2e, [h,f]=-2f$, we define the Lie bracket as
$$[V,g_\alpha] := ( V \bullet (\alpha + \rho_*(\alpha)) ) g_\alpha = 2 g_\alpha.$$
This definition makes  $\big(g_{\alpha},g_{-\alpha},\mathrm{var}(\alpha)\big)$ the $\mathfrak{sl}_2$-triple $\big(e,f,h\big)$.

Now we give the geometric definition in the general case so that for any geometric root $\beta$, we have a 
corresponding $\mathfrak{sl}_2$ triple $\big(g_{\beta},g_{-\beta},\mathrm{var}(\beta)\big)$.

\begin{defn}\label{Lie bracket}
Given a simple curve singularity $f$ of type $\Gamma$,
we define the geometric Lie algebra $\mathfrak{g}_{\textrm{geo}}$ as a $\Z$-module 
$$\mathfrak{g}_{\textrm{geo}}:=\mathfrak{h}_{geo} \oplus \bigoplus_{\alpha \in \Phi_{\Gamma}} \Z \langle g_\alpha \rangle$$
where $\Phi_{\Gamma}$ is the set of geometric roots of type $\Gamma$, and $\mathfrak{h}_{geo}:=H_1(M;\Z)$.
Its Lie bracket is defined as follows.
\begin{enumerate}

\item  For $h_1,h_2 \in H_1(M;\Z)$, 
$$[h_1,h_2]=0.$$

\item For $h \in H_1(M;\Z), \alpha \in \Phi_{\Gamma}$,
$$[h, g_\alpha] = -[g_\alpha,h] = ( h \bullet \alpha + h \bullet \rho_*(\alpha)) g_\alpha.$$
In particular, 
\begin{equation} \label{bracket_identity}
    [\mathrm{var}(\alpha), g_\beta] = (\alpha,\beta) g_\beta
\end{equation}
where  $(\alpha,\beta)$ is the negative symmetrized Seifert form in Definition \ref{defn:S}.

\item For each geometric root $\alpha \in \Phi_\Gamma$,  
$$[g_\alpha, g_{-\alpha}] = - \mathrm{var}(\alpha) \in H_1(M;\Z).$$

\item For any two roots $\alpha, \beta \in \Phi_\Gamma$ such that $\alpha  \neq  - \beta$,  we set
$$[g_\alpha, g_\beta] = \begin{cases}  N_{\alpha,\beta} g_{\alpha+\beta} & \textrm{if } \; \alpha + \beta \in \Phi_\Gamma,
\\ 0 & \textrm{if } \;\alpha + \beta \notin \Phi_\Gamma. \end{cases}$$
Here the sign $N_{\alpha,\beta} \in \{+1,-1\}$ is defined as
$$N_{\alpha,\beta} = (-1)^{\mathrm{var}(\beta) \bullet \alpha}.$$
\end{enumerate}
\end{defn}

In part (2) of Definition \ref{Lie bracket}, the second equation \eqref{bracket_identity} is equivalent to the preceding equation with the substitution $h=\var(\alpha)$, due to the identity (see \cite[Corollary of Lemma 1.1]{AGV2})
$$\big( \rho_*(\alpha) \bullet \var(\beta) \big) + \big( \var(\alpha) \bullet \beta \big) = 0.$$
Furthermore, \eqref{bracket_identity} also suffices for the definition of Lie bracket, since $\var(\Phi_\Gamma)$ spans $H_1(M;\Z)$.

\begin{thm}
For a simple curve singularity $f$ of type $\Gamma$, the above definition indeed defines the Lie algebra $\mathfrak{g}_{\textrm{geo}}$ that is isomorphic to the 
 simple Lie algebra of the type $\Gamma$.
\end{thm}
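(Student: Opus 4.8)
The plan is to exhibit an explicit isomorphism from $\mathfrak{g}_{\textrm{geo}}$ to the classical simple Lie algebra $\mathfrak{g}_\Gamma$ of type $\Gamma$ by checking that the geometric data reproduces a Chevalley-type presentation. By Theorem \ref{geometric roots} the set $\Phi_\Gamma$ of geometric roots is a classical root system of type $\Gamma$, and the geometric pairing $(\,\cdot\,,\,\cdot\,)$ is identified with the Cartan--Killing form; fix once and for all such an isomorphism $\Phi_\Gamma \xrightarrow{\sim} \Phi(\mathfrak{g}_\Gamma)$ together with a choice of simple roots. The natural candidate map sends $g_\alpha$ to a root vector $e_\alpha$ and sends $h \in \mathfrak{h}_{geo} = H_1(M;\Z)$ to the corresponding element of $\mathfrak{h}$ under the pairing; the key point is that the structural identity \eqref{bracket_identity}, $[\mathrm{var}(\alpha),g_\beta] = (\alpha,\beta)g_\beta$, together with part (1), shows that $\mathfrak{h}_{geo}$ acts on $g_\beta$ exactly by the root functional $\beta$, so the $\mathbbm{Z}$-module decomposition in Definition \ref{Lie bracket} is literally the root space decomposition once we know the bracket is a Lie bracket.

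The steps I would carry out, in order: (i) Verify antisymmetry of the bracket. For (1) and (2) this is built in; for (3) note $\mathrm{var}(-\alpha) = -\mathrm{var}(\alpha)$ (the variation operator is linear), so $[g_{-\alpha},g_\alpha] = -\mathrm{var}(-\alpha) = \mathrm{var}(\alpha) = -[g_\alpha,g_{-\alpha}]$; for (4) one must check $N_{\alpha,\beta} = -N_{\beta,\alpha}$ whenever $\alpha+\beta \in \Phi_\Gamma$, i.e. $(-1)^{\mathrm{var}(\beta)\bullet\alpha} = -(-1)^{\mathrm{var}(\alpha)\bullet\beta}$. Using Theorem \ref{thm:SA}, $\mathrm{var}(\beta)\bullet\alpha = \mathcal{L}(\beta,\mathrm{var}(\alpha)\cdots)$ — more precisely combine the Seifert-form description with the identity $(\rho_*(\alpha)\bullet\mathrm{var}(\beta)) + (\mathrm{var}(\alpha)\bullet\beta) = 0$ quoted after Definition \ref{Lie bracket} and the fact that for roots $(\alpha,\beta)$ is odd exactly when $\alpha+\beta$ or $\alpha-\beta$ is a root, to pin down the parity. (ii) Verify the Jacobi identity. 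I would reduce to cases by the grading: the $(\mathfrak{h},\mathfrak{h},\cdot)$ and $(\mathfrak{h},g_\alpha,g_\beta)$ cases follow immediately from \eqref{bracket_identity} and $\alpha+\beta$ being the weight of $g_{\alpha+\beta}$; the genuinely substantive case is $(g_\alpha,g_\beta,g_\gamma)$ with $\alpha+\beta+\gamma = 0$ or $\in \Phi_\Gamma$, which reduces to the classical cocycle condition on the structure constants $N_{\alpha,\beta}$. (iii) Verify the Chevalley relations $N_{\alpha,\beta}^2 = (p+1)^2$ etc., or more efficiently, verify directly that $\big(g_{\alpha_i}, g_{-\alpha_i}, -\mathrm{var}(\alpha_i)\big)$ for simple roots $\alpha_i$ satisfy the Serre relations, using Theorem \ref{geometric roots} to transport the Cartan integers.

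The main obstacle is step (ii)/(iii): showing that the explicit geometric sign $N_{\alpha,\beta} = (-1)^{\mathrm{var}(\beta)\bullet\alpha}$ satisfies the cocycle identity that makes the Jacobi identity hold, equivalently that it is a valid choice of Chevalley structure constants. The cleanest route is the following. A classical theorem (Frenkel--Kac / Tits) says that any function $\varepsilon \colon Q \times Q \to \{\pm 1\}$ on the root lattice that is \emph{bimultiplicative} and satisfies $\varepsilon(\alpha,\alpha) = (-1)^{(\alpha,\alpha)/2}$ gives, via $N_{\alpha,\beta} = \varepsilon(\alpha,\beta)\cdot(\text{positive integer determined by the root string})$, a consistent set of structure constants; for simply-laced $\Gamma$ all root strings have length $\le 2$ so the positive integer is always $1$ and $N_{\alpha,\beta} = \varepsilon(\alpha,\beta)$ when $\alpha+\beta$ is a root. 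So it suffices to show $\varepsilon(\alpha,\beta) := (-1)^{\mathrm{var}(\beta)\bullet\alpha}$ — or rather its restriction to pairs of roots, suitably extended — is bimultiplicative and has the right diagonal. Bimultiplicativity, $\mathrm{var}(\beta_1+\beta_2)\bullet\alpha = \mathrm{var}(\beta_1)\bullet\alpha + \mathrm{var}(\beta_2)\bullet\alpha \pmod 2$, is immediate from linearity of $\mathrm{var}$ and of $\bullet$; linearity in $\alpha$ likewise. The diagonal condition $\mathrm{var}(\alpha)\bullet\alpha \equiv (\alpha,\alpha)/2 = 1 \pmod 2$ for a root $\alpha$ follows from $(\alpha,\alpha) = \mathrm{var}(\alpha)\bullet\alpha + \mathrm{var}(\alpha)\bullet\alpha$? — no: unwinding Definition \ref{defn:S} gives $(\alpha,\alpha) = 2\,\mathrm{var}(\alpha)\bullet\alpha$, hence $\mathrm{var}(\alpha)\bullet\alpha = 1$ for every root, giving the diagonal condition on the nose. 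Thus the whole sign bookkeeping collapses to these two one-line verifications plus invocation of the Frenkel--Kac--Tits normalization theorem, and the remaining work is the routine grading-by-cases check of Jacobi, which I would present compactly rather than grinding through every subcase.
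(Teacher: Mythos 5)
Your proposal is correct in substance but takes a genuinely different route from the paper. The paper argues by hand: it checks skew-symmetry, then verifies the Jacobi identity through an exhaustive case analysis graded by how many arguments lie in $\mathfrak{h}_{geo}$, establishing the needed sign identities (e.g. $N_{\alpha,\beta}N_{\alpha+\beta,\gamma}N_{\beta,\gamma}N_{\beta+\gamma,\alpha}=-1$) by explicit parity computations with $\var(\cdot)\bullet(\cdot)$; it then invokes a recognition theorem of Kac (a finite-dimensional algebra with abelian $\mathfrak{h}$, one-dimensional root spaces, $[\mathfrak{g}_\alpha,\mathfrak{g}_{-\alpha}]\neq 0$, and $\Phi$ spanning $\mathfrak{h}^*$ is semisimple, and simple when $\Phi$ is indecomposable), together with Theorem \ref{geometric roots}, to identify $\mathfrak{g}_{geo}\otimes\C$ with the classical simple Lie algebra. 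You instead observe that $\varepsilon(\alpha,\beta):=(-1)^{\var(\beta)\bullet\alpha}$ is bimultiplicative on all of $H_1(M,\partial M;\Z)$ (linearity of $\var$ and bilinearity of $\bullet$) and has diagonal $\varepsilon(\alpha,\alpha)=(-1)^{(\alpha,\alpha)/2}$ (Definition \ref{defn:S} gives $(\alpha,\alpha)=2\,\var(\alpha)\bullet\alpha$), i.e. it is an asymmetry function, and you appeal to the standard lattice construction (Frenkel--Kac/Tits/Carter) of simply-laced Lie algebras; both one-line verifications are correct, and this is an attractive conceptual explanation of why the geometric sign works, which moreover renders your separate antisymmetry and Jacobi case-checks redundant since the cited theorem already asserts the bracket is a Lie algebra of type $\Gamma$. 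The one point you gloss over (``suitably extended'') is convention matching: to quote that theorem you must check that \emph{all} brackets of Definition \ref{Lie bracket}, not just the $N_{\alpha,\beta}$-brackets, coincide with the construction's formulas, in particular $[g_\alpha,g_{-\alpha}]=-\var(\alpha)$. This does work out --- bimultiplicativity forces $\varepsilon(\alpha,-\alpha)=\varepsilon(\alpha,\alpha)^{-1}=-1$ on roots, and \eqref{bracket_identity} identifies $\var(\alpha)$ with the coroot element of $\mathfrak{h}_{geo}$, so the construction's bracket $\varepsilon(\alpha,-\alpha)\,t_\alpha$ is exactly $-\var(\alpha)$ --- but it must be said explicitly, since with the opposite normalization the same signs $N_{\alpha,\beta}$ would violate Jacobi. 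In exchange for importing this external theorem and the normalization check, your route is shorter and more conceptual; the paper's is longer but self-contained and elementary.
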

This shows that geometry has a preferred sign function $N_{\alpha,\beta}$ for the  Lie bracket of simple Lie algebras.


\begin{remark}
There are well-known categorical constructions of Lie agebras by Peng-Xiao
\cite{PX90}, \cite{PX00}  via Hall algebras based on the works of Ringel \cite{Ri90}, \cite{Ri90h}.
It would be very interesting to find a connection to our work. 
As an evidence, in the case of $A$, all Lie brackets of type  $[g_\alpha, g_\beta] = g_\gamma$, the arc for $\gamma$ is the unique extension of arc for $\alpha$ by arc for $\beta$ in the Fukaya category $\mathcal{F}_\rho$.
\end{remark}

\subsection{Extension to $n$-variable cases}
The $n$-variable simple singularities can be obtained by stabilizations of simple curve singularities. We explain how to extend our construction to these cases.
$$\Tilde{f}(x,y,z_1,\dots,z_{n-2}):= f(x,y)+z_1^2+\dots + z_{n-2}^2.$$

Recall that given two isolated singularities $g_1:(\C^{n_1},0)\rightarrow (\C,0)$ and $g_2:(\C^{n_2},0)\rightarrow (\C,0)$, 
their Thom--Sebastiani sum  $g_1 \oplus g_2: (\C^{n_1+n_2},0)\rightarrow (\C,0)$ is defined by
$(g_1 \oplus g_2)(\textbf{z}_1,\textbf{z}_2) = g_1(\textbf{z}_1) + g_2(\textbf{z}_2)$. 
Thom--Sebastiani theorem \cite{ST71} says that the join $M_{g_1} * M_{g_2}$ of two Milnor fibers can be embedded into the Milnor fiber of the Thom--Sebastiani sum, and the inclusion map $\iota:M_{g_1} * M_{g_2} \rightarrow M_{g_1 \oplus g_2}$ is a homotopy equivalence.
Hence, we get an isomorphism
\[
\iota_*: \widetilde{H}_{n_1-1}(M_{g_1};\Z) \otimes \widetilde{H}_{n_2-1}(M_{g_2};\Z) \xrightarrow{\cong} \widetilde{H}_{n_1+n_2-1}(M_{g_1\oplus g_2};\Z).
\]

In addition, there is a result of P. Deligne which determines the Seifert form on the Milnor fiber of a Thom--Sebastiani sum: for $a_1,b_1\in H_{n_1-1}(M_{g_1};\Z)$ and $a_2,b_2\in H_{n_2-1}(M_{g_2};\Z)$,
\[
\mathcal{L}_{g_1 \oplus g_2}\big(\iota_*(a_1 \otimes a_2),\iota_*(b_1 \otimes  b_2)\big) = (-1)^{n_1 n_2} \mathcal{L}_{g_1}(a_1,b_1) \mathcal{L}_{g_2}(a_2,b_2).
\]
See page 57 of \cite{AGV2} (or \cite{Sak74}) for the proof of it.
Consider the one-variable singularity $z \mapsto z^2$ whose Milnor fiber $M_{z^2}$ is composed with two points. For the generator $\Delta\in \widetilde{H}_0(M_{z^2};\Z)$, we have $\mathcal{L}(\Delta,\Delta) = -1$ from the Picard--Lefschetz formula. In addition, $\iota_*(\cdot \otimes \Delta):\widetilde{H}_{n_1-1}(M_{g_1};\Z) \rightarrow \widetilde{H}_{n_1}(M_{g_1 \oplus z^2};\Z)$ turns out to be an isomorphism in this case.
Thus, the Seifert form of the singularity stabilized by $z^2$ is given by
\[
\mathcal{L}_{g_1 \oplus z^2}\big(\Tilde{a_1},\Tilde{b_1}\big) = (-1)^{n_1} \mathcal{L}_{g_1}(a_1,b_1)
\]
where $\Tilde{a}_1,\Tilde{b}_1$ denotes the image of $a_1,b_1$ under the isomorphism $\iota_*(\cdot \otimes \Delta)$.
Taking the stabilization $n-2$ times to $f$, we obtain $\Tilde{f}$. Above discussion leads us to the following definition.
\begin{defn}
For the $n$-variable simple singularity $\Tilde{f}$, we define the pairing $(\cdot,\cdot)_n$ on $H_{n-1}(M,\partial M;\Z)$ by 
$$(\cdot,\cdot)_n:=(-1)^{n(n+1)/{2}}\big(\mathcal{L}^t+\mathcal{L}\big).$$
\end{defn}
Now, it is straightforward to verify the following theorem.
\begin{thm}
The lattice $\big(H_{n-1}(M_{\Tilde{f}},\partial M_{\Tilde{f}};\Z),(\cdot,\cdot)_n \big)$ is identical to the lattice $\big(H_1(M_f,\partial M_f;\Z),(\cdot,\cdot)_2 \big)$.
In the definition of the Lie bracket (Definition \ref{Lie bracket}), we only replace the bilinear form $(\cdot,\cdot)$ by $(\cdot,\cdot)_n$.
This generalizes the construction of the Lie algebra to the case of $n$-variables $\Tilde{f}$. Namely,  Theorem \ref{thm:Lie bracket} still holds.
\end{thm}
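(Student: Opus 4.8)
The plan is to deduce both assertions from the single isomorphism $\iota_*$ furnished by the Thom--Sebastiani theorem. First I would iterate Thom--Sebastiani: the join inclusion $M_f * M_{z^2}*\cdots*M_{z^2}\hookrightarrow M_{\tilde f}$ ($n-2$ copies of $M_{z^2}$) is a homotopy equivalence, hence induces a $\Z$-module isomorphism $\iota_*\colon H_1(M_f;\Z)\xrightarrow{\cong}H_{n-1}(M_{\tilde f};\Z)$, together with a compatible isomorphism $\iota_*\colon H_1(M_f,\partial M_f;\Z)\xrightarrow{\cong}H_{n-1}(M_{\tilde f},\partial M_{\tilde f};\Z)$ on relative middle homology (via Lefschetz duality). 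Applying Deligne's formula $n-2$ times, with $n_2=1$ and $\mathcal{L}_{z^2}(\Delta,\Delta)=-1$ at each stabilization, gives $\mathcal{L}_{\tilde f}=(-1)^{(n-2)(n-1)/2}\mathcal{L}_f$ under $\iota_*$. Since $\tfrac{n(n+1)}{2}+\tfrac{(n-2)(n-1)}{2}=n^2-n+1$ is odd for every $n$, the normalization in the definition of $(\cdot,\cdot)_n$ yields $$(\cdot,\cdot)_n=(-1)^{n(n+1)/2}\bigl(\mathcal{L}_{\tilde f}^t+\mathcal{L}_{\tilde f}\bigr)=-\bigl(\mathcal{L}_f^t+\mathcal{L}_f\bigr)=(\cdot,\cdot)_2$$ under $\iota_*$ (reproducing Definition~\ref{defn:S} at $n=2$). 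Thus $\iota_*$ is an isometry of lattices, which is the first assertion, and it carries $\Phi_\Gamma$ of $f$ bijectively onto $\{\gamma:(\gamma,\gamma)_n=2\}$ of $\tilde f$.

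For the Lie algebra, recall from the remark following Definition~\ref{Lie bracket} that the bracket is completely determined by relation (1), the identity~\eqref{bracket_identity}, relation (3), and relation (4); that is, by the two lattices, the pairing $(\cdot,\cdot)$, the variation operator $\var$, and the relative--absolute intersection pairing $\bullet$. Apart from the form, these data are transported by $\iota_*$ only up to signs: Thom--Sebastiani multiplicativity of variation operators together with $\var_{z^2}=\pm\mathrm{id}$ gives $\var_{\tilde f}\circ\iota_*=\delta_n\,\iota_*\circ\var_f$ for a single sign $\delta_n\in\{\pm1\}$ (so in particular $\var_{\tilde f}$ of the roots of $\tilde f$ still spans $H_{n-1}(M_{\tilde f};\Z)$), and $\bullet$ is likewise transported up to a universal sign $\epsilon_n\in\{\pm1\}$. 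The decisive point is that the structure sign $N_{\alpha,\beta}=(-1)^{\var(\beta)\bullet\alpha}$ is \emph{unchanged}, since rescaling the integer $\var(\beta)\bullet\alpha$ by the unit $\delta_n\epsilon_n$ leaves $(-1)^{\var(\beta)\bullet\alpha}$ fixed. Hence, under $\iota_*$, the $n$-variable bracket with the form $(\cdot,\cdot)_n$ coincides term by term with the $2$-variable bracket when $\delta_n=1$; when $\delta_n=-1$ the two are interchanged by the $\Z$-module automorphism which is $h\mapsto-h$ on $\mathfrak{h}_{geo}$ and $g_\alpha\mapsto g_\alpha$ on the root spaces, and a direct check on the four relations shows this is a Lie algebra isomorphism. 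Either way $\mathfrak{g}_{\textrm{geo}}$ for $\tilde f$ is isomorphic to $\mathfrak{g}_{\textrm{geo}}$ for $f$, which is the simple Lie algebra of type $\Gamma$ by Theorem~\ref{thm:Lie bracket}; so Theorem~\ref{thm:Lie bracket} holds for $\tilde f$.

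The hard part is the sign bookkeeping: one must verify that the Thom--Sebastiani and Deligne signs accumulated over the $n-2$ stabilizations, combined with the exponent $n(n+1)/2$, collapse to exactly $-1$ for every $n$; and one must confirm that $\var$ and $\bullet$ are transported along $\iota_*$ only up to a \emph{universal} sign, so that the residual ambiguity is exactly the harmless global twist absorbed above. Everything else is the mechanical check that the four defining relations of the bracket survive the identification.
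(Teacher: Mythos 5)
Your proof is correct and follows essentially the route the paper intends: iterate the Thom--Sebastiani/Deligne stabilization of the Seifert form and check that the normalization $(-1)^{n(n+1)/2}$ collapses everything back to $-(\mathcal{L}_f^t+\mathcal{L}_f)$; the paper leaves exactly this as ``straightforward to verify'' after its preliminary discussion. Two points of comparison are worth recording. First, your per-step sign is $(-1)^{n_1+1}$ (Deligne's $(-1)^{n_1 n_2}$ with $n_2=1$, times $\mathcal{L}_{z^2}(\Delta,\Delta)=-1$), accumulating to $(-1)^{(n-1)(n-2)/2}$; the paper's displayed one-step formula $(-1)^{n_1}$ omits the factor $\mathcal{L}_{z^2}(\Delta,\Delta)=-1$, and if iterated literally it would make $(\cdot,\cdot)_n$ negative definite for odd $n$ (already at $n=3$, where $\mathcal{L}(\Delta,\Delta)=+1$ for a Morse point by the Picard--Lefschetz/variation computation). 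Your bookkeeping is the one consistent with the quoted Deligne formula and with the theorem, so this deviation is a correction rather than an error. Second, for the Lie-algebra half the paper's implicit argument is to rerun the proof of Theorem \ref{thm:Lie bracket}, which uses only the pairing, its positive definiteness, and parities of $\var(\beta)\bullet\alpha$; you instead transport the bracket along $\iota_*$, and your two key observations --- that $N_{\alpha,\beta}$ depends only on the parity of $\var(\beta)\bullet\alpha$, hence is insensitive to the residual unit, and that a global flip of $\var$ is absorbed by the automorphism $h\mapsto -h$, $g_\alpha\mapsto g_\alpha$ --- are both correct, so either route works and yields the same conclusion.

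The only place you assert rather than prove is the ``universal sign'' transport of $\var$ and $\bullet$ along $\iota_*$ (you flag this yourself). This is easily secured: instead of invoking Lefschetz duality, define the identification of relative groups to be $\var_{\tilde f}^{-1}\circ\iota_*\circ\var_f$, which transports $\var$ on the nose ($\delta_n=1$) and converts every occurrence of $\var(\beta)\bullet\alpha$ into a value of $\mathcal{L}$ on absolute classes, up to the fixed sign coming from the order of arguments and the degree; all of that is then covered by Deligne's formula and by the parity-insensitivity you already exploit. With that adjustment the argument is complete.
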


%
%

\subsection{Main result 4: Description of Weyl group reflections and the Coxeter element}
Our model also provides a geometric interpretation of the Weyl group action associated with the geometric root system $\Phi_\Gamma$.
As an application, we present a geometric realization of a theorem of Kostant concerning the Coxeter element.

We begin by observing that the correspondence between Weyl group reflections and Picard–Lefschetz transformations, valid in the setting of three variables, does not hold in the case of two variables.
Hence, the Weyl group associated with $\Phi_\Gamma$ does not coincide with the monodromy group arising from the simple curve singularity of the same type.
Rather, it admits a combinatorial description.
\begin{thm}
Weyl group reflection $s_\alpha$ corresponding to the geometric root $\alpha$ can be described as a transformation that flips the edges and spokes of the Coxeter wheel $W$ involved with $\alpha$.
\end{thm}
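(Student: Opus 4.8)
The plan is to translate the classical formula for a Weyl reflection, $s_\alpha(\beta)=\beta-(\alpha,\beta)\alpha$, into an operation on the Coxeter wheel $W$, using the identification from the previous theorem that oriented edges and spokes of $W$ represent geometric roots in $\Phi_\Gamma$. First I would fix the combinatorial setup: by Theorem~\ref{root_equiv}(iii)--(iv), every root $\alpha$ has an embedded arc representative $L_\alpha$ with $L_\alpha\cap\rho(L_\alpha)=\emptyset$, and under $\pi:W\to M$ this arc lifts to a specific edge or spoke $e_\alpha$ of $W$ (or to a translation-equivalence class of them). I would then record, for the arc $L_\alpha$, precisely which edges and spokes of $W$ have representatives meeting $L_\alpha$ transversally, and with what signed intersection number with $\var(\alpha)$ or with $\alpha+\rho_*(\alpha)$; these are exactly the roots $\beta$ with $(\alpha,\beta)\neq 0$, by Definition~\ref{defn:S} and the identity $(\alpha,\beta)=\var(\alpha)\bullet\beta+\var(\beta)\bullet\alpha$.

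Next I would define the candidate ``flip'' map $F_\alpha$ on $W$ concretely: it should act as the identity on the part of $W$ disjoint from (the neighborhood of) $e_\alpha$, and on the edges/spokes crossing $e_\alpha$ it should be the local move that, homologically, adds a copy of $\pm\alpha$ — the geometric analogue of a half Dehn twist supported along $L_\alpha$. The key computation is then to check that for each oriented edge/spoke $e_\beta$, the image $F_\alpha(e_\beta)$, read as a class in $H_1(M,\partial M;\Z)$ via $\pi$, equals $\beta-(\alpha,\beta)\alpha = s_\alpha(\beta)$. For roots $\beta$ with $(\alpha,\beta)=0$ the flip does nothing and there is nothing to check; for $(\alpha,\beta)=\pm 1$ one sees $e_\beta$ cross $e_\alpha$ once and $F_\alpha$ splices in one copy of $\mp\alpha$; for $(\alpha,\beta)=\pm 2$ (the $A_1\times A_1$ or long-root situations, and the center spokes in $D_k$) one checks the doubled crossing. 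Since $s_\alpha$ is determined by its values on a spanning set of roots and $\var(\Phi_\Gamma)$ spans $H_1(M;\Z)$, verifying the formula on edges/spokes suffices to identify $F_\alpha$ with $s_\alpha$.

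I would organize the case analysis by wheel type. For $A_k$ the wheel is a regular $(k+1)$-gon with punctured vertices; a root is an oriented edge or diagonal, and $F_\alpha$ is the combinatorial ``flip across the chord $e_\alpha$'' exchanging the two arcs of vertices it separates — here the classical picture of $\mathfrak{sl}_{k+1}$ reflections as transpositions acting on chords makes the check essentially bookkeeping. For $D_k$ the wheel is the $2(k-1)$-gon with the center also punctured; one must separately treat chords avoiding the center (which behave as in the $A$ case after the opposite-edge identification) and the spokes through the center, where $(\alpha,\alpha)=2$ still but the local flip interacts with the central puncture and with the edge identification $\pi$. For $E_6,E_7,E_8$ the map $\pi$ is ``very close to a covering map'' and the wheel is built from translated copies of the polygonal pieces of \cite{CCJ2}; here I would lift $F_\alpha$ compatibly with the translation structure of Theorem~\ref{Etype wheel}(2) and with the monodromy lift in (3), then verify $F_\alpha\circ\pi=\pi\circ(\text{flip})$ and run the same homological check on a set of roots that spans.

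The main obstacle I expect is making the ``flip'' rigorous and uniform rather than an ad hoc picture: precisely specifying the support of $F_\alpha$ near $e_\alpha$ so that (a) it is a well-defined bijection on the finite set of oriented edges/spokes, (b) it is compatible with $\pi$ for all three types simultaneously (the $D_k$ edge-identification and the $E$-type near-covering are the delicate points), and (c) the induced map on $H_1(M,\partial M;\Z)$ is exactly $\beta\mapsto\beta-(\alpha,\beta)\alpha$ with the signs matching those dictated by $\var$ in Definition~\ref{defn:S}. Once the local model is pinned down, the rest is the finite, type-by-type verification sketched above, and consistency with the Coxeter-plane picture of Remark~\ref{multi} provides a useful sanity check.
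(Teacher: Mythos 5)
Your core mechanism is off, and it changes the answer. You take ``involved with $\alpha$'' to mean edges/spokes that \emph{cross} $L_\alpha$ transversally, and you propose to realize $s_\alpha$ by an actual surface transformation $F_\alpha$ (a half Dehn twist supported along $L_\alpha$) that splices a copy of $\mp\alpha$ into each crossing arc. But the pairing of Definition \ref{defn:S} is the negative symmetrized Seifert form, not the geometric intersection number of the two arcs, and the incidence that governs $s_\alpha$ is \emph{sharing an endpoint at a puncture}, not crossing. Already in the $A_k$-wheel two diagonals with no common vertex can cross once in the interior yet satisfy $(\alpha,\beta)=0$ (e.g.\ $\alpha_{13}$ and $\alpha_{24}$, corresponding to $e_1-e_3$ and $e_2-e_4$), while two edges meeting at a vertex never cross and have $(\alpha,\beta)=\pm1$; so your dictionary ``$(\alpha,\beta)=\pm1$ iff $e_\beta$ crosses $e_\alpha$ once'' fails, and splicing at crossings does not compute $\beta-(\alpha,\beta)\alpha$. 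Moreover, the paper stresses in Section \ref{sec:9} that in two variables $s_\alpha$ does \emph{not} come from the mapping class group of the Milnor fiber (the symplectic transvection/Picard--Lefschetz map along $\var(\alpha)$ is a different operator), so no homeomorphism supported near $L_\alpha$ can induce $s_\alpha$ on $H_1(M,\partial M;\Z)$ in general; the ``flip'' has to be a combinatorial rule on the set of edge/spoke representatives, not a map of $W$ or $M$. A smaller but telling slip: for a simply-laced system $(\alpha,\beta)=\pm2$ forces $\beta=\pm\alpha$, so your proposed ``$\pm2$'' cases ($A_1\times A_1$ configurations, the two center spokes of $D_k$) do not occur --- those pairs are orthogonal.

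The paper's argument is much lighter and goes the other way around: since all roots have $(\alpha,\alpha)=2$, the reflection formula reduces to the four cases $s_\alpha(\beta)=\beta\pm\alpha$ when $(\alpha,\beta)=\mp1$, $s_\alpha(\beta)=\beta$ when $(\alpha,\beta)=0$, and $s_\alpha(\pm\alpha)=\mp\alpha$; then one observes that $\beta\mapsto\beta\pm\alpha$ is realized geometrically by concatenation at the common puncture, i.e.\ the $\beta$-arrow tied to the source/target node of $\alpha$ is carried along when $\alpha$ is flipped (Figure \ref{fig:Weyl_example}), with the one genuine subtlety being that in $D_k$ (and $E$-types) a root may have several parallel edge/spoke representatives, so one must flip using the representative that actually meets $\beta$ (as in Figure \ref{fig:Weyl_D5} and Lemma \ref{Dk roots}). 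If you restructure your write-up around endpoint-sharing and concatenation, and drop the half-twist model, the remaining verification is exactly the finite bookkeeping you describe.
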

See Section \ref{sec:9} for a detailed explanation of the flipping rule.

A Weyl group has a distinguished element $c$, called Coxeter element, obtained by the product of reflections corresponding to simple roots.
Its conjugacy class is independent of the choice of simple roots and ordering of the product. 
Its order $h$ is called the Coxeter number.
The following theorem concerning the action of Coxeter element on root system was given in \cite{Kos} (Kostant refer it to Coleman \cite{Cole}).
\begin{thm}[{\cite[Corollary 8.2]{Kos}}]\label{Kostant_original}
Coxeter element $c$ acts on the set of roots $\Phi$ with $k$ orbits, each containing $h$ element.
\end{thm}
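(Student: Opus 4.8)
The plan is to obtain Kostant's theorem as a direct consequence of the rotational symmetry of the Coxeter wheel $W$ of type $\Gamma$, using only the combinatorics of its edges and spokes rather than the eigenvalue analysis of \cite{Kos}.

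\textbf{Step 1: realize $c$ as the elementary rotation of $W$.} I would first identify the Coxeter element $c$ of the Weyl group attached to $\Phi_\Gamma$ with the rotation $R$ of $W$ through the angle $2\pi/h$ (one ``tilt'' in the counter-clockwise direction), under the identification of $\Phi_\Gamma$ with equivalence classes of oriented edges and spokes. One route is combinatorial: pick simple roots $\alpha_1,\dots,\alpha_k$ as a chain of adjacent edges/spokes of $W$, write $c=s_{\alpha_1}\cdots s_{\alpha_k}$ as a composite of $k$ of the flip-reflections of Section \ref{sec:9}, and check by direct inspection that this composite equals $R$ (for $A_k$ this is just the classical fact that the Coxeter element of $S_{k+1}$ is a $(k+1)$-cycle, realized as the rotation of the $(k+1)$-gon). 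A second route uses the Coxeter-plane picture already set up above: $W$ sits in the Coxeter plane with vertices on the root projections, $c$ rotates that plane through $2\pi/h$, and $c$ commutes with the orthogonal projection of $\mathfrak{h}^*$ onto the Coxeter plane, so it must permute the edges/spokes by $R$. In particular $R$ has order exactly $h$, and it descends along $\pi\colon W\to M$ to an automorphism of $H_1(M,\partial M;\Z)$ preserving $\Phi_\Gamma$ that represents $c$. Note that $R$ is in general \emph{not} the geometric monodromy, which is $R$ composed with a half-rotation for the $D$- and $E$-wheels; this is why, unlike in three variables, the Weyl group differs from the monodromy group.

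\textbf{Steps 2--3: freeness and the orbit count.} Next I would show the $\langle R\rangle\cong\Z/h\Z$-action on $\Phi_\Gamma$ is free. Each oriented edge or spoke, viewed as a vector in $\R^2$, is nonzero, since it joins two distinct punctures, or a puncture to the center; recall that equivalent edges/spokes have the same length and direction, so a root $\alpha$ acquires a well-defined planar vector $v(\alpha)\ne 0$, equal to the orthogonal projection of $\alpha$ to the Coxeter plane, and $v(R\cdot\alpha)=R\,v(\alpha)$. For $0<j<h$ the rotation $R^j$ moves every nonzero planar vector, so $v(\alpha)\ne R^j v(\alpha)=v(R^j\cdot\alpha)$, forcing $R^j\cdot\alpha\ne\alpha$; hence no root has a nontrivial stabilizer. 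A free action of $\Z/h\Z$ has all orbits of size $h$, so the number of orbits equals $|\Phi_\Gamma|/h$. From the cardinalities recorded above, $|\Phi_\Gamma|=k(k+1),\,2k(k-1),\,72,\,126,\,240$ for $A_k,D_k,E_6,E_7,E_8$, while $h=k+1,\,2k-2,\,12,\,18,\,30$, so in every case $|\Phi_\Gamma|=kh$. Therefore $c$ has exactly $k$ orbits on $\Phi_\Gamma$, each of size $h$; transporting this back through the isomorphism of Theorem \ref{geometric roots} gives the statement for the classical root system.

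\textbf{Main obstacle.} The real work is Step 1: pinning down that the abstract Coxeter element — a product of the combinatorial flip-reflections of Section \ref{sec:9} — is exactly the one-tilt rotation $R$ of $W$, with the correct orientation and no off-by-one in the normalization, and that $R$ is genuinely compatible with $\pi\colon W\to M$. Once this identification is secured, Steps 2 and 3 reduce to the observation that a nonzero vector in the plane has no nontrivial rotational stabilizer, together with bookkeeping of the known root counts. A secondary subtlety is to apply the ``root $\leftrightarrow$ planar vector'' dictionary carefully in Step 2, so that freeness of $R$ on vectors (which is obvious) upgrades to freeness on roots; equivalently, one uses that no root of an $ADE$ system projects to the origin of its Coxeter plane, which on the wheel is just the statement that edges and spokes have positive length.
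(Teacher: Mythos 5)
Your Steps 2--3 are fine in spirit, but Step 1 --- the load-bearing step --- is not only unproved but false as stated, and it is exactly the point where the paper has to do real work. You want to realize the Coxeter element $c$ as the rigid rotation $R$ of the wheel $W$ by $2\pi/h$, acting on edges/spokes and descending through $\pi\colon W\to M$. The paper explicitly warns against this: the Coxeter element is $\overline{\rho}_*$ (monodromy composed with orientation reversal), and its action ``is not given by a uniform cyclic rotation of the wheel'' --- it tilts outer edges one way and spokes/inner edges differently. Concretely, in the $D_k$-wheel the class of an outward spoke $[L_{0,v}]$ is sent by $\overline{\rho}_*$ to an \emph{inward} spoke class, whereas $R$ would produce the outward spoke at the next vertex; since distinct spokes are never homologous to one another, these differ, so no rigid rotation of $W$ induces $c$ on $H_1(M,\partial M;\Z)$. (For $E_7$ a $2\pi/18$ rotation is not even a symmetry of $W$, whose polygonal pattern has angular period $2\pi/9$.) Your two suggested justifications do not close this: ``check by direct inspection that the product of flip-reflections equals $R$'' defers precisely the hard identification (and the thing to be checked is false outside type $A$), while the Coxeter-plane argument assumes an intertwining between the wheel and the orthogonal projection of $\mathfrak{h}^*$ that the paper only observes empirically and never uses as an input. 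What is true, and what your argument actually needs, is only the vector-level statement $v(c\cdot\alpha)=R_{2\pi/h}\,v(\alpha)$; but to get even that you must first prove $c=\overline{\rho}_*$ on $\Phi_\Gamma$, which is Proposition \ref{coxeter=-monodromy} and is established there by showing (via Theorem \ref{thm_gz80}) that the variation images of the simple basis form a distinguished collection of vanishing cycles and then verifying the matrix identity $S_1\cdots S_k=-T_1\cdots T_k$ relating Weyl reflections to Picard--Lefschetz transformations. None of this appears in your proposal.

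That said, once $c=\overline{\rho}_*$ and the facts from Sections \ref{sec:3}--\ref{sec:6} are in hand (every geometric root is an edge/spoke class, equivalent edges/spokes have the same length and direction, and the monodromy lifts to the $\pi+\tfrac{2\pi}{h}$ rotation of the wheel, with the modified rule in type $A$), your freeness argument is a genuine improvement in economy over the paper's Theorem \ref{kostant}: since $v(\alpha)\neq 0$ and $v(\overline{\rho}_*^{\,j}\alpha)=R_{2\pi j/h}v(\alpha)\neq v(\alpha)$ for $0<j<h$, the action is free, and $|\Phi_\Gamma|=kh$ gives the $k$ orbits --- with no case-by-case inspection of the powers of the matrices $P$ for the $E$-types and no parity discussion for $A_k$ and $D_k$. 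Note also that injectivity of $\alpha\mapsto v(\alpha)$ is irrelevant here (good, since root projections have multiplicities for $A_{2k-1}$, $D_k$, $E_6$), and that you should phrase the nonvanishing of $v(\alpha)$ intrinsically (edges/spokes join distinct punctures) rather than via the unproved identification with Coxeter-plane root projections. So: repair Step 1 by replacing ``$c$ is the rotation $R$ of $W$'' with ``$c=\overline{\rho}_*$, which rotates the planar vector of each root by $2\pi/h$,'' and supply the proof of that identification; as written, the proposal assumes the theorem's hardest ingredient in a form that is incorrect.
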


Here, $k$ denotes the dimension of the ambient Euclidean space of the root system.
In our framework, the geometric realization of this theorem is given as follows.

Recall from  Proposition 2.4 of Hubery--Krause \cite{HKra} that 
Coxeter element equals the negative of the monodromy. This equals $\overline{\rho}_*$, which is  
the orientation reversal composed with $\rho_*$.
\begin{thm}
The group $\langle \;\overline{\rho}_* \rangle$ acts freely on $\Phi_\Gamma$.
Moreover, this action  decomposes $\Phi_\Gamma$ into exactly $k$ orbits, each containing $h$ elements.
\end{thm}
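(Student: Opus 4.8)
The plan is to deduce the statement directly from the preceding theorem together with Theorem~\ref{geometric roots} and Kostant's Theorem~\ref{Kostant_original}. First I would invoke Theorem~\ref{geometric roots}: the pair $(\Phi_\Gamma,(\cdot,\cdot))$ is isomorphic, as a root system, to the classical root system of type $\Gamma$. Consequently the group generated by the reflections $\{s_\alpha : \alpha\in\Phi_\Gamma\}$ is the Weyl group $W(\Gamma)$, and any statement about $W(\Gamma)$ acting on the classical root system transports verbatim to $W(\Phi_\Gamma)$ acting on $\Phi_\Gamma$. By the preceding theorem, $\overline{\rho}_*$ --- the orientation reversal of the Milnor fiber composed with $\rho_*$ --- belongs to $W(\Phi_\Gamma)$ and is a Coxeter element $c$; in particular $\overline{\rho}_*$ maps $\Phi_\Gamma$ onto itself, so $\langle\,\overline{\rho}_*\,\rangle$ acts on $\Phi_\Gamma$.

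Next I would pin down the order of this cyclic group. Since $\Phi_\Gamma$ spans $\mathfrak{h}_{geo}^*\otimes\R = H_1(M,\partial M;\R)$, the linear automorphism $\overline{\rho}_*$ is determined by its restriction to $\Phi_\Gamma$; hence the order of $\overline{\rho}_*$ as a permutation of $\Phi_\Gamma$ equals its order in $W(\Phi_\Gamma)$, which is the Coxeter number $h$. Thus $\langle\,\overline{\rho}_*\,\rangle\cong\Z/h\Z$. Applying Theorem~\ref{Kostant_original} to the Coxeter element $c=\overline{\rho}_*$ acting on $\Phi_\Gamma$ now gives that the orbits all have cardinality $h$ and that there are exactly $k$ of them, where $k=\operatorname{rank}\Gamma=\operatorname{rank} H_1(M,\partial M;\Z)=\mu$ is the Milnor number --- precisely the rank parameter used throughout the paper.

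Freeness is then formal. Since $\langle\,\overline{\rho}_*\,\rangle$ is cyclic of order $h$ and every one of its orbits $O_\alpha$ on $\Phi_\Gamma$ has cardinality $h$, the orbit--stabilizer identity $|O_\alpha|\cdot|\operatorname{Stab}(\alpha)| = h$ forces $\operatorname{Stab}(\alpha)=\{1\}$ for every $\alpha\in\Phi_\Gamma$; hence the action is free, and its orbit decomposition is exactly the one produced by Kostant's theorem. This completes the argument modulo the inputs.

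The real work is hidden in the preceding theorem, i.e.\ in proving that $\overline{\rho}_*$ is a Coxeter element of $W(\Phi_\Gamma)$: one must match the lift of the geometric monodromy to the Coxeter wheel (the half-rotation-plus-one-tilt of Theorem~\ref{Etype wheel}(3), and its $A_k$, $D_k$ analogues) with the rotation by $2\pi/h$ on the Coxeter plane, and verify that the orientation reversal on the surface supplies exactly the sign needed to turn $\rho_*$ into a genuine rotation of the root projections. Once this is granted, the present theorem is an immediate corollary; the only points I would still check by hand are that $\overline{\rho}_*$ has order exactly $h$ on $\Phi_\Gamma$ (as above, because the roots span the space) and that $\langle\,\overline{\rho}_*\,\rangle \subset W(\Phi_\Gamma)$, which is part of the preceding statement.
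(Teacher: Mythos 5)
Your argument is logically sound, but it takes a genuinely different route from the paper. You deduce the statement by transporting the classical result: Theorem \ref{geometric roots} identifies $\Phi_\Gamma$ with the classical root system, Proposition \ref{coxeter=-monodromy} identifies $\overline{\rho}_*$ with a Coxeter element $c$, the order of $c$ on $\Phi_\Gamma$ is $h$ because the roots span $H_1(M,\partial M;\R)$, and then Kostant's Theorem \ref{Kostant_original} plus orbit--stabilizer gives freeness and the count of $k$ orbits of size $h$. Each step checks out, and there is no circularity, since the paper's proof of Proposition \ref{coxeter=-monodromy} does not rely on this theorem. The paper, however, deliberately does \emph{not} invoke Kostant: its proof (Theorem \ref{kostant}) is a self-contained, case-by-case verification on the Coxeter wheel --- for $A_k$ and $D_k$ by analyzing how the rotation-plus-orientation-reversal moves line segments (the subtle point being whether $\overline{\rho}_*^{\,k-1}(\alpha)=\pm\alpha$, which depends on the parity of $k$), and for $E_6,E_7,E_8$ by computing the powers of $\overline{P}=-P$ in the projective basis and checking that no column of $\overline{P},\dots,\overline{P}^{\,h-1}$ is a standard basis vector (noting $\overline{P}^{h/2}=-\mathrm{Id}$ for $E_7,E_8$). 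What the paper's approach buys is precisely its stated goal: an independent, geometric realization (indeed re-proof in the $ADE$ cases) of Kostant's theorem via the wheel, together with the explicit orders $h=k+1,\,2(k-1),\,12,\,18,\,30$ obtained by hand; what your approach buys is brevity, at the cost of using Kostant's theorem as a black box, so it proves the statement but cannot serve as the geometric realization the paper is after.
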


To finish, we remark on the relation to the Coxeter plane.
On the Coxeter plane, the Coxeter element $c$ acts on root projections by the rotation of $\tfrac{2\pi}{h}$.
But on Coxeter wheels (except $A$-type), monodromy $\rho_*$ rotates the wheel by $\tfrac{2\pi}{h} + \pi$.
Now, orientation reversal rotates each geometric root as a vector by $\pi$, and hence $\;\overline{\rho}_*$ rotates each geometric root by an angle of $\tfrac{2\pi}{h}$.

But orientaiton reversal is not always realized by the overall $\pi$-rotation of the wheel, and hence
the action of $\;\overline{\rho}_*$ is not given by a uniform cyclic rotation of the wheel. 

For example, for outer oriented edges or their sums, $\overline{\rho}_*$ acts by tilting the wheel counter-clockwise once. 
But for spokes or inner edges, it tilts differently.  This is not a contradiction because root projections in the Coxeter plane correspond  to vectors in our wheel not the vertices.

%

\subsection{On non-simply-laced simple Lie algebras}
We give a brief discussion for non-simply-laced
simple Lie algebras $B_k, C_k, F_4$, and $G_2$. 
It is well-known that these Lie algebras can be embedded into simply-laced ones (into $D_{k+1}, A_{2k-1}, E_6$, and $D_4$, respectively) as Lie subalgebras.
Hence, our construction gives a geometric model for these Lie algebras in an indirect way.

This process of embedding can be described in terms of folding of root systems (see \cite{Gin} page 47).
Consider two set of simple roots $\mathcal{A}, \mathcal{A}'$ with Cartan matrices $C, C'$. Then a projection $\pi: \mathcal{A}' \to \mathcal{A}$ is called a folding
if it satisfies the following two properties.
\begin{enumerate}
\item $C_{\alpha'\beta'}=0$ if $\pi(\alpha') =\pi(\beta')$ and $\alpha'\neq\beta'$.
\item $C_{\alpha\beta}= \sum_{\alpha' \in \pi^{-1}(\alpha)} C_{\alpha'\beta'}$ if $\pi(\beta')= \beta$.
\end{enumerate}
This induces an embedding (in the reverse direction) of Chevalley generators $\{h_{\alpha}, e_{\alpha}, e_{-\alpha}\}$  into  $\{h_{\alpha'}', e_{\alpha'}', e_{-\alpha'}'\}$ given by
$$  h_{\alpha} \mapsto \sum_{\alpha' \in \pi^{-1}(\alpha)} h_{\alpha'}', \quad e_{\alpha} \mapsto \sum_{\alpha' \in \pi^{-1}(\alpha)} e_{\alpha'}', \quad e_{-\alpha} \mapsto \sum_{\alpha' \in \pi^{-1}(\alpha)} e_{-\alpha'}' .$$

One may ask if the projection $\pi$ has a geometric meaning in our cases.
For the folding of $D_{k+1}$ into $B_k$, two simple roots $\alpha_1$ and $\alpha_2$   in Figure \ref{fig:simple} (C) are folded (i.e., have the same image under $\pi$).
Note that these two simple roots are different geometric roots, but are the same as vectors on $\R^2$.
For the folding of $E_6$ into $F_4$,  two roots $\alpha_1$ and $\alpha_3$ are folded and two roots $\alpha_2$ and $\alpha_4$ are folded as well (see  Figure \ref{fig:simple} (E)).
Again, $\alpha_1$ and $\alpha_3$ define the same vectors and so does $\alpha_2$ and $\alpha_4$.
Similarly, for the folding of $D_4$ into $G_2$, and for the folding of $A_{2k-1}$ into $C_k$, we can choose simple roots so that folded roots define
the same vectors. 
As in the remark \ref{multi}, projection of geometric roots to vectors corresponds to the root projections in the Coxeter plane.
In particular, the root projections have multiplicities in $A_{2k-1}, D_{k}$, and $E_6$-cases, and these are exactly the cases that we have foldings.
Also, one can check that the Coxeter planes for $\mathfrak{g}$ and $\mathfrak{g}'$ are isomorphic if  $\mathfrak{g}$ and $\mathfrak{g}'$ are related by a folding.

After the paper was put on arXiv, we have learned that there are
related earlier works by \cite{QZ25} and \cite{CQZ24} in a different context.
The authors studied total stability conditions of quiver representations and
found a combinatorial polygonal model relating indecomposable quiver representations with roots.

More precisely, in \cite{QZ25}, Qiu--Zhang defined the notion of a $h$-gon: a polygon $P$ in $\C$
and a vector $z_i \in \C$ for each boundary edge of $P$ and it was
required that $z_i$'s satisfy certain linear relations. Then they
defined the notion of stable $h$-gons and showed that these correspond
to total stability conditions on Dynkin cases. For a symmetric stability condition (called a Gepner point), these
$z_i$'s were identified with root projections on the Coxeter plane. 

There are similarities and differences between their combinatorial model and ours wheels.
Outer boundary of our $E_8$-wheel agrees with their symmetric
$h$-gon, but boundaries of our $E_6,E_7$-wheels do not.
Inside the $h$-gons, they choose two convex overlapping $h/2$-gons, and
consider the edges between some of the vertices. Thus the interiors
of $h$-gons and our wheels are different.

Another major difference is the multiplicity of vectors. The edges in
the stable $h$-gon are regarded as vectors in $\C$ and polygons represent
which sums of vectors vanish. But recall that some of the distinct arcs in our Coxeter wheel can be the same as vectors
as explained in the Remark \ref{multi}. 

The difference comes from the fact that our wheels are made of Milnor fibers and hence our edges/spokes are
actual arcs in the Milnor fiber, whereas the notion of stable $h$-gon itself is rather
combinatorially defined and interiors of their polygons represent relation between vectors.
For example, their polygon for $D_k$ has two
interior punctures, whereas our $D_k$-wheel has only one puncture;
our geometric roots $\alpha_1,\alpha_2$ are the same as vectors in
$\C$ but are different as geometric roots.

In \cite{CQZ24}, Chang--Qiu--Zhang further gave a geometric model of quiver representations based on these constructions and settled Reineke's conjecture. This should be related to our paper by the Remark \ref{qui} (see also Subsection \ref{ssec:simple}).

\subsection{Organization of the paper}
In Section \ref{sec:2}, we recall a combinatorial description of Milnor fibers, following the exposition in \cite{CCJ2}.
In Section \ref{sec:3}, we present a detailed construction of the Coxeter wheel.
In Section \ref{sec:4}, we define the geometric root system and verify that it satisfies the axioms of a root system.
In Section \ref{sec:monodromy}, we show that the geometric roots are realized by equivalence classes of edges and spokes on the Coxeter wheel.
In Section \ref{sec:6}, we provide several equivalent geometric realizations of the geometric root system.
In Section \ref{sec:7}, we construct Lie algebras associated with each simple singularity.
In Section \ref{sec:8}, we offer a geometric interpretation of the Lie bracket in terms of the edges and spokes on the Coxeter wheel.
In Section \ref{sec:9}, we describe the Weyl group reflections associated with the geometric root system and analyze the action of the Coxeter element.

\subsection*{Acknowledgements}
First author would like to thank Nan-Kuo Ho for introducing him the Coxeter plane.
We would like to thank Dongwook Choa and Hanwool Bae for the collaborations and discussions where several ideas in this paper are originated from.
Cho and Kim are  supported by NRF grant funded
by the Korea government(MSIT)  No.2020R1A5A1016126 and 0450-20250061.
Jeong was supported by G-LAMP program of the NRF grant funded by the Ministry of Education (RS-2024-00441954).

\section{Milnor fibers of simple curve singularities} \label{sec:2}
In this section, we review a particular combinatorial description of the Milnor fiber $M$ of simple curve singularities following \cite{CCJ2}.
For the case of $A$ and $D$, this will be enough for our geometric construction of Lie algebras.
For the case of $E_6, E_7, E_8$, further rearrangements of Milnor fibers into Coxeter wheels are required and will be explained in the next section.

We recall the following theorem.
\begin{thm}[{\cite[Theorem 1.7]{CCJ2}}]\label{thm:M1}
Milnor fiber of a simple curve singularity  can be described as follows.
\begin{enumerate}
\item $A_{k}$-case : $(2k+2)$-gon with edges identified as $\pm 3$ pattern.
\item $D_{k}$-case : $(4k-4)$-gon with edges identified as $\pm (2k-3)$ pattern.
\item $E_{6}$-case : $16$-gon with edges identified as $\pm 5$ pattern.
\item $E_{7}$-case : $18$-gon with edges identified as $\pm 5$ pattern.
\item $E_{8}$-case : $20$-gon with edges identified as $\pm 5$ pattern.
\end{enumerate}

For $D_k$, one may equivalently regard the Milnor fiber as a $(2k-2)$-gon with opposite edges identified.

Here, edges are identified as $\pm m$ pattern means the following.
A boundary edge $E$ has two boundary points, say $p$ and $q$. Suppose $p$ is a puncture. 
Travel along the boundary of the polygon starting from this edge in the direction of the vector $\overrightarrow{pq}$, to the $m$-th neighboring edge $E'$. We identify $E$ and $E'$ so that the resulting surface is orientable.
\end{thm}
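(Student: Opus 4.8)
The plan is to establish the five cases one at a time, in each case exhibiting the Milnor fiber $M$ as a branched covering of a large disk in the $x$-plane (or of $\mathbb{P}^1$) via a coordinate projection, and then reading off the asserted polygon as an explicit fundamental domain collapsed to a single $2$-cell. The three Brieskorn--Pham cases $f=x^p+y^q$ --- giving $A_k$ with $(p,q)=(k+1,2)$, $E_6$ with $(3,4)$, and $E_8$ with $(3,5)$ --- are closely parallel, while the chain-type cases $f=x^{k-1}+xy^2$ ($D_k$) and $f=x^3+xy^3$ ($E_7$) require a variant, since the covering involved is no longer cyclic.

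For $f=x^p+y^q$ I would use the projection $\phi(x,y)=x$. Over $x$ with $x^p\neq1$ the fibre has exactly $q$ points, so $\phi$ realizes $M$ as the $q$-fold cyclic cover of the $x$-plane totally ramified over the $p$ points with $x^p=1$, with $\gcd(p,q)$ ends over $\infty$. Fixing a system of disjoint embedded arcs cutting a large disk into a simply connected piece with the branch points on its boundary, the $q$ lifts of that piece are glued cyclically along the lifted cuts, the cyclic offset being governed by the monodromy $y\mapsto e^{2\pi i/q}y$ around each branch point. Collapsing these $q$ sheets to a single polygon along a spanning tree of their gluing pattern, and then recording which remaining cut-side is identified with which --- and with which orientation, as forced by orientability of $M$ --- should reproduce exactly the $(2k+2)$-, $16$-, and $20$-gon with the stated $\pm m$ edge identification, the branch points and the points over $\infty$ becoming the punctures. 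The precise value of $m$ (and its sign) follows from combining the cyclic offset of the cover with the rotation by $2\pi/p$ permuting the branch points, and I would pin it down by inspection in each case.

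For the chain-type singularities I would again project to the $x$-axis. For $D_k$ one has $f=x(x^{k-2}+y^2)$, so $\phi\colon M\to\C^{\ast}_x$ is a double cover branched over the $(k-1)$-st roots of unity, with one additional end over $x=0$ and one or two ends over $\infty$ according as $k$ is odd or even; for $E_7$, $f=x(x^2+y^3)$ gives a triple cover of $\C^{\ast}_x$ branched over the cube roots of $1$. The same cut-and-collapse procedure applies, the extra end over $x=0$ accounting for the central puncture. In the $D_k$ case it is cleanest to first extract the ``$(2k-2)$-gon with opposite edges identified'' model (together with its central puncture) from this double cover, and then re-cut that surface into the $(4k-4)$-gon carrying the $\pm(2k-3)$ pattern.

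Finally I would check the correctness of each polygon on several independent grounds, any one of which suffices: that the Euler characteristic together with the number of vertex classes recovers $\chi(M)=1-\mu$ and the known number of boundary components (equivalently, components of the corresponding $ADE$ algebraic link); that the obvious arc system in the polygon has the expected skew intersection matrix; and that the distinguished collection of vanishing cycles read off from the polygon has intersection graph the $ADE$ Dynkin diagram, matching the classical descriptions of A'Campo and Gusein-Zade (or the plumbing of Hopf bands realizing the fibre surface of the $ADE$ fibred link). The main obstacle is precisely the bookkeeping inside the second and third steps: the covering argument immediately shows that $M$ is \emph{some} polygon with its boundary edges paired up plus some free edges, but fixing the exact cyclic shift $m$ --- its orientability-forced sign, together with the correct behaviour at the ends over $\infty$ and (in the chain cases) over $x=0$ --- demands careful, type-by-type tracking of orientations, and the non-cyclic covers behind $D_k$ and $E_7$ genuinely need separate handling rather than the uniform Brieskorn--Pham argument.
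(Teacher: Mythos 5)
Your proposal follows essentially the same route as the paper's source for this statement: Theorem \ref{thm:M1} is quoted from \cite{CCJ2}, and the proof recalled in Section \ref{sec:2} is precisely your construction --- realizing $M$ as the Riemann surface of $y=\sqrt[q]{1-x^p}$ (Fermat cases $A_k,E_6,E_8$), resp.\ $y=\sqrt[q]{(1-x^p)/x}$ (chain cases $D_k,E_7$), obtained by gluing $q$ copies of the cut $x$-plane along branch cuts from the roots of unity to infinity, with an additional cut from the origin in the chain cases, and then reading off the polygon with its $\pm m$ edge identifications. One minor correction: the chain-type covers are still cyclic (deck transformation $y\mapsto e^{2\pi i/q}y$); the only new feature is the extra branch point/puncture at $x=0$, so no genuinely non-cyclic argument is required.
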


\begin{figure}[h]
\begin{subfigure}{0.47\textwidth}
\includegraphics[scale=0.9]{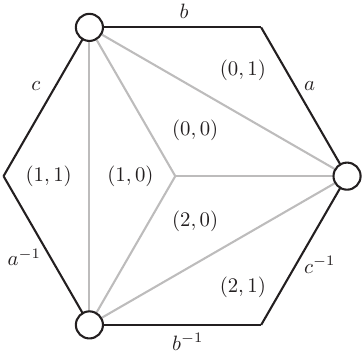}
\centering
\caption{Milnor fiber for $A_{2}$-case.}
\end{subfigure}
\begin{subfigure}{0.47\textwidth}
\includegraphics[scale=0.9]{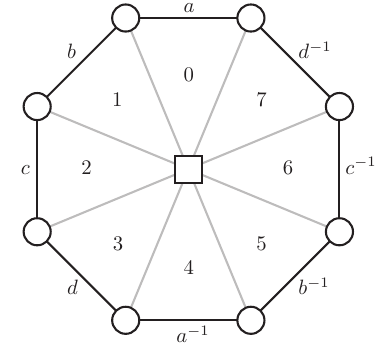}
\centering
\caption{Milnor fiber for $D_{5}$-case.}
\end{subfigure}
\centering
\caption{Milnor fibers of $A_{k}$ and $D_{k}$-singularities.}
\label{fig:ADMil}
\end{figure}

\begin{figure}[h]
\begin{subfigure}{0.47\textwidth}
\includegraphics[scale=0.8]{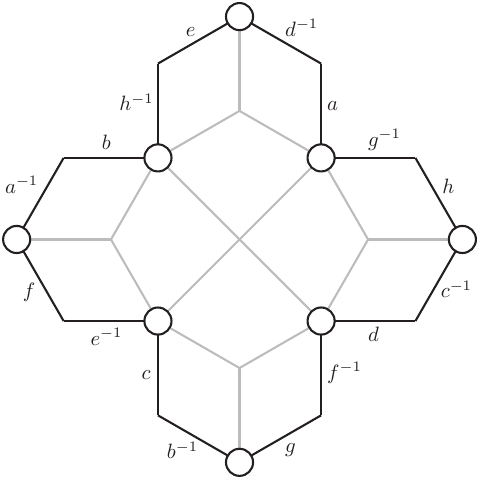}
\centering
\caption{Milnor fiber for $E_{6}$-case.}
\end{subfigure}
\begin{subfigure}{0.47\textwidth}
\includegraphics[scale=0.75]{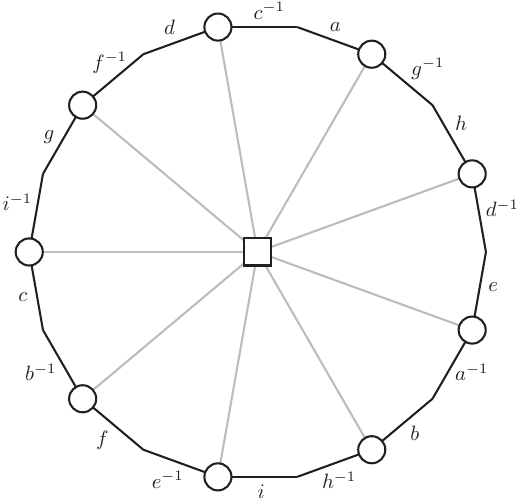}
\centering
\caption{Milnor fiber for $E_{7}$-case.}
\end{subfigure}
\centering
\caption{Milnor fibers of $E_{6}$ and $E_{7}$-singularities.}
\label{fig:E67Mil}
\end{figure}

\begin{figure}[h]
\includegraphics[scale=0.8]{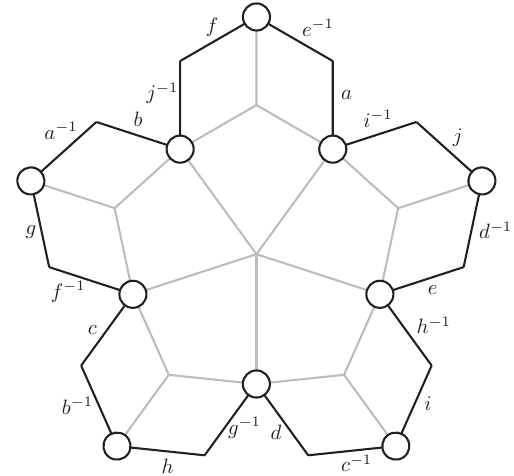}
\centering
\caption{Milnor fiber of $E_{8}$-singularity.}
\label{fig:E8Mil}
\end{figure}

In \cite{CCJ2}, the above theorem was proved in more general setting of invertible polynomials of two variables.
Up to coordinate changes, any invertible polynomial of two variables is one of the following:
$$F_{p,q} = x^{p} + y^{q}, \; C_{p,q}=x^{p} + xy^{q}, \mbox{ and } L_{p,q} = x^{p}y + xy^{q}.$$
Thus $A_{k}$, $D_{k}$, $E_{6}$, $E_{7}$, and $E_{8}$-singularities are  $F_{k+1,2}, C_{k-1,2}, F_{4,3}, C_{3,3}$, and $F_{5,3}$ respectively.
Furthermore, these are weighted homogeneous polynomials.


Recall that the Milnor fiber of a weighted homogeneous isolated singularity is defined by $\{f=1\} \cap \overline{B}_R(0)$.
The proof of Theorem \ref{thm:M1} is based on analyzing the Riemann surface  $f(x,y) =1$.
For example, in the case of $F_{p,q}$, the Riemann surface of $y = \sqrt[q]{1-x^p}$ is given by gluing $q$-copies of $x$-plane with
$p$-branch cuts from $p$-th root of unity to the infinity. 
For the case of $C_{p,q}$, the Riemann surface of $y=\sqrt[q]{\frac{1-x^p}{x}}$ is given by gluing $q$-copies of $x$-plane with
additional branch cut from the origin (for the pole). We refer readers to \cite{CCJ2} for more details.

Milnor fibers above have symmetry groups comming from the diagonal symmetry group of  invertible polynomials: define the maximal symmetry group $G_f$ of $f$ as
$$ G_f = \{ (\lambda, \mu) \in \mathbb{C}^2 \mid f(\lambda x, \mu y) = f(x,y)\}.$$
\begin{lemma}
The maximal symmetry group $G_{f}$ for simple curve singularties  $A_{k}$, $D_{k}$, $E_{6}$, $E_{7}$, and $E_{8}$ are
$$\Z/(k+1)\Z \oplus \Z/2\Z,  \;\; \Z/2(k-1)\Z,\;\; \Z/4\Z \oplus \Z/3\Z, \;\;\Z/9\Z,\mbox{ and } \;\Z/5\Z \oplus \Z/3\Z.$$
\end{lemma}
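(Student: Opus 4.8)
The plan is to reduce everything to the two infinite families $F_{p,q} = x^p + y^q$ and $C_{p,q} = x^p + x y^q$, since the preceding discussion identifies $A_k, D_k, E_6, E_7, E_8$ with $F_{k+1,2}, C_{k-1,2}, F_{4,3}, C_{3,3}, F_{5,3}$ respectively. Write $U_n := \{\zeta \in \C^* \mid \zeta^n = 1\}$ for the group of $n$-th roots of unity. For any such $f$, the equation $f(\lambda x, \mu y) = f(x,y)$ is required to hold identically as a polynomial; it forces $\lambda,\mu$ to be nonzero (otherwise a monomial of $f$ disappears on the left), so $G_f$ is automatically a subgroup of the torus $(\C^*)^2$. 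I would then compare coefficients of the two monomials occurring in $f$, which are linearly independent in each of our cases.

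For $F_{p,q}$ the monomials are $x^p$ and $y^q$, so $f(\lambda x, \mu y) = \lambda^p x^p + \mu^q y^q$ agrees with $x^p + y^q$ precisely when $\lambda^p = 1$ and $\mu^q = 1$. Hence $G_{F_{p,q}} = U_p \times U_q \cong \Z/p\Z \oplus \Z/q\Z$, and specializing $(p,q)$ to $(k+1,2)$, $(4,3)$, $(5,3)$ gives the asserted groups for $A_k$, $E_6$, $E_8$.

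For $C_{p,q}$ the monomials are $x^p$ and $x y^q$, so the condition becomes $\lambda^p = 1$ together with $\lambda \mu^q = 1$. The latter forces $\lambda = \mu^{-q}$, and substituting back into $\lambda^p = 1$ gives $\mu^{pq} = 1$; conversely, for any $\mu \in U_{pq}$ the pair $(\mu^{-q}, \mu)$ satisfies both equations. Thus $\mu \mapsto (\mu^{-q},\mu)$ is an isomorphism $U_{pq} \xrightarrow{\sim} G_{C_{p,q}}$, so $G_{C_{p,q}} \cong \Z/pq\Z$. Specializing $(p,q)$ to $(k-1,2)$ and $(3,3)$ yields $\Z/2(k-1)\Z$ for $D_k$ and $\Z/9\Z$ for $E_7$.

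All of this is elementary, so there is no genuine obstacle; the only point deserving attention is the group-theoretic bookkeeping in the $C_{p,q}$ case — one must observe that $\lambda$ is completely determined by $\mu$ and that $\mu$ ranges over the full cyclic group $U_{pq}$, which is precisely what makes $G_{C_{p,q}}$ cyclic rather than a direct product, in contrast with the $F$-type symmetry groups.
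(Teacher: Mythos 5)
Your proposal is correct, and it is exactly the computation the paper leaves implicit: the lemma is stated without proof, relying on the identification of $A_k, D_k, E_6, E_7, E_8$ with $F_{k+1,2}, C_{k-1,2}, F_{4,3}, C_{3,3}, F_{5,3}$ and a direct comparison of coefficients of the two monomials, which is precisely what you do. Your parametrization of the cyclic group in the $C_{p,q}$ case, $\mu \mapsto (\mu^{-q},\mu)$, is moreover consistent with the explicit generators the paper quotes later, namely $\bigl(e^{2\pi i/(k-1)}, e^{-\pi i/(k-1)}\bigr)$ for $D_k$ and $\bigl(e^{2\pi i/3}, e^{-2\pi i/9}\bigr)$ for $E_7$.
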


In the Fermat case $F_{p,q}$, $\Z/p\Z \oplus \Z/q\Z$ action on the Milnor fiber can be easily seen.
Milnor fiber consists of $p$ copies of $2q$-gon (with $q$-punctures). Then, $\Z/p\Z$ acts by cyclic rotation of $2\pi/p$ of the whole Milnor fiber and
$\Z/q\Z$-action is given by cyclic rotation of $2\pi/q$ of each $2q$-gons.
Note that each edge belongs to two different $2q$-gons and in fact, boundary identification rule is equivalent to the well-definedness of the latter action.
In the case of $D_k$, Milnor fiber has the cyclic $\Z/(2k-2)\Z$-action, where the generator $1$ act by multiplication of $\left(e^{\frac{2\pi  i}{k-1}}, e^{-\frac{\pi  i}{k-1}}\right)$.
In the case of $E_7$, Milnor fiber has the cyclic $\Z/9\Z$-action, where the generator $1$ act by multiplication of $\left(e^{\frac{2\pi  i}{3}}, e^{-\frac{2\pi  i}{9}}\right)$.
If we fix one fundamental domain of $G_f$-action on the Milnor fiber $M$, then $M$ has a tessellation that are labed by the elements of $G_f$.
We refer readers to \cite{CCJ2} for more details.

Geometric monodromy for the Milnor fibers for weighted homogeoneous polynomials are given by the weight action \eqref{eq:wa}, and in our cases, this action is a part of the maximal symmetry group $G_f$. 
\begin{lemma}\label{lem:mo}
The geometric monodromy $\rho$ for a simple curve singularity $f$ is given by the group element $(1,1) \in G_f$ for $A_{k}$, $E_{6}$, and $E_{8}$, by $k\in \Z/(2k-2)\Z$ for $D_k$, and by $7 \in \Z/9\Z$ for $E_{7}$-case.
\end{lemma}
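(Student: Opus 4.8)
The plan is to reduce the statement to an explicit weight computation. Each simple curve singularity $f(x,y)$ is weighted homogeneous, so its geometric monodromy $\rho$ may be taken to be the weight action \eqref{eq:wa}. It therefore suffices to compute the weight $(a_1,a_2;h)$ for each normal form, read off the pair of roots of unity $\big(e^{2\pi i a_1/h},e^{2\pi i a_2/h}\big)$ by which $\rho$ acts on $(x,y)$, and then recognize this pair among the elements of $G_f$ whose action was described just above.

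First I would record the weights. From $f=x^{k+1}+y^2$ one gets $(a_1,a_2;h)=(2,\,k+1;\,2(k+1))$, so $\rho(x,y)=\big(e^{2\pi i/(k+1)}x,\,-y\big)$. From $f=x^{k-1}+xy^2$, solving $(k-1)a_1=h$ and $a_1+2a_2=h$ gives $(a_1,a_2;h)=(2,\,k-2;\,2(k-1))$ and $\rho(x,y)=\big(e^{2\pi i/(k-1)}x,\,e^{\pi i(k-2)/(k-1)}y\big)$. From $f=x^3+y^4$: $(a_1,a_2;h)=(4,\,3;\,12)$ and $\rho(x,y)=\big(e^{2\pi i/3}x,\,e^{\pi i/2}y\big)$. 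From $f=x^3+xy^3$: $(a_1,a_2;h)=(3,\,2;\,9)$ and $\rho(x,y)=\big(e^{2\pi i/3}x,\,e^{4\pi i/9}y\big)$. From $f=x^3+y^5$: $(a_1,a_2;h)=(5,\,3;\,15)$ and $\rho(x,y)=\big(e^{2\pi i/3}x,\,e^{2\pi i/5}y\big)$.

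For the Fermat types $A_k$, $E_6$, $E_8$, the group $G_f$ is a direct product $\Z/p\Z\oplus\Z/q\Z$ whose two factors act by the generating rotation in each coordinate; the weight actions just computed are precisely these two generating rotations, hence coincide with $(1,1)\in G_f$, as claimed. For $D_k$ and $E_7$, where $G_f$ is cyclic, I would instead match a power of the generator with the weight action. For $D_k$, writing $g$ for the generator of $\Z/(2k-2)\Z$, which acts by $(x,y)\mapsto\big(e^{2\pi i/(k-1)}x,\,e^{-\pi i/(k-1)}y\big)$, the identity $g^m=\rho$ forces $m\equiv 1\pmod{k-1}$ from the first coordinate and then $m\equiv k\pmod{2(k-1)}$ from the second, whence $\rho=k\in\Z/(2k-2)\Z$. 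For $E_7$, the generator of $\Z/9\Z$ acts by $\big(e^{2\pi i/3}x,\,e^{-2\pi i/9}y\big)$, and $g^m=\rho$ forces $m\equiv 1\pmod 3$ and $m\equiv 7\pmod 9$, so $\rho=7\in\Z/9\Z$.

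The only point requiring care — and hence the main obstacle — is the cyclic bookkeeping in the $D_k$ and $E_7$ cases: one must verify that a \emph{single} exponent $m$ matches the weight action in \emph{both} coordinates simultaneously (a Chinese-remainder compatibility that would fail for a wrong choice of generator), and that the orientation convention fixing the generator of $G_f$ agrees with the counter-clockwise convention built into \eqref{eq:wa}. I would also record explicitly that every element of $G_f$, being a symmetry of $f$ preserving the ball, restricts to a diffeomorphism of the Milnor fiber $\{f=1\}\cap\overline{B}_R(0)$, so that these identifications hold at the level of the geometric monodromy $\rho$ itself and not merely of the induced homological operator $\rho_*$.
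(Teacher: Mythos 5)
Your proposal is correct and follows exactly the route the paper intends: the lemma is stated right after the observation that the geometric monodromy is the weight action \eqref{eq:wa} and that this action lies in $G_f$, and your weight computations and the matching against the generator actions $\big(e^{2\pi i/(k-1)},e^{-\pi i/(k-1)}\big)$ for $D_k$ and $\big(e^{2\pi i/3},e^{-2\pi i/9}\big)$ for $E_7$ reproduce precisely the exponents $k$ and $7$ claimed there. The congruence bookkeeping in the cyclic cases checks out ($-(k-2)\equiv k \pmod{2k-2}$ and $-2\equiv 7 \pmod 9$), so no gap remains.
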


\section{Coxeter wheel from the Milnor fiber} \label{sec:3}
In this section, we define the Coxeter wheel $W \subset \mathbb{R}^2$ by rearranging Milnor fiber of each simple singularity. 
Later, the edges and spokes on this wheel (up to equivalence) will correspond to roots.


The wheel is easy to describe for the cases of $A_k$ and $D_k$.
\begin{itemize}
\item $A_k$-wheel:
Recall that $A_k$-Milnor fiber $M$ is obtained from the regular $(2k+2)$-gon  by identifying boundary edges,
and there are $(k+1)$ punctures at every other vertices. We consider a regular $(k+1)$-gon in $M$ whose vertices are given by these $(k+1)$-punctures. 
This is the wheel $W$. Note that $A_k$-wheel is only a half of the Milnor fiber $M$ in this case,
and geometric monodromy $\rho$ takes the wheel to the other half.

\item $D_k$-wheel: The $D_k$-Milnor fiber $M$ itself is a wheel $W$ in some sense.  Using the description in Theorem \ref{thm:M1}, the regular $(2k-2)$-gon with a puncture at the origin and boundary vertices is the wheel $W$. 
The Milnor fiber $M$ can be obtained by gluing opposite sides of $D_k$-wheel.
The geometric monodromy (in Lemma \ref{lem:mo}) on $D_k$-wheel is given by the counterclockwise rotation of the wheel by $\frac{k}{k-1}\pi$.
\end{itemize}

The construction of the wheel for exceptional cases are more elaborate.
In the rest of the section, we will describe a construction of the $E$-type wheels that proves Theorem \ref{Etype wheel}.

\subsection{$E_{6}$-singularity}
We decompose the Milnor fiber of $E_6$-singularity  as in Figure \ref{fig:E6domain2} (c.f., Figure \ref{fig:E67Mil} (A)).
\begin{figure}[h]
\includegraphics[scale=0.8]{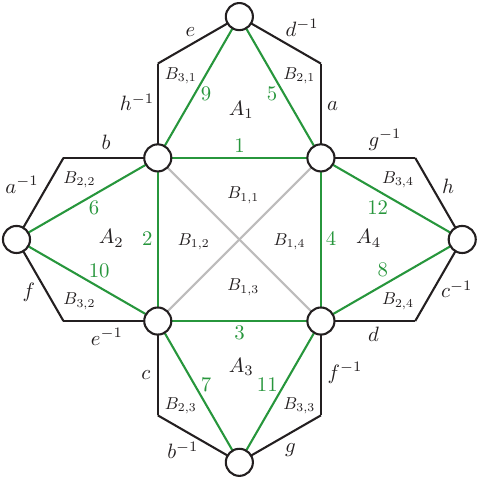}
\centering
\caption{Another decomposition of $E_{6}$-Milnor fiber.}
\label{fig:E6domain2}
\end{figure}
Namely, this cuts $M$ into the 4  triangles $A_{1},\dots,A_{4}$ and 12 triangles  $\{B_{i,j}\}$ for $1 \leq i \leq 3$ and $1\leq j \leq 4$. 
Note that 4 triangles $\{B_{i,1}, B_{i,2}, B_{i,3}, B_{i,4}\}$ are glued (in $M$) to form a 4-gon, which we call $B_i$ for $i=1,2,3$.
See Figure \ref{fig:E6pieces}.  Also note that $B_1$ is the square in the center of Figure  \ref{fig:E6domain2}.
\begin{figure}[h]
\includegraphics[scale=0.8]{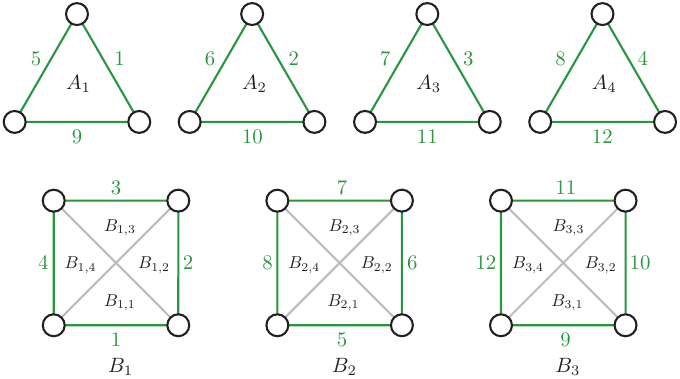}
\centering
\caption{Four regular triangles and three squares consisting of $E_{6}$-Milnor fiber.}
\label{fig:E6pieces}
\end{figure}
In fact, we have chosen an embedding of these pieces in $\mathbb{R}^2$ in Figure \ref{fig:E6pieces}  such that each $A$-triangle is a regular triangle  and
each $B_{ij}$ is a  right isosceles triangle. 

Now, the wheel $W$ is defined by gluing 3 copies of each $A$-triangles and 2 copies of each $B$-triangles in a particular way
as described in Figure \ref{fig:E6Milnor}. Each piece in Figure \ref{fig:E6Milnor} is obtained from the one in Figure \ref{fig:E6pieces} modulo rotation and translation.
\begin{figure}[h]
\includegraphics[scale=0.5]{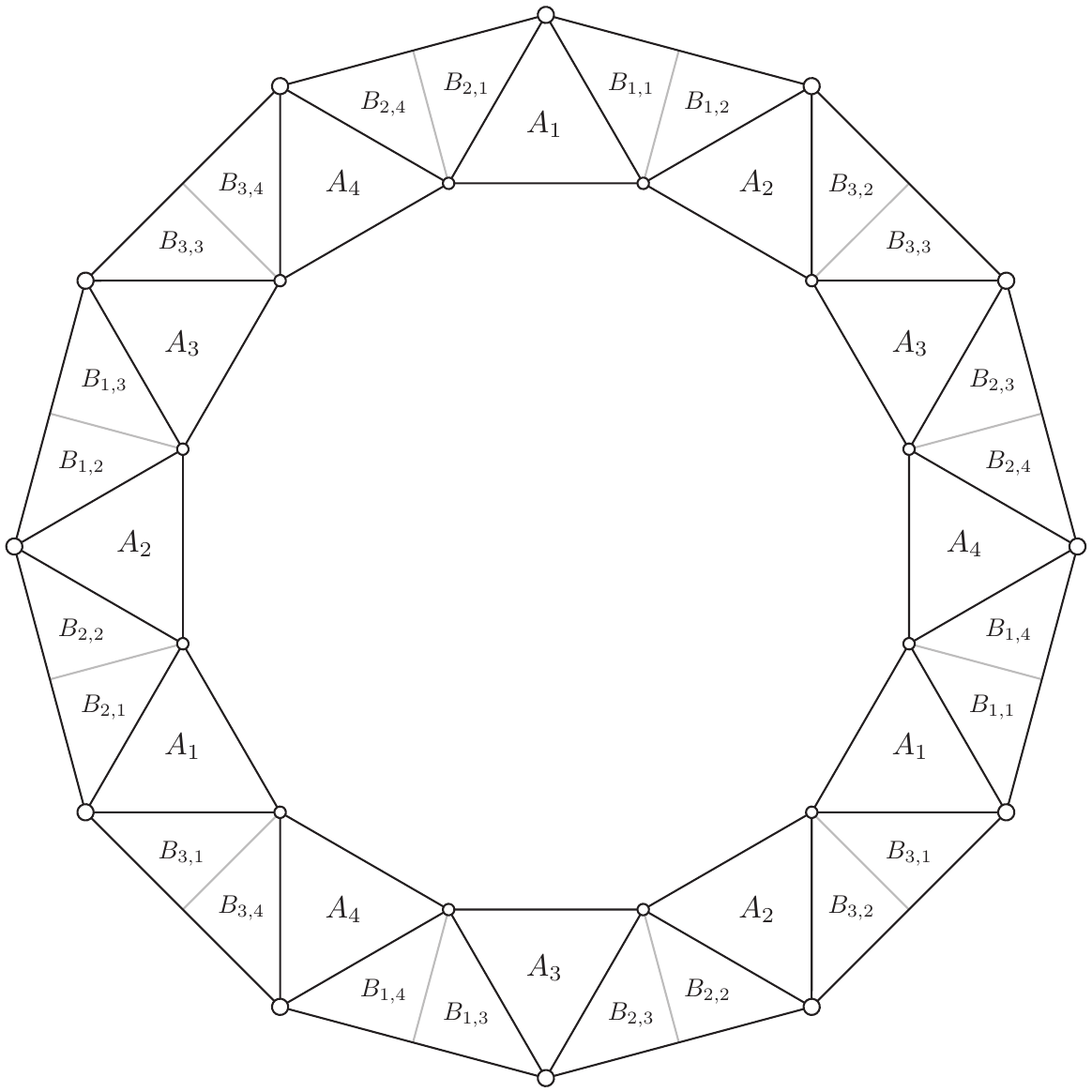}
\centering
\caption{The wheel $W$ for the $E_{6}$-singularity.}
\label{fig:E6Milnor}
\end{figure}
Construction of $W$ can be seen in Figure \ref{fig:E6domain2} as follows.
We start from the union of triangles $A_1, B_{1,1}, B_{1,2}$. These three shares a vertex and form a 4-gon in $M$. 
Adjacent to the last triangle $B_{1,2}$, we have $A_2$ which is then adjacent to $B_{3,2}$.  
 From the gluing rule, $B_{3,2}$ is adjacent to $B_{3,3}$ in $M$. Note that the triangles $B_{1,2}, A_2, B_{3,2}$ shares a vertex
 and so does the triple $A_2, B_{3,2}, B_{3,3}$. These give a part of the wheel consisting of
 $$A_1, B_{1,1}, B_{1,2}, A_2, B_{3,2}, B_{3,3}.$$
 After continuing in this way, we return to the original  triple $A_1, B_{1,1}, B_{1,2}$ forming the  wheel $W$.

Thus wheel is a region bounded by two 12-gons, and one can check that the projection map $\pi: W \to M$ is continuous and surjective.
In fact, we may also consider a bigger wheel $\widetilde{W}$ so that the projection map $\widetilde{\pi}:\widetilde{W} \to M$ becomes a 3-fold covering.
Namely, for each $A$-triangles in the wheel $W$, the interior edge is adjacent to a $B$-triangle and we can attach these 12 $B$-triangles to $W$ and obtain $\widetilde{W}$.

Alternatively the Milnor fiber decomposes into four $\Z/3$-invariant hexagon with one $A$-triangle and 3 adjacent $B$-triangles (realized as right isosceles) and $\widetilde{W}$ is
the union of 12 hexagons, obtained by gluing 3 copies of each hexagons in a particular pattern. 
We choose to work with $W$ instead of $\widetilde{W}$ since all the vertices of $W$ are punctures unlike $\widetilde{W}$.

Recall that the monodromy of Milnor fiber is the same as the group action by $(1,1) \in \Z/4\Z \oplus \Z/3\Z$.
We now show that this can be identified with  the counterclockwise rotation of $W$ by $\frac{6+1}{12} \times 2\pi$.

We find that $(1,1)$ action takes $A_j$ to $A_{j+1}$ 
and takes $B_{1,j}$ to $B_{2,j-1}$, $B_{2,j}$ to $B_{3,j+1}$, $B_{3,j}$to $B_{1,j-1}$ where indices are taken modulo 4.
Then one can check that this can be identified with  the counterclockwise rotation of $W$ by $\frac{7}{6}\pi$.
%
%
%
%

\subsection{$E_{7}$-singularity}
We decompose the  $E_7$-Milnor fiber (in Figure \ref{fig:E67Mil} (B)) as in Figure \ref{fig:E7domain2} (A)
into 9 triangles $A_{1,1},\dots, A_{3,3}$ and another 9 triangles $B_{1,1},\dots, B_{1,9}$.
Note that under the gluing rule, three $A$-triangles $A_{i,1}$, $A_{i,2}$, $A_{i,3}$ glue to a triangle $A_i$ for each $i=1,2,3$ and 9 $B$-triangle glues to a punctured 9-gon.
Also note that each $B_{1,j}$ is adjacent to only one $A$-triangle.

\begin{figure}[h]
\begin{subfigure}[t]{0.42\textwidth}
\includegraphics[scale=0.65]{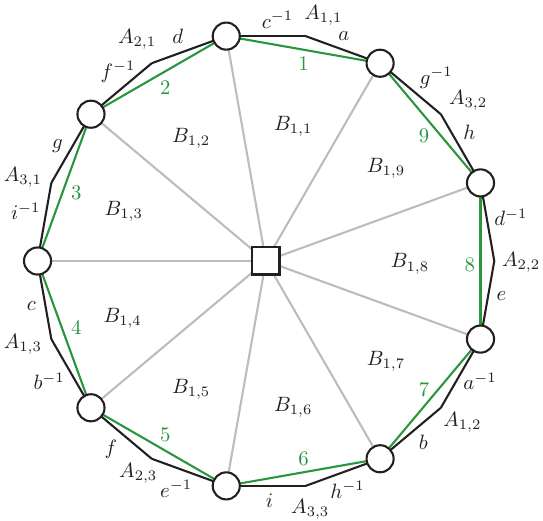}
\centering
\caption{Another decomposition of $E_{7}$-Milnor fiber.}
\end{subfigure}
\begin{subfigure}[t]{0.53\textwidth}
\includegraphics[scale=0.7]{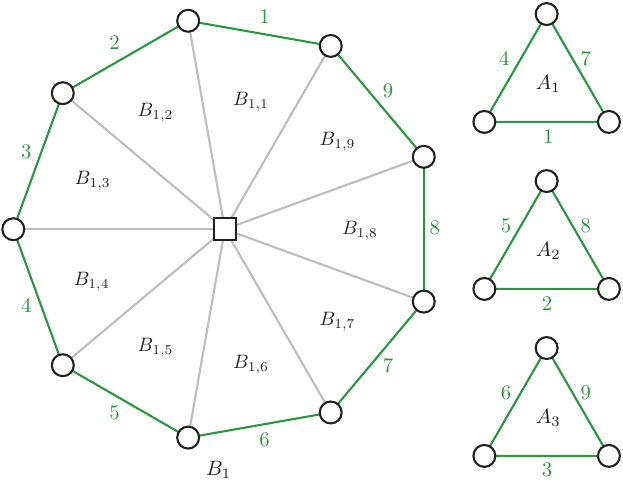}
\centering
\caption{One regular $9$-gon and three squares consisting of $E_{7}$-Milnor fiber.}
\end{subfigure}
\centering
\caption{Milnor fiber of $E_{7}$-singularity.}
\label{fig:E7domain2}
\end{figure}

\begin{figure}[h]
\includegraphics[scale=0.5]{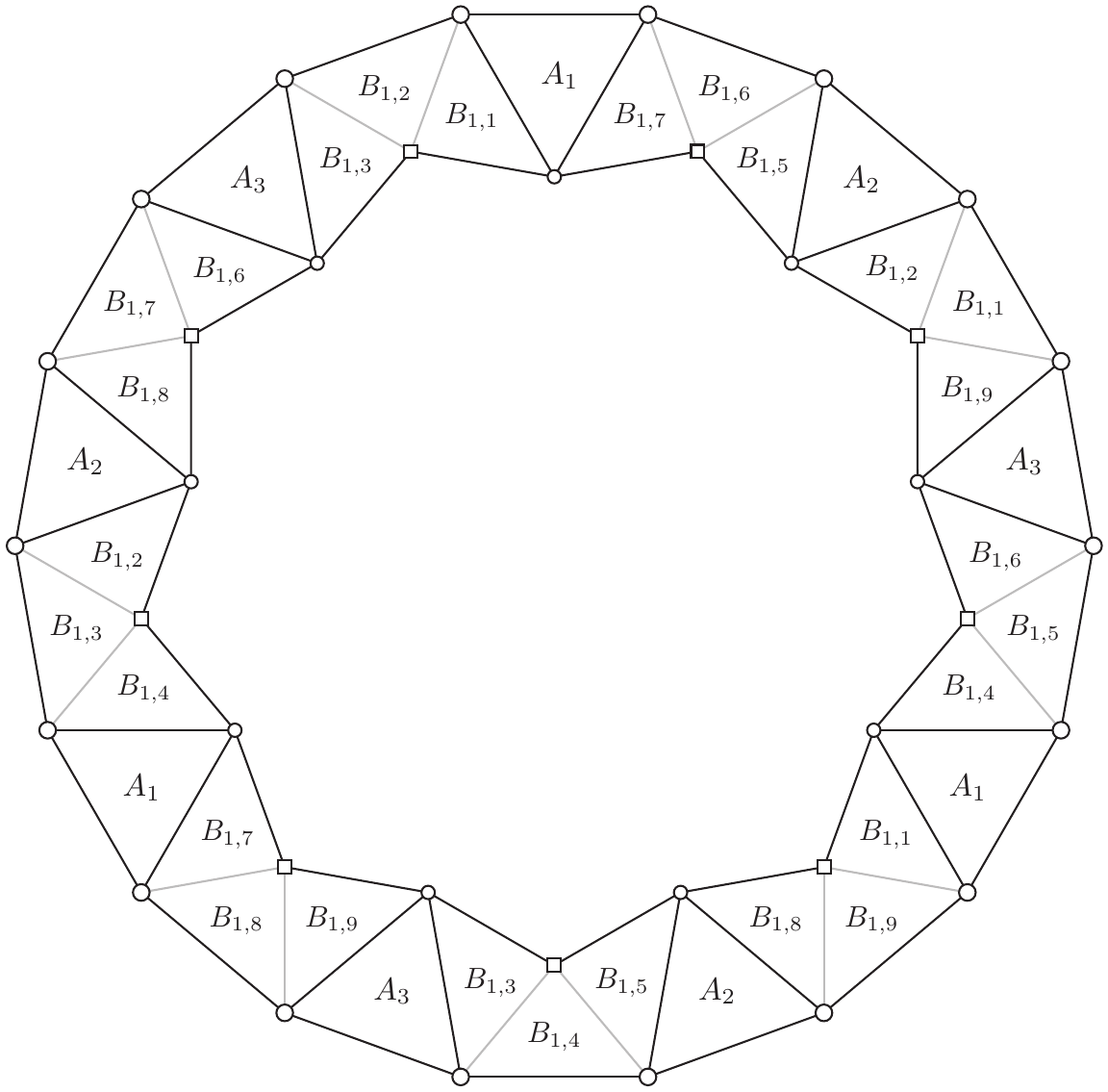}
\centering
\caption{The wheel $W$ for the $E_{7}$-singularity.}
\label{fig:E7Milnor}
\end{figure}

%
To make the wheel, we first choose  embeddings of isosceles triangles $B_{1,j}$ in $\mathbb{R}^2$ so that its interior angles are $80\degree, 50\degree, 50\degree$.  
(This is {\em not} the angles of $B_{1,j}$ drawn in Figure \ref{fig:E7domain2} (B) which are $40\degree, 70\degree,70\degree$.)

Then the wheel in Figure \ref{fig:E7Milnor} can be constructed as follows.
Start with 3 consecutive $B$-triangles (with the above angles), say $B_{1,1}, B_{1,2}, B_{1,3}$ and continue with the unique $A$-triangle $A_3$ attached with $B_{1,3}$.
From $A_3$, we can continue with 3 consecutive $B$-triangles $B_{1,6}, B_{1,7}, B_{1,8}$ and the last one meets the $A_2$. Continuing in this way, we come back to
the original configuration $B_{1,1}, B_{1,2}, B_{1,3}$ and this gives the polygonal wheel. Angles are chosen to make this work.

The wheel is a region bounded by two 18-gons, and  the projection map to the Milnor fiber  $W \to M$ is a 3-fold covering.
Given an $A$ or $B$ triangle, its preimage has 3 isomorphic copies that are identified by translations in $\mathbb{R}^2$.

We show that the geometric monodromy of $E_7$-Milnor fiber lifts to the counterclockwise rotation of $18$-gon by $\frac{9+1}{18}\times 2\pi$.
Recall that $7 \in \Z/9\Z$ for $E_{7}$-case gives the monodromy action. Hence one can check that 
it sends $A_{i}$ to $A_{i+1}$ (modulo $3$) and $B_{1,j}$ to $B_{1,j+7}$ (modulo $9$).
On the wheel this is  the counterclockwise rotation of $W$ by $\frac{10}{9}\pi$.
%
%
%
 
\subsection{$E_{8}$-singularity}
We decompose the Milnor fiber $M$ of $E_8$-singularity  as in Figure \ref{fig:E8domain2}.

\begin{figure}[h]
\includegraphics[scale=0.8]{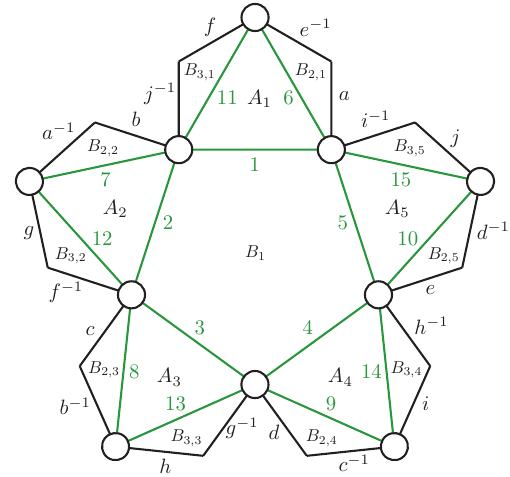}
\centering
\caption{Another decomposition of $E_{8}$-Milnor fiber.}
\label{fig:E8domain2}
\end{figure}
 
Namely, this cuts $M$ into 5  triangles $A_{1},\dots,A_{5}$, a pentagon $B_1$ and 10 triangles $\{B_{i,j}\}$.
Note that 5 triangles $\{B_{i,1},\dots, B_{i,5}\}$ are glued (in $M$) to form a 5-gon, which we call $B_i$ for $i=2,3$. Hence we have 3 pentagons $B_1,B_2,B_3$. 
We choose an embedding of $A$-triangles as regular triangles and $B$-pentagons as regular pentagons as in Figure \ref{fig:E8pieces}.

 \begin{figure}[h]
\includegraphics[scale=0.8]{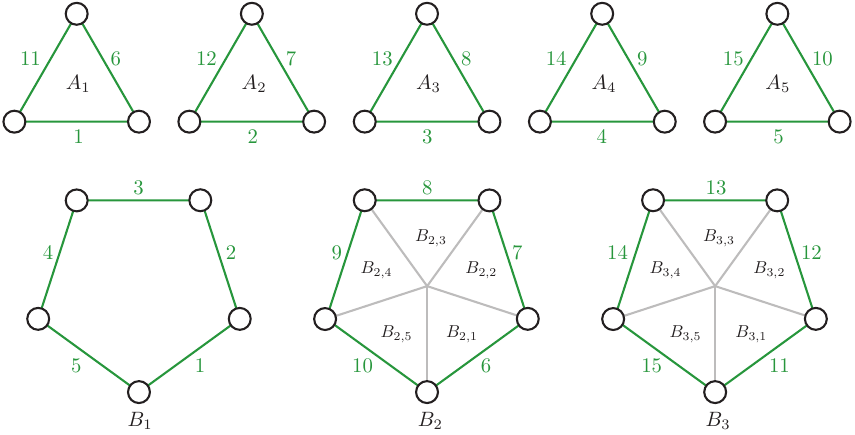}
\centering
\caption{5 regular triangles and 3 regular pentagons consisting of $E_{8}$-Milnor fiber.}
\label{fig:E8pieces}
\end{figure}
The wheel $W$ is constructed by gluing $A_1, A_3, A_5, A_2, A_4$ triangles and $B_1, B_3, B_2$-pentagons in an alternating pattern as in Figure \ref{fig:E8Milnor}.
The wheel is a region bounded by two 30-gons.
Each $A$-triangle is used 3 times and each $B$-pentagon 5 times. Thus the projection map $\pi:W \to M$ is not a covering map.
We remark that  we can extend $W$ to $\widetilde{W}$ by attaching $1/3$ of $A$-triangles to the interior boundary edges of the wheel.
Then, $\widetilde{W}$ after identifying corresponding parallel boundary edges become a 5-fold covering of $M$.

\begin{figure}[h]
\includegraphics[scale=0.5]{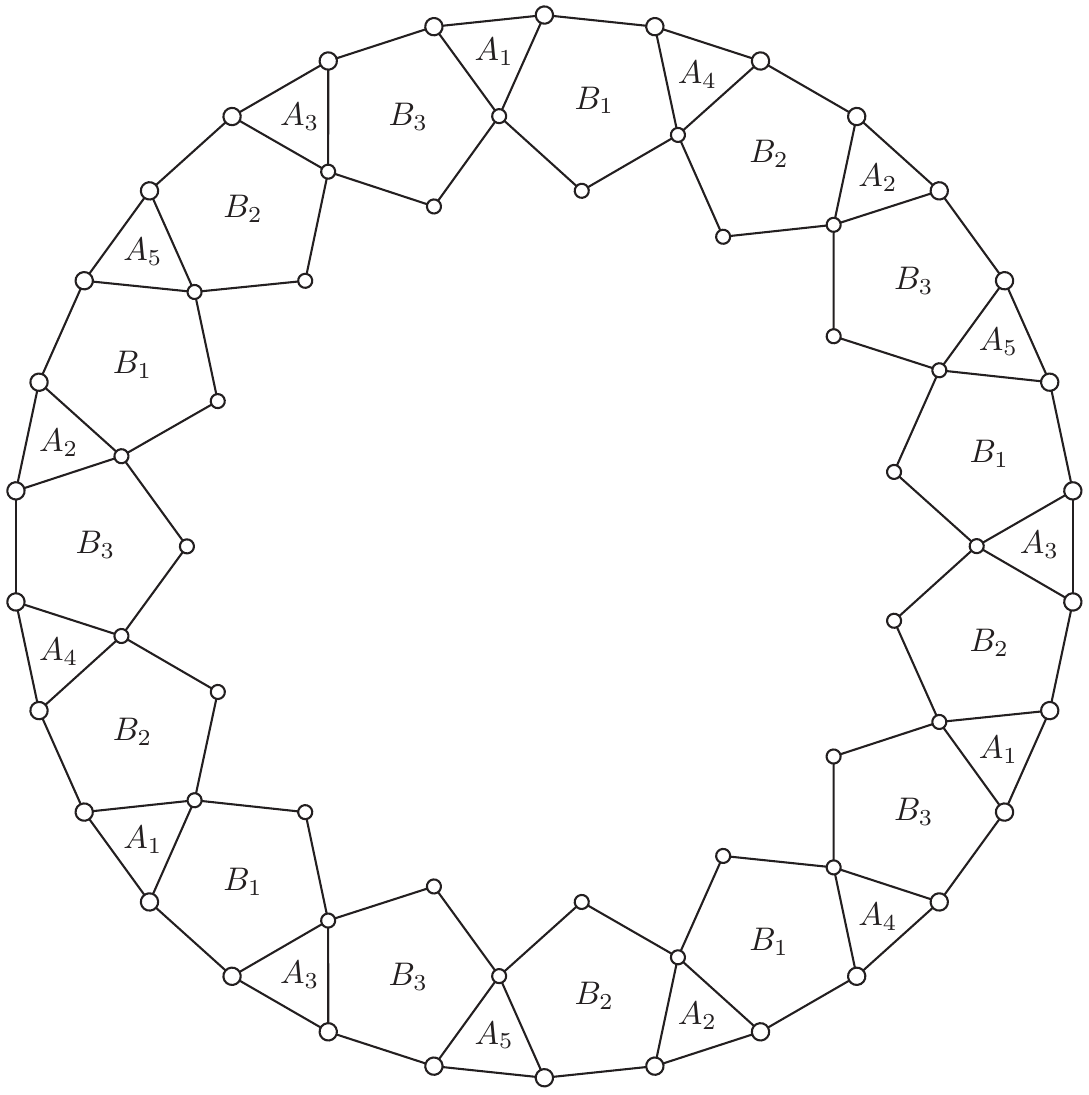}
\centering
\caption{The wheel $W$ for the $E_{8}$-singularity.}
\label{fig:E8Milnor}
\end{figure}
In this case, the geometric monodromy of Milnor fiber is the group action by $(1,1) \in \Z/5\Z \oplus \Z/3\Z$. It sends $A_{j}$ to $A_{j+1}$ (modulo $5$) and $B_{i}$ to $B_{i+1}$ (modulo $3$).
The geometric monodromy of $E_{8}$-Milnor fiber can be lifted to the counterclockwise rotation of the wheel by $\frac{15+1}{30} \times 2\pi = \frac{16}{15}\pi$.
%
%
%

\subsection{Wheels and Coxeter planes}
Coxeter plane of a complex simple Lie algebra $\mathfrak{g}$ is a certain real two dimensional plane in $\mathfrak{h}^*$
with the dots given by orthogonal projection of roots. Coxeter called it the most symmetric projection of roots to a plane.
On the Coxeter plane, a choice of  Coxeter element acts on the plane as the rotation by $\frac{2 \pi}{h}$ ($h$ is the Coxeter number). 
We refer readers to Appendix B of \cite{GH19} and references therein. 

Surprisingly, there exists an interesting relation between the wheel and the Coxeter planes.
Namely, we can place our $E_{6}$, $E_{7}$, and $E_{8}$-wheels on the Coxeter planes so that vertices are on the root projections!
See Figure \ref{fig:E67Coxeter} and Figure \ref{fig:E8Coxeter}.
From this observation, we will call these wheels as Coxeter wheels.

 
Often, root projections in the Coxeter planes are decorated with line segments between them.
The edges in those figures join pairs of roots that are nearest neighbors, 
i.e., two projections of roots $\alpha$ and $\beta$ are connected by a line segment when $\alpha-\beta$ is also in the root system \cite{WangZhao}, \cite{Stembridge}.
The root projection images of $E$-type root systems available on the webpage \cite{Stembridge} demonstrate a large number of edges.
Later, we will find that geometric roots are realized as edges and spokes lying on the wheels. 
Given that our wheel diagram is aligned with the Coxeter plane, interpreting the edges and spokes as roots is quite natural.

\begin{figure}[h]
\begin{subfigure}{0.47\textwidth}
\includegraphics[scale=0.35]{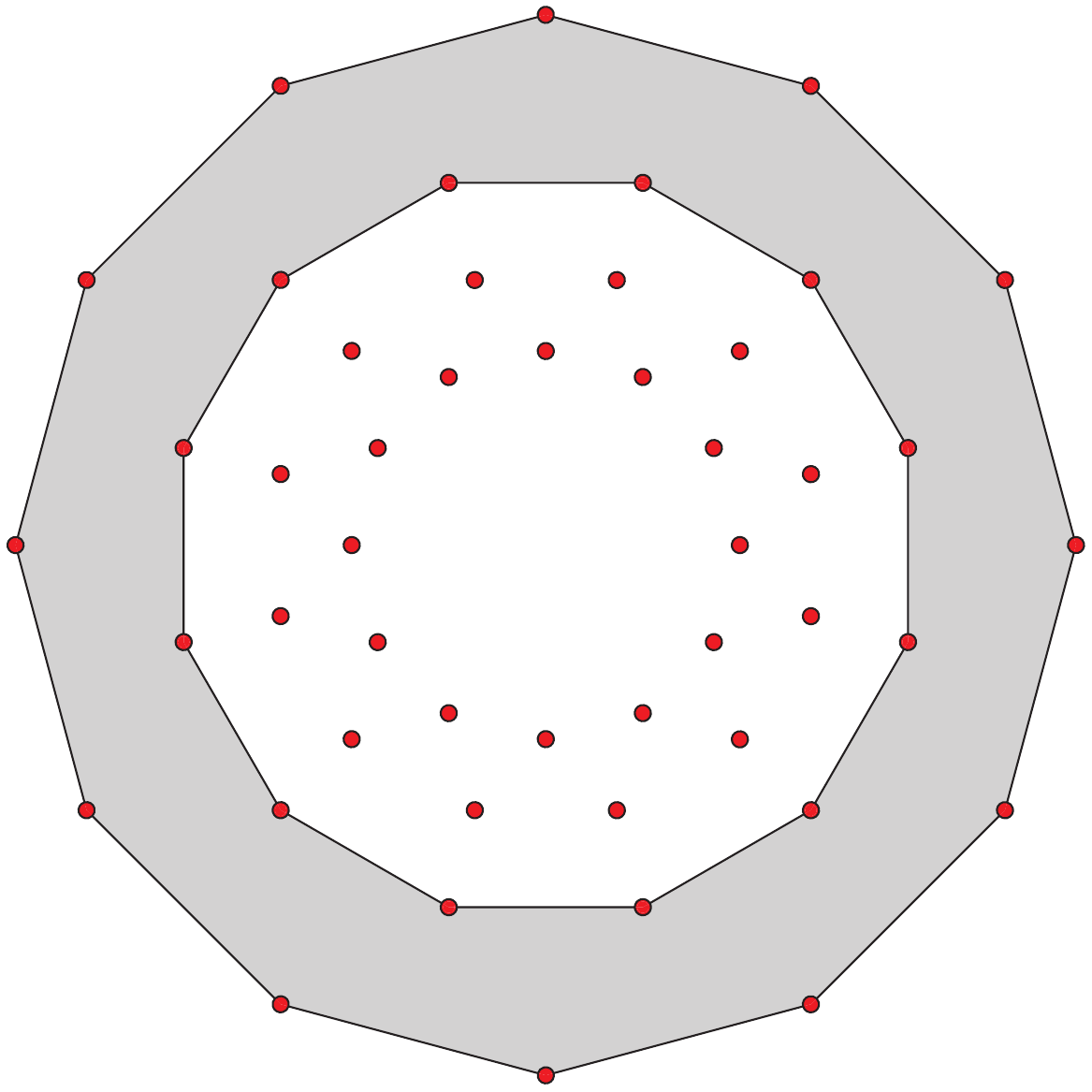}
\centering
\caption{Coxeter wheel for $E_{6}$.}
\end{subfigure}
\begin{subfigure}{0.47\textwidth}
\includegraphics[scale=0.35]{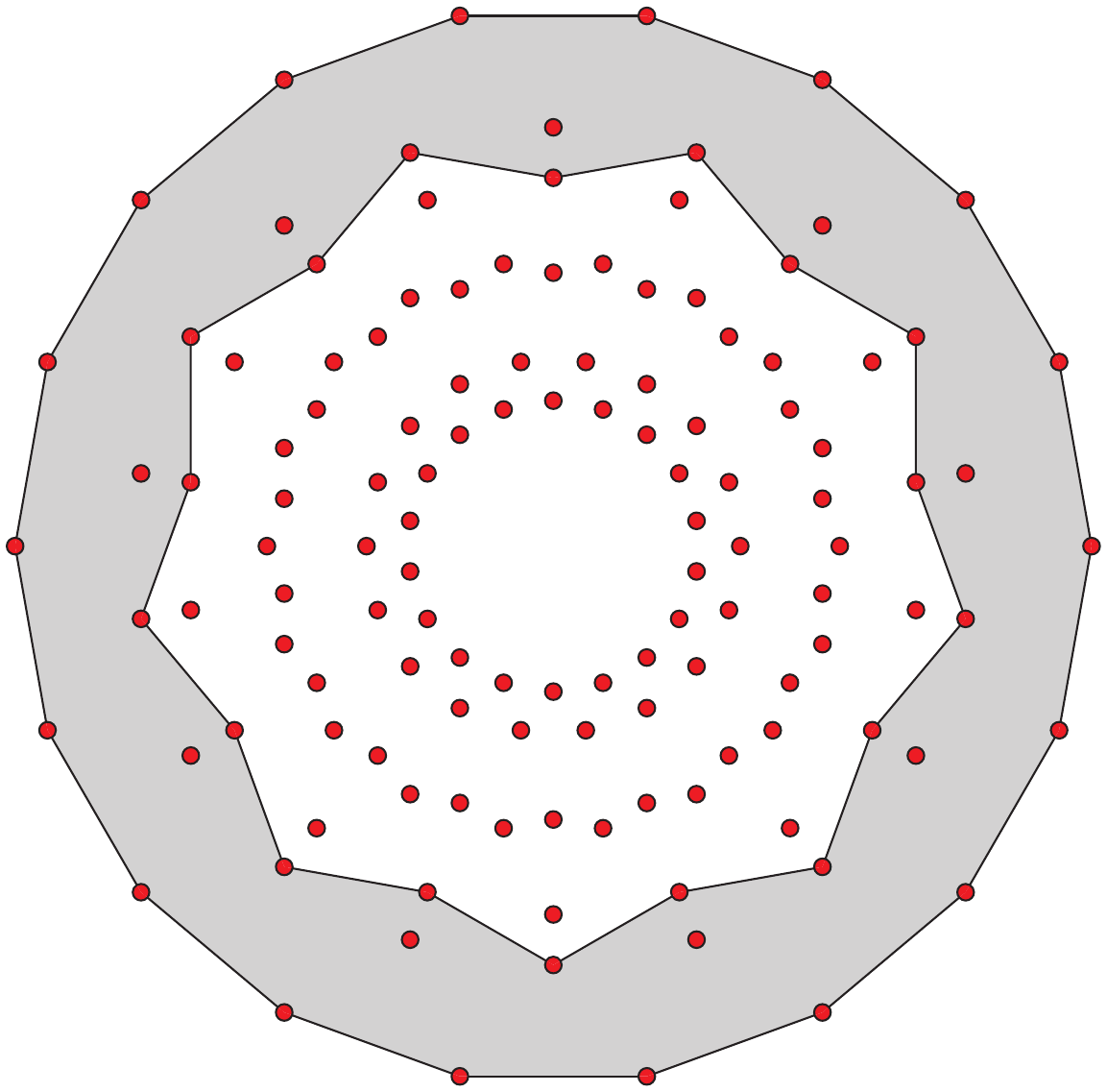}
\centering
\caption{Coxeter wheel for $E_{7}$.}
\end{subfigure}
\centering
\caption{Coxeter wheels.}
\label{fig:E67Coxeter}
\end{figure}

\begin{figure}[h]
\includegraphics[scale=0.35]{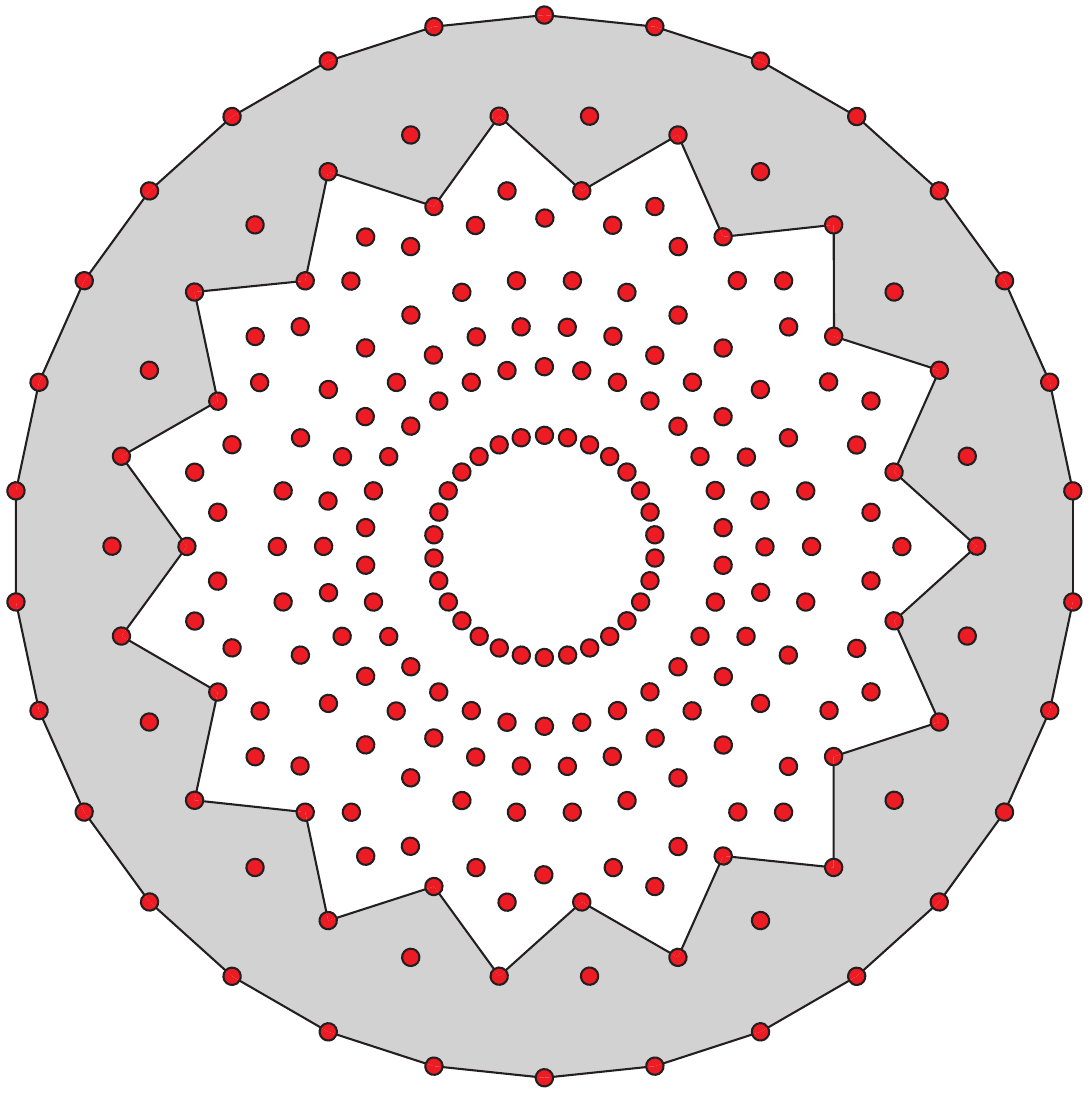}
\centering
\caption{Coxeter wheel for $E_{8}$.}
\label{fig:E8Coxeter}
\end{figure}

%
%

\section{Geometric root systems} \label{sec:4}
Let $f(x,y)$ be a simple curve singularity of type $\Gamma=A_k,\,D_k,\,E_6,\,E_7, E_8$ and $M$ be its Milnor fiber.
We have a bilinear form $(\cdot,\cdot)$ on $H_1(M,\partial M;\Z)$ given by the negative symmetrized Seifert form
in Definition \ref{defn:S}.  
Theorem \ref{geometric roots} claims that the set of geometric roots defined by 
$$\Phi_\Gamma := \{\alpha\in H_1(M,\partial M;\Z) \;|\;  (\alpha,\alpha)=2 \} $$ 
forms a root system.   We first recall the definition of an abstract root system.

\begin{defn}\label{defi: Root system}
Let $E$ be a Euclidean space with an inner product $(\cdot, \cdot)$.
For $\alpha, \beta \in E$, let us denote by $ \langle \beta, \alpha \rangle  = 2 \frac{(\beta,\alpha)}{(\alpha,\alpha)}$. 
A subset $\Phi \subset E$  is called a root system if the following axioms are satisfied.
\begin{enumerate}
\item $\Phi$ is finite, spans $E$, and does not contain $0$.
\item If $\alpha \in \Phi$, then the only multiples of $\alpha$ in $\Phi$ are $\pm \alpha$.
\item If $\alpha  \in \Phi$, then the reflection map $s_\alpha:\beta \to \beta - \langle \beta, \alpha \rangle \alpha$ leaves $\Phi$ invariant.
\item If $\alpha,\beta \in \Phi$, then $ \langle \beta, \alpha \rangle \in \mathbb{Z}$.
\end{enumerate}
\end{defn}
 
\subsection{Proof of Thm \ref{geometric roots}}
We first show that in the Eulclidean space $E = H_1(M,\partial M;\Z)\otimes \R$, a subset $\Phi_\Gamma$ of $E$ with  $(\cdot,\cdot)$  satisfies the axioms of a root system.
Note that axioms (2), (3), and (4) follow directly from our definition of $\Phi_\Gamma$ and standard computations.
It remains to show that the first axiom holds and that $(\cdot,\cdot)$ is an inner product defined by the  standard Cartan matrix of type $\Gamma$.

In order to show this, we will find a `geometric simple basis' of $H_1(M,\partial M;\Z)$ so that the bilinear form $(\cdot,\cdot)$ is
represented by the standard Cartan matrix of type $\Gamma$. 
\begin{prop}\label{cartan matrix}
    For each type of simple singularity $f$, there exists a basis $\{\alpha_1,\dots,\alpha_k\}$  of $H_1(M,\partial M;\Z) \cong \Z^k$ such that
    the matrix $\big(B_{ij} \big) = \big( \mathrm{var}(\alpha_i) \bullet \alpha_j\big)$ is of the following form. In particular, the basis consists of
    geometric roots (i.e., $(\alpha_i,\alpha_i)=2$ for all $i$).

\begin{tabular}{p{5em} p{18em} p{5em} p{18em}}
(a) $A_{k}$ & $ \begin{pmatrix}
                1 & -1 & 0 & 0 & & & \\
                0 & 1 & -1 & 0 & &\dots &  \\
                0 & 0 & 1 & -1 & & &  \\
                & &  & \ddots & \ddots & & \\
                & \vdots &  & & 1  & -1  & 0  \\
                & &  & &  & 1  & -1  \\
                & &  &  &  &  & 1
            \end{pmatrix}$
& (b) $D_{k}$ & $ \begin{pmatrix}
                1 & 0 & -1 & 0 & & & \\
                0 & 1 & -1 & 0 & &\dots &  \\
                0 & 0 & 1 & -1 & & &  \\
                & &  & \ddots & \ddots & & \\
                & \vdots &  & & 1  & -1  & 0  \\
                & &  & &  & 1  & -1  \\
                & &  &  &  &  & 1
            \end{pmatrix}$
\end{tabular} \vspace{5mm}

\begin{tabular}{p{6.5em} p{16.5em} p{5em} p{18em}}
(c) $E_{6}$ & $\begin{pmatrix}
                1 & -1 & 0 & 0 & 0 & 0 \\
                0 & 1 & 0 & 0 & -1 & 0 \\
                0 & 0 & 1 & -1 & 0 & 0 \\
                0 & 0 & 0 & 1 & -1 & 0 \\
                0 & 0 & 0 & 0 & 1 & -1  \\
                0 & 0 & 0 & 0 & 0 & 1
            \end{pmatrix}$ &
(d) $E_{7}$ & $\begin{pmatrix}
                1 & 0 & 0 & 0 & -1 & 0 & 0 \\
                0 & 1 & -1 & 0 & 0 & 0 & 0 \\
                0 & 0 & 1 & -1 & 0 & 0 & 0 \\
                0 & 0 & 0 & 1 & -1 & 0 & 0 \\
                0 & 0 & 0 & 0 & 1 & -1 & 0 \\
                0 & 0 & 0 & 0 & 0 & 1 & -1 \\
                0 & 0 & 0 & 0 & 0 & 0 & 1
            \end{pmatrix}$
\end{tabular} \vspace{3mm}

\begin{center}
\begin{tabular}{p{5em} p{18em}}
(e) $E_{8}$ & $\begin{pmatrix}
                1 & 0 & 0 & -1 & 0 & 0 & 0 & 0 \\
                0 & 1 & -1 & 0 & 0 & 0 & 0 & 0 \\
                0 & 0 & 1 & -1 & 0 & 0 & 0 & 0 \\
                0 & 0 & 0 & 1 & -1 & 0 & 0 & 0 \\
                0 & 0 & 0 & 0 & 1 & -1 & 0 & 0 \\
                0 & 0 & 0 & 0 & 0 & 1 & -1 & 0 \\
                0 & 0 & 0 & 0 & 0 & 0 & 1 & -1 \\
                0 & 0 & 0 & 0 & 0 & 0 & 0 & 1
            \end{pmatrix}$
\end{tabular}
\end{center}

\end{prop}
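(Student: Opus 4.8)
The plan is to produce, in each of the five cases, an explicit list of arcs $\alpha_1,\dots,\alpha_k$ in $M$ (realised as oriented edges and spokes of the Coxeter wheel of Section~\ref{sec:3}), to compute the integer matrix $B=\big(\var(\alpha_i)\bullet\alpha_j\big)$ directly in the polygonal model, and to conclude from the shape of $B$ both that the $\alpha_i$ form a $\Z$-basis and that the geometric pairing is the Cartan matrix of type $\Gamma$. Granting such a basis, Theorem~\ref{geometric roots} follows at once: the Gram matrix of the pairing $(\cdot,\cdot)$ of Definition~\ref{defn:S} in this basis is $B+B^{t}$, and one checks by inspection that in every case $B+B^{t}$ is precisely the standard Cartan matrix of $\Gamma$ (the off-diagonal $-1$'s of $B$ encode the edges of the Dynkin diagram with the indicated labelling, the diagonal $1$'s give $(\alpha_i,\alpha_i)=2$). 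Since Cartan matrices are positive definite, $\Phi_\Gamma$ is then the finite set of squared-length-$2$ vectors of a lattice isometric to the $\Gamma$ root lattice, and axioms (1)--(4) of Definition~\ref{defi: Root system} hold.

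I would obtain the basis property as a formal consequence of the computation of $B$, rather than verifying it separately. The variation operator $\var\colon H_1(M,\partial M;\Z)\to H_1(M;\Z)$ is an isomorphism of free $\Z$-modules (Theorem~2.2 of \cite{AGV2}), and by Poincaré--Lefschetz duality the intersection pairing $H_1(M,\partial M;\Z)\otimes H_1(M;\Z)\to\Z$ is perfect; hence the bilinear form $(\alpha,\beta)\mapsto\var(\alpha)\bullet\beta$ on $H_1(M,\partial M;\Z)$ is unimodular (up to transpose it is the Seifert form, cf.\ Theorem~\ref{thm:SA}). Each displayed $B$ is upper triangular with $1$'s on the diagonal, so $\det B=1$; therefore the rows of $B$ are $\Z$-linearly independent, which forces the $\alpha_i$ to be linearly independent, and a full-rank sublattice on which a unimodular form restricts to a unimodular form has index $1$. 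Thus $\{\alpha_1,\dots,\alpha_k\}$ is automatically a $\Z$-basis of $H_1(M,\partial M;\Z)$, and everything is reduced to choosing the arcs and evaluating $\var(\alpha_i)\bullet\alpha_j$.

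The heart of the argument is this evaluation, carried out in the polygonal model of Theorem~\ref{thm:M1} (equivalently on the wheel $\pi\colon W\to M$). For $A_k$ one takes $\alpha_i$ to be the $i$-th oriented boundary edge of the $(k+1)$-gon wheel, joining the puncture $v_i$ to $v_{i+1}$; for $D_k$ a suitable mixture of spokes through the central puncture and outer edges of the $2(k-1)$-gon; for $E_6,E_7,E_8$ the distinguished edges and spokes of the wheels depicted in Figures~\ref{fig:E6Milnor}, \ref{fig:E7Milnor}, \ref{fig:E8Milnor}. By Lemma~\ref{lem:mo} the geometric monodromy $\rho$ is the weight action, i.e.\ a Euclidean rotation of the wheel, and the boundary-trivialised monodromy $\rho'$ is obtained from $\rho$ by an untwisting supported in collars of $\partial M$ along the weight flow; in particular $\rho'$ fixes the endpoints of each $\alpha_i$, so $\var(\alpha_i)=[\rho'(\alpha_i)-\alpha_i]$ is represented by an explicit closed loop (the rotated edge, closed up through the collar by the reverse of $\alpha_i$). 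One then reads off $\var(\alpha_i)\bullet\alpha_j$ as the signed count of transverse intersections of this loop with the arc $\alpha_j$, with the chosen orientations and respecting the polygon identifications; doing this case by case produces the five matrices.

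I expect the bookkeeping for the $E$-wheels to be the main obstacle: since $\pi$ is (close to) a $3$-, $3$-, or $5$-fold cover, tracking how the rotation-plus-untwist carries a given edge or spoke through the many triangular and polygonal pieces, and then recognising the resulting loops and arcs in $H_1$ modulo the gluings, is delicate even though the individual intersection numbers are all $0$ or $\pm1$; the wheel of Section~\ref{sec:3} is precisely the organising device that makes this tractable. A softer alternative is available and worth recording as a check: any simple singularity admits a Morsification with a distinguished basis of vanishing arcs, for a suitable ordering of which the Seifert matrix is upper triangular with constant diagonal $\pm1$ (classical, see \cite{AGV2}); the braid-group (Gabrielov) transformations acting on distinguished bases then normalise it to the displayed Coxeter--Dynkin forms, which are the standard ones for $\Gamma$. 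I would nonetheless run the explicit wheel computation as the primary argument, since it also yields the concrete arc representatives of the $\alpha_i$ that are used in the later sections.
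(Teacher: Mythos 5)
Your proposal follows essentially the same route as the paper: choose explicit arc representatives on the wheel (the paper's Figure~\ref{fig:simple}), compute $\var(\alpha_i)\bullet\alpha_j$ directly from the description of $\var(\alpha_i)$ as the closed curve obtained by joining $\rho'(\alpha_i)$ to the reversed arc along $\partial M$, and deduce the integral-basis property formally from the triangular shape of $B$ (your unimodularity-plus-$\det B=1$ argument is an equivalent variant of the paper's rational-coefficient argument). The only caveat is that, like the paper, the substantive content lives in the case-by-case intersection bookkeeping on the $D$- and $E$-wheels, which you correctly identify but defer rather than carry out.
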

\begin{proof}
Theorem \ref{geometric roots} follows from Proposition \ref{cartan matrix} in the following way.
The set $\Phi_\Gamma$ spans $E$, because $\{\alpha_1,\dots,\alpha_k\}$ forms a basis of $H_1(M,\partial M;\Z)$.
The bilinear form $(\cdot,\cdot)$ defines an inner product on $E$, as the matrix $B+B^t$ is symmetric, positive-definite,
and coincides with the standard Cartan matrix of type $\Gamma$.
The finiteness of $\Phi_\Gamma$ follows from positive-definiteness of $(\cdot,\cdot)$.
Thus, $\Phi_\Gamma$ defines a root system that is isomorphic to the standard root system of type $\Gamma$.
\end{proof}

\subsection{Simple basis of arcs}
In this subsection, we prove Proposition \ref{cartan matrix}.
    We will  find the elements $\alpha_1,\dots,\alpha_k$ of which the matrix $B=\big(B_{ij} \big) = \big( \mathrm{var}(\alpha_i) \bullet \alpha_j\big)$ is of the form described in the proposition.
    Note that this implies that  $\{\alpha_i\}_{i=1}^k$ forms an integral basis of $H_1(M,\partial M;\Z)$; $\mathrm{var}$ is linear and intersection form is bilinear, so if they were not linearly independent,
    the matrix $B$ would not have full rank. Also, if   $\{\alpha_i\}_{i=1}^k$ is not an integral basis, we have an integral vector $v = \sum_{i=1}^k c_i \alpha_i$ such that $c_i \in \mathbb{Q}$
    for any $i$ and not all of them lie on $\mathbb{Z}$. Let $i_0$ be the smallest index such that $c_i \notin \mathbb{Z}$, then $v^tB\vec{e}_{i_0} \notin \Z$, which is a contradiction.
   
 Recall that given an arc $K$ in the Milnor fiber, its monodromy image $\rho'(K)$ and the orientation reversal $\overline{K}$ meet at the boundary $\partial M$, and smoothing out the boundary
 gives a simple closed curve in the Milnor fiber $M$, which is the variation image of $K$. The wheel readily illustrates this, as the geometric monodromy $\rho$ is given by a cyclic rotation of the wheel (except type $A$).
    
   The desired simple basis will be given by the following arc representatives $\alpha_1,\dots,\alpha_k\in H_1(M,\partial M;\Z)$ as depicted on Figures \ref{fig:simple} (A), (C), (E), (G), and (I). It can be directly checked that the given $\alpha_j$'s induce the desired matrix $B$. We have illustrated some on Figures \ref{fig:simple} (B), (D), (F), (H), and (J).
   
   From the Lie theory, $\{\alpha_i\}_{i=1}^k$ is a simple basis. Namely, any other root is either a positive or negative linear combination of them.
For geometric roots, this will be checked in Subsection \ref{ssec:simple}.

           \begin{figure}[H]
            \centering
\begin{subfigure}[t]{0.47\textwidth}
\includegraphics[scale=1]{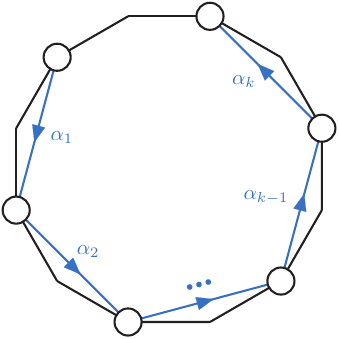}
\centering
\caption{The choice of $\alpha_1,\ldots,\alpha_k$ in $A_k$-Milnor fiber.}
\end{subfigure}
\begin{subfigure}[t]{0.47\textwidth}
\includegraphics[scale=1]{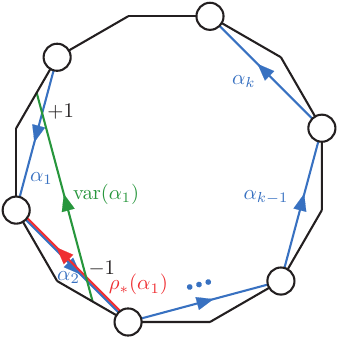}
\centering
\caption{An illustration for computation of $\var(\alpha_1)\bullet \alpha_j$.}
\end{subfigure} \vspace{10mm}

\begin{subfigure}[t]{0.47\textwidth}
\includegraphics[scale=1]{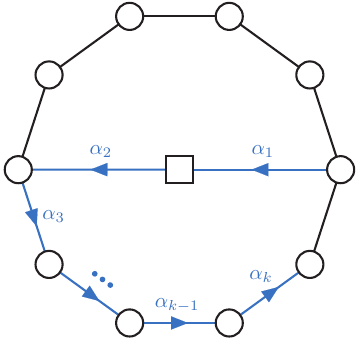}
\centering
\caption{The choice of $\alpha_1,\ldots,\alpha_k$ in $D_k$-Milnor fiber.}
\end{subfigure}
\begin{subfigure}[t]{0.47\textwidth}
\includegraphics[scale=1]{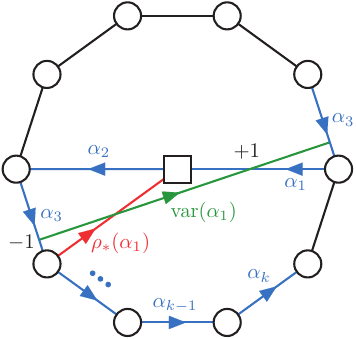}
\centering
\caption{An illustration for computation of $\var(\alpha_1)\bullet \alpha_j$.}
\end{subfigure} \vspace{3mm}
\caption{Examples of simple basis.}
        \end{figure}
        
        \begin{figure}[H] \ContinuedFloat
\begin{subfigure}[t]{0.47\textwidth} 
\includegraphics[scale=0.75]{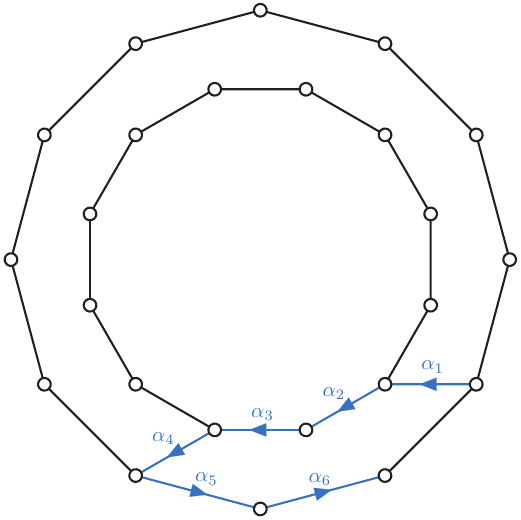}
\centering
\caption{The choice of $\alpha_1,\ldots,\alpha_6$ in $E_6$-Milnor fiber.}
\end{subfigure}
\begin{subfigure}[t]{0.47\textwidth}
\includegraphics[scale=0.75]{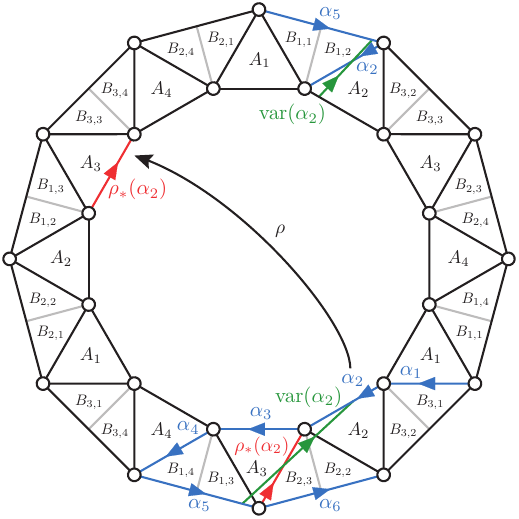}
\centering
\caption{An illustration for computation of $\var(\alpha_2)\bullet \alpha_j$.}
\end{subfigure}

\begin{subfigure}[t]{0.47\textwidth}
\includegraphics[scale=0.75]{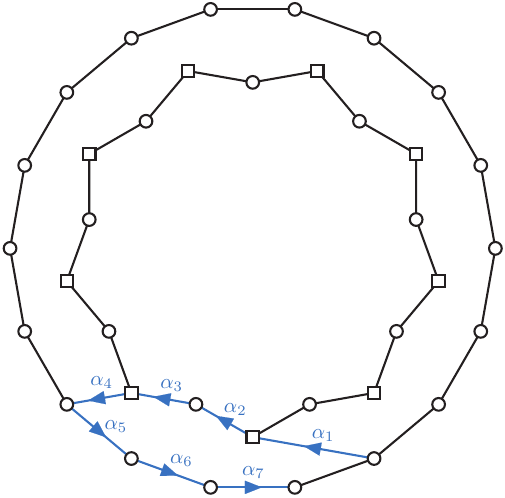}
\centering
\caption{The choice of $\alpha_1,\ldots,\alpha_7$ in $E_7$-Milnor fiber.}
\end{subfigure}
\begin{subfigure}[t]{0.47\textwidth}
\includegraphics[scale=0.75]{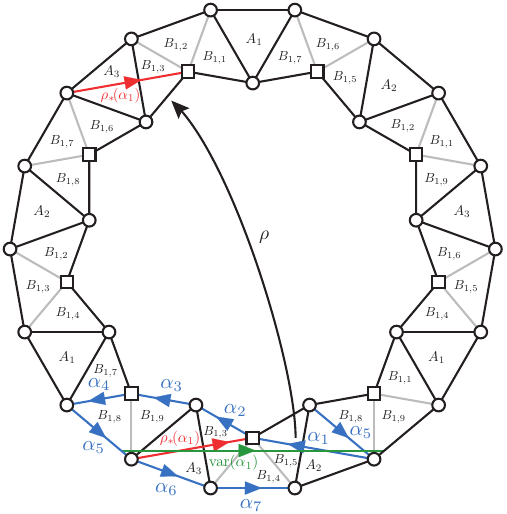}
\centering
\caption{An illustration for computation of $\var(\alpha_1)\bullet \alpha_j$.}
\end{subfigure}

\begin{subfigure}[t]{0.47\textwidth}
\includegraphics[scale=0.75]{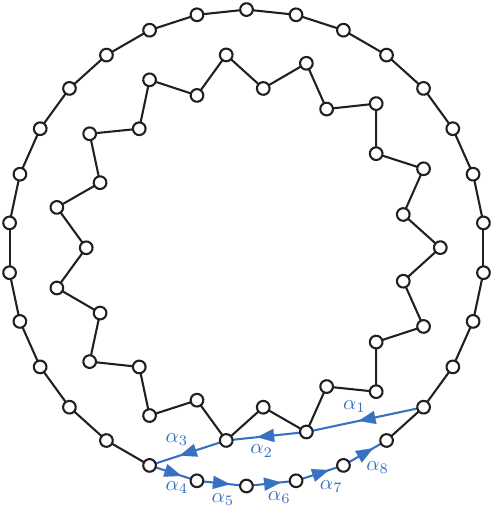}
\centering
\caption{The choice of $\alpha_1,\ldots,\alpha_8$ in $E_8$-Milnor fiber.}
\end{subfigure}
\begin{subfigure}[t]{0.47\textwidth}
\includegraphics[scale=0.75]{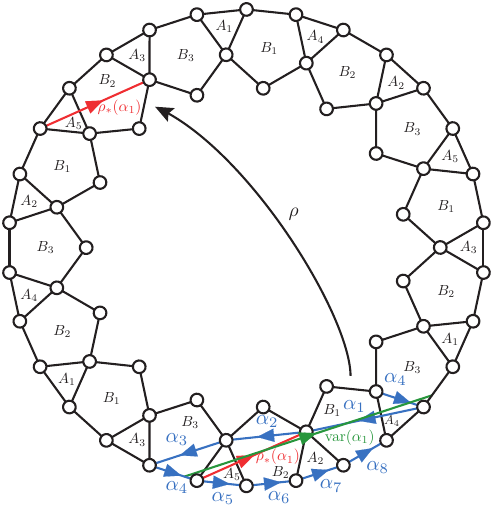}
\centering
\caption{An illustration for computation of $\var(\alpha_1)\bullet \alpha_j$.}
\end{subfigure}
\centering
\caption{Examples of simple basis.}
\label{fig:simple}
        \end{figure}

\section{Monodromy action on  the geometric root system}\label{sec:monodromy}
In the previous section, we have found a `simple' basis of geometric roots in the wheel and showed that
geometric root system and classical root system coincide for $ADE$.
In this section, we show that all geometric roots are realized as edges/spokes in the Coxeter wheel.
The converse statement will be proved in the next section.

To locate all geometric roots in the wheel, we will first find a different basis, called `projective' basis of geometric roots,
 also given by a collection of edges/spokes.   
Then, cyclic rotations, which are powers of monodromy, take edges/spokes in the projective basis to other edges/spokes. 
The upshot is that $\rho_*$ sends geometric roots to geometric roots, and its induced action on $\Phi_\Gamma$ is free. 
In particular, this yields as many geometric roots as there are classical roots.

Recall that $ADE$ simple singularties are weighted homogeneous, and  the monodromy operator $\rho_*$ on $H_1(M,\partial M;\Z)$ or on $H_1(M;\Z)$ is induced from a weight action.
In particular, $\rho_*$ is of finite order, say $m$  with $\rho_*^m = \mathrm{id}$. The following is easy to check.
\begin{lemma}
The variation operator $\var$ commutes with the monodromy. Hence we have
$(\rho_* (\alpha),\rho_* (\beta)) = (\alpha,\beta)$ for any $\alpha, \beta \in H_1(M,\partial M;\Z)$.
\end{lemma}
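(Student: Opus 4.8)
The plan is to prove both assertions by unwinding definitions; the statement is elementary, so the real content is keeping track of which geometric representative of the monodromy enters each construction.

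First I would record that the boundary-trivialized geometric monodromy $\rho'$ is isotopic to $\rho$ through self-maps of the pair $(M,\partial M)$: the untwisting near $\partial M$ is built from the weight flow, which keeps points of $M$ inside $M$ and points of $\partial M$ inside $\partial M$. Consequently $\rho$ and $\rho'$ induce the \emph{same} operator $\rho_*$ on $H_{n-1}(M;\Z)$ and on $H_{n-1}(M,\partial M;\Z)$. Now fix a relative cycle $K$ representing $a\in H_{n-1}(M,\partial M;\Z)$. Since $\rho'$ fixes $\partial M$ pointwise, $\rho'(K)$ is again a relative cycle, representing $\rho_*a$, and the defining formula $\var([\,\cdot\,])=[\rho'(\cdot)-(\cdot)]$ gives
\[
\var(\rho_* a)=[\rho'(\rho'(K))-\rho'(K)]=\rho_*\big([\rho'(K)-K]\big)=\rho_*\big(\var(a)\big),
\]
using only linearity of $\rho'$ on chains. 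This is the first statement.

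For the second statement I would invoke naturality of the intersection pairing $\bullet\colon H_{n-1}(M,\partial M;\Z)\otimes H_{n-1}(M;\Z)\to\Z$. Since $\rho'$ is an orientation-preserving homeomorphism of the pair $(M,\partial M)$ --- it is (isotopic to) the weight action \eqref{eq:wa}, the restriction of a unitary map of $\C^n$ --- we have $\rho_*(c)\bullet\rho_*(d)=c\bullet d$ for all $c\in H_{n-1}(M,\partial M;\Z)$ and $d\in H_{n-1}(M;\Z)$. Combining this with the first statement,
\begin{align*}
(\rho_*\alpha,\rho_*\beta)&=\var(\rho_*\alpha)\bullet\rho_*\beta+\var(\rho_*\beta)\bullet\rho_*\alpha\\
&=\rho_*(\var\alpha)\bullet\rho_*\beta+\rho_*(\var\beta)\bullet\rho_*\alpha\\
&=\var(\alpha)\bullet\beta+\var(\beta)\bullet\alpha=(\alpha,\beta).
\end{align*}

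There is no real obstacle here; the only point needing care is the compatibility in the first paragraph between $\var$, which is built from $\rho'$, and $\rho_*$, which one may think of as built from $\rho$ --- once $\rho\simeq\rho'$ as maps of pairs this is automatic. As an alternative one could bypass the homological manipulation and derive the second statement directly from Theorem \ref{thm:SA} together with Definition \ref{defn:S}: the Seifert form $\mathcal L$ is monodromy-invariant because it is defined by linking numbers in the link $\partial M$, which $\rho'$ preserves, and hence so is $-\mathcal L^t-\mathcal L=(\cdot,\cdot)$. I would present the homological argument above as the main proof and mention this as a remark.
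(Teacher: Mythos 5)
Your proof is correct, and since the paper offers no argument for this lemma beyond the remark that it "is easy to check," your verification is exactly the routine unwinding of definitions the authors intend: commuting $\var$ with $\rho_*$ via the boundary-fixing representative $\rho'$, then using invariance of the intersection pairing under the orientation-preserving pair map. The one point genuinely requiring care — that $\rho$ and $\rho'$ induce the same operator on both absolute and relative homology, so the $\rho_*$ in the statement is compatible with the $\rho'$ used to define $\var$ — is handled properly in your first paragraph.
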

In particular, $\rho_*$ takes geometric roots to geometric roots. 
Therefore the cyclic group $\Z/m\Z$ generated by $\rho_*$ acts on $\Phi_\Gamma$, and we call this group action  the {\em monodromy action}.
%

We can divide $\Phi_\Gamma$ into disjoint union of orbits of the monodromy action. We will show that the monodromy action is free, and
hence each orbit has $m$ elements, and the number of orbits in $\Phi_\Gamma$ is given by dividing the cardinality of $\Phi_\Gamma$ by $m$:
\begin{center}
\begin{tabular}{ c||c|c|c|c|c|c|c } 
    Type ($\Gamma$) & $A_k$ ($k$ even) & $A_k$ ($k$ odd) & $D_k$ ($k$ even) & $D_k$ ($k$ odd) & $E_6$ & $E_7$ & $E_8$ \\
    \hline
    Number of roots ($|\Phi_\Gamma|$) & \multicolumn{2}{|c|}{$k(k+1)$} & \multicolumn{2}{|c|}{$2k(k-1)$} & $72$ & $126$ & $240$ \\
    \hline
    Order of action ($|\rho_*|=m$) & $2(k+1)$ & $k+1$ & $k-1$ & $2(k-1)$ & $12$ & $9$ & $15$ \\ 
    \hline
    Number of orbits ($|\Phi_\Gamma|/m$) & $k/2$ & $k$ & $2k$ & $k$ & $6$ & $14$ & $16$
\end{tabular}
\end{center}
We will exhibit the proof for each type of simple singularity.
\subsection{The case of $A_k$}
Recall that the Milnor fiber $M$ of $A_k$-singularity is a regular $2(k+1)$-gon with $(k+1)$ punctures with edges identified $\pm 3$ pattern,
and regular $(k+1)$-gon inside is the $A_k$-wheel.
\begin{lemma}\label{lemma_Ak root}
    The geometric roots of type $A_k$ have one-to-one correspondence with the oriented line segments connecting pairs of punctures on the $A_k$-wheel.
\end{lemma}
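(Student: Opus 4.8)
The plan is to set up an explicit bijection between the oriented chords (edges and diagonals) of the $(k+1)$-gon wheel and the root system $\Phi_{A_k}$, reducing the statement to a cardinality count together with the identification of the simple basis. First I would record that a regular $(k+1)$-gon with punctured vertices has exactly $\binom{k+1}{2}$ chords, hence $k(k+1)$ oriented ones, which agrees with $|\Phi_{A_k}| = k(k+1)$ (by Theorem \ref{geometric roots}, since $\Phi_{A_k}$ is a classical $A_k$ root system).

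Next I would use Proposition \ref{cartan matrix}(a): up to isotopy the simple basis $\alpha_1,\dots,\alpha_k$ of Figure \ref{fig:simple}(A) is realized by the arcs $\alpha_i$ joining the consecutive punctures $p_{i-1}$ and $p_i$ of the wheel, oriented from $p_{i-1}$ to $p_i$. For $1 \le a \le b \le k$, the positive root $\alpha_a + \cdots + \alpha_b$ is then represented by the concatenated polygonal arc $p_{a-1} \to p_a \to \cdots \to p_b$, since concatenating relative $1$-cycles whose shared endpoints lie on $\partial M$ adds classes in $H_1(M,\partial M;\Z)$. This bent arc and the straight chord from $p_{a-1}$ to $p_b$ together bound the convex sub-polygon on the vertices $p_{a-1},\dots,p_b$ of the wheel, whose only punctures sit on its boundary; hence the two arcs differ in $H_1(M,\partial M;\Z)$ only by small loops encircling punctures, and such a loop is null in $H_1(M,\partial M;\Z)$ because it is homologous to a component of $\partial M$, which vanishes in $H_1(M,\partial M;\Z)$ by the long exact sequence of the pair. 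Since no three vertices of a regular polygon are collinear, the straight chord is an embedded arc avoiding every other puncture, so it is a legitimate representative. The assignment $\{p_i,p_j\} \mapsto \alpha_{i+1}+\cdots+\alpha_j$ (the support of the sum recovers the pair) is therefore a bijection from unordered pairs of punctures to positive roots, realized by the straight chords; reversing orientations matches the remaining chords with the negative roots.

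Putting this together proves the lemma: the $k(k+1)$ oriented chords have pairwise distinct classes because the corresponding roots are distinct, and every oriented chord is a (positive or negative) root; as both finite sets have size $k(k+1)$, the map $L \mapsto [L] \in \Phi_{A_k}$ is a bijection. I would add two remarks. First, one sees directly from Theorem \ref{root_equiv}(iii) that each straight chord $L$ is a geometric root: $L$ lies in the wheel, which is one half of $M$, and $\rho$ carries the wheel onto the complementary half, so $L$ and $\rho(L)$ may be taken disjoint. Second, $\rho_*$ then permutes these oriented chords, which is the viewpoint pursued in the rest of this section.

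The step I expect to be the main obstacle is the homology bookkeeping in the second paragraph: checking precisely that straightening the concatenated simple-root arcs to a chord changes nothing in $H_1(M,\partial M;\Z)$ — i.e. that the error terms are exactly puncture loops, which die — and verifying that the correspondence between pairs of punctures and positive roots is genuinely exhaustive, the "missing" edge $p_k p_0$ of the wheel polygon corresponding, with its two orientations, to $\pm$(highest root).
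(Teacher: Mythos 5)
Your proposal is correct and takes essentially the same route as the paper's proof: label the punctures, write each oriented chord in the simple basis as a consecutive sum $\alpha_a+\cdots+\alpha_b$ (you justify the straightening step that the paper leaves implicit), note that these classes are distinct and have $(\alpha,\alpha)=2$ (you via the classical description of $A_k$ positive roots, the paper via direct computation with the matrix $B$), and conclude by matching the count $k(k+1)$ with $|\Phi_{A_k}|$. The only caution is your side remark invoking Theorem \ref{root_equiv}(iii), which in the paper's logical order is proved later using this very section; the particular inclusion you need ($\Phi^{(3)}_\Gamma\subseteq\Phi_\Gamma$, Lemma \ref{lemma_root_subset}) is independent of it, so the remark is harmless but should cite that lemma, and it also glosses over the wheel's boundary edges, for which $\rho(L)$ is not strictly in the complementary half.
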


\begin{proof}
    We label the vertices of the wheel as $v_1,\dots,v_{k+1}$ so that $\alpha_i$ in the basis $\{\alpha_1,\dots,\alpha_k\}$ of $H_1(M,\partial M;\Z)$ in Proposition \ref{cartan matrix}
is an arrow from the vertex $v_i$ to $v_{i+1}$. An oriented line segment from $v_i$ to $v_j$ $(i<j)$ has the homology
    \[
         (\underbrace{0,\dots,0}_{i-1},\underbrace{1,\dots,1}_{j-i},\underbrace{0,\dots,0}_{k-j+1})
    \]
    with respect to the basis $\{\alpha_i\}$.
    Therefore each oriented line segment  in $A_k$-wheel has distinct homology class in $H_1(M,\partial M;\Z)$, say $\alpha$.
    A direct computation shows $(\alpha,\alpha)=2$, and hence they are geometric roots.
    The number of such lines segments are $\tfrac{1}{2}k(k+1)$, and with orientation, it is $k(k+1)$. 
    It equals the cardinality of the $A_k$ root system.
\end{proof}

\begin{prop}\label{free_Ak}
    The monodromy action on $\Phi_{A_k}$ is free.
\end{prop}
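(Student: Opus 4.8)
The plan is to understand the monodromy action explicitly in terms of the combinatorial model and show no nonzero power of $\rho_*$ fixes a root. Recall from Lemma~\ref{lem:mo} that for $A_k$ the geometric monodromy $\rho$ is the group element $(1,1) \in \Z/(k+1)\Z \oplus \Z/2\Z = G_f$, and that the $A_k$-wheel is only half of the Milnor fiber, with $\rho$ carrying the wheel to the other half. In particular $\rho$ does not preserve the wheel, but $\rho^2$ does: $\rho^2$ corresponds to $(2,0)$, which acts on the wheel (a regular $(k+1)$-gon with punctures at the vertices) as the rotation by $\tfrac{2\cdot 2\pi}{k+1}$ — i.e.\ the two-step cyclic rotation of the vertices $v_i \mapsto v_{i+2}$. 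More precisely, I would first record that the order of $\rho_*$ is $m = 2(k+1)$ when $k$ is even and $m = k+1$ when $k$ is odd, matching the table; this reflects whether $(1,1)$ generates all of $G_f$ or only an index-$2$ subgroup.

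Next, using the identification of $\Phi_{A_k}$ with oriented segments $\overrightarrow{v_i v_j}$ between punctures of the wheel from Lemma~\ref{lemma_Ak root}, I would describe the action of $\rho_*$ on these segments. Since $\rho$ maps the wheel to the reflected copy on the other half of the Milnor fiber and then $\rho^2$ rotates vertices by two steps, the even powers $\rho_*^{2\ell}$ send $\overrightarrow{v_i v_j} \mapsto \overrightarrow{v_{i+2\ell}\, v_{j+2\ell}}$ (indices mod $k+1$), while odd powers combine this with the orientation-reversing ``flip'' to the other half of $M$. The key point is that $\rho_*$ acts on the {\em pair of endpoints} of a segment essentially by a vertex rotation (possibly composed with a flip that does not fix the endpoint set of a chord unless forced), so a fixed root would force the unordered pair $\{v_i, v_j\}$ to be invariant under a rotation of the $(k+1)$-gon.

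The core combinatorial step is then: a rotation by $t$ steps of the vertices of a regular $(k+1)$-gon fixes an unordered pair $\{v_i,v_j\}$ with $i\neq j$ only if $2t \equiv 0 \pmod{k+1}$ and the pair is antipodal, and it fixes an {\em oriented} segment only if $t \equiv 0$. Working out that the only candidate power that could preserve a chord as an oriented segment is the identity — and checking that the orientation-reversing odd powers reverse the oriented segment rather than fixing it — gives freeness. I expect the main obstacle to be bookkeeping the odd powers of $\rho_*$ correctly: one must verify that composing the half-turn/flip to the opposite half of the Milnor fiber with a vertex rotation genuinely reverses orientation on $H_1(M,\partial M;\Z)$ of a chord and cannot accidentally fix some diameter with a compensating orientation flip. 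I would handle this by computing $\rho_*$ on the simple basis $\{\alpha_1,\dots,\alpha_k\}$ of Proposition~\ref{cartan matrix} (each $\alpha_i = \overrightarrow{v_i v_{i+1}}$) explicitly as a matrix, checking it has no eigenvalue $1$ in its action on the root set by noting any fixed root would be a rational eigenvector, and then invoking that the characteristic polynomial of $\rho_*$ is (a product of cyclotomic polynomials, none equal to $\Phi_1(x) = x-1$) to conclude $\rho_*^\ell$ has no nonzero fixed vector for $0 < \ell < m$, hence acts freely on $\Phi_{A_k}$.
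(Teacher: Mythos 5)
Your plan stands or falls on the step you yourself flag as the main obstacle, and that step fails. The fix you propose --- compute $\rho_*$ on the simple basis, observe that its characteristic polynomial is a product of cyclotomic polynomials with no factor $x-1$, and conclude that $\rho_*^{\ell}$ has no nonzero fixed vector for $0<\ell<m$ --- is wrong on both counts. First, for $k$ odd the characteristic polynomial of $\rho_*$ on $H_1(M,\partial M;\mathbb{Q})$ is $(t^{k+1}-1)/(t+1)$, which \emph{does} contain the factor $t-1$: the curve $x^{k+1}+y^2$ has two branches and already $\rho_*$ itself has eigenvalue $1$ (for $A_5$ the fixed vector is $\alpha_1+\alpha_3+\alpha_5$). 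Second, even if $\rho_*$ had no eigenvalue $1$, that would say nothing about proper powers: $\rho_*^{\ell}$ acquires eigenvalue $1$ whenever $\rho_*$ has an eigenvalue which is an $\ell$-th root of unity. So the spectral argument cannot close the case you are worried about, and what is really needed is an argument about fixed \emph{roots}, not fixed vectors.

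Worse, the scenario you wanted to exclude actually occurs. Using the action on the simple basis that both you and the paper describe, namely $\rho_*(\alpha_i)=-\alpha_{i+1}$ for $i<k$ and $\rho_*(\alpha_k)=\alpha_1+\cdots+\alpha_k$ (``rotate one step and invert orientation''; this matrix has the correct characteristic polynomial above), take $k=5$ and the oriented diameter $\beta=\alpha_1+\alpha_2+\alpha_3=[\overrightarrow{v_1v_4}]$, which is a geometric root. Then $\rho_*(\beta)=-(\alpha_2+\alpha_3+\alpha_4)$, $\rho_*^{2}(\beta)=\alpha_3+\alpha_4+\alpha_5$, and $\rho_*^{3}(\beta)=\beta$, although $\rho_*^{3}\neq\mathrm{id}$ (it sends $\alpha_1$ to $-\alpha_4$). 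In general, whenever $(k+1)/2$ is odd, i.e.\ $k\equiv 1\pmod 4$, the power $\rho_*^{(k+1)/2}$ equals $(-1)$ times the half-turn of the wheel and hence fixes every oriented diameter: the ``compensating orientation flip'' is not a bookkeeping nuisance but a genuine fixed point, so your claim that the orientation-reversing odd powers always reverse rather than fix an oriented segment is false in exactly this case. Under this description of $\rho_*$, freeness holds for $k$ even and $k\equiv 3\pmod 4$ but fails for $k\equiv 1\pmod 4$ (and degenerately for $A_1$, where $\rho_*=\mathrm{id}$ on $H_1(M,\partial M;\Z)$). Note that the paper's own proof of the odd case simply asserts that $\rho_*(\alpha),\dots,\rho_*^{k}(\alpha)$ are distinct from $\alpha$, which the $A_5$ diameters contradict, so your gap coincides with a gap in the paper rather than being repairable by its method; by contrast, the orientation-reversed action $\overline{\rho}_*$ treated later in the paper is the honest rotation and is genuinely free.
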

\begin{proof}
Monodromy for $A_k$-Milnor fiber is given by $(1,1) \in \Z/(k+1)\Z \oplus \Z/2\Z$, and for an oriented line segment $L$ inside the $A_k$-wheel, its monodromy image $\rho(L)$ lies outside the $A_k$-wheel.
    However, $\rho(L)$ is homologous to the  line obtained by rotating $L$ through an angle of $\tfrac{2\pi}{k+1}$ and inverting its orientation.
    This is because the monodromy acts in this manner on the basis elements $\alpha_1,\dots,\alpha_k$.

    With the rule of the monodromy operator on the $A_k$-wheel, it can be readily proved that the action is free.
    Let $\alpha$ be the relative homology class of $L$.
    
    First suppose that $k$ is an even number. 
    The monodromy images $\rho_*(\alpha),\dots,\rho_*^{k}(\alpha)$ are distinct from $\alpha$. 
    Note that $\rho_*^{k+1}(\alpha) = -\alpha$ corresponds to the orientation reversal of the line $L$. 
    Similarly, $\rho_*^{k+2}(\alpha),\dots,\rho_*^{2k+1}(\alpha)$ are orientation reversals of the former.
    Therefore, the monodromy action on $\Phi_{A_k}$ ($k$ even) is a free $\Z/(2k+2)\Z$-action.

    Suppose that $k$ is an odd number. The monodromy images $\rho_*(\alpha),\dots,\rho_*^{k}(\alpha)$ are distinct from $\alpha$, and we have $  \rho_*^{k+1}(\alpha) = \alpha$.   
    Therefore, the monodromy action on $\Phi_{A_k}$ ($k$ odd) is a free $\Z/(k+1)\Z$-action.
\end{proof}

\subsection{The case of $D_k$}
Recall that $D_k$-wheel is a regular $2(k-1)$-gon with punctures at each of its vertices, along with an additional puncture at the center. 
$D_k$-Milnor fiber is obtained by identifying opposite edges of the $D_k$-wheel.
\begin{lemma}\label{Dk roots}
    The geometric roots of $D_k$ are realized by the set of equivalence classes of oriented line segments connecting pairs of punctures on the $D_k$-wheel.
\end{lemma}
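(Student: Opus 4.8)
The plan is to mirror the structure of the $A_k$ argument (Lemma \ref{lemma_Ak root}), but to pay attention to the two features that distinguish the $D_k$-wheel: the central puncture, and the identification of opposite edges of the $2(k-1)$-gon. First I would set up coordinates on the wheel by labeling the boundary punctures $v_1,\dots,v_{2k-2}$ cyclically and the central puncture $v_0$, and I would fix the simple basis $\{\alpha_1,\dots,\alpha_k\}$ of $H_1(M,\partial M;\Z)$ from Proposition \ref{cartan matrix}(b), realized by the arc representatives in Figure \ref{fig:simple}(C). Then I would classify the oriented line segments between punctures into three types: (i) a spoke from $v_0$ to some $v_i$ (or its reverse); (ii) a chord between two boundary punctures $v_i,v_j$ that does \emph{not} separate the central puncture, more precisely whose image under $\pi$ is null-homotopic relative to $\partial M$ after the edge identification is irrelevant; (iii) a chord through the ``diameter'' passing near the center. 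Using the edge-identification rule ($\pm(2k-3)$, or equivalently opposite-edge gluing), one checks which chords become homologous in $M$: two parallel boundary chords with the same orientation that avoid the center are identified, giving the count $\binom{2k-2}{2}-(k-1)$ quoted in the introduction, while the $2\cdot 2(k-1)$ oriented spokes remain distinct, for a total of $2k(k-1)$.

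The core computation is to show each such equivalence class $\alpha\in H_1(M,\partial M;\Z)$ satisfies $(\alpha,\alpha)=2$, hence is a geometric root by definition of $\Phi_{D_k}$. For this I would express $\alpha$ in the simple basis exactly as in the $A_k$ case: a spoke or chord, read off against the arcs $\alpha_1,\dots,\alpha_k$, has coordinate vector a $0$–$1$ vector (possibly with one $-1$ where the ``forked'' node of the $D_k$ diagram enters) supported on a connected subdiagram of the $D_k$ Dynkin graph — i.e., its support is the vertex set of a subtree. A direct evaluation of $\alpha^t B\, \alpha$ with $B$ the matrix of Proposition \ref{cartan matrix}(b), together with the fact that $(\cdot,\cdot)=B+B^t$ is the $D_k$ Cartan matrix, gives $(\alpha,\alpha)=2$ whenever the support is a connected subdiagram and the coefficients are the ``interval'' coefficients of that subdiagram; this is the standard fact that connected subdiagrams of an ADE diagram, read as sums of simple roots, are roots. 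Conversely, since $|\Phi_{D_k}|=2k(k-1)$ by the classical count and we have produced exactly $2k(k-1)$ distinct equivalence classes, all of which are geometric roots, the correspondence is a bijection. I would phrase this last step carefully to avoid circularity: either invoke Theorem \ref{geometric roots} (already proved via Proposition \ref{cartan matrix}) to know $|\Phi_{D_k}|=2k(k-1)$ and conclude by a counting argument, or, if one prefers a self-contained argument within this section, show directly that the monodromy orbits of the produced classes exhaust $\Phi_{D_k}$.

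The step I expect to be the main obstacle is the bookkeeping of the edge identification: verifying precisely \emph{which} pairs of boundary chords become homologous after gluing opposite sides of the $2(k-1)$-gon, and checking that no chord is accidentally identified with a spoke or becomes null-homologous. Concretely, one must confirm that a chord $\overline{v_i v_j}$ and its translate $\overline{v_{i+(k-1)}\, v_{j+(k-1)}}$ (the ``opposite'' chord) have the same image in $H_1(M,\partial M;\Z)$, with matching orientation, and that these are the \emph{only} coincidences; this amounts to understanding the deck-transformation-like symmetry of the gluing and is cleanly visible on the wheel picture (Figure \ref{fig:ADroots}(b)), but needs to be stated carefully. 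Once that identification table is pinned down, the homology-class computation and the counting are routine, and the lemma follows; the freeness of the monodromy action on $\Phi_{D_k}$, needed to organize the roots into orbits, will be handled separately in the proposition that follows this lemma.
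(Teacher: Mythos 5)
Your overall strategy coincides with the paper's: list the oriented segments on the $D_k$-wheel, work out which become homologous under the opposite-edge identification, write each class in the simple basis of Proposition \ref{cartan matrix}(b), verify $(\alpha,\alpha)=2$, and conclude by matching the count $2k(k-1)$ with the known cardinality of the $D_k$ root system. The counting itself ($2(k-1)(k-2)$ oriented chord classes plus $4(k-1)$ oriented spokes) is right, and your caution that parallel spokes such as $\alpha_1,\alpha_2$ must \emph{not} be identified is exactly the point the paper also makes.

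However, there is a genuine gap in your core computation. You assert that every class is ``a $0$--$1$ vector (possibly with one $-1$ at the forked node) supported on a connected subdiagram,'' and you plan to verify $(\alpha,\alpha)=2$ by the standard fact that the sum of the simple roots over a connected subdiagram is a root. This mischaracterizes precisely the classes where the edge identification matters: a chord joining a vertex of the upper half of the wheel to a vertex of the lower half has class of the form $\pm(\alpha_1+\alpha_2+2\alpha_3+\cdots+2\alpha_{i}+\alpha_{i+1}+\cdots+\alpha_j)$, i.e.\ with coefficient $2$ on a string of simple roots (the paper's type (ii) family), and no class carries a single $-1$ against $+1$'s. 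Such roots are not sums over a connected subdiagram, so the fact you invoke does not establish $(\alpha,\alpha)=2$ for them; they must be listed explicitly and their pairing computed directly, as the paper does with its four families (i)--(iv). The same omission undermines the final bijection step: to conclude that you have $2k(k-1)$ \emph{distinct} homology classes (no accidental coincidence of a chord class with a spoke class, no null-homologous chord), you need the correct coordinate vectors, which your proposed normal form does not supply. Once the type (ii) classes with coefficient $2$ are included and their self-pairing checked against $B+B^t$, the rest of your argument (counting against $|\Phi_{D_k}|=2k(k-1)$ via Theorem \ref{geometric roots}) goes through as in the paper.
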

\begin{proof}
 With respect to  the basis $\{\alpha_1,\dots,\alpha_k\}$ of Proposition \ref{cartan matrix},
   consider the following homology classes in $H_1(M,\partial M;\Z)$:
    \begin{itemize}
            \item [(i)] $\pm(0,0,\underbrace{0,\dots,0}_{k_1},\underbrace{1,\dots,1}_{k_2},\underbrace{0,\dots,0}_{k_3})$ \quad\quad ($k_1+k_2+k_3=k-2$, $k_2\ge 1$).
            \item [(ii)] $\pm(1,1,\underbrace{2,\dots,2}_{k_1},\underbrace{1,\dots,1}_{k_2},\underbrace{0,\dots,0}_{k_3})$ \quad\quad ($k_1+k_2+k_3=k-2$, $k_2\ge 1$).
            \item [(iii)] $\pm(1,0,\underbrace{1,\dots,1}_{k_1},\underbrace{0,\dots,0}_{k_2})$ \quad\quad ($k_1+k_2=k-2$).
            \item [(iv)] $\pm(0,1,\underbrace{1,\dots,1}_{k_1},\underbrace{0,\dots,0}_{k_2})$ \quad\quad ($k_1+k_2=k-2$).
    \end{itemize}

    \begin{figure}[h]
    \centering
    \begin{subfigure}{0.23\textwidth}
    \includegraphics[scale=0.75]{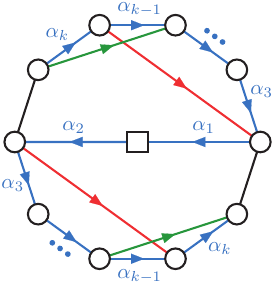}
    \centering
    \caption{Type (i).}
    \end{subfigure}
    \begin{subfigure}{0.23\textwidth}
    \includegraphics[scale=0.75]{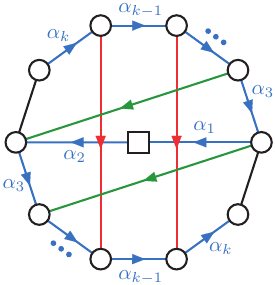}
    \centering
    \caption{Type (ii).}
    \end{subfigure}
    \begin{subfigure}{0.23\textwidth}
    \includegraphics[scale=0.75]{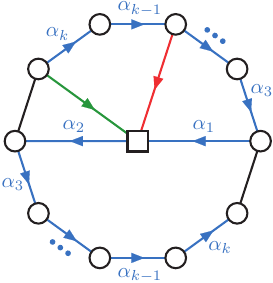}
    \centering
    \caption{Type (iii).}
    \end{subfigure}
    \begin{subfigure}{0.23\textwidth}
    \includegraphics[scale=0.75]{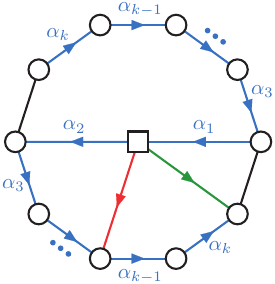}
    \centering
    \caption{Type (iv).}
    \end{subfigure}
    \caption{Geometric roots on $D_k$-wheel.}
    \label{fig:Dkroots}
    \end{figure}
    
    These homology classes are all represented by lines segments of  the $D_k$-wheel.
    Recall that  two line segments are considered equivalent if they determine the same relative homology class. 
    Unlike $A_k$, we will find that there are non-trivial equivalence relations (for type (i) and (ii)).
      
    Now, a homology class of type (i) corresponds to a line segment lying in one of the convex regions enclosed by the lines $\alpha_3,\dots,\alpha_k$.
    As we are identifying opposite edges of the $2(k-1)$-gon, each homology class of type (i) can be represented by one of the two parallel lines as in the Figure \ref{fig:Dkroots} (A).
    
    Type (ii) includes lines in the wheel that connect pairs of vertices of the $2(k-1)$-gon such that one vertex lies in the upper half and the other in the lower half. 
    When $k_1=0$, type (ii) also includes lines that connect the source of $\alpha_1$ with a vertex in the lower half (or its parallel line).
    As in type (i), each class of type (ii) can be represented by one of the two parallel lines as in the Figure \ref{fig:Dkroots} (B).        
       
    Lines of types (iii) and (iv) are those joining the center of the $2(k-1)$-gon with one of its vertices. 
    Unlike (i) and (ii), two parallel lines in type (iii) and (iv) are not equivalent to each other. 
    For example, $\alpha_1$ and $\alpha_2$ are parallel but not equivalent.
   
    It can be seen that every line segment on the $D_k$-wheel can be classified as one of types (i)–(iv).
    All elements $\alpha$ in (i)$-$(iv) are geometric roots from the direct computation of $(\alpha,\alpha)=2$.
 
    The number of  homology classes for each type is as follows: types (i) and (ii) each contain $(k-1)(k-2)$ elements, and types (iii) and (iv) each contain $2(k-1)$ elements. 
    In total, there are $2k(k-1)$ geometric roots, which is identical to the cardinality of $D_k$ root system.
    Therefore, all $D_k$ geometric roots are represented by the above oriented line segments.  
\end{proof}


\begin{prop}\label{free_Dk}
    The monodromy action on $\Phi_{D_k}$ is free.
\end{prop}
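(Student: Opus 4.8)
The plan is to use the concrete description of the $D_k$-wheel as a regular $2(k-1)$-gon with a central puncture, together with the explicit geometric monodromy from Lemma \ref{lem:mo}, namely the counterclockwise rotation of the wheel by $\tfrac{k}{k-1}\pi$ (equivalently, the group element $k \in \Z/2(k-1)\Z$). Since $\gcd(k, 2k-2) = \gcd(k,2)$, the rotation $\rho$ has order $k-1$ when $k$ is even and order $2(k-1)$ when $k$ is odd; this already predicts the orbit count from the table ($|\Phi_{D_k}| = 2k(k-1)$, so $|\Phi_{D_k}|/m = 2k$ in both cases). First I would record how $\rho$ acts on line segments: by Lemma \ref{Dk roots} every geometric root is represented (up to the parallel-line equivalence for types (i) and (ii)) by an oriented chord of the $2(k-1)$-gon, either a spoke through the center (types (iii), (iv)) or a non-central chord (types (i), (ii)), and $\rho_*$ sends such a chord to the chord obtained by rotating it by $\tfrac{k}{k-1}\pi$ — this is immediate since $\rho$ is literally that rotation of the wheel and $\rho_*$ is induced by it.

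The key step is then a case analysis on the four types, since freeness means no nonzero power $\rho_*^j$ ($0 < j < m$) fixes a root. For the \emph{spoke} classes (types (iii) and (iv)): a spoke is determined by the single non-central vertex it hits, and two parallel spokes are \emph{not} equivalent (as noted after Figure \ref{fig:Dkroots}), so a spoke-root is fixed by $\rho_*^j$ iff the rotation by $\tfrac{jk}{k-1}\pi$ maps that vertex to itself with orientation preserved; the vertices of a regular $2(k-1)$-gon form a free $\Z/2(k-1)\Z$-set under rotation by $\tfrac{\pi}{k-1}$, and $jk \equiv 0 \pmod{2k-2}$ forces $j \equiv 0 \pmod m$ — so no fixed spoke-roots. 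For the \emph{non-central chord} classes (types (i) and (ii)): here each root is the class $\{L, L'\}$ of a parallel pair, so $\rho_*^j$ fixes the root iff rotation by $\tfrac{jk}{k-1}\pi$ maps the parallel pair $\{L, L'\}$ to itself with consistent orientation. I would handle this by looking at the \emph{direction vector} of the chord together with the constraint that it not pass through the center: rotating by $\tfrac{jk}{k-1}\pi$ either fixes each of $L, L'$ or swaps them. Swapping $L \leftrightarrow L'$ would require the rotation to be the point reflection through the center (rotation by $\pi$), which is rotation by $(k-1)\cdot\tfrac{\pi}{k-1}$, i.e. the element $k-1 \in \Z/2(k-1)\Z$; but $jk \equiv k-1 \pmod{2k-2}$ has no solution since $k-1$ has the opposite parity to $jk \bmod 2$ issues — more precisely one checks $jk$ ranges over the subgroup generated by $k$, which does not contain $k-1$. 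Fixing each of $L, L'$ individually again forces the rotation to be trivial by the same free-action-on-vertices argument. Combining, no root of any type is fixed by a proper power of $\rho_*$.

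The main obstacle I anticipate is the bookkeeping in the type (i)/(ii) case: the parallel-line equivalence means a root is an unordered pair of chords, and orientation-consistency under the identification of opposite edges of the $2(k-1)$-gon has to be tracked carefully (a rotation by $\pi$ reverses orientation of a chord as a vector, which is exactly why $-\alpha$ appears in Lemma \ref{Dk roots}, so one must not accidentally identify $\alpha$ with $-\alpha$). I would make this rigorous by passing to the homology labels: with respect to the simple basis $\{\alpha_1,\dots,\alpha_k\}$ of Proposition \ref{cartan matrix}, each root has an explicit coordinate vector (as listed in types (i)–(iv)), $\rho_*$ acts on $H_1(M,\partial M;\Z)$ by an explicit finite-order integer matrix determined by its action on the $\alpha_i$, and "$\rho_*^j(\alpha)=\alpha$" becomes a linear algebra condition. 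Checking that this matrix has no eigenvalue-$1$ issue compatible with a lattice vector of norm $2$, except at $j \equiv 0$, is then a finite verification; alternatively, since $\rho_*$ is conjugate to a Coxeter-type element and already known (Lemma \ref{lemma_Ak root}-style arguments) to rotate roots, one invokes that $1$ is not an eigenvalue of $\rho_*$ on $H_1(M,\partial M;\R)$ at all — which holds because the weight action \eqref{eq:wa} has no eigenvalue $1$ on the reduced homology of the Milnor fiber of a weighted homogeneous isolated singularity — and then $\rho_*^j$ for $0<j<m$ likewise has no eigenvalue $1$ unless $\rho_*$ had a root of unity eigenvalue of order dividing $j$; a short argument rules out that a norm-$2$ lattice vector lies in such an eigenspace. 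Either route closes the proof; I would present the elementary wheel-rotation version as the main argument and mention the eigenvalue version as a remark.
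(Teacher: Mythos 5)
Your overall strategy is the same as the paper's (the paper simply asserts that the monodromy is the rotation of the $D_k$-wheel by $\tfrac{k\pi}{k-1}$ and that the induced cyclic action on roots is free), but your detailed case analysis for the non-central chords contains a genuine error when $k$ is odd. You dismiss the ``swap'' case by claiming that $jk\equiv k-1 \pmod{2k-2}$ has no solution, i.e.\ that no power of $\rho$ is the rotation by $\pi$. That is true for $k$ even (then $jk$ is always even while $k-1$ is odd), but false for $k$ odd: since $k$ is odd, $\rho^{k-1}$ is rotation by $k\pi\equiv\pi$, and $0<k-1<2(k-1)=m$, so this power does occur and does map each parallel pair $\{L,L'\}$ to itself by swapping its members. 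Your ``subgroup generated by $k$'' argument also fails here, because for odd $k$ the element $k$ generates all of $\Z/(2k-2)\Z$. The statement is still true, and the repair is exactly the orientation condition you stated but never used: rotation by $\pi$ reverses the direction of a chord, whereas the two representatives $L,L'$ of a type (i)/(ii) class carry the \emph{same} direction (this is the content of Lemma \ref{Dk roots}); hence $\rho^{k-1}$ sends the class $\alpha$ to $-\alpha\neq\alpha$ (and sends a spoke $L_{i,0}$ to the inequivalent spoke $L_{-i,0}$), so no root is fixed. With that substitution your argument is complete and matches the paper's intended (terse) proof; the rest of your chord/spoke analysis is correct.

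Two smaller points. The aside ``$|\Phi_{D_k}|/m=2k$ in both cases'' is wrong: for odd $k$ one has $m=2(k-1)$ and $|\Phi_{D_k}|/m=k$, in agreement with the paper's table. And the fallback ``eigenvalue'' remark rests on a false general fact: the monodromy of a weighted homogeneous isolated singularity can have eigenvalue $1$ on (reduced, or relative) middle homology --- already for $A_1$ the monodromy of the cylinder acts trivially on $H_1$, and for $D_k$ with $k$ even the characteristic polynomial of $\rho_*$ has $1$ as a root --- so freeness on $\Phi_\Gamma$ cannot be deduced from absence of the eigenvalue $1$; the elementary wheel-rotation argument (corrected as above) is the one to keep.
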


\begin{proof}
    The monodromy image $\rho(L)$ of $L$ is given by rotating $L$ in the $D_k$-wheel with an angle of $\tfrac{k \pi}{(k-1)}$.
    Hence for even $k$, the map $\rho$ generates a cyclic rotation of $L$ of order $k-1$, and the induced monodromy action on $\Phi_{D_k}$ is a free $\Z/(k-1)\Z$-action.
    For odd $k$, the map $\rho$ generates a cyclic rotation of $L$ of order $2k-2$, and the induced monodromy action on $\Phi_{D_k}$ is a free $\Z/(2k-2)\Z$-action.
\end{proof}

\subsection{The case of $E_6$}
Analogously to the $A_k$ and $D_k$-cases, one can figure out all the relative homology classes of the oriented line segments on $E_6$-wheel, prove that they are geometric roots of $E_6$, and analyze their monodromy images by brute force.

    \begin{figure}[h]
        \centering
        \includegraphics[scale=0.5]{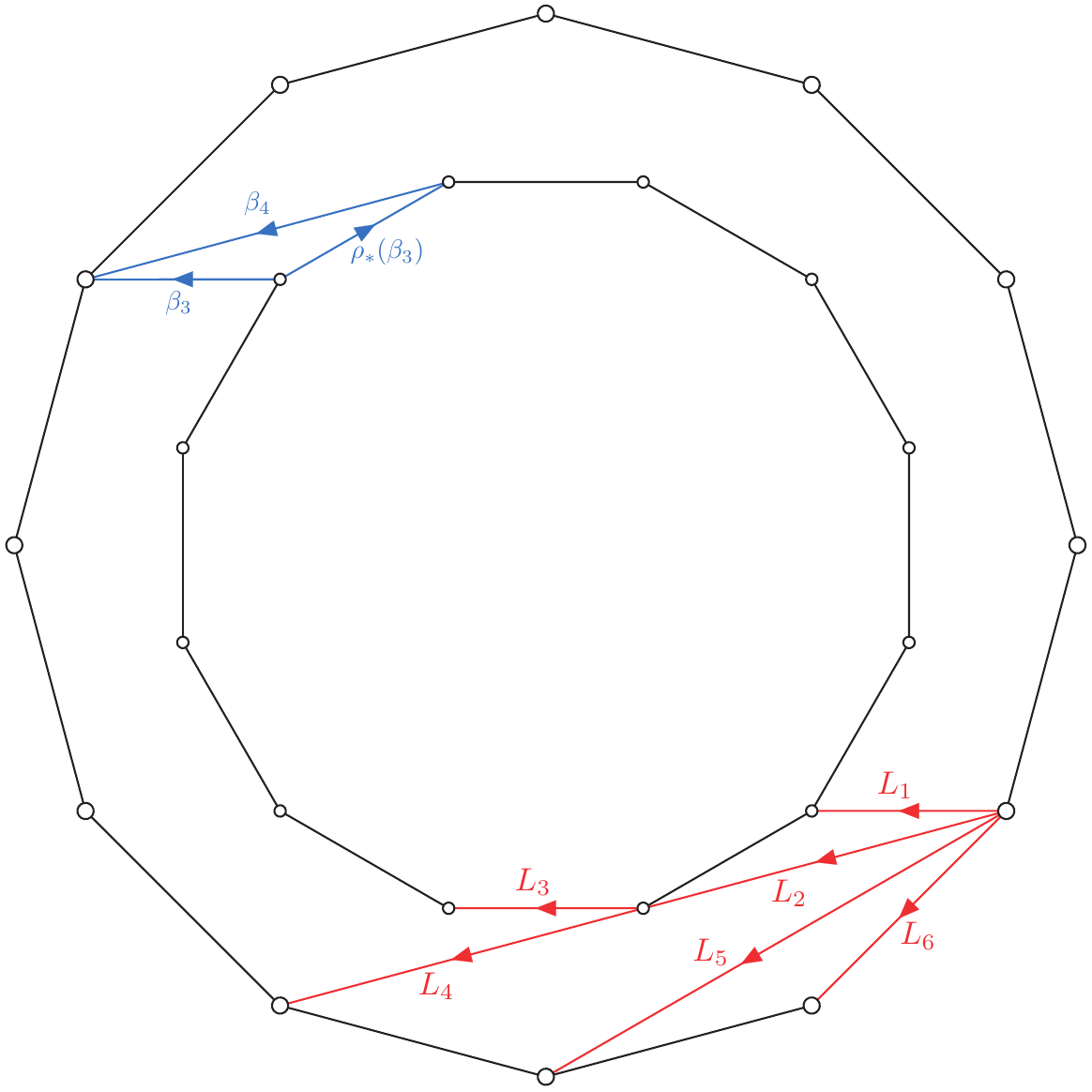}
        \caption{A projective basis in $E_{6}$-wheel.}
        \label{fig:E6spokes}
    \end{figure}

Instead, we introduce a more refined method to show that the monodromy action on $\Phi_{E_6}$ is free.
\begin{prop}\label{free_E6}
    The monodromy action on $\Phi_{E_6}$ is free.
\end{prop}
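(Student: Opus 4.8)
The plan is to exploit the finite-order structure of $\rho_*$ together with the $720$-element root system $\Phi_{E_6}$. Since $\rho_*$ has order $m = 12$ and $|\Phi_{E_6}| = 72$, a free action would partition $\Phi_{E_6}$ into exactly $6$ orbits of size $12$. The main idea is to produce a \emph{projective basis}: six geometric roots $\beta_1, \dots, \beta_6$, realized as spokes of the $E_6$-wheel (see Figure \ref{fig:E6spokes}), whose $\rho_*$-orbits are pairwise disjoint and together exhaust all $72$ roots. Since the $\beta_i$ are explicit edges/spokes and $\rho_*$ lifts to the counterclockwise rotation of the wheel by $\tfrac{7}{6}\pi$ (as established in Section \ref{sec:3}), the orbit of each $\beta_i$ is a concrete list of twelve line segments, and one checks by inspection that $\rho_*^j(\beta_i) \neq \beta_i$ for $1 \le j \le 11$.

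First I would fix the six spokes $\beta_1, \dots, \beta_6$ and express each in coordinates with respect to the simple basis $\{\alpha_1, \dots, \alpha_6\}$ of Proposition \ref{cartan matrix}, verifying $(\beta_i, \beta_i) = 2$ so that they are indeed geometric roots. Next I would compute the action of $\rho_*$ on $H_1(M, \partial M; \Z)$ in this basis — a single $6 \times 6$ integer matrix $R$ of order $12$ — using the description of the monodromy on the wheel pieces $A_j \mapsto A_{j+1}$, $B_{i,j} \mapsto B_{i',j'}$ from the $E_6$ subsection of Section \ref{sec:3}. Then for each $i$, I would list the twelve vectors $R^j \beta_i$ ($0 \le j \le 11$) and confirm: (a) within each list, all twelve entries are distinct (freeness of the action restricted to that orbit); (b) the six lists are pairwise disjoint; (c) the union of all $72$ vectors consists precisely of roots, i.e. all have self-pairing $2$. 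Item (c) combined with $|\Phi_{E_6}| = 72$ forces the union to equal $\Phi_{E_6}$, so every root lies in some orbit and each orbit already has the full size $12$; hence no root can have a nontrivial stabilizer, and the action is free.

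The step I expect to be the main obstacle is \textbf{verifying disjointness and exhaustion} — item (b) and (c) — rather than the freeness within a single orbit. Freeness of each individual orbit is a short finite check once $R$ is known (one can even argue abstractly: $\rho_*$ has no eigenvalue $1$ on the relevant invariant subspaces, so $R^j \beta = \beta$ with $0 < j < 12$ would force $\beta$ into a proper $\rho_*$-invariant sublattice whose roots can be enumerated separately). But proving that the six orbits are disjoint and cover all $72$ roots genuinely requires knowing the full root list, which is where the wheel geometry does the work: every geometric root is represented by an oriented edge or spoke (to be proven in full in Section \ref{sec:6}, but the enumeration of edge/spoke classes is available here), and the rotational symmetry organizes these into six rotation-orbits visibly. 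If a cleaner argument is desired, one can instead show directly that the stabilizer of any root $\gamma \in \Phi_{E_6}$ is trivial: a nontrivial stabilizer element $\rho_*^d$ (with $d \mid 12$, $d < 12$) would fix $\gamma$, but the only $\rho_*^d$-invariant vectors form a sublattice of rank $< 6$ whose intersection with $\Phi_{E_6}$ is a sub-root-system of strictly smaller rank, and one checks case by case ($d = 6, 4, 3, 2$) that no such invariant vector has self-pairing $2$ — equivalently, that the fixed subspace of $\rho_*^d$ meets the unit-length shell trivially. This eigenvalue computation for the order-$12$ operator $R$ (whose eigenvalues are primitive $12$th, $3$rd, and other roots of unity dictated by the weights of the $E_6$ singularity) is the real content, and it is a routine but slightly delicate linear-algebra verification.
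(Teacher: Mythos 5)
Your proposal is correct and takes essentially the same route as the paper: the paper likewise fixes the six spokes $\beta_1,\dots,\beta_6$ as a projective basis of geometric roots, computes the matrix $P$ of $\rho_*$ in that basis, checks that no column of $P,P^2,\dots,P^{11}$ equals a standard basis vector (which simultaneously gives orbit size $12$ and pairwise disjointness of the six orbits), and concludes freeness by comparison with $|\Phi_{E_6}|=72$, exactly as in your items (a)--(c). (The ``$720$-element'' in your opening sentence is a slip for $72$, as your own later count makes clear; your alternative fixed-subspace sketch is plausible but not needed.)
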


\begin{proof}
    We consider the lines $L_1,\dots,L_6$ on $E_6$-wheel as in Figure \ref{fig:E6spokes}.
    Let $\beta_1,\dots,\beta_6$ denote the relative homology classes corresponding to $L_1,\dots,L_6$, respectively.
    These are geometric roots (i.e., $(\beta_j,\beta_j)=2$ for each $j$) and this
     can be checked by presenting each $\beta_j$ by a linear combination of $\alpha_j$'s:
    \[
    \begin{array}{rl}
        \beta_1 & =\; \alpha_1 \\
        \beta_2 & =\; \alpha_1 + \alpha_2 \\
        \beta_3 & =\; \alpha_3  \\
        \beta_4 & =\; \alpha_3 + \alpha_4 \\
        \beta_5 & =\; \alpha_1 + \alpha_2 + \alpha_3 + \alpha_4 + \alpha_5  \\
        \beta_6 & =\; \alpha_1 + \alpha_2 + \alpha_3 + \alpha_4 + \alpha_5 + \alpha_6 
    \end{array}
    \quad\text{, i.e., }\quad
    \begin{pmatrix}
        \beta_1 \\ \beta_2 \\ \beta_3 \\ \beta_4 \\ \beta_5 \\ \beta_6
    \end{pmatrix}
    = 
    \begin{pmatrix}
        1 & 0 & 0 & 0 & 0 & 0 \\
        1 & 1 & 0 & 0 & 0 & 0 \\
        0 & 0 & 1 & 0 & 0 & 0 \\
        0 & 0 & 1 & 1 & 0 & 0 \\
        1 & 1 & 1 & 1 & 1 & 0 \\
        1 & 1 & 1 & 1 & 1 & 1
    \end{pmatrix}
    \begin{pmatrix}
        \alpha_1 \\ \alpha_2 \\ \alpha_3 \\ \alpha_4 \\ \alpha_5 \\ \alpha_6 
    \end{pmatrix}.
    \]
    The bilinear form $(\beta_j,\beta_j)$ is easily calculated from this.
    Moreover, the elements $\beta_1,\dots,\beta_6$ form a basis of $H_1(M,\partial M;\Z)$, since the presented matrix is invertible.

    We can calculate the action of monodromy on $\beta_j$'s explicitly.
    For example, $\rho_*(\beta_3)= \beta_3 - \beta_4$ as shwon in Figure \ref{fig:E6spokes}.
    The matrix $P$ for the monodromy operator $\rho_*$ on $H_1(M,\partial M;\Z)$ with respect to the basis $\{\beta_1,\dots,\beta_6\}$ is given by
    \[
    P = 
    \begin{pmatrix}
        1 & 1 & 0 & 0 & 1 & 1 \\
        -1 & 0 & 0 & 0 & 0 & 0 \\
        0 & 0 & 1 & 1 & 1 & 1 \\
        0 & 0 & -1 & 0 & 0 & 0 \\
        0 & -1 & 0 & -1 & -1 & -1 \\
        0 & 0 & 0 & 0 & -1 & 0
    \end{pmatrix}.
    \]
    The $j$-th column of the powered matrix $P^\lambda$ represents the element $\rho_*^\lambda(\beta_j)$ in $H_1(M,\partial M;\Z)$ with respect to the basis $\{\beta_1,\dots,\beta_6\}$.
    We can calculate the powers of this matrix $P^2,\dots,P^{12}$.
    Note that $P^{12}=\mathrm{Id}$, which is obvious due to $\rho^{12} = \mathrm{id_{M}}$.
    Let 
    \[
    \Psi_j := \{\beta_j,\rho_*(\beta_j),\dots,\rho_*^{11}(\beta_j)\}, \quad(j=1,\dots,6)
    \]
    be the orbits of $\beta_j$ under the monodromy action on $\Phi_{E_6}$.
    
    Recall that the cardinality of the $E_6$ root system is 72.
    To show that the monodromy action on $\Phi_{E_6}$ is free, it is enough to prove that
    \begin{itemize}
        \item [i)] the 6 orbits $\Psi_j$ are pairwise disjoint, and
        \item [ii)] each orbit has 12 elements.
    \end{itemize}
    These can be verified by checking the columns of the matrices $P,P^2,\dots,P^{11}$.
    None of them coincides with a column of the identity matrix; that is, none is equal to a vector of the form $(0,\dots,1,\dots,0)^t$.
    This observation establishes both (i) and (ii).
\end{proof}

Since all 72 roots are realized through the monodromy action of $L_1,\dots, L_6$, we obtain the following corollary.
\begin{cor}
    Each geometric root of type $E_6$ corresponds to an edge/spoke on the $E_6$-wheel.
\end{cor}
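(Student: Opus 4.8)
The plan is to obtain the corollary as a direct consequence of Proposition \ref{free_E6} combined with the rotational description of the monodromy on the wheel in Theorem \ref{Etype wheel}(3). First I would extract the bookkeeping content of freeness. Since $\rho_*$ has order $m=12$ and acts freely on $\Phi_{E_6}$, each orbit $\Psi_j = \{\beta_j,\rho_*(\beta_j),\dots,\rho_*^{11}(\beta_j)\}$ has exactly $12$ elements, and the argument in the proof of Proposition \ref{free_E6} (no column of $P,P^2,\dots,P^{11}$ is a standard basis vector) shows the six orbits are pairwise disjoint. Hence $|\Psi_1 \sqcup \cdots \sqcup \Psi_6| = 72 = |\Phi_{E_6}|$, so $\Phi_{E_6} = \Psi_1 \cup \cdots \cup \Psi_6$; equivalently, every geometric root of type $E_6$ equals $\rho_*^{\lambda}(\beta_j)$ for some $j \in \{1,\dots,6\}$ and $\lambda \in \{0,\dots,11\}$.

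Next I would push these orbit elements back into the wheel. By construction each $\beta_j$ is the relative homology class $[\pi(L_j)]$ of the edge or spoke $L_j \subset W$ drawn in Figure \ref{fig:E6spokes}. By Theorem \ref{Etype wheel}(3), verified for $E_6$ in Section \ref{sec:3}, the geometric monodromy $\rho$ of $M$ lifts through $\pi$ to the counterclockwise rotation $r$ of the wheel by $\tfrac{7}{6}\pi$, i.e. $\pi \circ r = \rho \circ \pi$ up to a homotopy rel $\partial M$ preserving relative homology classes (using that $\rho$ and $\rho'$ are isotopic). Therefore $\rho_*^{\lambda}(\beta_j) = \rho_*^{\lambda}[\pi(L_j)] = [\pi(r^{\lambda}(L_j))]$. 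Since $r$ is a Euclidean isometry of $\R^2$ preserving $W$ together with its vertex set (the punctures), it carries the segment $L_j$, whose endpoints are punctures, to another segment between punctures of the same type; in other words $r$ permutes the edges and spokes of $W$. Hence $r^{\lambda}(L_j)$ is again an edge or spoke, and it represents the root $\rho_*^{\lambda}(\beta_j)$. Combined with the previous paragraph, this shows that every geometric root of type $E_6$ is represented by an edge or spoke of the $E_6$-wheel.

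I do not expect a genuine obstacle here: the substantive work was already carried out in Proposition \ref{free_E6}, and the corollary is its geometric payoff. The only step that uses input beyond formal counting is the identity $\pi \circ r = \rho \circ \pi$, i.e. that the monodromy lifts to the planar rotation $r$; but this is precisely Theorem \ref{Etype wheel}(3), established for $E_6$ in Section \ref{sec:3} by matching the $(1,1)$-action on the triangles $A_j$ and squares $B_i$ with the rotation by $\tfrac{7}{6}\pi$. The one remaining detail worth spelling out is that $r$ and its powers, being symmetries of the decorated polygonal region $W$, map the set of edges and spokes of $W$ into itself, which is immediate once that structure is in place. Thus the proof reduces to assembling these observations.
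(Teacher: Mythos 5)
Your argument is correct and is essentially the paper's own proof: the paper deduces the corollary directly from Proposition \ref{free_E6} by noting that the $72$ elements in the six disjoint monodromy orbits of $\beta_1,\dots,\beta_6$ exhaust $\Phi_{E_6}$, and that these orbit elements are represented by rotated copies of the segments $L_1,\dots,L_6$, hence by edges/spokes of the wheel. You merely spell out the counting and the compatibility $\pi\circ r=\rho\circ\pi$ from Theorem \ref{Etype wheel}(3), which the paper leaves implicit.
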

We will prove the converse in the next section.

\subsection{The case of $E_7$}
We proceed as in the case of $E_6$.
\begin{prop}\label{free_E7}
    The monodromy action on $\Phi_{E_7}$ is free.
\end{prop}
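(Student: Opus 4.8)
The plan is to mirror the proof of Proposition~\ref{free_E6}. First I would fix a \emph{projective basis} $\beta_1,\dots,\beta_7$ of $H_1(M,\partial M;\Z)$ realized by oriented line segments $L_1,\dots,L_7$ (edges and spokes) on the $E_7$-wheel, chosen in the same spirit as the $E_6$ case so that each $\beta_j$ has a short expression in the simple basis $\{\alpha_1,\dots,\alpha_7\}$ of Proposition~\ref{cartan matrix}. Recording the change of basis as a $7\times 7$ integer matrix and checking that it is unimodular shows that $\{\beta_j\}$ is again a $\Z$-basis of $H_1(M,\partial M;\Z)$; substituting these expressions into the matrix $\big(\var(\alpha_i)\bullet\alpha_j\big)$ of Proposition~\ref{cartan matrix} and computing $(\beta_j,\beta_j)$ confirms that every $\beta_j$ is a geometric root.

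The one genuinely new point compared with $E_6$ is a parity issue. Since $\rho^9=\mathrm{id}_M$, the monodromy operator $\rho_*$ has order $9$, which is odd, so $-\mathrm{id}$ is not a power of $\rho_*$ (it has order $2$). Consequently $-\beta_j$ does not lie in the monodromy orbit $\Psi_j^{+}:=\{\beta_j,\rho_*(\beta_j),\dots,\rho_*^{8}(\beta_j)\}$ of $\beta_j$, and seven orbits of size at most $9$ could account for at most $63<126$ roots. I would therefore run the argument with the fourteen seeds $\pm\beta_1,\dots,\pm\beta_7$ and their orbits $\Psi_j^{\pm}$, where $\Psi_j^{-}=\{-\gamma:\gamma\in\Psi_j^{+}\}$; note that $-\beta_j$ is also a geometric root, since $(-\beta_j,-\beta_j)=(\beta_j,\beta_j)=2$.

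Next I would compute, from the description of $\rho$ as the counterclockwise rotation of the $E_7$-wheel by $\tfrac{10}{9}\pi$, the matrix $P$ of $\rho_*$ with respect to $\{\beta_1,\dots,\beta_7\}$, together with its powers $P^2,\dots,P^{8}$ (with $P^{9}=\mathrm{Id}$ forced by $\rho^{9}=\mathrm{id}_M$). The decisive finite check is that, for every $\lambda\in\{1,\dots,8\}$, no column of $P^\lambda$ equals $\pm e_i$ for any standard basis vector $e_i$. Granting this: each $\Psi_j^{\pm}$ has exactly $9$ elements, since a shorter orbit would force $\rho_*^\lambda(\beta_j)=\beta_j$ for some $1\le\lambda\le 8$, i.e.\ the $j$-th column of $P^\lambda$ would be $e_j$; and the fourteen orbits are pairwise disjoint, because an intersection of $\Psi_i^{\epsilon}$ with $\Psi_j^{\epsilon'}$ would yield $\rho_*^\lambda(\beta_i)=\pm\beta_j$ for some $\lambda\in\{0,\dots,8\}$, where $\lambda=0$ is impossible as $\{\beta_j\}$ is a basis (so $\beta_i\neq\pm\beta_j$), and $\lambda\ge 1$ is excluded by the column check. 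Since $|\Phi_{E_7}|=126=14\times 9$ by Theorem~\ref{geometric roots}, these fourteen orbits exhaust $\Phi_{E_7}$; hence every geometric root lies in an orbit of full size $9$ and the monodromy action is free. As in the $E_6$ case, this simultaneously shows that every geometric root of type $E_7$ is realized by an edge or spoke of the $E_7$-wheel.

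The main obstacle is bookkeeping rather than conceptual: one must choose the seven lines $L_j$ carefully enough that $P$ can be read off the wheel with confidence, and then carry out the explicit computation of the eight matrix powers and inspect all of their columns. The parity remark is the only place where the argument must deviate from the $E_6$ template, and it is precisely what forces the switch from seven to fourteen seed roots and from inspecting columns for $e_i$ to inspecting them for $\pm e_i$.
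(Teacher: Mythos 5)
Your proposal is correct and follows essentially the same route as the paper: the same projective basis $\{\beta_1,\dots,\beta_7\}$ of edges/spokes, the same monodromy matrix $P$ with $P^9=\mathrm{Id}$, the same check that no column of $P,\dots,P^{8}$ equals $\pm e_i$, and the same count $126=14\times 9$ with the fourteen orbits $\Psi_j^{\pm}$. Your explicit parity remark (order $9$ is odd, so $-\mathrm{id}$ is not a power of $\rho_*$, forcing fourteen seeds rather than seven) is left implicit in the paper but is exactly why its proof switches from the $E_6$ pattern to seeds $\pm\beta_j$ and the $\pm e_i$ column test.
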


\begin{proof}
    \begin{figure}[h]
        \centering
        \includegraphics[scale=0.5]{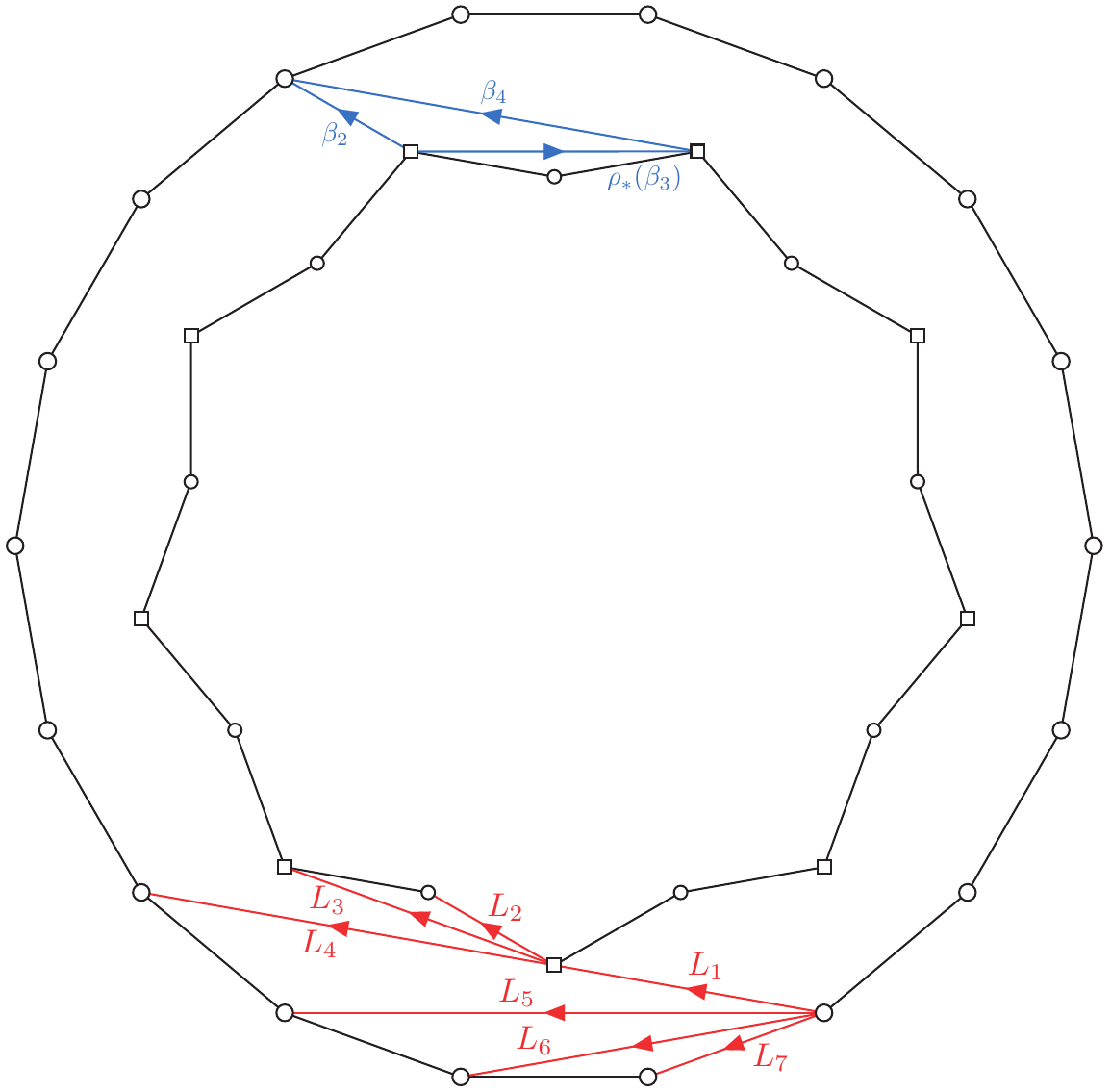}
        \caption{A projective basis in $E_{7}$-wheel.}
        \label{fig:E7spokes}
    \end{figure}
    We consider the lines $L_1,\dots,L_7$ on $E_7$-wheel as in Figure \ref{fig:E7spokes},
    and their relative homology classes $\beta_1,\dots,\beta_7$.
    These give another basis consisting of geometric roots, and this can be checked by presenting each $\beta_j$ by a linear combination of $\alpha_j$'s:
    \[
    \begin{array}{rl}
        \beta_1 & =\; \alpha_1 \\
        \beta_2 & =\; \alpha_2 \\
        \beta_3 & =\; \alpha_2 + \alpha_3 \\
        \beta_4 & =\; \alpha_2 + \alpha_3 + \alpha_4\\
        \beta_5 & =\; \alpha_1 + \alpha_2 + \alpha_3 + \alpha_4 + \alpha_5 \\
        \beta_6 & =\; \alpha_1 + \alpha_2 + \alpha_3 + \alpha_4 + \alpha_5 + \alpha_6 \\
        \beta_7 & =\; \alpha_1 + \alpha_2 + \alpha_3 + \alpha_4 + \alpha_5 + \alpha_6 + \alpha_7 \\
    \end{array}
    \quad\text{, i.e., }\quad
    \begin{pmatrix}
        \beta_1 \\ \beta_2 \\ \beta_3 \\ \beta_4 \\ \beta_5 \\ \beta_6 \\ \beta_7
    \end{pmatrix}
    = 
    \begin{pmatrix}
        1 & 0 & 0 & 0 & 0 & 0 & 0 \\
        0 & 1 & 0 & 0 & 0 & 0 & 0 \\
        0 & 1 & 1 & 0 & 0 & 0 & 0 \\
        0 & 1 & 1 & 1 & 0 & 0 & 0 \\
        1 & 1 & 1 & 1 & 1 & 0 & 0 \\
        1 & 1 & 1 & 1 & 1 & 1 & 0 \\
        1 & 1 & 1 & 1 & 1 & 1 & 1
    \end{pmatrix}
    \begin{pmatrix}
        \alpha_1 \\ \alpha_2 \\ \alpha_3 \\ \alpha_4 \\ \alpha_5 \\ \alpha_6 \\ \alpha_7 
    \end{pmatrix}.
    \]
       We can calculate the action of monodromy on $\beta_j$'s explicitly.
    For example, $\rho_*(\beta_3)= \beta_2 - \beta_4$ as shown in Figure \ref{fig:E7spokes}.
    The matrix $P$ for the monodromy operator $\rho_*$ on $H_1(M,\partial M;\Z)$ with respect to the basis $\{\beta_1,\dots,\beta_7\}$ is given by 
    \[
    P = 
    \begin{pmatrix}
        1 & 0 & 0 & 0 & 1 & 1 & 1 \\
        0 & 1 & 1 & 1 & 1 & 1 & 1 \\
        0 & -1 & 0 & 0 & 0 & 0 & 0 \\
        0 & 0 & -1 & 0 & 0 & 0 & 0 \\
        -1 & 0 & 0 & -1 & -1 & -1 & -1 \\
        0 & 0 & 0 & 0 & -1 & 0 & 0 \\
        0 & 0 & 0 & 0 & 0 & -1 & 0
    \end{pmatrix}.
    \]
    The $j$-th column of the powered matrix $P^\lambda$ represents the element $\rho_*^\lambda(\beta_j)$ in $H_1(M,\partial M;\Z)$ with respect to the basis $\{\beta_1,\dots,\beta_7\}$.
    We can calculate the powers of this matrix $P^2,\dots,P^{9}$.
    Note that $P^{9}=\mathrm{Id}$, which is obvious due to $\rho^{9} = \mathrm{id_{M}}$.
    Let 
    \[
    \Psi^+_j := \{\beta_j,\rho_*(\beta_j),\dots,\rho_*^{8}(\beta_j)\},\quad
    \Psi^-_j := \{-\beta_j,-\rho_*(\beta_j),\dots,-\rho_*^{8}(\beta_j)\}, \quad(j=1,\dots,7)
    \]
    be the orbits of $\beta_j$ and $-\beta_j$ under the monodromy action on $\Phi_{E_7}$.
    
    Recall that the cardinality of the $E_7$ root system is 126.
    To show that the monodromy action on $\Phi_{E_7}$ is free, it is enough to prove that
    \begin{itemize}
        \item [i)] the 14 orbits $\Psi^+_j,\Psi^-_j$ are pairwise disjoint, and
        \item [ii)] each orbit has 9 elements.
    \end{itemize}
    These can be verified by checking the columns of the matrices $P,P^2,\dots,P^{8}$.
    None of them coincides with a column of the identity matrix or its negative; that is, none is equal to a vector of the form $\pm (0,\dots,1,\dots,0)^t$.
    This observation establishes both (i) and (ii).
\end{proof}

\begin{cor}
    Each geometric root of type $E_7$ corresponds to an edge/spoke on the $E_7$-wheel.
\end{cor}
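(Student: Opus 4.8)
The plan is to obtain this corollary as an immediate consequence of Proposition \ref{free_E7} together with the fact, established in Section \ref{sec:3} and recorded in Theorem \ref{Etype wheel}(3), that the geometric monodromy $\rho$ of the $E_7$-Milnor fiber lifts to the rigid counterclockwise rotation of the $18$-gon $W$ by $\tfrac{10}{9}\pi$. First I would observe that the seven lines $L_1,\dots,L_7$ of Figure \ref{fig:E7spokes} are by construction edges or spokes of $W$, and that their orientation reversals $\overline{L_1},\dots,\overline{L_7}$, which represent $-\beta_1,\dots,-\beta_7$, are again edges or spokes. Since a Euclidean rotation of $W$ maps the boundary $18$-gon onto itself and straight chords between punctures onto straight chords between punctures, it permutes the finite set of oriented edges and spokes of $W$ among themselves; hence for every $\lambda$ the class $\rho_*^\lambda(\pm\beta_j)$ is still represented by an edge or a spoke.

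Next I would invoke the counting already carried out in the proof of Proposition \ref{free_E7}: the monodromy action on $\Phi_{E_7}$ is free, the $14$ orbits $\Psi^+_j,\Psi^-_j$ ($j=1,\dots,7$) are pairwise disjoint, and each has $9$ elements. Together they therefore contain $14 \cdot 9 = 126$ distinct geometric roots, and since $|\Phi_{E_7}| = 126$ they exhaust $\Phi_{E_7}$. Combining this with the previous paragraph, every geometric root of type $E_7$ equals $\rho_*^\lambda(\pm\beta_j)$ for some $j$ and some $\lambda$, and is consequently represented by an edge or a spoke of the $E_7$-wheel.

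There is essentially no remaining obstacle here: the only thing that needs checking is that the lift of $\rho$ is a rigid rotation carrying edges to edges and spokes to spokes, and this is already contained in the construction of the $E_7$-wheel in Section \ref{sec:3}, where $\rho$ is explicitly identified with the $\tfrac{10}{9}\pi$-rotation. The substantive computation — verifying that no column of $P, P^2, \dots, P^8$ has the form $\pm(0,\dots,1,\dots,0)^t$ — has already been done inside Proposition \ref{free_E7}, so the corollary is a short counting deduction. I would explicitly note that the converse statement, namely that every edge and spoke of $W$ is a geometric root, is proved in the next section and is not attempted here.
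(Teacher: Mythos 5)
Your argument is correct and is essentially the paper's own (largely implicit) proof: the corollary is deduced from Proposition \ref{free_E7} by noting that the $14$ disjoint orbits $\Psi^{\pm}_j$ of the projective basis classes $\pm\beta_j$, each of size $9$, exhaust all $126$ roots, and that the monodromy lifts to the rigid $\tfrac{10}{9}\pi$-rotation of the wheel, so every class in these orbits is represented by an (oriented) edge or spoke. Nothing further is needed, and you correctly defer the converse inclusion to the next section.
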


\subsection{The monodromy action on $\Phi_{E_8}$}
We proceed as in the case of $E_6, E_7$.
\begin{prop}\label{free_E8}
    The monodromy action on $\Phi_{E_8}$ is free.
\end{prop}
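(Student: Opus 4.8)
The plan is to follow verbatim the template used in the proofs of Propositions \ref{free_E6} and \ref{free_E7}, now on the $E_8$-wheel. First I would fix a \emph{projective basis} $L_1,\dots,L_8$ of oriented line segments (edges and spokes) on the $E_8$-wheel, chosen in analogy with the configuration of Figure \ref{fig:E7spokes}, and let $\beta_1,\dots,\beta_8\in H_1(M,\partial M;\Z)$ be their relative homology classes. As before, I would record each $\beta_j$ as an explicit $\Z$-linear combination of the simple basis $\alpha_1,\dots,\alpha_8$ from Proposition \ref{cartan matrix}; one should arrange the change-of-basis matrix to be lower triangular with $1$'s on the diagonal, so that $\{\beta_j\}$ is automatically an integral basis of $H_1(M,\partial M;\Z)$, and then $(\beta_j,\beta_j)=2$ follows at once from the fact that $B+B^t$ is the $E_8$ Cartan matrix, confirming that each $\beta_j$ is a geometric root.

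Next I would compute the matrix $P$ of the monodromy operator $\rho_*$ on $H_1(M,\partial M;\Z)$ with respect to $\{\beta_1,\dots,\beta_8\}$. By Lemma \ref{lem:mo} the geometric monodromy of the $E_8$-singularity is the element $(1,1)\in\Z/5\Z\oplus\Z/3\Z$, which lifts to the counterclockwise rotation of the wheel by $\tfrac{16}{15}\pi$; reading off, up to homology, the image of each $L_j$ under this rotation produces the columns of $P$. Since $\rho^{15}=\mathrm{id}_M$, we have $P^{15}=\mathrm{Id}$, so the cyclic group $\langle\rho_*\rangle\cong\Z/15\Z$ acts on $\Phi_{E_8}$, and the $j$-th column of $P^\lambda$ records $\rho_*^\lambda(\beta_j)$ in the $\beta$-basis.

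To conclude freeness I would introduce the $16$ subsets
\[
\Psi^+_j := \{\beta_j,\rho_*(\beta_j),\dots,\rho_*^{14}(\beta_j)\}, \qquad
\Psi^-_j := \{-\beta_j,-\rho_*(\beta_j),\dots,-\rho_*^{14}(\beta_j)\}, \qquad (j=1,\dots,8),
\]
the $\langle\rho_*\rangle$-orbits of $\pm\beta_j$. (Unlike the $E_6$ case, here the order $15$ is odd, so one cannot expect a single orbit to be closed under negation, which is exactly why we track both signs, as for $E_7$.) Because $|\Phi_{E_8}|=240=16\times 15$, it suffices to prove that these $16$ orbits are pairwise disjoint and each has exactly $15$ elements: an orbit collision or a short orbit would force some column of one of $P,P^2,\dots,P^{14}$ to equal $\pm e_i$ for a standard basis vector, and conversely, if no such column occurs then each $\Psi_j^{\pm}$ has size dividing $15$ yet not smaller than $15$, so size exactly $15$, and the $16$ of them are disjoint and hence partition $\Phi_{E_8}$. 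So the argument reduces to the finite verification that none of the columns of $P,P^2,\dots,P^{14}$ is of the form $\pm(0,\dots,1,\dots,0)^t$.

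The main obstacle is not conceptual but is the heaviest bookkeeping in the paper: the $E_8$-wheel is assembled from regular triangles and regular pentagons with the projection $\pi: W\to M$ only $5$-to-$1$ and not even a covering map, so reading the homology classes of the chosen spokes and edges $L_j$ in terms of the $\alpha_i$ — and hence pinning down $P$ correctly — is delicate; thereafter the computation of $P^2,\dots,P^{14}$ and the scan of their columns, while routine, is substantial for an $8\times 8$ integer matrix. A secondary point requiring care is the very choice of $L_1,\dots,L_8$: they must be picked so that $P$ is genuinely integral and the disjointness scan actually succeeds, and a clumsy choice can inflate the computation considerably, so I would mimic the ``spoke together with a nest of outer edges'' pattern used for $E_7$ as closely as the $E_8$-geometry permits.
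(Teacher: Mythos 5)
Your proposal is correct and follows essentially the same route as the paper: a projective basis $\beta_1,\dots,\beta_8$ of edges/spokes on the $E_8$-wheel written lower-triangularly in the simple basis, the monodromy matrix $P$ with $P^{15}=\mathrm{Id}$, the sixteen orbits $\Psi^\pm_j$, and the counting argument $240=16\times 15$ reduced to checking that no column of $P,\dots,P^{14}$ equals $\pm(0,\dots,1,\dots,0)^t$. This is exactly the paper's proof of Proposition \ref{free_E8}, so only the explicit bookkeeping (the choice of $L_j$ as in Figure \ref{fig:E8spokes} and the resulting matrix $P$) remains to be filled in.
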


\begin{proof}
    \begin{figure}[h]
        \centering
        \includegraphics[scale=0.6]{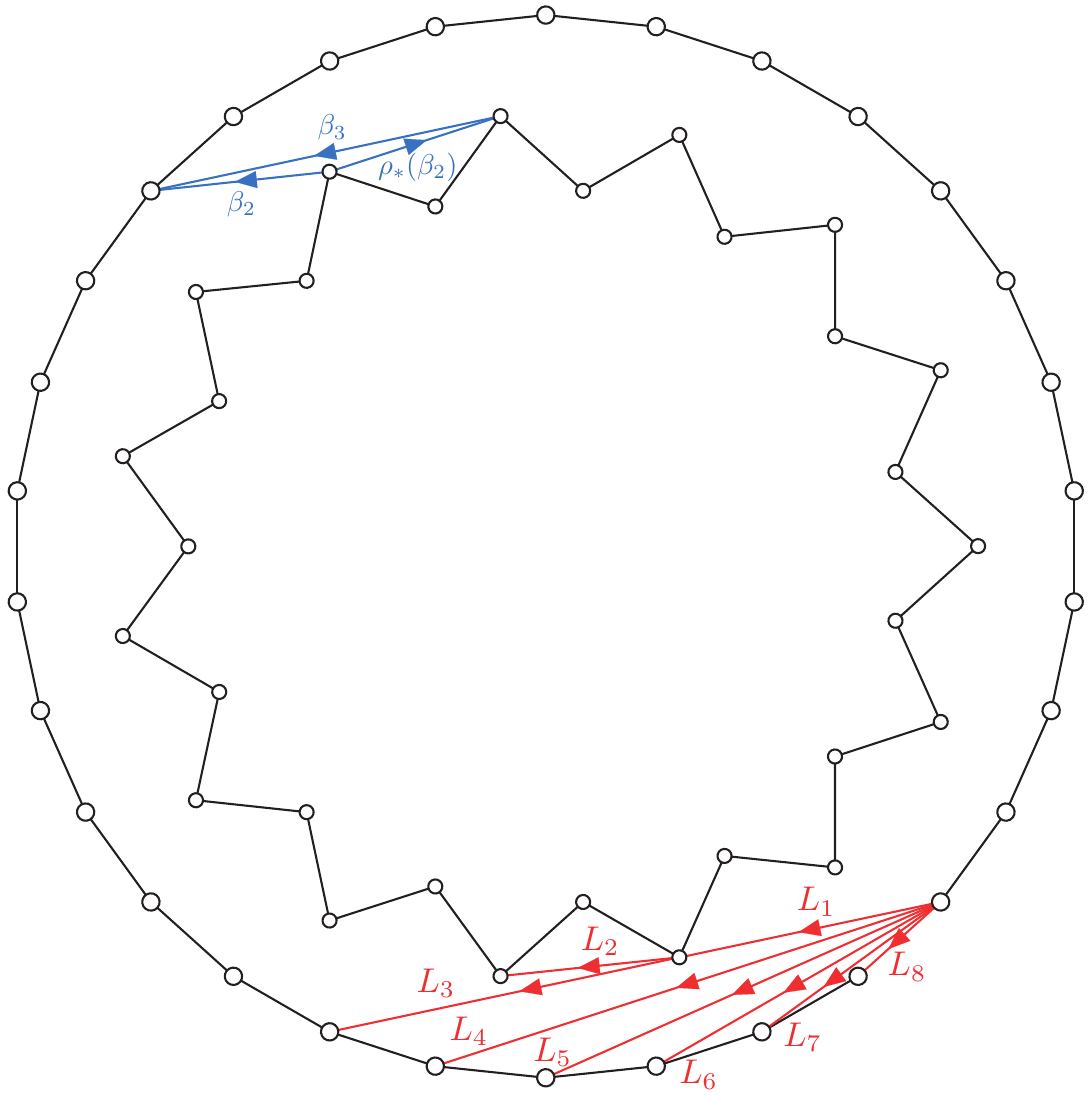}
        \caption{A projective basis in $E_{8}$-wheel.}
        \label{fig:E8spokes}
    \end{figure}
    We consider the lines $L_1,\dots,L_8$ on $E_8$-wheel as in Figure \ref{fig:E8spokes}, and their relative homology classes
    $\beta_1,\dots,\beta_8$.
     These give another basis consisting of geometric roots, and this can be checked by presenting each $\beta_j$ by a linear combination of $\alpha_j$'s:
    \[
    \begin{array}{rl}
        \beta_1 & =\; \alpha_1 \\
        \beta_2 & =\; \alpha_2 \\
        \beta_3 & =\; \alpha_2 + \alpha_3 \\
        \beta_4 & =\; \alpha_1 + \alpha_2 + \alpha_3 + \alpha_4 \\
        \beta_5 & =\; \alpha_1 + \alpha_2 + \alpha_3 + \alpha_4 + \alpha_5 \\
        \beta_6 & =\; \alpha_1 + \alpha_2 + \alpha_3 + \alpha_4 + \alpha_5 + \alpha_6 \\
        \beta_7 & =\; \alpha_1 + \alpha_2 + \alpha_3 + \alpha_4 + \alpha_5 + \alpha_6 + \alpha_7 \\
        \beta_8 & =\; \alpha_1 + \alpha_2 + \alpha_3 + \alpha_4 + \alpha_5 + \alpha_6 + \alpha_7 +\alpha_8
    \end{array}
    \quad\text{, i.e., }\quad
    \begin{pmatrix}
        \beta_1 \\ \beta_2 \\ \beta_3 \\ \beta_4 \\ \beta_5 \\ \beta_6 \\ \beta_7 \\ \beta_8
    \end{pmatrix}
    = 
    \begin{pmatrix}
        1 & 0 & 0 & 0 & 0 & 0 & 0 & 0 \\
        0 & 1 & 0 & 0 & 0 & 0 & 0 & 0 \\
        0 & 1 & 1 & 0 & 0 & 0 & 0 & 0 \\
        1 & 1 & 1 & 1 & 0 & 0 & 0 & 0 \\
        1 & 1 & 1 & 1 & 1 & 0 & 0 & 0 \\
        1 & 1 & 1 & 1 & 1 & 1 & 0 & 0 \\
        1 & 1 & 1 & 1 & 1 & 1 & 1 & 0 \\
        1 & 1 & 1 & 1 & 1 & 1 & 1 & 1
    \end{pmatrix}
    \begin{pmatrix}
        \alpha_1 \\ \alpha_2 \\ \alpha_3 \\ \alpha_4 \\ \alpha_5 \\ \alpha_6 \\ \alpha_7 \\ \alpha_8
    \end{pmatrix}.
    \]
    We can calculate the action of monodromy on $\beta_j$'s explicitly.
    For example, $\rho_*(\beta_2)= \beta_2 - \beta_3$ as shown in Figure \ref{fig:E8spokes}.
    The matrix $P$ for the monodromy operator $\rho_*$ on $H_1(M,\partial M;\Z)$ with respect to the basis $\{\beta_1,\dots,\beta_8\}$ is given by
    \[
    P = 
    \begin{pmatrix}
        1 & 0 & 0 & 1 & 1 & 1 & 1 & 1 \\
        0 & 1 & 1 & 1 & 1 &  1& 1 & 1 \\
        0 & -1 & 0 & 0 & 0 & 0 & 0 & 0 \\
        -1 & 0 & -1 & -1 & -1 & -1 & -1 & -1 \\
        0 & 0 & 0 & -1 & 0 & 0 & 0 & 0 \\
        0 & 0 & 0 & 0 & -1 & 0 & 0 & 0 \\
        0 & 0 & 0 & 0 & 0 & -1 & 0 & 0 \\
        0 & 0 & 0 & 0 & 0 & 0 & -1 & 0
    \end{pmatrix}.
    \]
    The $j$-th column of the powered matrix $P^\lambda$ represents the element $\rho_*^\lambda(\beta_j)$ in $H_1(M,\partial M;\Z)$ with respect to the basis $\{\beta_1,\dots,\beta_8\}$.
    We can calculate the powers of this matrix $P^2,\dots,P^{15}$.
    Note that $P^{15}=\mathrm{Id}$, which is obvious due to $\rho^{15} = \mathrm{id_{M}}$.
    Let 
    \[
    \Psi^+_j := \{\beta_j,\rho_*(\beta_j),\dots,\rho_*^{14}(\beta_j)\},\quad
    \Psi^-_j := \{-\beta_j,-\rho_*(\beta_j),\dots,-\rho_*^{14}(\beta_j)\},\quad(j=1,\dots,8)
    \]
    be the orbits of $\beta_j$ and $-\beta_j$ under the monodromy action on $\Phi_{E_8}$.
    
    Recall that the cardinality of the $E_8$ root system is 240.
    To show that the monodromy action on $\Phi_{E_8}$ is free, it is enough to prove that
    \begin{itemize}
        \item [i)] the 16 orbits $\Psi^+_j,\Psi^-_j$ are pairwise disjoint, and
        \item [ii)] each orbit has 15 elements.
    \end{itemize}
    These can be verified by checking the columns of the matrices $P,P^2,\dots,P^{14}$.
    None of them coincides with a column of the identity matrix or its negative; that is, none is equal to a vector of the form $\pm (0,\dots,1,\dots,0)^t$.
    This observation establishes both (i) and (ii).
\end{proof}

\begin{cor}
    Each geometric root of type $E_8$ corresponds to an edge/spoke on the $E_8$-wheel.
\end{cor}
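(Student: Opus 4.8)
The corollary will follow at once from Proposition~\ref{free_E8} combined with the geometry of the $E_8$-wheel built in Section~\ref{sec:3}; the plan is to package the orbit count of Proposition~\ref{free_E8} with the fact that the monodromy lifts to a symmetry of the wheel. First I would note that, by construction, each class $\beta_j$ of the projective basis is represented by the oriented edge or spoke $L_j\subset W$ shown in Figure~\ref{fig:E8spokes}, and that $-\beta_j$ is represented by the same segment with reversed orientation, hence is again an oriented edge or spoke. So the $16$ classes $\pm\beta_1,\dots,\pm\beta_8$ are already realized in the desired form.

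Next I would use the lift of the geometric monodromy. By Theorem~\ref{Etype wheel}(3) and the explicit description in the $E_8$-subsection, $\rho$ lifts to the rotation $\widetilde\rho$ of $W$ by $\tfrac{16}{15}\pi = 16\cdot\tfrac{2\pi}{30}$. Since $W$ is the region between two concentric regular $30$-gons whose vertices are punctures, $\widetilde\rho$ is a symmetry of $W$, so it permutes the set of oriented edges and spokes of $W$. Because $\widetilde\rho$ lifts $\rho$, i.e. $\pi\circ\widetilde\rho = \rho\circ\pi$, the segment $\widetilde\rho^{\lambda}(L_j)$ is an oriented edge or spoke whose class in $H_1(M,\partial M;\Z)$ equals $\rho_*^{\lambda}(\beta_j)$, and the same segment with reversed orientation realizes $-\rho_*^{\lambda}(\beta_j)$. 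Hence every element of the orbits $\Psi^{+}_j$ and $\Psi^{-}_j$ is represented by an oriented edge or spoke of the $E_8$-wheel.

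Finally I would invoke the counting already carried out inside the proof of Proposition~\ref{free_E8}: the $16$ orbits $\Psi^{+}_1,\Psi^{-}_1,\dots,\Psi^{+}_8,\Psi^{-}_8$ are pairwise disjoint and each has $15$ elements, so together they contain $16\times 15 = 240$ geometric roots, which is exactly $|\Phi_{E_8}|$. Therefore every geometric root of type $E_8$ lies in one of these orbits and is represented by an oriented edge or spoke of the wheel, proving the corollary.

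I do not expect a genuine obstacle here: the substantive content is entirely in Proposition~\ref{free_E8}, and what remains is bookkeeping — reading off from Figure~\ref{fig:E8spokes} that each $L_j$ and its reverse is an edge or spoke, and observing that a rotational symmetry of $W$ preserves the class of edges and spokes. (The converse, that every edge or spoke of the wheel represents a geometric root, is established separately in the following section.)
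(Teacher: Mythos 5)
Your proposal is correct and follows essentially the same route as the paper: the paper derives the corollary from Proposition~\ref{free_E8} together with the observation (stated in the introduction of Section~\ref{sec:monodromy} and illustrated for $E_6$) that the monodromy acts as a rotation of the wheel, so the orbits of the projective-basis segments $L_1,\dots,L_8$ and their orientation reversals are realized by edges/spokes and exhaust all $16\times 15=240$ geometric roots. Your write-up just makes the lifting step $\pi\circ\widetilde\rho=\rho\circ\pi$ and the counting explicit, which matches the paper's intent.
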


\subsection{Simple or Projective basis}\label{ssec:simple}
We have chosen two basis of $H_1(M,\partial M;\Z)$ consisting of geometric roots.
The basis $\{\alpha_1,\dots,\alpha_k\}$ chosen in Proposition \ref{cartan matrix} is called the {\em simple basis}
and basis $\{\beta_1,\dots,\beta_k\}$ chosen in this section is called the {\em projective basis}.

The projective basis for $A_k$ is given by $\beta_j:=\alpha_1+\cdots+\alpha_j$ ($1 \le j \le k$).
The projective basis for $D_k$ is given by $\beta_1:=\alpha_1$, $\beta_2:=\alpha_2$, and $\beta_j:=\alpha_1+\cdots+\alpha_j$ ($3 \le j \le k$).
It is readily seen that the projective bases for $\Phi_{A_k}$ and $\Phi_{D_k}$ exhibit properties analogous to those of type $E$.
That is, the monodromy action on the projective basis yields a complete set of geometric roots, up to reversing orientation.

These names come from the theory of quiver representations.
Gabriel's theorem says that if a quiver has finitely many isomorphism class of indecomposable representations then
its underlying graph is  of $ADE$ type, and its indecomposable representations are in one to one correspondence with
positive roots of the root system.

Each geometric root may be regarded as an oriented Lagrangian submanifold of the Milnor fiber $M$,
which is a symplectic manifold. There is a version of Fukaya category being developed in \cite{CCJ}, \cite{CCJ2}, \cite{BCCJ}
for a singularity.  We conjecture that each basis generates this Fukaya category.
Moreover $\AI$-endomorphism algebra of $\{\beta_1,\dots,\beta_k\}$ in this Fukaya category is quasi-isomorphic to the path algebra of an $ADE$-quiver. And the hom functor with respect $\{\beta_1,\dots,\beta_k\}$ takes geometric roots to the indecomposable representations(up to shift).
Namely, equivalence classes of edges/spokes of the wheel also classify indecomposable quiver representations. 
In particular, hom functor takes basis $\{\alpha_1,\dots,\alpha_k\}$ to simple representations and takes the basis $\{\beta_1,\dots,\beta_k\}$
to the projective representations. The wheel  provides geometric models for Auslander--Reiten quivers as well (c.f., see \cite{Schbook} for well-known geometric models in $A_{k}$ and $D_{k}$-cases, also \cite{CCS06}, \cite{Sch08}, see \cite{BGMS23} and \cite{CZ24} for similar geometric models in $A_{k}$ and $D_{k}$-cases). This will be explored elsewhere by the first two authors of the paper. This is also where the current
paper is originated from.

This connection can be readily seen as follows. 
In terms of the basis $\{\alpha_1,\dots,\alpha_k\}$, we can write the homology class of each edge/spoke.
Following the notations of quiver representations theory, a homology class $\alpha_1+2\alpha_3+2\alpha_4$ will be
written as $13344$. It is difficult to write every homology classes, so we have chosen a triangulation of each Milnor fiber (hence the wheel)
and wrote the homology classes of their edges in Figure \ref{fig:tri}. The homology classes of rest of the edges/spokes can be easily deduced.
In this way, we can also check that each geometric root is either a positive or negative  linear combination of $\{\alpha_1,\dots,\alpha_k\}$,
which is a property of simple basis for a root system.
    

\section{Equivalent definition of geometric root system (proof of Thm \ref{root_equiv})} \label{sec:6}

Now we prove Theorem \ref{root_equiv}.
Let's denote the sets (ii),(iii), and (iv) in Theorem \ref{root_equiv} by $\Phi_\Gamma^{(2)}$,$\Phi_\Gamma^{(3)}$, and $\Phi_\Gamma^{(4)}$, respectively.

\begin{lemma}\label{lemma_root_subset}
The following statements hold.
\begin{enumerate}
\item
$ \Phi_\Gamma = \Phi_\Gamma^{(2)} \supseteq \Phi_\Gamma^{(3)}$.
\item  $\Phi_\Gamma^{(4)} \supseteq  \Phi_\Gamma^{(3)}.$
\end{enumerate}
\end{lemma}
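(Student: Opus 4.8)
The plan is to prove the three claims contained in the lemma in stages: the equality $\Phi_\Gamma = \Phi_\Gamma^{(2)}$ (pure lattice theory), then $\Phi_\Gamma^{(4)}\supseteq\Phi_\Gamma^{(3)}$ (the geometric heart), and finally $\Phi_\Gamma\supseteq\Phi_\Gamma^{(3)}$, which I would deduce from the second stage together with Theorem~\ref{thm:SA}. For the first stage, I would invoke Proposition~\ref{cartan matrix}: in the simple basis the Gram matrix of $(\cdot,\cdot)$ is the Cartan matrix $C = B + B^t$ of type $\Gamma$, which is positive definite, integral, and has diagonal entries $2$; hence $(v,v) = v^t C v$ is a non-negative even integer for every integral $v$, vanishing only at $v = 0$. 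Therefore $(\alpha,\alpha)\le 2$ forces $(\alpha,\alpha)\in\{0,2\}$, and discarding the zero class gives $\Phi_\Gamma^{(2)} = \Phi_\Gamma$.

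For $\Phi_\Gamma^{(4)}\supseteq\Phi_\Gamma^{(3)}$, I would take $\alpha$ with an embedded connected arc representative $L$ such that $L\cap\rho(L)=\emptyset$, with endpoints $p,q\in\partial M$, and pass to the boundary-fixing monodromy $\rho'$, which is isotopic to $\rho$ through an isotopy supported in a collar $N$ of $\partial M$, using that $\var(\alpha) = [\rho'(L)-L]$ (the bracketed chain being an absolute $1$-cycle since $\rho'$ fixes $p$ and $q$). Normalizing $L$ so that $L\cap N$ is a pair of collar fibres near $p$ and $q$: outside $N$ one has $\rho'(L)=\rho(L)$, disjoint from $L$ by hypothesis; inside $N$ the map $\rho'$ is a (fractional) boundary twist, so the image of each collar fibre spirals monotonically in the angular coordinate and meets $L\cap N$ only at its endpoint. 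Hence $\rho'(L)$ and $L$ meet only at $p$ and $q$, so $\rho'(L)\ast\overline{L}$, pushed slightly into the interior at those two points, is an embedded simple closed curve representing $\var(\alpha)$. Such a curve — the variation of an embedded arc disjoint from its monodromy image — is a vanishing cycle (one Morsifies $f$ so that $L$ becomes the Lefschetz thimble of a new critical value), which is exactly the notion of vanishing arc studied in \cite{BCCJP}; hence $\alpha\in\Phi_\Gamma^{(4)}$.

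For $\Phi_\Gamma\supseteq\Phi_\Gamma^{(3)}$, set $\delta := \var(\alpha)$, which is a vanishing cycle by the previous stage. The Seifert self-pairing of a vanishing cycle of a plane curve singularity is $-1$, since near the associated Morse point the local model is $A_1$, for which $\mathcal{L}(\delta,\delta)=-1$ was recorded in the introduction. By Theorem~\ref{thm:SA}, $\mathcal{L}(\delta,\delta)=\var^{-1}(\delta)\bullet\delta=\alpha\bullet\var(\alpha)$; swapping the two entries of the intersection pairing between $H_1(M;\Z)$ and $H_1(M,\partial M;\Z)$ reverses its sign, so $\var(\alpha)\bullet\alpha=-\mathcal{L}(\delta,\delta)=1$, and therefore $(\alpha,\alpha)=\var(\alpha)\bullet\alpha+\var(\alpha)\bullet\alpha=2$, i.e.\ $\alpha\in\Phi_\Gamma$. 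Combining the three stages proves the lemma.

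The hard part will be the middle stage: showing that $\rho'(L)\cup\overline{L}$ is represented by an \emph{embedded} circle in the presence of the boundary twist (if the fractional twist along some boundary component exceeds a full turn, the spiral produces extra intersections with $L$ that must be cancelled by an ambient isotopy, using a suitable choice of $\rho'$), together with identifying that circle as a genuine vanishing cycle; both of these rest on the analysis of vanishing arcs in \cite{BCCJP}. The first and third stages are then short formal deductions.
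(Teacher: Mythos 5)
Your stage 1 (the equality $\Phi_\Gamma=\Phi_\Gamma^{(2)}$ via positive-definiteness and evenness of $(\cdot,\cdot)$) matches the paper and is fine, but the core of your proposal has a genuine gap, and it sits exactly where you put the weight: the claim that an embedded arc $L$ with $L\cap\rho(L)=\emptyset$ has $\var([L])$ a vanishing cycle because ``one Morsifies $f$ so that $L$ becomes the Lefschetz thimble of a new critical value.'' That sentence is the entire nontrivial content of the inclusion $\Phi_\Gamma^{(3)}\subseteq\Phi_\Gamma^{(4)}$, and you do not prove it; you defer it to \cite{BCCJP}. But, as the paper itself points out, the sufficiency criterion in \cite{BCCJP} (disjointness from the monodromy image implies vanishing arc) is proved there under a hypothesis on the genus and type of the Milnor fiber that excludes \emph{all} of the $ADE$ cases, so the citation does not close the gap. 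Not every embedded non-separating simple closed curve in a Milnor fiber is a vanishing cycle, so producing the required Morsification is not a routine construction. Moreover, since you derive part (1)'s inclusion $\Phi_\Gamma\supseteq\Phi_\Gamma^{(3)}$ from part (2), the gap infects both assertions of the lemma in your scheme. (A smaller issue: your stage-3 input, that every vanishing cycle of a plane curve singularity has Seifert self-pairing $-1$, is a standard fact for members of distinguished bases, but the introduction of the paper only records it for the one-variable $z^2$, so it needs its own citation rather than the local $A_1$ heuristic.)

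The paper's route is structured in the opposite direction precisely to avoid this. It proves $\Phi_\Gamma\supseteq\Phi_\Gamma^{(3)}$ \emph{directly}: perturb $L$ to $L'$ pushed off to the right near $\partial M$, so that $L'\cap L$ is a single positive point and $L'\cap\rho(L)=\emptyset$ because the monodromy is right-veering; then $\var([L])\bullet[L]=1$, hence $([L],[L])=2$. For $\Phi_\Gamma^{(4)}\supseteq\Phi_\Gamma^{(3)}$ it does not Morsify at all: it applies Gusein-Zade's upper/lower-triangularity criterion (Theorem \ref{thm_gz80}) to show the variation images of the projective basis $(\beta_k,\dots,\beta_1)$ form a distinguished collection of vanishing cycles, uses that monodromy images of vanishing cycles are vanishing cycles, and invokes the Section \ref{sec:monodromy} computation that the monodromy orbits of the projective basis exhaust $\Phi_\Gamma$ up to sign; combined with the already-proved $\Phi_\Gamma^{(3)}\subseteq\Phi_\Gamma$ this gives (2). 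If you want to salvage your architecture, you would need an actual proof that your embedded curve $\rho'(L)\ast\overline{L}$ is a vanishing cycle in the $ADE$ cases — which is essentially what the paper's Gusein-Zade argument supplies by other means — or else adopt the paper's order of deductions.
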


\begin{proof}
$\Phi_\Gamma$ is equal to $\Phi_\Gamma^{(2)}$, since the bilinear form $(\cdot,\cdot)$ is positive-definite.

\
For the relation $\Phi \supseteq \Phi_\Gamma^{(3)}$, we claim that for a connected embedded arc $L$ satisfying $L \cap \rho(L) = \emptyset$,  $\var([L]) \bullet [L]=1$. The latter implies that  $([L],[L]) = 2$.

Consider a small perturbation $L'$ of the arc $L$ such that $L' \cap \rho(L) = \emptyset$ and $L'$ intersects $L$ once with intersection number $+1$.
(As we approach $\partial M$, $L'$ is on the right side of $L$. Since the monodromy is right-veering, $\rho(L)$ is on the left-side of $L$ and does not intersect $L'$).
 Then $\var([L])\bullet [L'] $ comes from $L \cap L'$, and hence $\var([L]) \bullet [L]=1$.
 

We prove $\Phi_\Gamma^{(4)} \supseteq  \Phi_\Gamma^{(3)}$ in two steps.
First, we claim that  given the projective basis $(\beta_1,\ldots,\beta_k)$, the collection of variation images of  $(\beta_k,\ldots, \beta_1)$
forms a distinguished collection of vanishing cycles.
Next, note that the monodromy image of an em vanishing cycle is an em vanishing cycle since one can compose the vanishing path with a loop going around all Morse critical points. In Section \ref{sec:monodromy}, we have shown that the monodromy action images of the projective basis cover all geometric roots, up to reversing orientation.
Hence this proves $\Phi_\Gamma^{(4)} \supseteq  \Phi_\Gamma^{(3)}$.

It remains to show the first claim.
We first recall the following theorem of Gusein-Zade.
\begin{thm}[{\cite[Theorem 1]{GZ80}}]\label{thm_gz80}
Let $f:(\C^n,0) \to (\C,0)$ be one of the simple singularities.  Let $(\Delta_1,\cdots,\Delta_\mu)$ is an integral basis of $H_{n-1}(M;\Z)$
and $(\nabla_1,\ldots,\nabla_\mu)$ is a dual basis of $H_{n-1}(M,\partial M;\Z)$ (via intersection pairing).
 Suppose  the matrix of the variation operator $\var$ (or $\var^{-1}$) is upper-triangular in terms of these bases. Then $(\Delta_1,\cdots,\Delta_\mu)$ is a distinguished collection of vanishing cycles.
\end{thm}

    \begin{figure}[H]
\begin{subfigure}[t]{0.47\textwidth}
\includegraphics[scale=0.55]{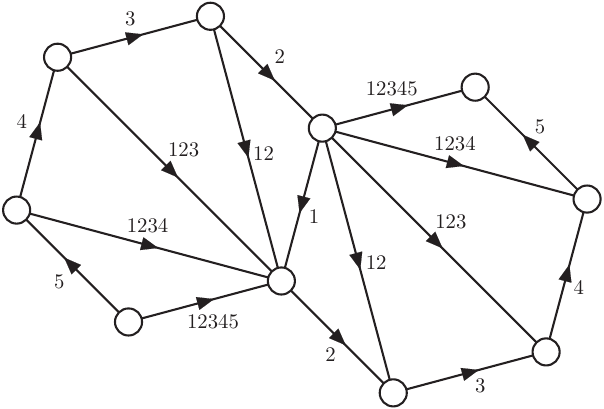}
\centering
\caption{The triangulation of $A_5$-Milnor fiber.}
\end{subfigure}
\begin{subfigure}[t]{0.47\textwidth}
\includegraphics[scale=0.6]{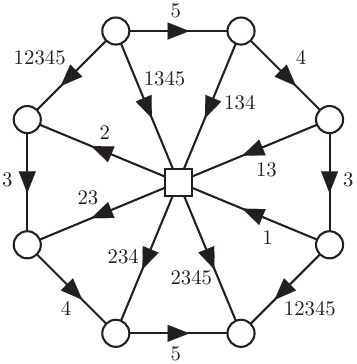}
\centering
\caption{The triangulation of $D_5$-Milnor fiber.}
\end{subfigure}

\begin{subfigure}[t]{0.47\textwidth}
\includegraphics[scale=0.35]{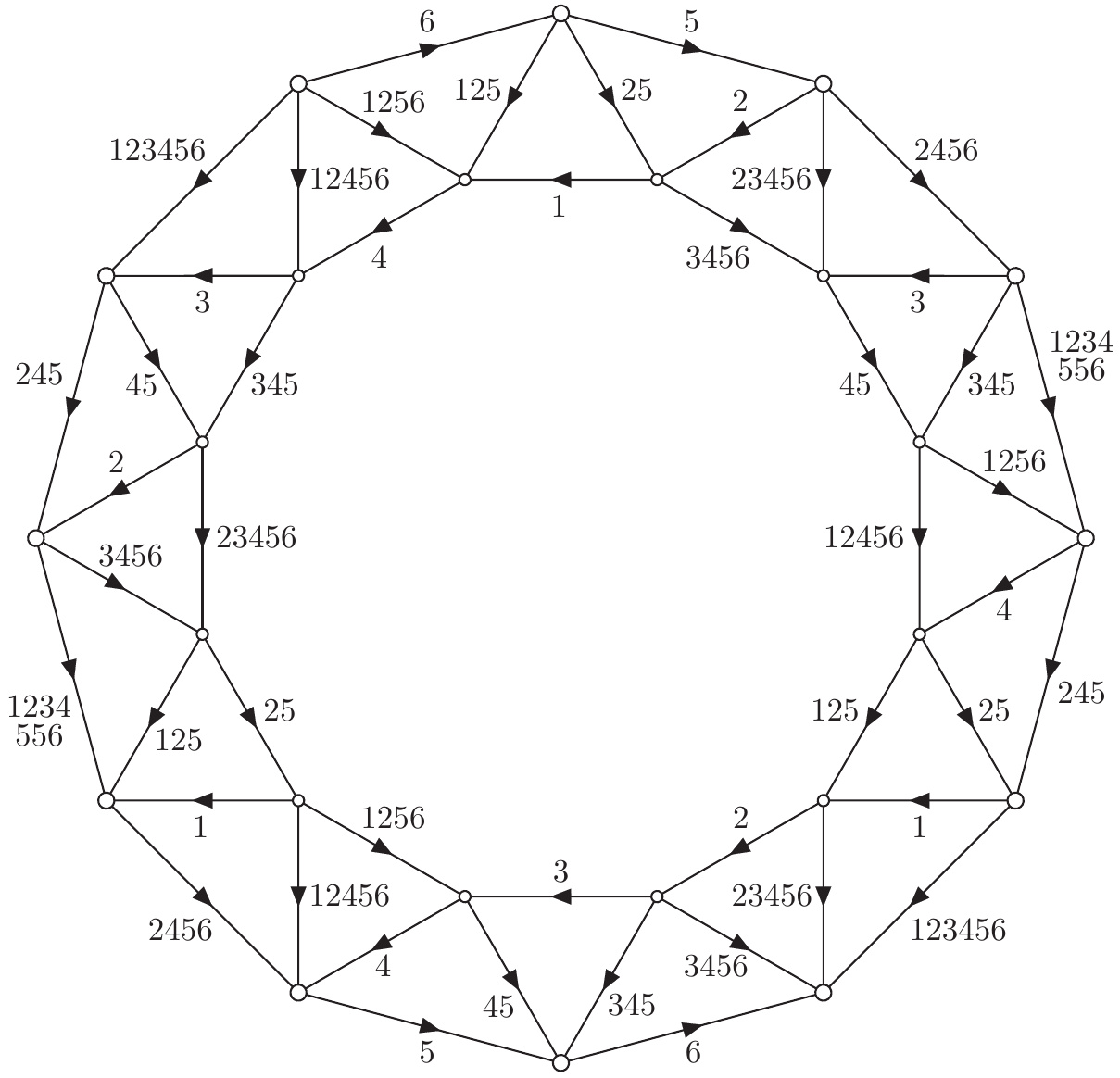}
\centering
\caption{The triangulation of $E_6$-Milnor fiber.}
\end{subfigure}
\begin{subfigure}[t]{0.47\textwidth}
\includegraphics[scale=0.35]{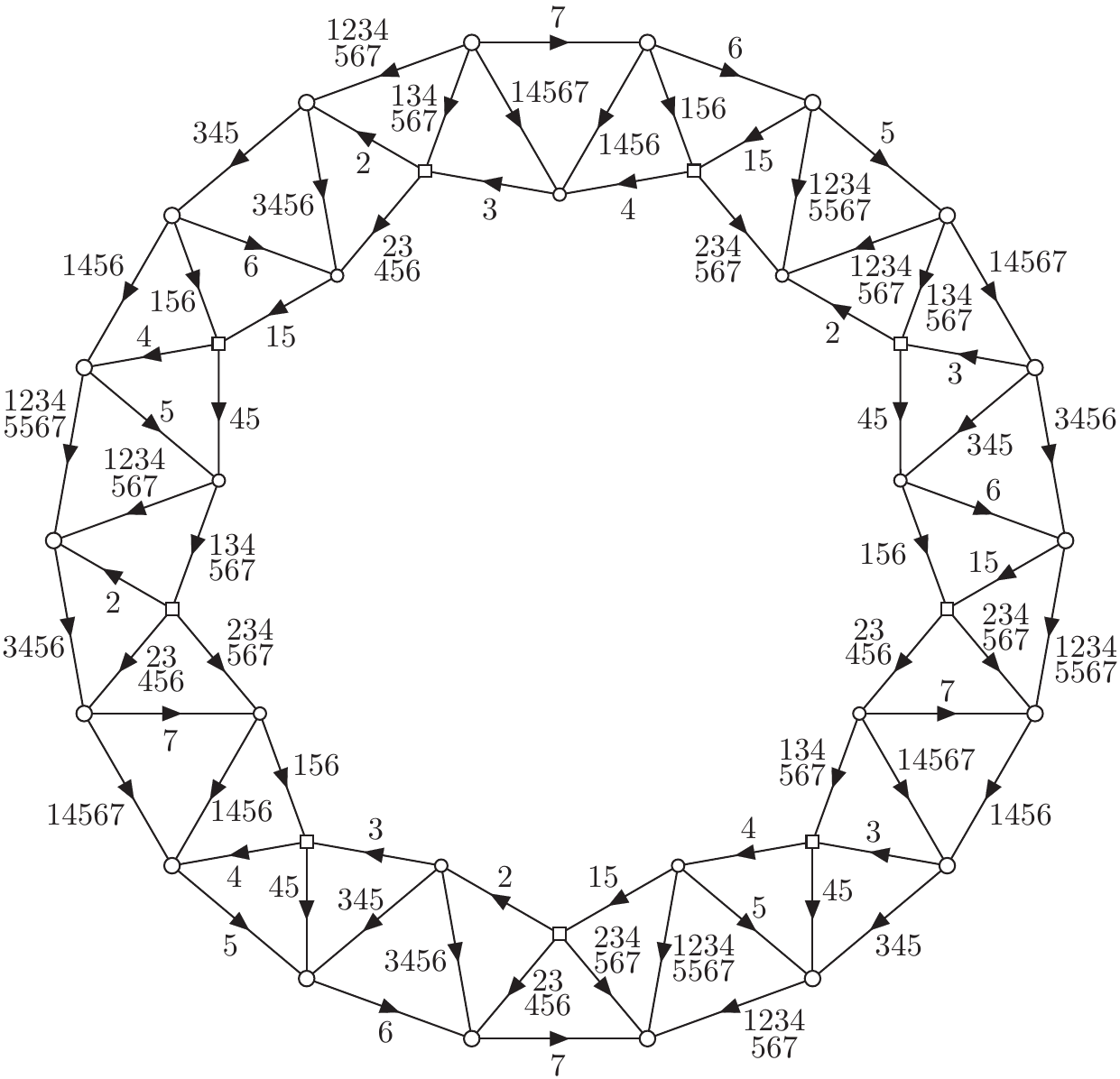}
\centering
\caption{The triangulation of $E_7$-Milnor fiber.}
\end{subfigure}

\begin{subfigure}[t]{0.6\textwidth}
\includegraphics[scale=0.5]{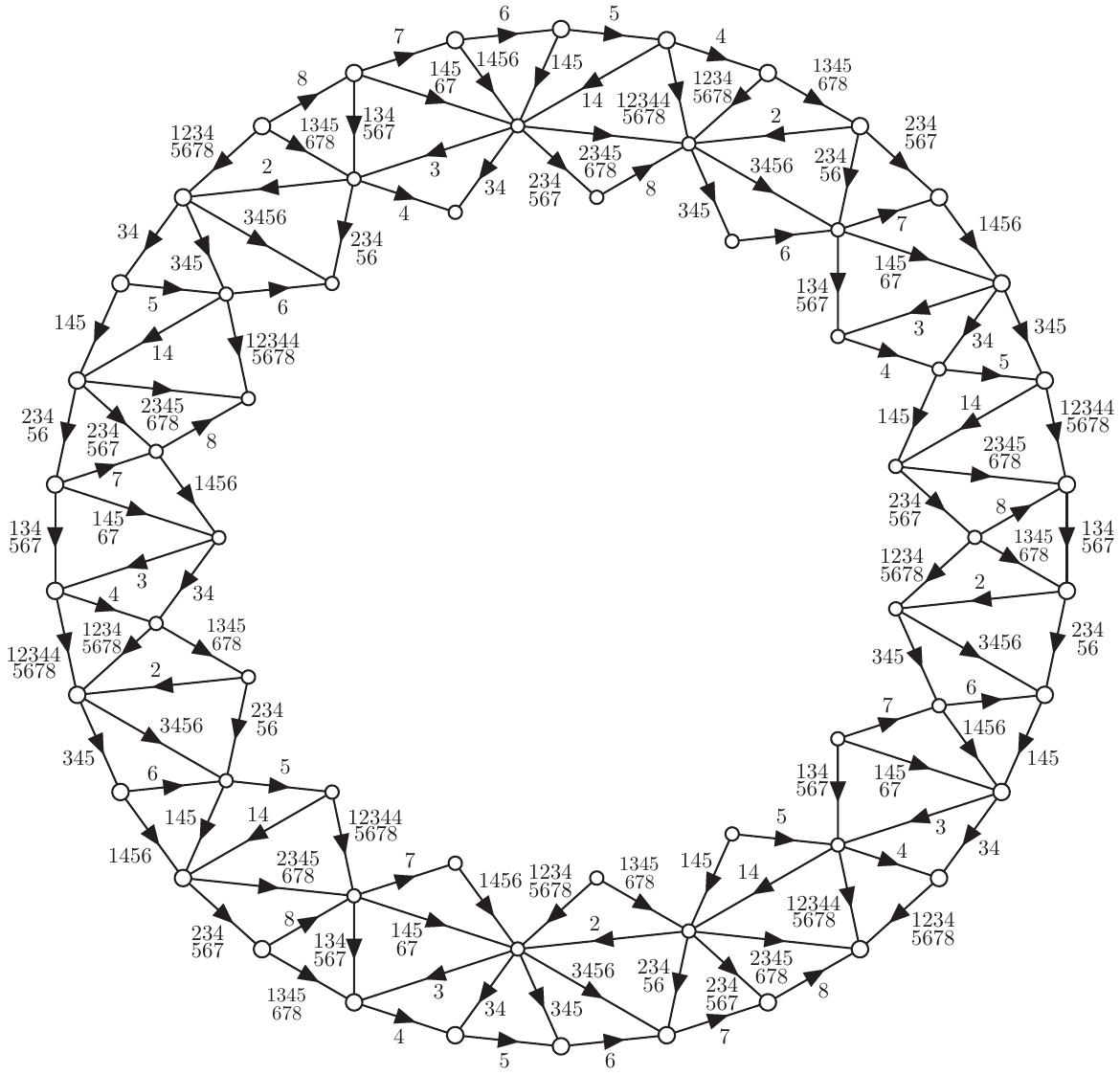}
\caption{The triangulation for $E_8$-Milnor fiber.}
\end{subfigure}
\centering
\caption{The triangulation of Milnor fibers.}
\label{fig:tri}
\end{figure}

For the simple basis $(\alpha_1,\ldots,\alpha_k)$, the matrix in the theorem is exactly $B_{ij}$ and we have checked that they are
upper-triangular. This means that simple basis is in fact a distinguished collection of vanishing cycles.
For the projective basis $(\beta_1,\ldots, \beta_k)$, one can similarly compute the matrix of variation operator and find that it is lower-triangular.
Hence after reversing the ordering as $(\beta_k,\ldots, \beta_1)$, their variation images form a distinguished collection of vanishing cycles.
%
\end{proof}
In order to show that they are all equal, we introduce another set.
\begin{defn}
Define a subset $\Phi^{(5)}_\Gamma$ of the relative homology group $H_1(M,\partial M;\Z)$ by
\begin{center}
    $\Phi^{(5)}_\Gamma$ := $\{ \alpha\in H_1(M,\partial M;\Z)$ | $\alpha$ is represented by an oriented line segment in the Coxeter wheel $\}$.
\end{center}
\end{defn}

We will prove the following lemma in the rest of the section.
\begin{lemma}\label{lem:sub}
The following statements hold.
\begin{enumerate}
\item  $\Phi_\Gamma \subseteq \Phi^{(5)}_\Gamma$.
\item  $\Phi^{(5)}_\Gamma \subseteq \Phi_\Gamma^{(3)}$.
\item $|\Phi_\Gamma^{(4)}| \le |\Phi_\Gamma|$.
\end{enumerate}
\end{lemma}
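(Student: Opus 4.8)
The plan is to prove the three inclusions/inequalities in turn, using the work already done in Sections \ref{sec:4} and \ref{sec:monodromy}. For part (1), $\Phi_\Gamma \subseteq \Phi^{(5)}_\Gamma$: this is essentially a restatement of the corollaries obtained after Propositions \ref{free_E6}, \ref{free_E7}, \ref{free_E8} together with Lemmas \ref{lemma_Ak root} and \ref{Dk roots}. Indeed, in each type we exhibited a basis (the projective basis $\{\beta_1,\dots,\beta_k\}$, or the simple basis for $A_k$) consisting of classes of edges/spokes of the wheel, showed the monodromy action on $\Phi_\Gamma$ is free, and counted orbits to see that the monodromy orbits of this basis (together with orientation reversal where needed) already exhaust $\Phi_\Gamma$. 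Since $\rho$ lifts to a rotation of the wheel $W$ (Theorem \ref{Etype wheel}(3), Lemma \ref{lem:mo}) and orientation reversal of an arc is again an arc, every monodromy image of an edge/spoke is again represented by an oriented line segment in $W$. Hence every element of $\Phi_\Gamma$ lies in $\Phi^{(5)}_\Gamma$. I would write this out type by type, referring back to the projective-basis computations, so the only new content is the bookkeeping that rotations and orientation reversals of line segments in the wheel remain line segments in the wheel.

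For part (2), $\Phi^{(5)}_\Gamma \subseteq \Phi_\Gamma^{(3)}$: given an oriented line segment $L$ in the Coxeter wheel $W$, I must produce an embedded connected arc representative in $M$ of the class $\pi_*[L]$ that is disjoint from its own monodromy image. The natural candidate is $\pi(L)$ itself (for $A_k$, $\pi$ is an inclusion so $\pi(L)=L$; for $D_k$ and $E$-type, $\pi$ identifies edges/sheets but is injective on the interior of a generic line segment). Here is where I would use the flat (translation-surface) structure from Remark \ref{rmk:transl}: $L$ is a straight Euclidean segment, and by Theorem \ref{Etype wheel}(3) (resp.\ the explicit rotation angles for $A_k$, $D_k$ in Section \ref{sec:3}) the monodromy $\rho$ acts on $W$ by a rotation by $\tfrac{2\pi}{h}+\pi$ (or the stated angle), which is never a multiple of $\pi$; so $\rho(L)$ is a straight segment in a genuinely different direction, and two distinct straight segments on a translation surface meet in only finitely many points. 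A transversality/perturbation argument then lets one push $\pi(L)$ slightly to an embedded arc $L'$ with $L'\cap\rho(L')=\emptyset$ while preserving the homology class — alternatively, one checks directly on the wheel that the rotated segment $\rho(L)$, drawn in $W$, does not cross $L$ (the rotation moves every endpoint off $L$ because vertices are punctures and the rotation permutes punctures nontrivially). So $\pi_*[L]\in\Phi^{(3)}_\Gamma$. The small subtlety is the endpoints: after the rotation one wants $L$ and $\rho(L)$ to be honestly disjoint, not just interior-disjoint, and since $\rho'$ is the identity near $\partial M$ one should work with $\rho'$ and note (as in the proof of Lemma \ref{lemma_root_subset}) that $\rho'(L)$ veers to one side near the boundary, so a small perturbation removes boundary contact.

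For part (3), $|\Phi_\Gamma^{(4)}|\le|\Phi_\Gamma|$: a vanishing arc $\alpha$ has $\var(\alpha)$ a vanishing cycle, and a vanishing cycle $V$ satisfies $V\bullet V=0$ and, since it is represented by an embedded circle meeting its own variation-input arc once, $\var^{-1}(V)\bullet V = \mathcal{L}(V,V)=-1$ (the self-Seifert form of a vanishing cycle in a curve singularity). Translating via Theorem \ref{thm:SA} and Definition \ref{defn:S}, for a vanishing arc $\alpha$ one computes $(\alpha,\alpha) = \var(\alpha)\bullet\alpha + \var(\alpha)\bullet\alpha$-type expression which evaluates to $2$; concretely $\mathcal{L}(\var(\alpha),\var(\alpha)) = -1$ forces $-\mathcal{L}^t-\mathcal{L}$ to take value $2$ on $\alpha$. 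Hence $\Phi_\Gamma^{(4)}\subseteq\Phi_\Gamma$, which gives the inequality (in fact equality of sets, but the inequality is all that is needed here to close the chain of inclusions in the main proof). I would double-check the sign bookkeeping against the Picard–Lefschetz formula in the $n=2$ case and the identity quoted from \cite[Corollary of Lemma 1.1]{AGV2}.

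The main obstacle I anticipate is part (2), specifically making the ``straight segments on a translation surface meet finitely often, hence can be made disjoint from the rotated copy'' argument airtight at the boundary and at the punctures — one needs to be careful that the perturbation $L'$ of $\pi(L)$ stays embedded, stays in the correct relative homology class, and avoids $\rho'(L')$ globally (not just in the interior), which is exactly the kind of argument handled in \cite{BCCJP} but there under a genus hypothesis excluding $ADE$; so here the flat structure of the wheel must do that work directly, case by case.
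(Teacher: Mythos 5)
Your parts (1) and (3) are fine, but part (2) contains a genuine gap. For (1) you do exactly what the paper does: quote the orbit counts of Section \ref{sec:monodromy}. For (3) you take a different route from the paper: you argue directly that a vanishing arc $\alpha$ satisfies $(\alpha,\alpha)=2$ because $\mathcal{L}(\Delta,\Delta)=-1$ for any vanishing cycle of a curve singularity, so $\Phi_\Gamma^{(4)}\subseteq\Phi_\Gamma$; the paper instead stabilizes to $\Tilde{f}=f+z^2$, uses Gabrielov's Thom--Sebastiani result to inject vanishing cycles of $f$ into those of $\Tilde{f}$, and invokes the classical fact that the latter form a root system of type $\Gamma$. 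Your route is shorter and works, provided you replace the heuristic ``embedded circle meeting its own variation-input arc once'' by the standard fact (AGV, triangularity of the Seifert matrix in a distinguished basis with diagonal entries $(-1)^{n(n+1)/2}$) that every vanishing cycle has self-Seifert value $-1$ when $n=2$; what the paper's longer argument buys is independence from that sign bookkeeping.

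The gap is in (2). Your main argument is: $L$ and $\rho(L)$ are straight segments in different directions, hence meet in finitely many points, hence ``a transversality/perturbation argument lets one push $\pi(L)$ to an embedded arc $L'$ with $L'\cap\rho(L')=\emptyset$.'' This is false as stated: a transverse interior crossing of two arcs is stable under small perturbation and cannot be removed without changing the isotopy class, so finiteness of intersections gives nothing. Your fallback --- that the rotation moves the endpoints off $L$ because it permutes the punctures --- also does not rule out interior crossings; two chords of a polygon with disjoint endpoint sets can perfectly well cross (e.g.\ near-diameters and their images under the rotation by $\tfrac{2\pi}{h}+\pi$), and the paper explicitly flags that in the $D_k$ case the dangerous diameters are excluded only by the central puncture. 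The actual content of the paper's proof is a case-by-case verification that $L\cap\rho(L)=\emptyset$ genuinely holds: for $A_k$ and $D_k$ by direct inspection of the rotation, and for $E_6,E_7,E_8$ by exhibiting a region $Y$ with $Y\cap\rho(Y)$ having empty interior, checking disjointness for segments in or on the boundary of $Y$, covering the wheel by $Y,\rho(Y),\dots$, and treating the exceptional spokes $K$ not covered by these regions separately. Moreover, the disjointness must be checked in the Milnor fiber $M$, not in the wheel $W$: for $E$-type $\pi$ is not injective (for $E_8$ not even a covering), so segments disjoint in $W$ can intersect in $M$; the paper handles this by tracking which polygonal pieces $A_i,B_{i,j}$ the regions $Y$ and $\rho(Y)$ occupy. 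Your proposal acknowledges at the end that a case-by-case check may be needed, but the argument you actually offer in its place does not work, so part (2) remains unproved as written.
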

Combining above two lemmata, we have
$$\Phi_\Gamma= \Phi_\Gamma^{(2)}=\Phi_\Gamma^{(3)}= \Phi_\Gamma^{(4)}=\Phi_\Gamma^{(5)}.$$
This proves Theorem \ref{root_equiv}.

Lemma \ref{lem:sub} (1)
means that every geometric root is represented by an oriented line segment in the Coxeter wheel, and this was proved in the last section.

For Lemma \ref{lem:sub} (2), we prove the following.

\begin{prop}
    Let $L$ be an oriented line segment in a Coxeter wheel.
    Then its image $\rho(L)$ under the geometric monodromy is disjoint with $L$.
    Consequently, $\Phi^{(5)}_\Gamma \subseteq \Phi^{(3)}_\Gamma$.
\end{prop}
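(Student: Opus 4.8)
The plan is to reduce the statement to plane geometry on the wheel $W$. Recall from Section~\ref{sec:3} (Theorem~\ref{Etype wheel}) that for types $D$ and $E$ the geometric monodromy $\rho$ lifts to the rotation $R$ of $W\subset\mathbb{R}^2$ about its centre $O$ through the angle $\theta=\pi+\tfrac{2\pi}{h}$, $h$ the Coxeter number, whereas for type $A$ the map $\rho$ interchanges the two ``halves'' of the Milnor fibre. The engine of the proof is the following elementary fact, which I would record first: \emph{if $R$ is a rotation of $\mathbb{R}^2$ through an angle $\theta\in(\pi,2\pi)$ about $O$, and $L$ lies on a line $\ell$ with $O\notin\ell$, then, writing $d=\mathrm{dist}(O,\ell)>0$ and letting $F$ be the foot of the perpendicular from $O$ to $\ell$, any point $p\in L\cap R(L)$ satisfies: $p$ and $R^{-1}(p)$ are the two points of $\ell$ at a common distance $r$ from $O$, symmetric about $F$, with $2\arccos(d/r)=2\pi-\theta$; in particular $F$ is interior to $L$ and, when $\theta=\pi+\tfrac{2\pi}{h}$, one has $r=d/\sin(\pi/h)\le r_{\max}(L):=\max_{x\in L}|Ox|$.} Indeed, $p$ and $R^{-1}(p)$ are equidistant from $O$ and subtend the angle $\theta$ there, while the two points of $\ell$ at distance $r$ subtend an angle $2\arccos(d/r)\in(0,\pi)$, which can match $\theta\in(\pi,2\pi)$ only as $2\pi-\theta$.

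\textbf{Types $A$ and $D$.} For $A_k$ I would use that the Milnor fibre is a double cover of a disc branched over the $k+1$ punctures, with $\rho$ covering the sheet-interchange composed with a rotation of the base; since the two sheets of such a cover meet only over the branch points, $L$ lies in one sheet and $\rho(L)$ in the other, so they meet at most at punctures and hence are disjoint in $M$. For $D_k$: a spoke or a diameter passes through the central puncture $O$, as does its $R$-image, their only common point being $O\notin M$; otherwise $L$ is a chord of the circumscribed circle (radius $R_0$) of the regular $2(k-1)$-gon, so $F$ is its midpoint, $r_{\max}(L)=R_0$, and $d=R_0\cos(j\pi/h)$ where $1\le j\le k-2$ is the number of edges subtended. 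The bound $d/\sin(\pi/h)\le R_0$ then forces $\cos(j\pi/h)\le\sin(\pi/h)=\cos\!\big((k-2)\pi/h\big)$, i.e.\ $j=k-2$ and $r=R_0$; so the only candidate common point lies on the circumscribed circle, and being on the chord $L$ it must be an endpoint, i.e.\ a puncture. Since these chords touch the polygon's boundary edges only at vertices, the identifications defining $M$ are never triggered along their interiors, whence $L\cap\rho(L)=\emptyset$ in $M$.

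\textbf{Type $E$.} Here $W$ is the closed annulus between two concentric regular $h$-gons of circumradii $r_1<r_2$; every line segment $L$ has endpoints at distance $r_1$ or $r_2$ from $O$ and avoids the open inner region, so $|Ox|\ge r_1\cos(\pi/h)$ for all $x\in L$. If $F\notin\mathrm{int}\,L$ the lemma gives no intersection; if $F\in\mathrm{int}\,L$ then $d\ge r_1\cos(\pi/h)$, and the lemma yields $r_1\cos(\pi/h)\le d=r\sin(\pi/h)\le r_2$, i.e.\ $r_1/r_2\le\tan(\pi/h)$. I would then check, from the explicit constructions of the $E_6$-, $E_7$-, $E_8$-wheels in Section~\ref{sec:3}, that $r_1/r_2>\tan(\pi/h)$ in every case, a contradiction; hence $L\cap R(L)=\emptyset$ inside $W$. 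To conclude in $M$, I would observe that $\pi\colon W\to M$ identifies only finitely many translated copies of each polygonal piece, so $\pi(L)\cap\pi(R(L))\ne\emptyset$ would force $L$ to meet $R(L)+\tau$ for one of finitely many ``identification translations'' $\tau$; but $x\mapsto Rx+\tau$ is again a rotation through $\pi+\tfrac{2\pi}{h}$ about some other centre $O'$, and the same lemma applied at $O'$ excludes this for all but finitely many pairs $(L,O')$, which I would dispatch by inspection (alternatively, the whole argument can be run in the honest covering $\widetilde W\to M$ of Section~\ref{sec:3}). The inclusion $\Phi_\Gamma^{(5)}\subseteq\Phi_\Gamma^{(3)}$ follows at once, since a line segment representing $\alpha\in\Phi_\Gamma^{(5)}$ is an embedded connected arc $L$ with $L\cap\rho(L)=\emptyset$.

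\textbf{Main obstacle.} The substantive work is concentrated in type $E$: first extracting the radius inequality $r_1/r_2>\tan(\pi/h)$ from the combinatorics of the wheels, and --- more delicately --- the bookkeeping needed to upgrade disjointness inside $W$ to disjointness inside $M$ through the non-injective projection $\pi$, which requires controlling rotations through $\pi+\tfrac{2\pi}{h}$ about the translated centres $O'$ as well. Types $A$ and $D$ should fall out quickly once the planar observation above is established.
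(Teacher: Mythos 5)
Your rotation lemma is correct, and for $A_k$ and $D_k$ it (together with the half-swapping of the wheel) gives a genuinely different, quantitative route from the paper's argument, which instead covers the wheel by a region $Y$ and its $\rho$-translates and checks disjointness piece by piece. However, the proposal has a genuine gap exactly where the real difficulty lies: the passage from the planar wheel $W$ to the Milnor fiber $M$ in the $E$-cases. Disjointness of $L$ and $R(L)$ inside $W$ is not the assertion to be proved, because $\pi\colon W\to M$ is $3$-to-$1$ for $E_7$ and not even a covering for $E_6,E_8$; one must rule out $L\cap\bigl(R(L)+\tau\bigr)\ne\emptyset$ for every identification translation $\tau$. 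Your plan is to apply the lemma at the shifted centre $O'=(\mathrm{Id}-R)^{-1}(-\tau)$, but since $(\mathrm{Id}-R)^{-1}$ has operator norm $1/(2\cos(\pi/h))\approx 1/2$ and the relevant $\tau$ join copies of a piece spread around the ring (length up to roughly the diameter of $W$), the centres $O'$ can land well inside the region spanned by the wheel, where the needed inequality $\mathrm{dist}(O',\ell)>\sin(\pi/h)\,\max_{x\in L}|O'x|$ has no reason to hold. So the lemma does not dispose of ``all but finitely many pairs''; it leaves a sizeable finite family of pairs $(L,\tau)$ completely untreated, and ``dispatch by inspection'' is precisely the work the paper's proof actually performs (the region $Y$, the verification via the piece decomposition that $Y$ and $\rho(Y)$ share no area in $M$, and the exceptional spokes $K$ for $E_7,E_8$). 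The fallback of working in $\widetilde W$ does not remove the issue: for $E_7$ the map $W\to M$ is already a covering and still non-injective, with deck identifications that are only piecewise translations, and for $E_6,E_8$ the covering arises from $\widetilde W$ only after further boundary identifications. A smaller, fixable point in the same part: the inner and outer boundaries of the $E$-wheels are equilateral but not regular $h$-gons and the vertices are not all at two radii, so the inequality you must verify is $\min_{x\in W}|Ox|>\sin(\pi/h)\,\max_{x\in W}|Ox|$ rather than $r_1/r_2>\tan(\pi/h)$; numerically this does hold with room to spare, but the check as you set it up is not yet the right one.

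A secondary inaccuracy occurs in type $A$. The punctures are not the branch points of the double cover $(x,y)\mapsto x$ (those are the interior points with $y=0$; the punctures are ends of $M$ --- e.g.\ for $A_2$ all three wheel vertices map to the single puncture of the once-punctured torus), and the closures of the two ``sheets'' (the wheel and its complement) meet along the entire polygonal boundary of the wheel, not only over branch points. Hence your sheet argument is silent precisely when $L$ is a boundary edge of the wheel, since then $\rho(L)$ also lies on the common interface; the missing (easy) step is the paper's observation that $\rho$ carries a boundary edge to the adjacent one, and adjacent edges meet only at a puncture, hence are disjoint in $M$. A similar remark applies to your $D_k$ sentence ``these chords touch the polygon's boundary edges only at vertices'': it fails when $L$ \emph{is} a boundary edge, where one must additionally check that the opposite-edge identification and the rotated image meet only at punctures --- true, but not covered by what you wrote.
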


\begin{proof}
Although the general approach is the same for all types, we give specific justifications for each type separately.
\begin{enumerate}
    \item The case of $A_k$: 
    
From Proposition \ref{free_Ak}, for the boundary edges of the $A_k$-wheel, monodromy $\rho$ rotates them  counterclockwise by $\frac{2\pi}{k+1}$
and reverses the orientation. Hence $L$ and $\rho(L)$ are disjoint. For a line segment $L$ strictly inside $A_k$-wheel, note that the geometric monodromy $\rho$
takes the interior of the $A_k$-wheel to its complement in the $A_k$-Milnor fiber. Hence  $L$ and $\rho(L)$ are disjoint as well.

    \item  The case of $D_k$:
    
Recall that   the monodromy action on $D_k$-Milnor fiber is given by rotating the $2(k-1)$-gon counterclockwise by $\frac{k \pi}{k-1}$.
A diagonal $L$ in a regular $2(k-1)$-gon does not overlap with itself after the rotation of an angle $\frac{k \pi}{k-1}$,  unless $L$ is connecting two opposite vertices. But such $L$ is not allowed in the $D_k$-wheel due to the puncture at the center. 
For spokes to the center or boundary edges of $2(k-1)$-gon, it is easy to check that  $L \cap \rho(L) = \emptyset$ holds.

    \item The case of $E_6$:
    
We find a region $Y$ of the wheel so that the interiors of $Y$ and $\rho(Y)$ do not intersect at all.
In Figure \ref{fig:tri} (C), the region $Y$ is given by  the convex region enclosed by the lines `3',`4',`5',`6', and `3456' at the bottom of Figure.
Since  the geometric monodromy on the $E_6$-wheel is given by  the counterclockwise rotation of the wheel by $\tfrac{7}{6}\pi$,
$\rho(Y)$ is the convex region enclosed by the lines `3',`4',`125',`6', and `123456' at the upper-left part of Figure \ref{fig:tri} (C).
 
To see that  the interiors of $Y$ and $\rho(Y)$ do not intersect at all, note that the region $Y$ is composed of $B_{1,4},B_{1,3},B_{2,3},B_{2,2}$, and $A_3$, while $\rho(Y)$ is composed of $B_{2,4},B_{2,1},B_{3,3},B_{3,4}$, and $A_4$.

From this, one can see that if $L$ is a line inside the region $Y$, then we have $L \cap \rho(L) =\emptyset$. 
We can also check directly that $L \cap \rho(L) = \emptyset$ holds  when $L$ is one of the boundary edges of $Y$
    
Now the entire $E_6$-wheel can be covered by the union of regions $Y,\rho(Y),\dots,\rho^{11}(Y)$.
Moreover, every edge and spoke in the $E_6$-wheel lies in at least one of these regions. Since $\rho$ is a diffeomorphism on the $E_6$-Milnor fiber, we can deduce that $\rho^{j}(L) \cap \rho^{j+1}(L) = \emptyset$ for every integer $j$.
Therefore, we get $L \cap \rho(L) = \emptyset$ for every edge/spoke $L$.

    \item The case of $E_7$:

We proceed as in the case of $E_6$.
In Figure \ref{fig:tri} (D), the region $Y$ is defined by the convex region enclosed by the lines `2',`3',`4',`5',`6',`7', and `234567' at the bottom of Figure. Since the geometric monodromy on the $E_7$-wheel is the counterclockwise rotation of the wheel by $\tfrac{10}{9}\pi$,
$\rho(Y)$ is the convex region enclosed by the lines `2',`3',`4',`15',`6',`7', and `1234567' at the top of Figure \ref{fig:tri} (D).

The interiors of $Y$ and $\rho(Y)$ do not intersect because the region $Y$ is composed of $B_{1,8},B_{1,9},B_{1,3},$ $B_{1,4}$, and $A_3$, while $\rho(Y)$ is composed of $B_{1,6},B_{1,7},B_{1,1},B_{1,2}$, and $A_1$.
One can check that  $L \cap \rho(L) = \emptyset$ holds whenever $L$ is a line in the inside of $Y$ or at the boundary of $Y$.

The entire $E_7$-wheel can be covered by the union of regions $Y,\rho(Y),\dots,\rho^{8}(Y)$.
Every edge and spoke of the $E_7$-wheel lies within the union, except for the spoke $K$ and its monodromy action images $\rho(K),\dots,\rho^8(K)$.
Here, the spoke $K$ corresponds to the sum of `14567',`5', and `6' at the bottom-left of Figure \ref{fig:tri} (D).
It is straightforward to check that $K \cap \rho(K)= \emptyset$, and it similarly follows that $\rho^{j}(K) \cap \rho^{j+1}(K) = \emptyset$ for every integer $j$. The rest of the argument is similar to the case of $E_6$ and
we get $L \cap \rho(L) = \emptyset$ for every edge/spoke $L$ of the $E_7$-wheel.

    \item The case of $E_8$:

We proceed as in the case of $E_6$ and $E_7$, but this needs some adjustments.
In Figure \ref{fig:tri} (E),  the region $Y$ is defined by the convex region enclosed by the lines  `2',`3',`4',`5',`6',`7',`8', and `2345678' at the bottom of Figure. Since the geometric monodromy on the  $E_8$-wheel  is the counterclockwise rotation of the wheel by $\tfrac{16}{15}\pi$,
 $\rho(Y)$ is the convex region enclosed by the lines `2',`3',`14',`5',`6',`7', and `12345678' at the top of Figure \ref{fig:tri} (E).

The interiors of $Y$ and $\rho(Y)$ do not intersect, but this needs more careful check.
Note that the region $Y$ is composed of some of  $B_1,B_2,B_3$ and $A_5$ and $A_2$, while $\rho(Y)$ is composed of some part of $B_1,B_2,B_3$ and $A_3$ and $A_1$.  
Although both $Y$ and $\rho(Y)$ occupy portions of $B_1$, $B_2$, and $B_3$, one can check that they do not share any common area.
One can check that  $L \cap \rho(L) = \emptyset$ holds whenever $L$ is a line in the inside of $Y$ or at the boundary of $Y$.

The union of regions $Y,\rho(Y),\dots,\rho^{14}(Y)$ covers the $E_8$-wheel except for a certain inner area. 
Namely, the triangle enclosed by `145',`14', and `5' is not covered and there are fifteen such triangles in total.
Nevertheless, other than a few exceptions, every edge and spoke (up to equivalence) in the $E_8$-wheel lies within at least one of $Y,\dots,\rho^{14}(Y)$. 
(If one endpoint of the line $L$ lies on an innermost vertex of the $E_8$-wheel, then there exists an equivalent line for $L$ located in the outer wheel.)
The only exceptions are the spoke $K$ and its monodromy action images $\rho(K),\dots,\rho^{14}(K)$, where $K$ corresponds to the sum of `1345678',`4',`5',`6', and `7' at the bottom of Figure \ref{fig:tri} (E).
It is straightforward to check $K \cap \rho(K)= \emptyset$, and we get $L \cap \rho(L) = \emptyset$ for every edge/spoke $L$ in $E_8$. \qedhere
\end{enumerate}
\end{proof}

Now it remains to prove Lemma \ref{lem:sub} (3). For this, we will consider the set of em vanishing cycles of $f(x,y)$ and compare it with
that of its stabilization  $\Tilde{f}(x,y,z) = f(x,y)+z^2$.
\begin{prop}
Let $f(x,y)$ be a simple curve singularity of type $\Gamma$, and $\Lambda_f \subset H_1(M;\Z)$ be the set of em vanishing cycles of $f$.
Then we have  $|\Lambda_f| \le |\Phi_\Gamma|$. 
Since the variation operator is an isomorphism, it follows that $|\Phi_\Gamma^{(4)}|=|\Lambda_f| \le |\Phi_\Gamma|$.
\end{prop}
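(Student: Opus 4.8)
The plan is to compare the vanishing cycles of $f$ with those of its three–variable stabilization, exactly as the statement suggests. Put $\WT{f}(x,y,z):=f(x,y)+z^2$; this is again a simple singularity of type $\Gamma$, with the same Milnor number $k$. Write $M=M_f$ and $\WT{M}=M_{\WT{f}}$. As recalled just before the statement, the suspension
$$\Sigma:=\iota_*(\,\cdot\,\otimes\Delta)\colon H_1(M;\Z)\xrightarrow{\ \cong\ }H_2(\WT{M};\Z)$$
is an isomorphism, $\Delta$ being the generator of $\WT{H}_0(M_{z^2};\Z)$. The key step I would prove is that $\Sigma$ maps the set $\Lambda_f$ of vanishing cycles of $f$ into the set $\Lambda_{\WT{f}}$ of vanishing cycles of $\WT{f}$. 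For this, observe that any $\delta\in\Lambda_f$ belongs to some distinguished collection $(\delta_1,\dots,\delta_k)$ of vanishing cycles of $f$, since any vanishing path can be completed to a distinguished system of paths. The Morsification $f_\lambda+z^2$ of $\WT{f}$ associated to a Morsification $f_\lambda$ of $f$ has the same critical values, with Morse points lying over those of $f_\lambda$; by the Sebastiani–Thom theorem in its form for distinguished collections (see \cite{AGV2}), the suspended tuple $(\Sigma\delta_1,\dots,\Sigma\delta_k)$ is then a distinguished collection of vanishing cycles of $\WT{f}$ (up to signs, which do not matter since $\Lambda_{\WT{f}}$ is stable under negation). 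In particular $\Sigma\delta\in\Lambda_{\WT{f}}$.

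Granting this, injectivity of $\Sigma$ yields $|\Lambda_f|=|\Sigma(\Lambda_f)|\le|\Lambda_{\WT{f}}|$, and it remains only to bound $|\Lambda_{\WT{f}}|$. Here I would invoke the classical description of three–variable simple singularities recalled in Subsection~1.2.2 (see \cite{brie70,le69,Ga73,Eb84}): in a distinguished basis the intersection form on $H_2(\WT{M};\Z)$ is the negative of the Cartan matrix $C$ of type $\Gamma$, every vanishing cycle of $\WT{f}$ has self–intersection $-2$, and $\Lambda_{\WT{f}}$ is precisely the set of norm–$2$ vectors of the type–$\Gamma$ root lattice, hence the classical root system of type $\Gamma$. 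By Theorem~\ref{geometric roots}, $|\Phi_\Gamma|$ equals the number of classical roots of type $\Gamma$, so $|\Lambda_{\WT{f}}|=|\Phi_\Gamma|$. Combining the two bounds gives $|\Lambda_f|\le|\Phi_\Gamma|$; and since $\var$ is an isomorphism with $\Phi_\Gamma^{(4)}=\var^{-1}(\Lambda_f)$, we conclude $|\Phi_\Gamma^{(4)}|=|\Lambda_f|\le|\Phi_\Gamma|$, which is Lemma~\ref{lem:sub}(3) and completes the proof of Theorem~\ref{root_equiv}.

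I expect the main obstacle to be the key step — that $\Sigma$ genuinely sends vanishing cycles to vanishing cycles — whose real content is the Sebastiani–Thom theorem in the refined form for distinguished collections, together with the standard fact that a single vanishing path extends to a distinguished system; everything after that is bookkeeping with cardinalities. I would also record an alternative, purely two–dimensional argument that avoids the stabilization altogether: the self–Seifert form $\mathcal{L}(\delta,\delta)$ takes the same value for every vanishing cycle $\delta$ (it is the common diagonal entry of any distinguished Seifert matrix), and that value for curve singularities, read off from Proposition~\ref{cartan matrix}, forces $(\var^{-1}(\delta),\var^{-1}(\delta))=2$ via Definition~\ref{defn:S} and Theorem~\ref{thm:SA}; thus $\Phi_\Gamma^{(4)}\subseteq\Phi_\Gamma$, which together with the already established $\Phi_\Gamma\subseteq\Phi_\Gamma^{(3)}\subseteq\Phi_\Gamma^{(4)}$ even gives equality directly. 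I would nonetheless keep the stabilization argument as the main line, since it is the route the statement is phrased around.
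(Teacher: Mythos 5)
Your main line of argument is essentially identical to the paper's proof: stabilize to $\tilde{f}=f(x,y)+z^2$, use Gabrielov's Thom--Sebastiani theorem for distinguished bases (together with the fact that any single vanishing cycle sits inside some distinguished collection) to see that the suspension isomorphism $\iota_*(\cdot\otimes\Delta)$ injects $\Lambda_f$ into the vanishing cycles of $\tilde{f}$, and then invoke the classical fact that the vanishing cycles of the three-variable simple singularity form the type-$\Gamma$ root system, whose cardinality equals $|\Phi_\Gamma|$. Your parenthetical alternative via the universal self-Seifert value of a vanishing cycle is a reasonable shortcut but is not the route the paper takes; the stabilization argument you keep as the main line matches the paper's proof step for step.
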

\begin{proof}
We will find that the number of vanishing cycles of $f$ is the same as that of $\Tilde{f}$ and compare the latter with  the classical roots of  type $\Gamma$.
  For the Thom--Sebastiani sum of two singularites $g_1,g_2$, Gabrielov \cite{Ga73} proved that distinguished bases of vanishing cycles for $g_1$ and $g_2$ induce a distinguished basis of vanishing cycles for the Thom--Sebastiani sum.
    More precisely, let $\{\Delta^{(1)}_i\}$ be a distinguished basis of $\widetilde{H}_{n_1}(M_{g_1};\Z)$ and $\{\Delta^{(2)}_j\}$ be a distinguished basis of $\widetilde{H}_{n_2}(M_{g_2};\Z)$.
    According to Gabrielov, the elements
    \[
    \Tilde{\Delta}_{i,j} := \iota_* ( \Delta^{(1)}_i \otimes \Delta^{(2)}_j )
    \]
    form a distinguished basis of $\widetilde{H}_{n_1+n_2-1}(M_{g_1\oplus g_2};\Z)$ with a certain ordering of the index $(i,j)$.
    
    We apply this for $g_1=f(x,y)$ and $g_2 = z^2$.
    Since $M_{z^2}$ consists of two points, we may consider an embedding $\iota$ of the suspension of $M_f$ into $M_{\Tilde{f}}$, which is a homotopy equivalence.
    The induced map $\iota_*$ can be considered as an isomorphism from $\widetilde{H}_{1}(M_f;\Z)$ to $\widetilde{H}_2(M_{\Tilde{f}};\Z)$. 
    Gabrielov's theorem asserts that if $\{\Delta_i\}$ is a distinguished basis of $\widetilde{H}_1(M_f;\Z)$, then the elements $\Tilde{\Delta}_i:= \iota_*(\Delta_i)$ form a distinguished basis of $\widetilde{H}_2(M_{\Tilde{f}};\Z)$.

    Now let $\Delta$ be an em vanishing cycle in $H_1(M_f;\Z)$. 
    Then $\Delta$ can be considered as one of among the elements of a distinguished basis $\{\Delta_1,\dots,\Delta_k\}$, so $\iota_*(\Delta)$ is a vanishing cycle in $H_2(M_{\Tilde{f}};\Z)$.
    Hence, we have constructed an injection from the set of em vanishing cycles for $f(x,y)$ to the set of vanishing cycles for $\Tilde{f}(x,y,z)$.
    Recall that the set of vanishing cycles of 3-variable simple singularity of type $\Gamma$ form a root system of type $\Gamma$.
    Therefore, $|\Lambda_f| \le |\Phi_\Gamma|$ is proved.
\end{proof}

\section{Construction of Lie algebras} \label{sec:7}
We have given a geometric definition of Lie algebras in Definition \ref{Lie bracket} using
vanishing arcs, Seifert form and variation operator for each simple singularity $f(x,y)$.
In this section, we verify that they indeed define simple Lie algebras that are isomorphic to the classical ones of the corresponding type.

\begin{thm}\label{thm:Lie bracket}
For $ADE$ curve singularities, $\mathfrak{g}_{geo}$ and the Lie bracket defined in Definition \ref{Lie bracket} define a Lie algebra, which is isomorphic to the simple Lie algebra of the corresponding type.
\end{thm}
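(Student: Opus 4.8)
The plan is to recognize the bracket of Definition~\ref{Lie bracket} as a Chevalley--Frenkel--Kac realization of the simple Lie algebra attached to the root system $\Phi_\Gamma$, so that the Lie algebra axioms become a short formal verification once the geometric sign function $N_{\alpha,\beta}$ is understood. By Theorem~\ref{geometric roots}, $\Phi_\Gamma \subset Q := H_1(M,\partial M;\Z)$ is a root system isomorphic to the classical one of type $\Gamma$, with $(\cdot,\cdot)$ the Cartan--Killing form. The variation operator $\var\colon Q \to \mathfrak{h}_{geo}=H_1(M;\Z)$ is an isomorphism, and by \eqref{bracket_identity} (which rests on the identity $\rho_*(\alpha)\bullet\var(\beta)+\var(\alpha)\bullet\beta=0$ from \cite{AGV2}) the element $\var(\alpha)$ acts on $g_\beta$ by $(\alpha,\beta)$; hence $\var(\alpha)$ plays the role of the coroot $\alpha^\vee$, and $\{\var(\alpha_i)\}_{i=1}^k$ is a $\Z$-basis of $\mathfrak{h}_{geo}$. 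Thus, as a $\Z$-module with the operations of Definition~\ref{Lie bracket}, $\mathfrak{g}_{geo}$ is a Chevalley-type integral form with structure constants $N_{\alpha,\beta}\in\{0,\pm1\}$.

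The crucial observation is that the geometric sign function
$$\epsilon(\alpha,\beta):=(-1)^{\var(\beta)\bullet\alpha}=N_{\alpha,\beta}$$
extends from $\Phi_\Gamma\times\Phi_\Gamma$ to a bimultiplicative map $\epsilon\colon Q\times Q\to\{\pm1\}$, directly from bilinearity of $\var$ and of the intersection pairing $\bullet$. Moreover, since $(\alpha,\alpha)=2\,\var(\alpha)\bullet\alpha$ by Definition~\ref{defn:S}, we obtain $\epsilon(\alpha,\alpha)=(-1)^{(\alpha,\alpha)/2}$ for every $\alpha\in Q$, equivalently $\epsilon(\alpha,\beta)\epsilon(\beta,\alpha)=(-1)^{(\alpha,\beta)}$. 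Hence $\epsilon$ is an asymmetry ($2$-cocycle) function of precisely the type used in the vertex-operator construction of simply-laced simple Lie algebras. Two consequences will be used: if $\alpha,\beta\in\Phi_\Gamma$ with $\alpha+\beta\in\Phi_\Gamma$ then $(\alpha,\beta)=-1$, so $N_{\alpha,\beta}=-N_{\beta,\alpha}$; and if $\alpha+\beta+\gamma=0$ with $\alpha,\beta,\gamma\in\Phi_\Gamma$ then $N_{\alpha,\beta}=N_{\beta,\gamma}=N_{\gamma,\alpha}$.

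With this in hand I would verify the axioms. Antisymmetry of the bracket holds for parts (1)--(3) of Definition~\ref{Lie bracket} by construction, and for part (4) by $N_{\alpha,\beta}=-N_{\beta,\alpha}$. For the Jacobi identity I would split into cases according to how many of the three arguments lie in $\mathfrak{h}_{geo}$: the cases with at least two such arguments are trivial, and the case with exactly one $h$ reduces to $\Z$-linearity of $\alpha\mapsto h\bullet\alpha+h\bullet\rho_*(\alpha)$ together with $[g_\alpha,g_{-\alpha}]=-\var(\alpha)\in\mathfrak{h}_{geo}$. For three root vectors $g_\alpha,g_\beta,g_\gamma$: when $\alpha+\beta+\gamma=0$ the identity follows from $N_{\alpha,\beta}=N_{\beta,\gamma}=N_{\gamma,\alpha}$ and $\var(\alpha)+\var(\beta)+\var(\gamma)=\var(0)=0$; when $\alpha+\beta+\gamma\neq0$ it is the structure-constant relation $N_{\alpha,\beta}N_{\alpha+\beta,\gamma}+N_{\beta,\gamma}N_{\beta+\gamma,\alpha}+N_{\gamma,\alpha}N_{\gamma+\alpha,\beta}=0$ (terms with a non-root subscript being zero), which holds for any bimultiplicative $\epsilon$ with the above normalization. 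Equivalently, one invokes directly the Frenkel--Kac--Segal theorem: on a simply-laced root lattice, any bimultiplicative $\epsilon$ with $\epsilon(\alpha,\alpha)=(-1)^{(\alpha,\alpha)/2}$ produces, via $[g_\alpha,g_\beta]=\epsilon(\alpha,\beta)g_{\alpha+\beta}$ together with the Cartan relations of Definition~\ref{Lie bracket} and $[g_\alpha,g_{-\alpha}]=\epsilon(\alpha,-\alpha)\var(\alpha)=-\var(\alpha)$, the simple Lie algebra of the corresponding type. Since all $N_{\alpha,\beta}$ are integers and $\var(\Phi_\Gamma)$ spans $\mathfrak{h}_{geo}$, the lattice $\mathfrak{g}_{geo}$ is then a Chevalley $\Z$-form of the simple Lie algebra of type $\Gamma$, which is the assertion of Theorem~\ref{thm:Lie bracket}.

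The part requiring the most care is the bookkeeping of signs and conventions: one must confirm that $\var(\alpha)$ really corresponds to the coroot $\alpha^\vee$ (so that the eigenvalue in \eqref{bracket_identity} is $(\alpha,\beta)$ and not its negative), that the normalization $[g_\alpha,g_{-\alpha}]=-\var(\alpha)$ is the one compatible with the chosen $\epsilon$, and --- if one prefers a self-contained argument rather than citing Frenkel--Kac--Segal --- that the three-root-vector Jacobi relation with $\alpha+\beta+\gamma\neq0$ genuinely reduces to the combinatorial cocycle identity for $\epsilon$ over the relevant root strings. The geometry enters only through two facts already established in the paper: $\var$ is an isomorphism intertwining $\bullet$ with the symmetrized Seifert form, and $\var$ commutes with $\rho_*$; everything else is root-system bookkeeping.
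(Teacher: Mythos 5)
Your proposal is correct, but it takes a genuinely different route from the paper. The paper proves the theorem by hand: it checks skew-symmetry and then the Jacobi identity through an explicit case analysis (organized by how many arguments lie in $\mathfrak{h}_{geo}$), verifying the needed sign identities such as $N_{\alpha,\beta}N_{\alpha+\beta,\gamma}N_{\beta,\gamma}N_{\beta+\gamma,\alpha}=-1$ by direct mod-$2$ computations with $\var$ and $\bullet$; it then establishes that the result is the simple Lie algebra of type $\Gamma$ via a recognition theorem of Kac (semisimplicity from the root-space decomposition, simplicity from indecomposability of $\Phi_\Gamma$) together with the isomorphism of root systems from Theorem \ref{geometric roots}. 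You instead make the single structural observation that $\epsilon(\alpha,\beta)=(-1)^{\var(\beta)\bullet\alpha}$ is bimultiplicative on $Q=H_1(M,\partial M;\Z)$ with $\epsilon(\alpha,\alpha)=(-1)^{(\alpha,\alpha)/2}$ (equivalently $\epsilon(\alpha,\beta)\epsilon(\beta,\alpha)=(-1)^{(\alpha,\beta)}$), so that Definition \ref{Lie bracket} is literally the standard lattice (Chevalley/Frenkel--Kac--Segal) construction attached to the root system $\Phi_\Gamma$; all of the paper's sign identities then drop out in one line from bimultiplicativity, and the isomorphism type is immediate. What each approach buys: the paper's argument is elementary and self-contained, while yours is shorter, conceptual, and explains \emph{why} the geometric sign works — it is an asymmetry function on the lattice. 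One bookkeeping caveat if you want your argument self-contained rather than citing Frenkel--Kac--Segal: your three-root-vector relation ``$N_{\alpha,\beta}N_{\alpha+\beta,\gamma}+N_{\beta,\gamma}N_{\beta+\gamma,\alpha}+N_{\gamma,\alpha}N_{\gamma+\alpha,\beta}=0$ with non-root subscripts giving zero'' does not literally cover the degenerate case $\gamma=-\beta$, $\alpha+\beta\in\Phi_\Gamma$ (the paper's case (4)(ii)), since there $[g_\beta,g_{-\beta}]=-\var(\beta)\in\mathfrak{h}_{geo}$ is nonzero even though the subscript $\beta+\gamma=0$ is not a root; that case needs the extra identity $N_{\alpha,\beta}N_{\alpha+\beta,-\beta}=\epsilon(\beta,\beta)^{-1}=-1$, which again follows from your bimultiplicativity, so the gap is only in the statement of the reduction, not in the method.
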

\begin{proof}
Let us first check the skew-symmetry of the Lie bracket.  
It is enough to check the following two cases.
For $ \alpha \in \Phi_\Gamma$,  linearity of variation operator implies that
$$[g_{-\alpha},g_\alpha] = -\mathrm{var}(-\alpha)= + \mathrm{var}(\alpha) = - [g_\alpha, g_{-\alpha}].$$
Take two geometric roots $\alpha$ and $\beta$.  If $\alpha+\beta$ is not a geometric root, then the bracket vanishes and skew-symmetry holds automatically.
If $\alpha+\beta \in \Phi_\Gamma$, we have $(\alpha,\beta)=-1$ from the identity 
    $$(\alpha,\beta) = \tfrac{1}{2}\left( (\alpha+\beta,\alpha+\beta)-(\alpha,\alpha)-(\beta,\beta) \right).$$
Then the following identity for the sign $N_{\alpha,\beta}$ of the bracket implies the skew symmetry  $[g_\alpha,g_\beta] = -[g_\beta,g_\alpha]$:
   $$ N_{\alpha,\beta} N_{\beta,\alpha}=(-1)^{\mathrm{var}(\beta)\bullet \alpha + \mathrm{var}(\alpha)\bullet \beta} = (-1)^{(\alpha,\beta)} = -1. $$
 
Let us check the Jacobi Identity:
$$[[A,B],C]+[[B,C],A]+[[C,A],B]=0.$$
 We classify the cases according to the number of inputs in $\mathfrak{h}_{geo}$.
      \begin{enumerate}
        \item Suppose that $A,B,C\in \mathfrak{h}_{geo}$: $[[A,B],C]=[[B,C],A]=[[C,A],B]=0$.
        \item Suppose that two of $A,B,C$ are in $\mathfrak{h}_{geo}$, say $A$ and $B$: Since variation operator is an isomorphism, we can find $\alpha, \beta \in H_1(M,\partial M;\Z)$
        that $A=\mathrm{var}(\alpha)$, $B=\mathrm{var}(\beta)$. From the multilinearity, we may assume that  $\alpha,\beta,\gamma \in \Phi_\Gamma$ and $C=g_\gamma$.
        Then Jacobi identity follows from the following identities:
        \begin{align*}
            [[\mathrm{var}(\alpha),\mathrm{var}(\beta)],g_\gamma]& = 0, \\
            [[\mathrm{var}(\beta),g_\gamma],\mathrm{var}(\alpha)]& = [(\beta,\gamma) g_\gamma, \mathrm{var}(\alpha)] = -(\alpha,\gamma)(\beta,\gamma) g_\gamma, \\
            [[g_\gamma,\mathrm{var}(\alpha)],\mathrm{var}(\beta)]& = [-(\alpha,\gamma) g_\gamma, \mathrm{var}(\beta)] = (\alpha,\gamma)(\beta,\gamma) g_\gamma.
        \end{align*}
        
        \item Suppose that one of $A,B,C$ is in $\mathfrak{h}_{geo}$, say $C=\mathrm{var}(\gamma)$,  $A=g_\alpha$ and $B=g_\beta$ for  $\alpha,\beta,\gamma\in \Phi_\Gamma$:

         If $\alpha+\beta$ is non-zero and not a geometric root, then we get $[[A,B],C]=[[B,C],A]=[[C,A],B]=0$ and Jacobi identity holds. 
         If $\alpha+\beta=0$, then 
         $$[[A,B],C]+ [[B,C],A]+[[C,A],B]= 0  -(\gamma,-\alpha) (-\mathrm{var}(-\alpha)) - (\gamma,\alpha) \mathrm{var}(\alpha) =0.$$
         Assume that $\alpha+\beta \in \Phi_\Gamma$. Then Jacobi identity follows from the following identities:
        \begin{align*}
            [[g_\alpha,g_\beta],\mathrm{var}(\gamma)]& = [N_{\alpha,\beta} g_{\alpha+\beta}, \mathrm{var}(\gamma)] = -(\gamma,\alpha+\beta) N_{\alpha,\beta} g_{\alpha+\beta}, \\
            [[g_\beta,\mathrm{var}(\gamma)],g_\alpha]& = [-(\gamma,\beta) g_\beta, g_\alpha] = (\gamma,\beta) N_{\alpha,\beta} g_{\alpha+\beta}, \\
            [[\mathrm{var}(\gamma),g_\alpha],g_\beta]& = [(\gamma,\alpha) g_\alpha, g_\beta] = (\gamma,\alpha) N_{\alpha,\beta} g_{\alpha+\beta} .
        \end{align*}

        \item Suppose that none of $A,B,C$ is in $\mathfrak{h}_{geo}$, say $A=g_\alpha$, $B=g_\beta$, and $C=g_\gamma$ for $\alpha,\beta,\gamma\in \Phi_\Gamma$. If none of $\alpha+\beta$, $\beta+\gamma$, and $\gamma+\alpha$ are  geometric roots, then we have  $[[A,B],C]=[[B,C],A]=[[C,A],B]=0$ and Jacobi identity holds. Without loss of generality, assume that $\alpha+\beta \in \Phi_\Gamma$. If $\alpha+\beta+\gamma$ is not a geometric root, then we again have  $[[A,B],C]=[[B,C],A]=[[C,A],B]=0$.
        Hence let us assume further that $\alpha+\beta+\gamma \in \Phi_\Gamma$. 
        \begin{itemize}
            \item[i)] Suppose that at least one of $\beta+\gamma$ and $\gamma+\alpha$ is a geometric roots. Without loss of generality, assume that $\beta+\gamma \in \Phi_\Gamma$. 
            Then, we observe that $\gamma+\alpha$ is not a root as $(\gamma,\alpha) \neq 1$:             \begin{align*}
                2(\gamma,\alpha) & = (\alpha+\beta+\gamma,\alpha+\beta+\gamma) - (\alpha,\alpha) - (\beta,\beta) - (\gamma,\gamma) - 2 (\alpha,\beta) - 2 (\beta,\gamma) \\
                & = 2-2-2-2+2+2 = 0.
            \end{align*}
            From this, we have
            \begin{align*}
                [[g_\alpha,g_\beta],g_\gamma]& = [N_{\alpha,\beta} g_{\alpha+\beta}, g_\gamma] = N_{\alpha,\beta} N_{\alpha+\beta,\gamma} g_{\alpha+\beta+\gamma}, \\
                [[g_\beta,g_\gamma],g_\alpha]& = [N_{\beta,\gamma} g_{\beta+\gamma}, g_\alpha] = N_{\beta,\gamma} N_{\beta+\gamma,\alpha} g_{\alpha+\beta+\gamma}, \\
                [[g_\gamma,g_\alpha],g_\beta]& = 0 .
            \end{align*}
            For Jacobi identity, it is enough to show that  $N_{\alpha,\beta} N_{\alpha+\beta,\gamma}N_{\beta,\gamma} N_{\beta+\gamma,\alpha} = -1$. This follows from
            \begin{align*}
                & \mathrm{var}(\beta) \bullet \alpha + \mathrm{var}(\gamma) \bullet (\alpha+\beta) + \mathrm{var}(\gamma) \bullet \beta + \mathrm{var}(\alpha) \bullet (\beta+\gamma) \\
                \equiv \;\; & \mathrm{var}(\beta) \bullet \alpha + \mathrm{var}(\gamma) \bullet \alpha + \mathrm{var}(\alpha) \bullet (\beta+\gamma) \\
                \equiv \;\; & (\alpha,\beta) + (\gamma,\alpha) \equiv -1+0 \equiv 1 \mod{2}.
            \end{align*} 

            \item[ii)] Suppose that both $\beta+\gamma$ and $\gamma+\alpha$ are not geometric roots. Then
$$ \quad\quad\quad\quad\quad (\beta+\gamma,\beta+\gamma) + (\gamma+\alpha,\gamma+\alpha)  = (\alpha+\beta+\gamma,\alpha+\beta+\gamma) + (\alpha,\alpha) + (\beta,\beta) + (\gamma,\gamma) - (\alpha+\beta,\alpha+\beta) = 6.$$
           Note that $(\delta,\delta) = 2 \; \mathrm{var}(\delta) \bullet \delta$ is a positive  even integer for every nonzero $\delta \in H_1(M,\partial M;\Z)$. 
                         Since both $(\beta+\gamma,\beta+\gamma)$ and $(\gamma+\alpha,\gamma+\alpha)$ are not equal to $2$, one of them should be $0$, and the other should be $6$. Without loss of generality, assume that $(\beta+\gamma,\beta+\gamma)=0$ which implies that $\beta+\gamma=0$. Then,
            \begin{align*}
                [[g_\alpha,g_\beta],g_\gamma]& = [N_{\alpha,\beta} g_{\alpha+\beta}, g_\gamma] = N_{\alpha,\beta} N_{\alpha+\beta,\gamma} g_{\alpha} = - g_\alpha, \\
                [[g_\beta,g_\gamma],g_\alpha]& = [-\mathrm{var}(\beta), g_\alpha] = -(\beta,\alpha) g_{\alpha} = + g_\alpha, \\
                [[g_\gamma,g_\alpha],g_\beta]& = 0 .
            \end{align*}
            Here $N_{\alpha,\beta} N_{\alpha+\beta,\gamma} = -1$ follows from
            \begin{align*}
                \mathrm{var}(\beta) \bullet \alpha + \mathrm{var}(-\beta) \bullet (\alpha+\beta) \equiv - \mathrm{var}(\beta) \bullet \beta \equiv 1 \mod{2}.
            \end{align*}
        \end{itemize}
              
    \end{enumerate}
    
 Now, we prove that the complexified geometric Lie algebra $\mathfrak{g}_{geo}\otimes \C$ is isomorphic to the simple Lie algebra of the corresponding type.
 First, to show that the geometric Lie algebra is simple, we use the following theorem.
\begin{thm}[{\cite[Theorem 14.2]{KacL}}]
Let $\mathfrak{g}$ be a finite dimensional Lie algebra over an algebraically closed field $\mathbb{k}$ of characteristic zero.
Suppose $\mathfrak{g}$ admits a decomposition
 $$\mathfrak{g} = \mathfrak{h} \oplus \left(\bigoplus_{\alpha \in \Phi} \mathfrak{g}_{\alpha}\right)$$
into a direct sum of subspaces such that the following properties hold:
\begin{enumerate}
\item $\mathfrak{h}$ is an abelian subalgebra and $\dim \mathfrak{g}_\alpha = 1$ for all $\alpha \in \Phi$,
where  $\mathfrak{g}_\alpha =\{ x \in \mathfrak{g} \mid [h,x]=\alpha(h) x \; \forall h \in \mathfrak{h}\}$.
\item $[g_\alpha, g_{-\alpha}] \in \mathbb{k} h_\alpha$, 
where $h_\alpha \in \mathfrak{h}$ is such that $\alpha(h) \neq 0$.
\item $\mathfrak{h}^*$ is spanned by $\Phi$. 
\end{enumerate}
Then $\mathfrak{g}$ is a semi-simple Lie algebra. Moreover, if $\Phi$ is indecomposable, then $\mathfrak{g}$ is simple.
\end{thm}
For our geometric Lie algebra, (1) holds by definition, (2) holds because
$\alpha(\mathrm{var}(\alpha))$ is non-zero because $\alpha \bullet \mathrm{var}(\alpha) \neq 0$ for any $\alpha \in \Phi_\Gamma$.
(3) follows from Proposition \ref{cartan matrix}.  Hence $\mathfrak{g}_{geo} \otimes \C$ is a semi-simple Lie algebra. 
Since the root system for   $\mathfrak{g}_{geo} \otimes \C$  and that of the classical simple Lie algebra are isomorphic, two Lie algebras are isomorphic to each other. 
This finishes the proof of Theorem \ref{thm:Lie bracket}.
\end{proof}

\section{Geometry of Lie brackets} \label{sec:8}
\subsection{$A_2$-case}
Let us illustrate the construction in the case of $A_2$.
The Lie algebra $\mathfrak{g}$ is given by
$$\mathfrak{sl}(3;\Z)= \{ A\in \mathfrak{gl}(3;\Z) \;|\; \tr A = 0 \} = \Z \langle W_1 , W_2 , X_1 , X_2 , X_3 , Y_1 , Y_2 , Y_3 \rangle,$$
        $$
        W_1 = \begin{pmatrix}
            1 & 0 & 0 \\ 0 & -1 & 0 \\ 0 & 0 & 0
        \end{pmatrix}, \quad
        W_2 = \begin{pmatrix}
            0 & 0 & 0 \\ 0 & 1 & 0 \\ 0 & 0 & -1
        \end{pmatrix}, \quad
        X_1 = \begin{pmatrix}
            0 & 1 & 0 \\ 0 & 0 & 0 \\ 0 & 0 & 0
        \end{pmatrix}, \quad
        X_2 = \begin{pmatrix}
            0 & 0 & 0 \\ 0 & 0 & 1 \\ 0 & 0 & 0
        \end{pmatrix}, \quad
        X_3 = \begin{pmatrix}
            0 & 0 & 1 \\ 0 & 0 & 0 \\ 0 & 0 & 0
        \end{pmatrix},$$
        $$
        Y_1 = \begin{pmatrix}
            0 & 0 & 0 \\ -1 & 0 & 0 \\ 0 & 0 & 0
        \end{pmatrix}, \quad
        Y_2 = \begin{pmatrix}
            0 & 0 & 0 \\ 0 & 0 & 0 \\ 0 & -1 & 0
        \end{pmatrix}, \quad
        Y_3 = \begin{pmatrix}
            0 & 0 & 0 \\ 0 & 0 & 0 \\ -1 & 0 & 0
        \end{pmatrix}.$$
    The Lie bracket between basis elements is given by
       \begin{longtable}{ccc}
            & $[W_1,W_2]=0$, & \\[6pt]
            \, $[W_1,X_1] = 2X_1$ ,  & $[W_1,X_2] = -X_2$ , & $[W_1,X_3] = X_3$ , \\
            \, $[W_2,X_1] = -X_1$ , & $[W_2,X_2] = 2X_2$ ,  & $[W_2,X_3] = X_3$ , \\[6pt]
            \, $[W_1,Y_1] = -2Y_1$ , & $[W_1,Y_2] = Y_2$ ,  & $[W_1,Y_3] = -Y_3$ , \\
            \, $[W_2,Y_1] = Y_1$ ,  & $[W_2,Y_2] = -2Y_2$ , & $[W_2,Y_3] = -Y_3$, \\[6pt]
            \, $[X_1,Y_1] = -W_1$ , & $[X_2,Y_2] = -W_2$ , & $[X_3,Y_3] = -W_1-W_2$, \\[6pt]
            \, $[X_1,X_2] = X_3$ ,  & $[X_1,X_3] = 0$ , & $[X_2,X_3] = 0$, \\
            \, $[X_1,Y_2] = 0$ , & $[X_1,Y_3] = -Y_2$ , & $[X_2,Y_3] = Y_1$, \\
            \, $[Y_1,X_2] = 0$ , & $[Y_1,X_3] = -X_2$ ,  & $[Y_2,X_3] = X_1$, \\
            \, $[Y_1,Y_2] = Y_3$ , & $[Y_1,Y_3] = 0$ , & $[Y_2,Y_3] = 0$.
        \end{longtable}
The geometric Lie algebra from $A_2$-singularity $x^3+y^2$ is constructed as follows.
The Milnor fiber  $M$, defined by $x^3+y^2=1$, is a once-punctured torus (obtained by gluing the opposite edges of the punctured hexagon in Figure \ref{fig:A2roots}) (B).
The  regular triangle formed by three punctures in Figure \ref{fig:A2roots} (B) is the $A_2$-wheel, and 
six geometric roots are homology classes of arcs $\alpha_1,\alpha_2,\alpha_3$ and those of their orientation reversals $-\alpha_1,-\alpha_2,-\alpha_3$.
%
Monodromy images and variation images of geometric roots are illustrated in Figure \ref{fig:A_2 intersections}.
We take $\{ \mathrm{var}(\alpha_1), \mathrm{var}(\alpha_2) \}$ as the basis of $H_1(M;\Z)$.
  
\begin{lemma}
The geometric Lie algebra for $A_2$-singularity is defined on the vector space
$$\mathfrak{g}_{geo} = H_1(M;\Z) \oplus \Z \langle g_{\alpha_1},g_{\alpha_2},g_{\alpha_3},g_{-\alpha_1},g_{-\alpha_2},g_{-\alpha_3} \rangle.$$
The map $\Phi: \mathfrak{g}_{geo} \to \mathfrak{sl}_3(\mathbb{Z})$ defined by
$$\Phi(\mathrm{var}(\alpha_i)) =W_i,\; \Phi(g_{\alpha_i}) = X_i,\mbox{ and }  \Phi(g_{-\alpha_i}) = Y_i$$
is an isomorphism of Lie algebras.
\end{lemma}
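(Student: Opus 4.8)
The plan is to verify directly that $\Phi$ intertwines the two Lie brackets on a basis; this makes the general statement of Theorem \ref{thm:Lie bracket} concrete in the $A_2$ case, with explicit structure constants. Since $\Phi$ carries the basis $\{\mathrm{var}(\alpha_1),\mathrm{var}(\alpha_2),g_{\alpha_1},g_{\alpha_2},g_{\alpha_3},g_{-\alpha_1},g_{-\alpha_2},g_{-\alpha_3}\}$ of the free rank-$8$ module $\mathfrak{g}_{geo}$ bijectively onto the basis $\{W_1,W_2,X_1,X_2,X_3,Y_1,Y_2,Y_3\}$ of $\mathfrak{sl}_3(\mathbb{Z})$, it is automatically a $\mathbb{Z}$-module isomorphism, and bilinearity then reduces the claim to the finitely many brackets of basis elements. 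First I would record from Figure \ref{fig:A2roots} and Lemma \ref{lemma_Ak root} that, as relative homology classes, $\alpha_3=\alpha_1+\alpha_2$ (concatenation of the sides $v_1\to v_2$ and $v_2\to v_3$ of the wheel-triangle), so by linearity of the variation operator $\mathrm{var}(\alpha_3)=\mathrm{var}(\alpha_1)+\mathrm{var}(\alpha_2)$; and since $\mathrm{var}$ is an isomorphism, $\{\mathrm{var}(\alpha_1),\mathrm{var}(\alpha_2)\}$ is indeed a $\mathbb{Z}$-basis of $H_1(M;\mathbb{Z})$.

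Next I would read off the intersection data. By Proposition \ref{cartan matrix}(a), $\big(\mathrm{var}(\alpha_i)\bullet\alpha_j\big)=\left(\begin{smallmatrix}1&-1\\0&1\end{smallmatrix}\right)$, so the negative symmetrized Seifert form of Definition \ref{defn:S} is the $A_2$ Cartan matrix: $(\alpha_1,\alpha_1)=(\alpha_2,\alpha_2)=2$, $(\alpha_1,\alpha_2)=-1$, and hence $(\alpha_1,\alpha_3)=(\alpha_2,\alpha_3)=1$. Feeding these into parts (1)--(3) of Definition \ref{Lie bracket} handles all brackets involving the Cartan part: $[\mathrm{var}(\alpha_1),\mathrm{var}(\alpha_2)]=0$ matches $[W_1,W_2]=0$; $[\mathrm{var}(\alpha_i),g_{\pm\alpha_j}]=\pm(\alpha_i,\alpha_j)g_{\pm\alpha_j}$ matches every $[W_i,X_j]$ and $[W_i,Y_j]$ entry (e.g. $[\mathrm{var}(\alpha_1),g_{\alpha_3}]=g_{\alpha_3}$, i.e. $[W_1,X_3]=X_3$); and $[g_{\alpha_i},g_{-\alpha_i}]=-\mathrm{var}(\alpha_i)$ matches $[X_1,Y_1]=-W_1$ and $[X_2,Y_2]=-W_2$, while $[g_{\alpha_3},g_{-\alpha_3}]=-\mathrm{var}(\alpha_3)=-\mathrm{var}(\alpha_1)-\mathrm{var}(\alpha_2)$ matches $[X_3,Y_3]=-W_1-W_2$.

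The remaining brackets $[g_{\pm\alpha_i},g_{\pm\alpha_j}]$ for non-opposite pairs are governed by part (4) of Definition \ref{Lie bracket} and the sign $N_{\alpha,\beta}=(-1)^{\mathrm{var}(\beta)\bullet\alpha}$. For each pair I would test whether $\alpha+\beta\in\Phi_\Gamma$ by computing $(\alpha+\beta,\alpha+\beta)$: when this is not $2$ the bracket vanishes (e.g. $(\alpha_1+\alpha_3,\alpha_1+\alpha_3)=6$ yields $[X_1,X_3]=0$, and $(\alpha_1-\alpha_2,\alpha_1-\alpha_2)=6$ yields $[X_1,Y_2]=0$); when it equals $2$ I would compute $N_{\alpha,\beta}$ from the matrix above, using $\mathrm{var}(-\delta)=-\mathrm{var}(\delta)$ and $\mathrm{var}(\alpha_3)=\mathrm{var}(\alpha_1)+\mathrm{var}(\alpha_2)$ to reduce everything to the entries $\mathrm{var}(\alpha_i)\bullet\alpha_j$. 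Thus $N_{\alpha_1,\alpha_2}=(-1)^{\mathrm{var}(\alpha_2)\bullet\alpha_1}=(-1)^{0}=+1$ gives $[g_{\alpha_1},g_{\alpha_2}]=g_{\alpha_3}$, i.e. $[X_1,X_2]=X_3$, and $N_{\alpha_1,-\alpha_3}=(-1)^{-\mathrm{var}(\alpha_3)\bullet\alpha_1}=(-1)^{-1}=-1$ together with $\alpha_1-\alpha_3=-\alpha_2$ gives $[g_{\alpha_1},g_{-\alpha_3}]=-g_{-\alpha_2}$, i.e. $[X_1,Y_3]=-Y_2$; the rest of the $\mathfrak{sl}_3$ table follows identically (the skew-symmetric partners already being guaranteed by the skew-symmetry established in the proof of Theorem \ref{thm:Lie bracket}). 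Matching every entry then shows that $\Phi$ preserves brackets, hence is a Lie algebra isomorphism. I expect the main obstacle to be purely the sign bookkeeping: fixing the orientations of $\alpha_1,\alpha_2,\alpha_3$ consistently with Figure \ref{fig:A2roots} and tracking $\mathrm{var}(\beta)\bullet\alpha \bmod 2$ through the substitutions $\beta\mapsto-\beta$ and $\alpha_3\mapsto\alpha_1+\alpha_2$; all the arithmetic is then forced by Proposition \ref{cartan matrix} and the defining formulas of Definition \ref{Lie bracket}.
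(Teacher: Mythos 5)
Your proposal is correct and takes essentially the same route as the paper: both proofs verify that $\Phi$ matches the full $\mathfrak{sl}_3$ bracket table on the basis $\{\mathrm{var}(\alpha_1),\mathrm{var}(\alpha_2),g_{\pm\alpha_i}\}$ by computing the structure constants from Definition \ref{Lie bracket}, using $\alpha_3=\alpha_1+\alpha_2$ and $\mathrm{var}(\alpha_3)=\mathrm{var}(\alpha_1)+\mathrm{var}(\alpha_2)$. The only (minor) difference is that you source the intersection numbers $\mathrm{var}(\alpha_i)\bullet\alpha_j$ algebraically from Proposition \ref{cartan matrix} and bilinearity, whereas the paper reads them off geometrically from Figure \ref{fig:A_2 intersections}; your sign computations (e.g.\ $N_{\alpha_1,\alpha_2}=+1$, $N_{\alpha_1,-\alpha_3}=-1$) agree with the paper's table.
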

  
%
%
%
%

\begin{proof}    From (1) of Definition \ref{Lie bracket}, we get 
    \begin{align*}
        [h,h']=0
    \end{align*}
    for every $h,h'\in H_1(M;\Z)$.
    
    (2) of Definition \ref{Lie bracket} presents
    \begin{align*}
        [\var(\alpha_i),g_{\pm \alpha_j}] = (\alpha_i,\pm \alpha_j) \cdot g_{\pm \alpha_j} = \pm \left(\var(\alpha_i) \bullet \alpha_j + \var(\alpha_i) \bullet \rho_*(\alpha_j) \right) \cdot g_{\pm \alpha_j}.
    \end{align*}
    The intersection number between the arcs and the variation images of arcs can be explicitly computed as described in Figure \ref{fig:A_2 intersections}.
    We get
    \begin{align*}
        [\var(\alpha_1),g_{\alpha_1}] = 2g_{\alpha_1} , \quad [\var(\alpha_1),g_{\alpha_2}] = -g_{\alpha_2} , \quad [\var(\alpha_1),g_{\alpha_3}] = g_{\alpha_3} , \\
        [\var(\alpha_2),g_{\alpha_1}] = -g_{\alpha_1} , \quad [\var(\alpha_2),g_{\alpha_2}] = 2g_{\alpha_2} , \quad [\var(\alpha_2),g_{\alpha_3}] = g_{\alpha_3},
    \end{align*}
    and similar results for $g_{-\alpha_j}$'s. 

    \begin{figure}[h]
    \centering
    \begin{subfigure}{0.32\textwidth}
    \includegraphics[scale=0.7]{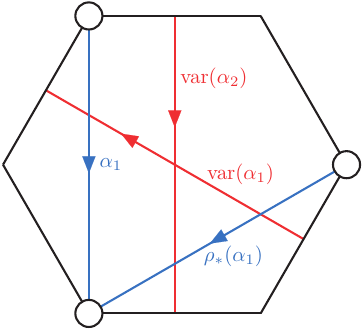}
    \centering
    \caption{$[\var(\alpha_1),g_{\alpha_1}]$ and $[\var(\alpha_2),g_{\alpha_1}]$.}
    \end{subfigure}
    \begin{subfigure}{0.32\textwidth}
    \includegraphics[scale=0.7]{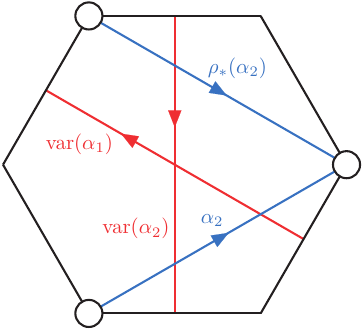}
    \centering
    \caption{$[\var(\alpha_1),g_{\alpha_2}]$ and $[\var(\alpha_2),g_{\alpha_2}]$.}
    \end{subfigure}
    \begin{subfigure}{0.32\textwidth}
    \includegraphics[scale=0.7]{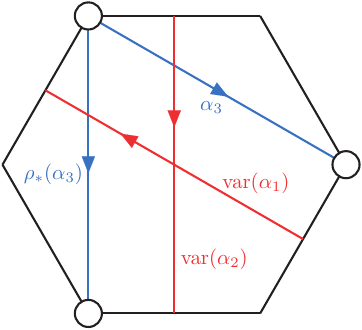}
    \centering
    \caption{$[\var(\alpha_1),g_{\alpha_3}]$ and $[\var(\alpha_2),g_{\alpha_3}]$.}
    \end{subfigure}
    \caption{Computations of Lie brackets.}
        \label{fig:A_2 intersections}
    \end{figure}
    

    (3) of Definition \ref{Lie bracket} presents
    \begin{align*}
        [g_{\alpha_1},g_{-\alpha_1}] = -\mathrm{var}(\alpha_1) , \quad
        [g_{\alpha_2},g_{-\alpha_2}] = -\mathrm{var}(\alpha_2) , \quad
        [g_{\alpha_3},g_{-\alpha_3}] = -\mathrm{var}(\alpha_3) = -\var(\alpha_1) - \var(\alpha_2).
    \end{align*}
    
    Finally, (4) of Definition \ref{Lie bracket} presents
    \begin{align*}
        [g_{\alpha_i},g_{\alpha_j}] = 
        \begin{cases}  
        (-1)^{\var(\alpha_j) \bullet \alpha_i} g_{\alpha_i+\alpha_j} & \textrm{if } \; \alpha_i + \alpha_j \in \Phi_\Gamma, \\ 
        0 & \textrm{if } \;\alpha_i + \alpha_j \notin \Phi_\Gamma. 
        \end{cases}
    \end{align*}
    The intersection number can be computed from Figure \ref{fig:A_2 intersections}. 
    We get
    \begin{align*}
        \begin{array}{lll}
            \, [g_{\alpha_1},g_{\alpha_2}] = g_{\alpha_3} ,  & [g_{\alpha_1},g_{\alpha_3}] = 0 , & [g_{\alpha_2},g_{\alpha_3}] = 0, \\
            \, [g_{\alpha_1},g_{-\alpha_2}] = 0 ,  & [g_{\alpha_1},g_{-\alpha_3}] = -g_{-\alpha_2} , & [g_{\alpha_2},g_{-\alpha_3}] = g_{-\alpha_1}, \\
            \, [g_{-\alpha_1},g_{\alpha_2}] = 0 ,  & [g_{-\alpha_1},g_{\alpha_3}] = -g_{\alpha_2} , & [g_{-\alpha_2},g_{\alpha_3}] = g_{\alpha_1}, \\
            \, [g_{-\alpha_1},g_{-\alpha_2}] = g_{-\alpha_3} ,  & [g_{-\alpha_1},g_{-\alpha_3}] = 0 , & [g_{-\alpha_2},g_{-\alpha_3}] = 0.
        \end{array}
    \end{align*}
   \end{proof} 
Now the signs appearing in type (4) in the proof have the following geometric interpretation of the right hand rule.
Lie bracket is non-zero if two roots can be added to another root.  When it does, we need to determine the sign $\pm 1$.
The claim is that for type $A$, the sign is given by the right hand rule.  Namely,  the Lie bracket $[g_{\alpha_1},g_{\alpha_2}] = g_{\alpha_3}$ is realized by the vector sum $\alpha_3=\alpha_1+\alpha_2$ in Figure \ref{fig:A2roots} (B), and the sign is positive as the orientation given by the pair $(\alpha_1,\alpha_2)$ agrees with the (complex) orientation of the Milnor fiber.
In contrast, the sign in the Lie braket $[g_{-\alpha_1},g_{\alpha_3}] = -g_{\alpha_2}$ is negative as the pair $(-\alpha_1,\alpha_3)$ does not agree with the orientation.
This sign rule holds only for $A_k$-cases. 

\subsection{$A_k$-case}   
There is a straight-forward generalization of $A_2$-case to the $A_k$-case. 
The Lie algebra $\mathfrak{sl}_{k+1}$
is decomposed into the Cartan subalgebra generated by $h_1,\dots, h_k$ (where $h_i = E_{i,i}-E_{i+1,i+1}$), and 
the root spaces $X_{ij}=(E_{i,j})$ and $Y_{ji}= (-E_{j,i})$ for $i<j$.
Milnor fiber $M$ of $A_k$-singularity is given by $\{x^{k+1}+y^2=1\}$, or a punctured $2(k+1)$-gon with $\pm 3$ boundary edge identifications.
$A_k$-wheel is given by a regular $(k+1)$-gon whose vertices are the punctures labeled by $v_{1}, \dots, v_{k+1}$.
Let $\alpha_{ij}$ be an arc given by the line segment connecting $v_{i}$ and $v_{j}$.
Variation images of $\alpha_{12}, \alpha_{23}, \dots, \alpha_{k(k+1)}$ generate $H_1(M;\mathbb{Z})\cong \Z^k$.
The isomorphism of Lie algebras is obtained by matching $\mathrm{var}(\alpha_{i (i+1)})$ with $h_i$, $g_{\alpha_{ij}}$ with $X_{ij}$, and $g_{-\alpha_{ij}}$ with $Y_{ji}$
for $i<j$.

Concatenation of arcs $\alpha_{ij}, \alpha_{jl}$ to $\alpha_{il}$ for distinct $i,j,l$ gives the nontrivial Lie bracket
$$[g_{\alpha_{ij}}, g_{\alpha_{jl}}] = \pm g_{\alpha_{il}}$$ 
and the sign is $+$ if vertices $(i,j,l)$ are ordered counter-clockwise on the $(k+1)$-gon and $-$ otherwise. Hence, the sign is given by the
right hand rule.

\subsection{$D_k$-case}
There is a simple geometric way to compute the Lie bracket for $D_k$-cases by looking at the
triangle that is formed by the two geometric roots.

Recall that $D_k$-wheel is given by a regular $2(k-1)$-gon whose vertices  $v_1,\dots v_{k-1}, v_{-1}, \dots, v_{-(k-1)}$ (labelled cyclically) are punctures with an additional puncture $v_0$ at the center.
$D_k$-Milnor fiber is obtained by identifying opposite edges of the $D_k$-wheel.
Now the relative homology classes of oriented arcs $L_{i,j}$ (line segment connecting from $v_i$ to $v_j$) are geometric roots.
With this notation, the consequence of Lemma \ref{Dk roots} can be rewritten as follows.
\begin{lemma}
Let $[L_{i,j}]\in H_1(M,\partial M;\Z)$ denote the relative homology class of the line segment $L_{i,j}$.
\begin{itemize}
    \item $[L_{i,j}]$ $(i,j \in \{-(k-1),\dots,k-1\})$ is contained in the $D_k$ geometric root system if and only if $i\ne \pm j$.
    \item Two distinct line segments $L_{i_1,j_1}$ and $L_{i_2,j_2}$ have same relative homology $[L_{i_1,j_1}]=[L_{i_2,j_2}]$ if and only if $i_1= -j_2$, $j_1=-i_2$, and $i_1,i_2,j_1,j_2\ne 0$.
\end{itemize}
\end{lemma}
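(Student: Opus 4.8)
The plan is to recognize this lemma as a restatement of Lemma \ref{Dk roots} in the notation $L_{i,j}$, and to do the bookkeeping that converts the enumeration of $D_k$ geometric roots by types (i)--(iv) into the two stated criteria. Throughout I use the model of Theorem \ref{thm:M1}: $M$ is the regular $2(k-1)$-gon $P$ with opposite edges identified (by the unique rule making the quotient orientable), with central puncture $v_0$ and boundary-vertex punctures $v_{\pm1},\dots,v_{\pm(k-1)}$; each puncture is a boundary circle of $M$, so any $1$-cycle supported on $\partial M$ — a small loop around a puncture, or a sub-arc of $\partial P$ — represents $0$ in $H_1(M,\partial M;\Z)$.

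For the first bullet, note that $L_{i,j}$ is an embedded oriented arc of the wheel joining two punctures precisely when it is nondegenerate and avoids the central puncture, i.e. when $i\ne j$ and not $i=-j\ne0$; together these are exactly $i\ne\pm j$. For every such segment, the computation carried out in the proof of Lemma \ref{Dk roots} — writing $[L_{i,j}]$ in the simple basis $\alpha_1,\dots,\alpha_k$ of Proposition \ref{cartan matrix} and evaluating $(\cdot,\cdot)$ — gives $([L_{i,j}],[L_{i,j}])=2$, so $[L_{i,j}]\in\Phi_{D_k}$; conversely Lemma \ref{Dk roots} asserts every $D_k$ geometric root is represented by such a segment, so $\Phi_{D_k}=\{[L_{i,j}]:i\ne\pm j\}$. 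In the excluded case $i=-j\ne0$ the object $L_{i,-i}$ is the diameter through $v_0$; its two pushoffs off $v_0$ agree in $H_1(M,\partial M;\Z)$ and both equal $[L_{i,0}]+[L_{0,-i}]$, the sum of the two antipodal spokes, which (up to the rotation symmetry and orientation) are the fork simple roots of the $D_k$ diagram and hence non-adjacent; thus this class has $(\cdot,\cdot)$-square $2+2+2\cdot0=4$, not a root value. This establishes the first bullet.

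For the second bullet, the implication $(\Leftarrow)$ is geometric: if $i_1=-j_2$ and $j_1=-i_2$ with all four indices nonzero, then $L_{i_2,j_2}=L_{-j_1,-i_1}$ is the image of $L_{i_1,j_1}$ under reflection through the center of $P$ followed by orientation reversal, hence a chord parallel to $L_{i_1,j_1}$ with the same direction lying on the opposite side of $P$. The planar region $R\subset P$ bounded by $L_{i_1,j_1}$, by $L_{-j_1,-i_1}$, and by two sub-arcs of $\partial P$ becomes, after deleting a small disk about the enclosed puncture $v_0$, a $2$-chain in $M$ whose relative boundary is $[L_{i_1,j_1}]-[L_{-j_1,-i_1}]$ modulo classes supported on $\partial M$; hence $[L_{i_1,j_1}]=[L_{-j_1,-i_1}]=[L_{i_2,j_2}]$. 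For $(\Rightarrow)$, split segments into spokes (one index $0$; types (iii)--(iv) of Lemma \ref{Dk roots}) and non-spoke chords (types (i)--(ii)). By Lemma \ref{Dk roots} the $4(k-1)$ oriented spokes carry $4(k-1)$ pairwise distinct classes, disjoint from all chord classes, while the $4(k-1)(k-2)$ oriented non-spoke chords carry $2(k-1)(k-2)$ classes, each realized by exactly two chords, namely the parallel same-direction pair $\{L_{i_1,j_1},L_{-j_1,-i_1}\}$ identified above. So two distinct homologous segments must be of this form, which is exactly the asserted index condition with all indices nonzero.

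The step I expect to be the main obstacle is the $(\Leftarrow)$ homology argument combined with pinning down the two parallel representatives: the strip in $P$ between a chord and its antipodal parallel chord necessarily encloses the central puncture $v_0$, so one must check carefully that the compensating small loop around $v_0$, as well as the two $\partial P$-arcs appearing in $\partial R$, really do become null in $H_1(M,\partial M;\Z)$ after the opposite-edge identification, and that the gluing matches up precisely these chords and no others — equivalently, that the ``two parallel lines'' in the proof of Lemma \ref{Dk roots} are exactly $\{L_{i,j},L_{-j,-i}\}$. Once these are in hand, the $(\Rightarrow)$ direction is just the counting already present in the proof of Lemma \ref{Dk roots}.
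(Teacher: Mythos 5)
Your overall route is the same as the paper's: the paper states this lemma as a direct rewriting of Lemma \ref{Dk roots}, whose proof already records that for non-spoke chords the two antipodal parallel segments give the same relative class while parallel spokes do not, and your proposal is essentially that translation plus the counting. The counting in your $(\Rightarrow)$ step and the treatment of the excluded diameter $L_{i,-i}$ (push-off equals the sum of two antipodal parallel spokes, which pair to $0$, so the square is $4$) are fine.

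However, the homological justification you give for the key $(\Leftarrow)$ step contains a genuine error. You assert that ``any $1$-cycle supported on $\partial M$ --- a small loop around a puncture, or a sub-arc of $\partial P$ --- represents $0$ in $H_1(M,\partial M;\Z)$,'' and later discard the two $\partial P$-arcs in $\partial R$ as ``classes supported on $\partial M$.'' But $\partial P$ is \emph{not} contained in $\partial M$: after the opposite-edge identification the edges of the polygon become interior arcs of $M$ joining vertex punctures, and their relative homology classes are themselves geometric roots (each boundary edge of the wheel connects adjacent punctures and is a root by the first bullet), hence certainly nonzero in $H_1(M,\partial M;\Z)$. Taken at face value, your premise would make every boundary edge of the wheel nullhomologous rel $\partial M$, contradicting the very statement you are proving. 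The strip argument does work, but for a different reason: the two $\partial P$-arcs of $\partial R$ (from the head of $L_{i_1,j_1}$ to the head of $L_{-j_1,-i_1}$, and from the tail of $L_{-j_1,-i_1}$ to the tail of $L_{i_1,j_1}$) are antipodal images of each other, so under the opposite-edge gluing they are identified with one another with opposite induced orientations and cancel as chains; only the small loop around $v_0$ lies in $\partial M$ and dies in relative homology, leaving $\partial R = L_{i_1,j_1} - L_{-j_1,-i_1}$ modulo $\partial M$. You flag this point as the ``main obstacle'' but the mechanism you propose to resolve it (each arc being null because it sits on $\partial M$) is the wrong one, so as written the central step of the second bullet is not established.
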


 
It is not difficult to determine when the sum of two distinct roots in $\Phi_{D_k}$ is again a root.

\begin{lemma}
    Let $\alpha_1,\alpha_2$ be distinct geometric roots in $\Phi_{D_k}$.
    Then $\alpha_1+\alpha_2$ is again a geometric root if and only if $\alpha_1$ and $\alpha_2$ have representatives $L_{i_1,j_1}$ and $L_{i_2,j_2}$, respectively, such that $j_1=i_2$ and $i_1 \ne \pm j_2$.
\end{lemma}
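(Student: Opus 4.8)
The plan is to route everything through the negative symmetrized Seifert form and then read off the combinatorics from the $D_k$-wheel. First, since $(\cdot,\cdot)$ is the Cartan–Killing form of $\Phi_{D_k}$ (Theorem \ref{geometric roots}) and $(\delta,\delta)=2\,\var(\delta)\bullet\delta$ is a positive even integer for every nonzero $\delta\in H_1(M,\partial M;\Z)$, Cauchy–Schwarz gives $(\alpha_1,\alpha_2)\in\{-1,0,1\}$ whenever $\alpha_1\neq\pm\alpha_2$, and
$$(\alpha_1+\alpha_2,\alpha_1+\alpha_2)=(\alpha_1,\alpha_1)+(\alpha_2,\alpha_2)+2(\alpha_1,\alpha_2)=4+2(\alpha_1,\alpha_2).$$
Hence $\alpha_1+\alpha_2\in\Phi_{D_k}$ iff $(\alpha_1,\alpha_2)=-1$. (If $\alpha_1=-\alpha_2$ then $\alpha_1+\alpha_2=0\notin\Phi_{D_k}$ and $(\alpha_1,\alpha_2)=-2\neq-1$, and moreover every head-to-tail pairing of representatives forces $i_1=j_2$, so both sides of the asserted equivalence fail; this edge case is therefore harmless.) It thus suffices to prove: $(\alpha_1,\alpha_2)=-1$ if and only if $\alpha_1,\alpha_2$ admit line-segment representatives $L_{i_1,j_1},L_{i_2,j_2}$ with $j_1=i_2$ and $i_1\neq\pm j_2$.

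For the ``if'' direction I would first record the concatenation identity: if two chords share a puncture $v_m$, then $[L_{a,m}]+[L_{m,c}]=[L_{a,c}]$, obtained by sliding the concatenated arc across the triangular region of the wheel spanned by $v_a,v_m,v_c$ (with an explicit correction by a loop about the central puncture $v_0$ when that triangle encloses $v_0$, which I will treat below). Applying this with the given representatives, $\alpha_1+\alpha_2=[L_{i_1,j_1}]+[L_{j_1,j_2}]=[L_{i_1,j_2}]$, which lies in $\Phi_{D_k}$ precisely because $i_1\neq\pm j_2$, by Lemma \ref{Dk roots} (in the restated two-bullet form). Equivalently — and this is the version I would actually write, since it avoids the $v_0$-bookkeeping — when $L_{i_1,j_1}$ and $L_{i_2,j_2}$ meet only at the boundary vertex $v_{j_1}=v_{i_2}$ one computes $\var(\alpha_1)\bullet\alpha_2+\var(\alpha_2)\bullet\alpha_1=-1$ directly from the picture, exactly as in the $D_k$ computations of Section \ref{sec:8}, using that $\var(\alpha)$ is the smoothing of $\rho'(L)\cup\overline{L}$; so $(\alpha_1,\alpha_2)=-1$.

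For the ``only if'' direction one must, starting from $(\alpha_1,\alpha_2)=-1$, actually produce the head-to-tail pair. I would argue case by case on the types of $\alpha_1,\alpha_2$ in Lemma \ref{Dk roots} — diagonal/diagonal, diagonal/spoke, spoke/spoke — using that a diagonal root $[L_{p,q}]$ has exactly the two representatives $L_{p,q}$ and $L_{-q,-p}$, so its set of admissible heads is $\{q,-p\}$ and of tails $\{p,-q\}$, while a spoke root has essentially one. In each case one checks that the hypothesis $(\alpha_1,\alpha_2)=-1$ — equivalently, that $\alpha_1+\alpha_2$ is a chord class $[L_{a,c}]$ with $a\neq\pm c$ — forces the head-set of one of $\alpha_1,\alpha_2$ to meet the tail-set of the other, giving $j_1=i_2$; then $i_1\neq\pm j_2$ is automatic, since otherwise $\alpha_1+\alpha_2$ would equal $0$ or a non-root. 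The cleanest packaging, which I expect is the one to use, is to transport everything through the isomorphism $\Phi_{D_k}\cong\{\pm\varepsilon_i\pm\varepsilon_j\}$ furnished by Proposition \ref{cartan matrix}: under an explicit dictionary sending each outer puncture $v_a$ to a signed coordinate and $v_0$ to the origin, $[L_{a,b}]$ becomes the difference of the signed coordinates of $v_b$ and $v_a$, and ``$\alpha_1+\alpha_2$ is a root'' becomes the familiar statement that $\pm\varepsilon_i\pm\varepsilon_j$ and $\pm\varepsilon_{i'}\pm\varepsilon_{j'}$ sum to a root exactly when they share one index carrying opposite signs — which is precisely the shared-endpoint condition.

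The main obstacle is the ``only if'' step, and within it the interaction between the two-to-one identification $L_{p,q}\sim L_{-q,-p}$ (so ``endpoint'' is only defined up to this symmetry) and the central puncture $v_0$, which makes the naive identity $[L_{a,b}]+[L_{b,c}]=[L_{a,c}]$ fail whenever the triangle $v_av_bv_c$ encloses $v_0$. One must verify that in exactly those configurations either an alternative representative restores a triangle avoiding $v_0$, or the corrected class is still the asserted root; equivalently, one must check that the purely combinatorial index condition and the condition $(\alpha_1,\alpha_2)=-1$ genuinely coincide in all cases. This is handled most safely through the explicit $D_k$-coordinate dictionary above, where $v_0$ has been absorbed into the origin of $\{\pm\varepsilon_i\pm\varepsilon_j\}$ and the equivalence becomes a short index bookkeeping.
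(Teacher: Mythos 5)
The paper offers no written proof of this lemma (it is stated as ``not difficult''), so I assess your argument on its own. Your overall route is the right one: reduce ``$\alpha_1+\alpha_2\in\Phi_{D_k}$'' to $(\alpha_1,\alpha_2)=-1$ via $(\alpha_1+\alpha_2,\alpha_1+\alpha_2)=4+2(\alpha_1,\alpha_2)$, then use the classification of segment representatives in Lemma \ref{Dk roots} (each diagonal class has exactly the two representatives $L_{p,q},L_{-q,-p}$, each spoke class exactly one) to match heads and tails. Your ``if'' direction is essentially sound, and in fact the $v_0$-bookkeeping you worry about is vacuous: a small loop about the central puncture is homologous in $M$ to a boundary component, hence vanishes in $H_1(M,\partial M;\Z)$ by exactness of $H_1(\partial M)\to H_1(M)\to H_1(M,\partial M)$, so $[L_{a,m}]+[L_{m,c}]=[L_{a,c}]$ holds even when the triangle $v_av_mv_c$ encloses $v_0$. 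The center puncture only forbids the segment $L_{i,-i}$ from existing; it does not correct the concatenation identity, and the non-summability of $[L_{i,j}]$ and $[L_{j,-i}]$ is simply the statement that their sum has square length $4$.

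The genuine gap is in the ``only if'' direction, which you yourself flag as the main obstacle and then delegate to the ``cleanest packaging'': the dictionary sending each outer puncture $v_a$ to a signed coordinate, $v_0$ to the origin, and $[L_{a,b}]$ to the difference of coordinates. That dictionary is false: it would send every spoke class $[L_{0,a}]$ to a vector of square length $1$ (not a $D_k$ root) and would identify the two inequivalent parallel spokes (e.g.\ $\alpha_1$ and $\alpha_2$ of the simple basis), contradicting Lemma \ref{Dk roots}. The correct coordinates can be read off from types (i)--(iv) in the proof of that lemma together with Proposition \ref{cartan matrix}: diagonals are $\pm(\varepsilon_i\pm\varepsilon_j)$ with $i,j\le k-1$, while the spokes are exactly the roots $\pm(\varepsilon_i\pm\varepsilon_k)$, so the center must carry a $\pm\varepsilon_k$ contribution depending on the orientation of the spoke rather than being ``absorbed into the origin.'' Since the spokes are precisely where the one-representative-versus-two subtlety lives, your case analysis is left unproved exactly where it is nontrivial. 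With the corrected coordinates it does close, but one more point then becomes visible and should be stated: the condition $j_1=i_2$ is order-sensitive, and for spoke--diagonal and spoke--spoke pairs only one ordering of the two roots can be concatenated head-to-tail (a spoke with tail at $v_0$ can only appear as the second arc), so the ``only if'' direction holds only after allowing the relabeling $\alpha_1\leftrightarrow\alpha_2$; your phrase ``the head-set of one meets the tail-set of the other'' silently makes this unordered reading, which is the intended one, but it needs to be said explicitly.
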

In particular, $[L_{i,j}]$ and $[L_{j,-i}]$ $(i\ne 0)$ are not summable due to the center puncture of $D_k$-wheel.
Thus, $\alpha_1=[L_{i,j}]$ and $\alpha_2=[L_{j,l}]$ are summable when $L_{i,j},L_{j,l},L_{l,i}$ compose a triangle in the $D_k$-wheel.
Now we describe the sign of Lie bracket $[g_{\alpha_1},g_{\alpha_2}]$ in this case. 
 
    \begin{figure}[h]
    \centering
    \begin{subfigure}{0.47\textwidth}
    \includegraphics[scale=0.8]{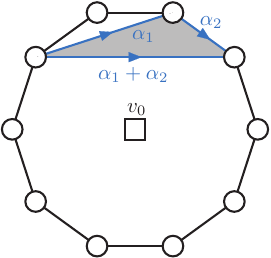}
    \centering
    \caption{$[g_{\alpha_1},g_{\alpha_2}]=-g_{\alpha_1+\alpha_2}$ since $\epsilon=-1$.}
    \end{subfigure}
    \begin{subfigure}{0.47\textwidth}
    \includegraphics[scale=0.8]{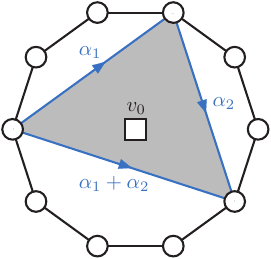}
    \centering
    \caption{$[g_{\alpha_1},g_{\alpha_2}]=-(-1)g_{\alpha_1+\alpha_2}$.}
    \end{subfigure}
    \caption{Illustration of  $D_k$ Lie bracket in Proposition \ref{prop:dksign}.}
        \label{fig:Dk sign}
    \end{figure}
    
\begin{prop}\label{prop:dksign}
Let $\alpha_1,\alpha_2$ be distinct geometric roots, and suppose that $\alpha_1+\alpha_2$ is a geometric root.
Let $\epsilon_{\alpha_1,\alpha_2}$ be the sign given by the right hand rule ($+1$ if $(\alpha_1,\alpha_2)$ agrees with the orientation of $M$).
Then, 
$$[g_{\alpha_1},g_{\alpha_2}] =  \begin{cases} 
\epsilon_{\alpha_1,\alpha_2} g_{\alpha_1+\alpha_2} & \textrm{if the triangle is formed by}\;  \alpha_1,\alpha_2,\mbox{ and }\alpha_1+\alpha_2  \textrm{ does not contain} \;\; v_0, \\
- \epsilon_{\alpha_1,\alpha_2} g_{\alpha_1+\alpha_2} & \textrm{if the triangle is formed by}\;  \alpha_1,\alpha_2,\mbox{ and }\alpha_1+\alpha_2  \textrm{ contains} \;\; v_0,  \end{cases}$$
\end{prop}
\begin{proof}
The sign is given by the parity of the intersection $\mathrm{var}(\alpha_2) \bullet \alpha_1$. 
This can be checked through case by case analysis: 
first case is when the triangle $\alpha_1,\alpha_2,\alpha_1+\alpha_2$ does not meet $v_0$, second case is when the triangle contains $v_0$ as a vertex,
and last case is when the triangle contains $v_0$. We leave the details to the reader.
\end{proof}
Hence the sign is given by the right hand rule, with the additional $(-1)$ sign if three geometric roots form a triangle that contains the center $v_0$ in the interior.
Signs for $E$-types are more subtle. One can specify the shape and size of the triangles formed by three vectors that requires additional $(-1)$ signs as in $D_{k}$-case, but omit them as there are too many to list.

\section{Description of Weyl group reflections and a Coxeter element} \label{sec:9}

Root system $\Phi$ on $\mathbb{R}^k$ with the pairing $(\cdot, \cdot)$ has an associated Weyl group, which 
is generated by $\{s_\alpha\}_{\alpha \in \Phi}$. 
Here, $s_\alpha$ is a reflection of $\mathbb{R}^k$ with respect to the hyperplane that is perpendicular to $\alpha$, and this restricts to an action on the set of the roots.
$$s_\alpha(\beta) := \beta -2 \tfrac{(\alpha,\beta)}{(\alpha,\alpha)}\alpha.$$
In this section, we explain the Weyl group action on the set of geometric roots and find the relation between the Coxeter element and the monodromy of the singularity.
Recall that our pairing is the negative symmetrized Seifert form. 
For a simple singularity of three variables, this is the same as intersection form, and $s_\alpha$ can be then identified with Picard--Lefschetz transformation.
In particular, the corresponding Weyl group can be interpreted as the monodromy group.
For a simple singularity of two variables, on the other hand, the pairing is not the same as intersection form and $s_\alpha$ cannot be then identified with Picard--Lefschetz transformation, so we need an alternative description for the Weyl group.
We will see that it corresponds to certain ``combinatorial operation'' of flipping roots.

The following is easy to check.
\begin{lemma}
Let $\alpha,\beta \in \Phi_\Gamma$ be geometric roots. Then
\begin{itemize}
    \item [(i)] $s_\alpha(\beta) = \beta+\alpha$\quad if $\beta+\alpha\in \Phi_\Gamma$, or equivalently $(\alpha,\beta)=-1$,
    \item [(ii)] $s_\alpha(\beta) = \beta-\alpha$\quad if $\beta-\alpha\in \Phi_\Gamma$, or equivalently $(\alpha,\beta)=1$,
    \item [(iii)] $s_\alpha(\beta) = \alpha$\quad\quad\;\, if $(\alpha,\beta)=0$,
    \item [(iv)] $s_\alpha(\pm \alpha) = \mp \alpha.$ 
\end{itemize}
\end{lemma}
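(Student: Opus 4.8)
The plan is to obtain all four items by a single substitution into the reflection formula once two elementary facts are recorded. First, since every $\alpha\in\Phi_\Gamma$ satisfies $(\alpha,\alpha)=2$ (this is built into the definition of $\Phi_\Gamma$, and is also part of Proposition \ref{cartan matrix}), we have $\langle\beta,\alpha\rangle=2\tfrac{(\alpha,\beta)}{(\alpha,\alpha)}=(\alpha,\beta)$, so the reflection simplifies to
$$s_\alpha(\beta)=\beta-(\alpha,\beta)\,\alpha.$$
Item (iv) is then immediate: $s_\alpha(\alpha)=\alpha-(\alpha,\alpha)\alpha=-\alpha$ and $s_\alpha(-\alpha)=-\alpha+(\alpha,\alpha)\alpha=\alpha$.

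For (i)--(iii) I would first justify the ``equivalently'' clauses. Because $(\cdot,\cdot)$ is the positive-definite Cartan--Killing form of type $\Gamma$ (Theorem \ref{geometric roots}), for $\beta\neq\pm\alpha$ the Cauchy--Schwarz inequality together with $(\alpha,\alpha)=(\beta,\beta)=2$ and integrality of $(\alpha,\beta)$ forces $(\alpha,\beta)\in\{-1,0,1\}$. Next, for any lattice vector $\gamma\in H_1(M,\partial M;\Z)$ the relation $\gamma\in\Phi_\Gamma$ is, by definition of $\Phi_\Gamma$, the same as $(\gamma,\gamma)=2$; expanding
$$(\beta\pm\alpha,\beta\pm\alpha)=(\beta,\beta)\pm 2(\alpha,\beta)+(\alpha,\alpha)=4\pm 2(\alpha,\beta)$$
shows $\beta+\alpha\in\Phi_\Gamma\iff(\alpha,\beta)=-1$ and $\beta-\alpha\in\Phi_\Gamma\iff(\alpha,\beta)=1$ (here $\beta+\alpha\neq 0$ when $(\alpha,\beta)=-1$ since $\beta=-\alpha$ would give $(\alpha,\beta)=-2$, and similarly $\beta-\alpha\neq 0$ when $(\alpha,\beta)=1$). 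With these equivalences established, (i)--(iii) are single substitutions into the displayed formula for $s_\alpha(\beta)$: if $(\alpha,\beta)=-1$ then $s_\alpha(\beta)=\beta+\alpha$; if $(\alpha,\beta)=1$ then $s_\alpha(\beta)=\beta-\alpha$; and if $(\alpha,\beta)=0$ the coefficient vanishes and $s_\alpha$ fixes $\beta$. Together with (iv) for $\beta=\pm\alpha$, the three values $(\alpha,\beta)\in\{-1,0,1\}$ exhaust all remaining cases.

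There is essentially no obstacle here -- the lemma is purely formal once the earlier identification of $(\Phi_\Gamma,(\cdot,\cdot))$ with the classical root system of type $\Gamma$ is available, which is why it is phrased as ``easy to check''. The only points needing a moment's attention are citing positive-definiteness to restrict $(\alpha,\beta)$ to $\{-1,0,1\}$, and checking $\beta\pm\alpha\neq 0$ before invoking the defining property of $\Phi_\Gamma$ in the biconditionals. I would present the computation $(\beta\pm\alpha,\beta\pm\alpha)=4\pm 2(\alpha,\beta)$ as one display and then list the four cases, keeping the whole argument to a few lines.
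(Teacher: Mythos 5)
Your proof is correct and is exactly the routine verification the paper intends (the paper gives no argument beyond ``easy to check''): with $(\alpha,\alpha)=2$ the reflection reduces to $s_\alpha(\beta)=\beta-(\alpha,\beta)\,\alpha$, and your display $(\beta\pm\alpha,\beta\pm\alpha)=4\pm 2(\alpha,\beta)$, together with positive-definiteness and integrality of the pairing, cleanly justifies the ``equivalently'' clauses and the case split $(\alpha,\beta)\in\{-1,0,1\}$ for $\beta\neq\pm\alpha$. One remark: in case (iii) your computation (correctly) gives $s_\alpha(\beta)=\beta$; the ``$s_\alpha(\beta)=\alpha$'' printed in the lemma is evidently a typo, since a reflection fixes every vector orthogonal to $\alpha$, so this is a defect of the statement, not of your argument.
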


Here is a geometric interpretation of Weyl group action on geometric roots.
$s_\alpha$ flips $\alpha$ (as an arrow) to $-\alpha$ (switching source and target).
Then for a geometric root $\beta$ that meets $\alpha$ in the wheel, imagine that
$\beta$ arrow is tied to the  the source/target node of $\alpha$ so that when $\alpha$ arrow is flipped,
$\beta$ arrow is switched accordingly (if the resulting arrow is still a root).
See Figure \ref{fig:Weyl_example} for an illustration.

   \begin{figure}[h]
        \centering
        \includegraphics[scale=0.9]{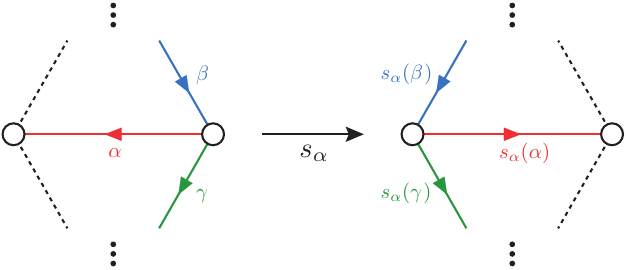}
        \caption{Local picture of a simple reflection $s_{\alpha}$.}
        \label{fig:Weyl_example}
    \end{figure}
%

We illustrate an example of the Weyl group action on $\Phi_{D_{5}}$ in Figure \ref{fig:Weyl_D5}.
First, $s_\alpha(\beta_1)$ did not change from $\beta_1$ since $\beta_1$ and $\pm \alpha$ are not summable in the geometric root system $\Phi_{D_5}$. 
For $s_\alpha(\beta_2)$, recall that $\alpha$ has two geometric representatives given by parallel ones
as in Figure \ref{fig:Dkroots} (B). This parallel arrow meets
$\beta_2$ and flipping this arrow gives $s_\alpha(\beta_2)$ in the figure. The same applies for $\beta_3$ and $\beta_4$.
    \begin{figure}[h]
        \centering
        \includegraphics[scale=0.8]{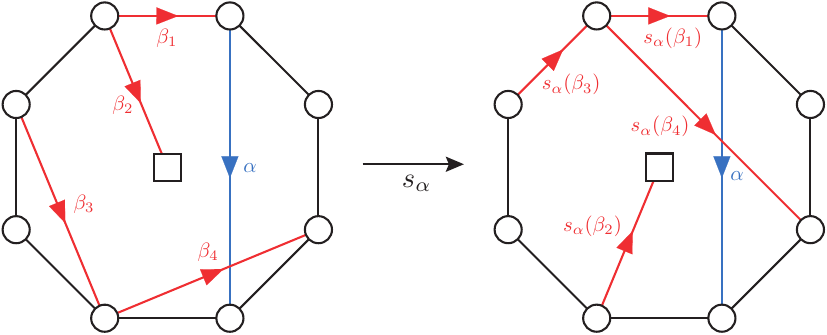}
        \caption{Example of the Weyl group action on $\Phi_{D_{5}}$.}
        \label{fig:Weyl_D5}
    \end{figure}
 
Therefore, we may say that $s_\alpha$ flips the roots involved with $\alpha$.
We remark that  the relation  $s_\alpha s_{\alpha'} s_\alpha = s_{\alpha'} s_\alpha s_{\alpha'}$
(when $(\alpha,\alpha')=-1$) in the Weyl group can be easily observed.

Now, we will find a geometric interpretation of Coxeter element and its action on geometric roots.
Recall that we have chosen a simple basis $\{\alpha_1, \dots, \alpha_k\}$  of $\Phi_\Gamma$ in Proposition \ref{cartan matrix}.
From Theorem \ref{thm_gz80}, the variation image $\{\Delta_1,\dots,\Delta_k\}$ of the simple basis $\{\alpha_1, \dots, \alpha_k\}$  forms a distinguished collection of vanishing cycles of the simple singularity $f(x,y)$.

Given a simple basis $\{\alpha_1, \dots, \alpha_k\}$, corresponding Coxeter element $c$ is defined as 
$$c = s_{\alpha_1} \circ \dots \circ s_{\alpha_k}.$$

Proposition 2.4 of Hubery--Krause \cite{HKra} says that
 the action of Coxeter element $c$ on geometric roots equals the operator $\overline{\rho}_*$, monodromy operator composed with an orientation reversal. This  is a variant of a classical result concerning skew-symmetric Milnor lattices, as found on page 36 of \cite{GZ95}.

In particular, this together with the discussions in Section \ref{sec:monodromy}, provides a geometric realization of Kostant's theorem (Theorem \ref{Kostant_original}) in terms of the Coxeter wheel.

\begin{thm}\label{kostant}
    The group $\langle \;\overline{\rho}_* \rangle$ acts freely on $\Phi_\Gamma$.
    The order of $\;\overline{\rho}_*$ and the number of its orbits are summarized as follows.
    \begin{center}
    \begin{tabular}{ c||c|c|c|c|c } 
    Type $(\Gamma)$ & $A_k$ & $D_k$ & $E_6$ & $E_7$ & $E_8$ \\
    \hline
    Number of roots  & $k(k+1)$ & $2k(k-1)$ & $72$ & $126$ & $240$ \\
    \hline
    Order of action  & $k+1$ & $2(k-1)$ & $12$ & $18$ & $30$ \\ 
    \hline
    Number of orbits & $k$ & $k$ & $6$ & $7$ & $8$
\end{tabular}
\end{center}
In particular, the order of action is equal to the Coxeter number $h$ of the Coxeter group of type $\Gamma$, and the number of orbits is equal to $k$, the dimension of the homology group $H_1(M,\partial M;\Z)$.
\end{thm}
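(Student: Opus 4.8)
The plan is to obtain Theorem \ref{kostant} from Proposition \ref{coxeter=-monodromy} together with the freeness results of Section \ref{sec:monodromy}. The starting observation is that reversing the orientation of an oriented arc is exactly the map $\alpha\mapsto-\alpha$ on $H_1(M,\partial M;\Z)$; since this commutes with the linear operator $\rho_*$, we have $\overline{\rho}_*=-\rho_*$ as operators, so $\langle\overline{\rho}_*\rangle$ is the cyclic group generated by $-\rho_*$, and by Proposition \ref{coxeter=-monodromy} its action on $\Phi_\Gamma$ coincides with that of the Coxeter element $c$. (For type $A$ this already gives a transparent picture: $\rho_*$ is rotation of the $(k+1)$-gon by $\tfrac{2\pi}{k+1}$ followed by orientation reversal, so $\overline{\rho}_*=-\rho_*$ is the honest rotation of the $(k+1)$-gon by $\tfrac{2\pi}{k+1}$, manifestly free on the oriented line segments between vertices, of order $k+1$, with $\tfrac{k(k+1)}{k+1}=k$ orbits.)

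For freeness in general I would give a geometric argument and then note the shortcut. The shortcut is to quote Kostant's theorem (Theorem \ref{Kostant_original}) directly: it says $c$ has exactly $k$ orbits on $\Phi_\Gamma$, each of cardinality $h$; since $c=\overline{\rho}_*$ has order $h$, ``orbit size $=$ group order'' forces every stabilizer to be trivial, so the action is free. The self-contained route uses Section \ref{sec:monodromy}, where $\rho_*$ is shown to act freely on $\Phi_\Gamma$ with orbit length $m$ as tabulated there; from $\overline{\rho}_*^{\,j}=(-1)^j\rho_*^{\,j}$ one sees that $\overline{\rho}_*^{\,j}(\alpha)=\alpha$ with $0<j$ forces $\rho_*^{\,j}(\alpha)=\pm\alpha$, the case $+\alpha$ being excluded for $0<j<m$ by freeness of $\rho_*$. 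One is then reduced to controlling when $-\alpha$ lies in the $\rho_*$-orbit of $\alpha$, i.e. when $-\mathrm{id}$ lies in $\langle\rho_*\rangle$ on the root lattice. This falls into three regimes, all visible in Section \ref{sec:monodromy}: for $A_k$ with $k$ even, $\rho_*^{\,k+1}=-\mathrm{id}$, so $\overline{\rho}_*\in\langle\rho_*\rangle$ is free of order $m/2=k+1$; for $A_k,D_k$ with $k$ odd and for $E_6$, the column-by-column check of Section \ref{sec:monodromy} (no relevant power of the monodromy matrix $P$ produces a $\pm$ standard-basis column) shows $-\alpha$ never lies in the $\rho_*$-orbit of $\alpha$, so $\overline{\rho}_*$ is free of order $m=h$; for $D_k$ with $k$ even and for $E_7,E_8$, the $\rho_*$-orbits come in the pairs $\Psi^{+}_j,\Psi^{-}_j$ interchanged by $-\mathrm{id}$, and $\overline{\rho}_*$ fuses each pair into a single free orbit of length $2m=h$.

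Once freeness and the order $h$ of $\overline{\rho}_*$ are established, the table is pure arithmetic: $h$ is the classical Coxeter number $k+1,\ 2(k-1),\ 12,\ 18,\ 30$, the cardinalities $|\Phi_\Gamma|=kh$ are $k(k+1),\ 2k(k-1),\ 72,\ 126,\ 240$, and the number of orbits is $|\Phi_\Gamma|/h=k$ in each case, which also reads off from Section \ref{sec:monodromy} which types have orbit length halved, preserved, or doubled in passing from $\rho_*$ to $\overline{\rho}_*$. The main obstacle is exactly the middle step --- deciding type by type whether $-\alpha$ lies in the $\rho_*$-orbit of $\alpha$, equivalently whether $-\mathrm{id}\in\langle\rho_*\rangle$ on the root lattice. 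This is not deep (it can be read from the eigenvalues of the weight-action monodromy, or straight from the matrices $P$ in Section \ref{sec:monodromy}), but it is fiddly bookkeeping; quoting Kostant's theorem avoids it entirely, in which case the genuinely new content of Theorem \ref{kostant} is Proposition \ref{coxeter=-monodromy}, namely that the abstract Coxeter element is realized on the Coxeter wheel as ``rotate and reverse orientation''.
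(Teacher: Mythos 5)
Your proposal is correct in substance, and it is worth separating your two routes. The ``shortcut'' (quote Theorem \ref{Kostant_original} plus Proposition \ref{coxeter=-monodromy}: orbits of size $h$ under a group of order $h$ force trivial stabilizers) is logically valid, since $\Phi_\Gamma$ is isomorphic to the classical root system and $\overline{\rho}_*$ is identified with a Coxeter element; but it runs against the purpose of the theorem, which the paper presents precisely as an independent, geometric \emph{realization} of Kostant's statement --- the paper's proof deliberately does not invoke Kostant and instead re-derives the orbit structure from the wheel. Your self-contained route is essentially the paper's proof, organized through the clean reduction $\overline{\rho}_*=-\rho_*$, so that freeness of $\overline{\rho}_*$ splits into freeness of $\rho_*$ (Section \ref{sec:monodromy}) plus the pointwise statement that no odd power of $\rho_*$ sends a root $\alpha$ to $-\alpha$; your three regimes ($A_k$ even, where $\rho_*^{k+1}=-\mathrm{id}$ halves the order; $A_k$ odd, $D_k$ odd, $E_6$, where the order stays $m=h$; $D_k$ even, $E_7$, $E_8$, where $m$ is odd and the $\Psi_j^{\pm}$ orbits fuse into orbits of length $2m=h$) match the paper's case analysis and the table exactly. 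Two small corrections to how you source the middle step: for $E_6$, Proposition \ref{free_E6} only checks columns of $P^j$ against $+$standard-basis vectors, not $\pm$, so the exclusion of $\rho_*^j(\beta_i)=-\beta_i$ for odd $j$ is a genuinely new (though finite and routine) verification, which the paper carries out in the proof of Theorem \ref{kostant} by checking the powers of $\overline{P}=-P$; and for $A_k$, $D_k$ there are no matrices $P$ in Section \ref{sec:monodromy} at all, so there the check must be done geometrically (rotation of chords, parallelism of representatives, the center puncture excluding diameters), as the paper does. Also, your phrase ``i.e.\ when $-\mathrm{id}$ lies in $\langle\rho_*\rangle$'' conflates the global condition with the pointwise one ($-\alpha$ in the $\rho_*$-orbit of $\alpha$); your subsequent case analysis uses the correct pointwise condition (and checking it on the basis elements $\beta_j$ suffices since every root is $\pm\rho_*^a\beta_j$ and $\overline{\rho}_*$ commutes with $\rho_*$ and $-\mathrm{id}$), so nothing breaks, but the ``i.e.'' should be dropped.
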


\begin{proof}
The proof of this theorem is an adaptation of that of Section \ref{sec:monodromy}.
Orientation reversal and monodromy commutes, but finding the precise order of $\;\overline{\rho}_*$ is more subtle.
\begin{enumerate}
    \item The case of $A_k$

    Recall the rule of monodromy action on $A_k$-wheel (see, e.g., Proposition \ref{free_Ak}).
    If $\alpha\in \Phi_\Gamma$ is represented by an oriented line $L$, then $\rho_*(\alpha)$ is represented by the line obtained by rotating $L$ through an angle of $\tfrac{2\pi}{k+1}$ and inverting its orientation.
    Thus, $\overline{\rho}_*(\alpha)$ can be considered as the rotation of $\alpha$ through an angle of $\tfrac{2\pi}{k+1}$.
    Hence, the action of $\;\overline{\rho}_*$ on $\Phi_{A_k}$ is a free $\Z/(k+1)\Z$-action.

    \item The case of $D_k$

    Recall the rule of monodromy action on $D_k$-wheel (see, e.g., Proposition \ref{free_Dk}).
    $\overline{\rho}_*(\alpha)$ can be considered as the rotation of $\alpha$ through an angle of $\tfrac{k \pi}{k-1}$ followed by an orientation reversal.

    First suppose that $k$ is an even number.
    From Proposition \ref{free_Dk}, recall that $\rho_*^{k-1}(\alpha)=\alpha$ for every $\alpha\in \Phi_\Gamma$.
    Thus, we have $\overline{\rho}_*^{2k-2}(\alpha)=\alpha$ and $\overline{\rho}_*^{k-1}(\alpha)=-\alpha$.
    The representatives of monodromy images $\overline{\rho}_*(\alpha),\dots,\overline{\rho}_*^{k-2}(\alpha)$, $\overline{\rho}_*^{k}(\alpha),\dots,\overline{\rho}_*^{2k-3}(\alpha)$ are not parallel to the representative of $\alpha$, so they are different to $\alpha$.
    Therefore, the action of $\;\overline{\rho}_*$ on $\Phi_{D_k}$ ($k$ even) is a free $\Z/(2k-2)\Z$-action.

    Now suppose that $k$ is an odd number.
    From Proposition \ref{free_Dk}, recall that $\rho_*^{2k-2}(\alpha)=\alpha$ for every $\alpha\in \Phi_\Gamma$.
    Thus, we have $\overline{\rho}_*^{2k-2}(\alpha)=\alpha$.
    The representatives of monodromy images $\overline{\rho}_*(\alpha),\dots,$ $\overline{\rho}_*^{k-2}(\alpha)$, $\overline{\rho}_*^{k}(\alpha),\dots,\overline{\rho}_*^{2k-3}(\alpha)$ are not parallel to the representative of $\alpha$, so they are different to $\alpha$.
    Since $k$ is odd, we also have $\overline{\rho}_*^{k-1}(\alpha)= \rho_*^{k-1}(\alpha) \ne \alpha$.
    Therefore, the action of $\;\overline{\rho}_*$ on $\Phi_{D_k}$ ($k$ even) is a free $\Z/(2k-2)\Z$-action.

    \item The case of $E_6$

    Recall the matrix $P$ which represents the monodromy operator $\rho_*$ with respect to the basis $\{\beta_1,\dots,\beta_6\}$ of $\Phi_{E_6}$ from the proof of Proposition \ref{free_E6}.
    The matrix for the Coxeter element $\;\overline{\rho}_*$ is equal to $\overline{P}:=-P$.
    We have $\overline{P}^{12} = P^{12} = \mathrm{Id}$.
    As we did in Proposition \ref{free_E6}, it can be checked that none of the columns of $\overline{P},\dots,\overline{P}^{11}$ coincides with a column of the identity matrix.
    Therefore, the action of $\;\overline{\rho}_*$ on $\Phi_{E_6}$ is a free $\Z/12\Z$-action.
    
    \item The case of $E_7$

    Recall the matrix $P$ which represents the monodromy operator $\rho_*$ with respect to the basis $\{\beta_1,\dots,\beta_7\}$ of $\Phi_{E_7}$ from the proof of Proposition \ref{free_E7}.
    The matrix for the Coxeter element $\;\overline{\rho}_*$ is equal to $\overline{P}:=-P$.
    We have $\overline{P}^{9} = -P^{9} = -\mathrm{Id}$, but $\overline{P}^{18} = P^{18} = \mathrm{Id}$.
    Note that the orbit of $\beta_j$ under the action of $\;\overline{\rho}_*$ is same with $\Psi^+_j \cup \Psi^-_j$.
    Each orbit has 18 elements, and hence the action of $\;\overline{\rho}_*$ on $\Phi_{E_7}$ is a free $\Z/18\Z$-action.

    \item The case of $E_8$:

    Recall the matrix $P$ which represents the monodromy operator $\rho_*$ with respect to the basis $\{\beta_1,\dots,\beta_8\}$ of $\Phi_{E_8}$ from the proof of Proposition \ref{free_E8}.
    The matrix for the Coxeter element $\;\overline{\rho}_*$ is equal to $\overline{P}:=-P$.
    We have $\overline{P}^{15} = -P^{15} = -\mathrm{Id}$, but $\overline{P}^{30} = P^{30} = \mathrm{Id}$.
    Note that the orbit of $\beta_j$ under the action of $\;\overline{\rho}_*$ is same with $\Psi^+_j \cup \Psi^-_j$.
    Each orbit has 30 elements, and hence the action of $\;\overline{\rho}_*$ on $\Phi_{E_8}$ is a free $\Z/30\Z$-action. \qedhere
\end{enumerate}
\end{proof}


\bibliographystyle{amsalpha}
\bibliography{FukayaSing}

\end{document}